\newtheorem{thm}{Theorem}[section]
\newtheorem{lem}[thm]{Lemma}
\newtheorem{lemma}[thm]{Lemma}
\newtheorem{cor}[thm]{Corollary}
\theoremstyle{definition}
\newtheorem{defn}{Definition}[section]
\newtheorem{ex}{Example}
\newcommand{\rank}{\text{rank}}
\newcommand{\nul}{\text{nul}}
\newcommand{\fcent}{\text{\normalfont cent}_{r,c}}
\definecolor{lightgray}{rgb}{0.75,0.75,0.75} 
\begin{document}
\title{The Sortability of Graphs and Matrices \\under Context Directed Swaps}

\author[C. Brown]{C.\ Brown}
\address{Department of Mathematics\\
University of Arizona\\ Tucson, AZ 85721, USA}
\email{\url{colbybrown@email.arizona.edu}}

\author[C.S. Carrillo Vazquez]{C.S.\ Carrillo Vazquez}
\address{Department of Computer Science and Department of Mathematics\\
University of Rochester\\ Rochester, NY 14627, USA}
\email{\url{ccarril2@u.rochester.edu}}

\author[R. Goswami]{R.\ Goswami}
\address{Department of Electrical Engineering and Computer Science\\
University of Michigan -- Ann Arbor\\ Ann Arbor, MI 48109, USA}
\email{\url{gorashmi@umich.edu}}

\author[S. Heil]{S.\ Heil}
\address{Department of Mathematics\\ 
Washington University in St. Louis\\ St. Louis, MO 63130, USA}
\email{\url{sheil@wustl.edu}}

\author[M. Scheepers]{M.\ Scheepers}
\address{Department of Mathematics\\ 
Boise State University\\ Boise, ID 83725, USA}
\email{\url{mscheepe@boisestate.edu}}

\thanks{Supported by the National Science Foundation under the Grant number DMS-1659872.}
\thanks{$^{\S}$ Corresponding Author: mscheepe@boisestate.edu}
% NEED TO CHANGE: 
\subjclass[2010]{05A15, 05C45, 05C50, 68P10} 
\keywords{Graph, Eulerian graph, Parity Cut, kernel, nullity, sortability by context directed swaps} 
%-------------------------------------------
\maketitle
% \textbf{cds}, when referring to it (in English)
% \textbf{cds}, when using it as a function
% $\textbf{cds}$, when using it in the game
%-------------------------------------------
%--- ABSTRACT ------------------------------
\begin{abstract} 
The study of sorting permutations by block interchanges has recently been stimulated by a phenomenon observed in the genome maintenance of certain ciliate species. The result was the identification of a block interchange operation that applies only under certain constraints. Interestingly, this constrained block interchange operation can be generalized naturally to simple graphs and to an operation on square matrices. This more general context provides numerous techniques applicable to the original context. In this paper we consider the more general context, and obtain an enumeration, in closed form, of all simple graphs on n vertices that are ``sortable" by the graph analogue of the constrained version of block interchanges. We also obtain asymptotic results on the proportion of graphs on $n$ vertices that are so sortable.
\end{abstract}
%-------------------------------------------
%\tableofcontents
%-------------------------------------------
\section{Introduction}
%-------------------------------------------
%-------------------------------------------

Sorting is one of the fundamental steps in the preparation of data for various data-mining operations. The requirement of efficiency of the sorting process has lead to the invention and study of several sorting methods. Among these is the block interchange technique: Abstractly described, a repetition-free list of the numbers $1$ through $n$ in some order is given. We shall call such a list a permutation of the numbers $1$ through $n$.The objective is to sort this list to result in the numbers $1$ through $n$ listed in their canonical order $1,\; 2,\; \cdots,\; n-1,\; n$. A block interchange of the original list consists of identifying two non-overlapping intervals of the list, and to then interchange these two intervals while retaining the relative order of items in each of the two intervals.

\begin{ex}\label{ex:blocswap} Consider the list $\lbrack 8,\; 3,\; 1,\; 4,\; 6,\; 5,\; 2,\; 9,\; 7\rbrack$. The result of interchanging the intervals $\lbrack 3,\; 1,\; 4\rbrack$ and $\lbrack 5,\; 2\rbrack$ is the new list $\lbrack 8,\; 5,\; 2,\;6,\;1,\;3,\;4,\;9,\;7\rbrack$.

\end{ex} 

Two fundamental questions regarding block interchanges are whether a permutation can be sorted to the canonical order by applying only block interchanges, and if so, to sort the permutation using the minimum possible number of block interchanges. In \cite{christie} Christie showed that every permutation can be sorted by a sequence of consecutive block interchanges, and identified a special type of block interchange that is critical to achieving the sorting in the minimum possible number of steps. The type of block interchange identified in \cite{christie} turns out to be a special case of a broader class of constrained block interchanges that were identified in a different context in modeling genome maintenance activity in certain species of a class of single cell organisms called ciliates \cite{per}.  As in prior work, these special block interchanges will be called context directed swaps in this paper, and will be defined below.

There is a significant body of work on sorting permutations by context directed swaps. %, including [\textcolor{blue}{References}]. 
Of the various approaches to this subject matter, translations to graph theoretic and linear algebraic methods have been especially useful. In this paper we indicate how to faithfully translate the context directed swap operation on permutations to corresponding operations on finite graphs and appropriate square matrices. It turns out that the corresponding operation on graphs is strongly related to the previously studied operation of edge complementation \cite{BH} was independently discovered in \cite{BHH}, where in Section 7 it is called edge reduction, and in \cite{p4}. Hints of the edge complementation operation also appear on p. 107 of \cite{Geelen}.The corresponding operation on square matrices in turn, is directly related to the elementary pivot operation (see page 106  of \cite{Geelen}) on a matrix and to the classical Schur complement \cite{BH, BHH, Geelen}.  We will exploit the body of knowledge in graph theory and linear algebra to enumerate, for each $n$, the set of simple finite graphs on $n$ vertices that are ``sortable" (to be defined later) by the graph theoretic analogue of context directed swaps. No corresponding closed formula is known for the  $n$-term permutations that are sorted by context directed swaps. When considering for each $n$ the proportion of simple graphs on $n$ vertices that are sortable by the graph theoretic analog of context directed swaps, an interesting phenomenon emerges: For even values of $n$ this proportion converges, and for odd values of $n$ the proportion also converges, but the limiting values for these two cases are different. A similar phenomenon has been observed for permutations sortable by context directed swaps, as the reader can verify from the data in the sequence A249165 available at  \cite{oeis}.

The rest of the paper is organized as follows: 
In section \ref{sec2}, some of the background definitions and notations will be given. The formal definitions of permutations and context directed swaps will be provided, as well as the description of the multiple representations of the permutations. In section \ref{sec3}, the results of applying \textbf{cds} to an overlap graph, \textbf{gcds}, will be presented. In particular, this section will discuss the properties of \textit{Eulerian graphs} after applying \textbf{gcds} on them.  In Section 6 we determine the complexity of a decision problem regarding graphs associated with permutations. In Section 7 we examine the sortabilty of finite simple graphs and square matrices.  The results of this section leads to the enumerative results of Section 8, and the asymptotic results of Section 9. %A simplified definition of applying \textbf{cds} on a matrix (abbreviated as \textbf{mcds}) will be given, in contrast to the description of a similar operation provided in \cite{p4}. %[\textcolor{blue}{Jansen, 2014}]. 
%Section \ref{sec4} will give sortability criteria for permutations being represented by graphs or matrices.

\section{Acknowledgements}
%-------------------------------------------
%-------------------------------------------
This research was supported by the National Science Foundation Grant No. DMS-1659872 and Boise State University. We also thank Professor Robert Brijder for a personal communication in which the connection with the Schur complement was pointed out.

%-------------------------------------------
%-------------------------------------------
\section{Preliminaries}\label{sec2}
%-------------------------------------------
There are two main types of graphs for a given permutation. One of these is the \textit{cycle graph}, which provides a sortability criterion through what is called the \textit{strategic pile}. The second graph is the \textit{overlap graph}, which is constructed from what is called the \textit{breakpoint graph}. The connections between these descriptions may be extended by the matrix representation of permutations. The matrix of a permutation that directly corresponds to the overlap is the \textit{adjacency matrix of the overlap graph}.

Even though \textbf{cds} originates from the field of biology, the results presented here are purely mathematical. Therefore, graphs and matrices that do not correspond to physically permissible permutations can be studied. This is because all graphs (and thus the matrices from the graphs) do not necessarily have a corresponding permutation.

%-------------------------------------------
\subsection{CDS and Associated Objects}
%-------------------------------------------
\subsubsection{Permutations, Pointers, and Context Directed Swaps}
%-------------------------------------------
\begin{defn}
A \textit{permutation}, $\pi = [a_1, ..., a_n] \in S_n$ is an array of the integers between $1$ and $n$ inclusive, in any order, such that there are no repeats among the integers. 
\end{defn}

Referring to the elements of permutations, each integer $i$ has a \emph{left pointer}, $(i-1,~i)$, and a \emph{right pointer}, $(i,~i+1)$. An integer $i$ along with its pointers is represented as $^{(i-1,i)}~i~^{(i,i+1)}$.
%-------------------------------------------
\begin{ex}
The permutation $\pi_1 = [3,2,5,1,4]$ can be expressed as
\begin{center}
$[$ $^{(2,3)}3^{(3,4)}$,~ $^{(1,2)}2^{(2,3)}$,~ $^{(4,5)}5^{(5,6)}$,~ $^{(0,1)}1^{(1,2)}$,~ $^{(3,4)}4^{(4,5)}]$.
\end{center}
\end{ex}
%-------------------------------------------
A permutation can be \textit{framed}, in which $0$ is added to the left of the open parenthesis and $(n+1)$ is added to the right of the close parenthesis. In a framed permutations where the pointers are explicitly given, the left pointer of $0$ and the right pointer of $n + 1$ are excluded. \\

The definition or \textit{context directed swap}, the main focus of this paper, may now be stated.
%-------------------------------------------
\begin{defn}
Given a permutation 
\begin{center}
$\pi = \Big[\big\{\alpha_{1}$ $^p\big\}$ $\big\{\alpha_{2}$ $^q\big\}$ $\big\{\alpha_{3}\big\}$ $\big\{$ $^p\alpha_{4}\big\}$ $\big\{$ $^q\alpha_{5}\big\}\Big]$
\end{center}
where each $\alpha_i$ is a subsection (or block) of the permutation and $p = (x,~x+1)$ and $q = (y,~y+1)$ are the pointers of arbitrary entries $x$ and $y$ in the permutation, we define the operation \textit{context directed swaps} on the permutation $\pi$ with context $p$ and $q$ as,
\begin{center}
$\text{\textbf{cds}}_{\{p,q\}}(\pi) = \Big[\big\{\alpha_{1}$ $^p$\big\} \big\{$^p$ $\alpha_{4}$\big\} \big\{$\alpha_{3}$\big\} \big\{$\alpha_{2}$ $^q$\big\} \big\{$^q$ $\alpha_{5}\big\}\Big]$.
\end{center}
\end{defn}
%-------------------------------------------
\begin{ex}
For example, for the permutation $\pi_1 = [3,2,5,1,4]$, we have that \[ \textbf{cds}_{\{(1,2),(4,5)\}} (\pi_1) = [3,4,5,1,2]. \]
\end{ex}
%-------------------------------------------
The operation \textbf{cds} interchanges the blocks $\alpha_{2}$ and $\alpha_{4}$, including the pointers that were part of each block. Notice that $\text{\textbf{cds}}_{\{p,q\}}$ can only be applied if the permutation has the pointers $p$ and $q$ in the following order: $``...^p...^q...^p...^q..."$. Also, the two $p$ pointers are adjacent to each other in the image of the permutation under \textbf{cds} (as are the two $q$ pointers). This paper will look at the different limitations and constraints that \textbf{cds} has on the various representations of permutations.\\
%-------------------------------------------
\subsubsection{Cycle Graph, Alternating Cycles, and Strategic Pile}
%-------------------------------------------
There are two distinct graph representations of a permutation, the first of which is the \textit{cycle graph}. 
%-------------------------------------------
\begin{defn}
The \textit{cycle graph} of a permutation $\pi = [a_1,...,a_n]$ is the directed graph $CG(\pi) = (V, E)$, where
\begin{align*}
	V &= \{0,~1,~...,~n,~(n+1)\}\text{, and}\\
E &= E_d \cup E_b\\
\end{align*}
with
\begin{align*}
	E_b &= \{\{i,i+1\}~|~ 0 \le i \le n \}\\
    E_d &= \{\{a_{i+1},a_i\}~|~ 1 \le i \le n \} \cup \{\{(n+1),a_n\},\{a_1,0\}\}
\end{align*}.
\end{defn}
%-------------------------------------------
The edges are divided into two sets, where $E_b$ is the set of \textit{bold} or \textit{black} edges and $E_d$ is the set of \textit{dotted} or \textit{gray} edges of the cycle graph. The bold edges are connected per each consecutive number (from smallest to largest), while the dotted edges are connected per each consecutive element in the permutation, starting at the end of the permutation and moving backward.
%-------------------------------------------
\begin{ex}
The cycle graph of the permutation $\pi_2 = [5,3,1,6,2,7,4]$ is shown in Figure 1 below.
\begin{center}
%\begin{figure}
\begin{tikzpicture}[scale=.6, every node/.style={scale=0.6}]
    %0452617389
    \draw[black] (-.625,-.3) node(0)[shape=circle,draw=black,inner sep=0.5pt][anchor=north,scale=1.5]{0};
    \filldraw[black] (.875,-.3) node(5)[shape=circle,draw=black,inner sep=0.5pt][anchor=north,scale=1.5]{5};
    \filldraw[black] (2.375,-.3) node(3)[shape=circle,draw=black,inner sep=0.5pt][anchor=north,scale=1.5]{3};
    \filldraw[black] (3.875,-.3) node(1)[shape=circle,draw=black,inner sep=0.5pt][anchor=north,scale=1.5]{1};
    \filldraw[black] (5.375,-.3) node(6)[shape=circle,draw=black,inner sep=0.5pt][anchor=north,scale=1.5]{6};
    \filldraw[black] (6.875,-.3) node(2)[shape=circle,draw=black,inner sep=0.5pt][anchor=north,scale=1.5]{2};
    \filldraw[black] (8.375,-.3) node(7)[shape=circle,draw=black,inner sep=0.5pt][anchor=north,scale=1.5]{7};
    \filldraw[black] (9.875,-.3) node(4)[shape=circle,draw=black,inner sep=0.5pt][anchor=north,scale=1.5]{4};
    \filldraw[black] (11.375,-.3) node(8)[shape=circle,draw=black,inner sep=0.5pt][anchor=north,scale=1.5]{8};
      
    \foreach \z in {0,...,7}
    	{\pgfmathtruncatemacro{\a}{\z+1}
    	\draw[black, thick, out=90, in=90, ->] (\z.east) to (\a.west);}
        
	\draw[dotted, thick, ->] (8) to (4);
    \draw[dotted, thick, ->] (4) to (7);
    \draw[dotted, thick, ->] (7) to (2);
    \draw[dotted, thick, ->] (2) to (6);
    \draw[dotted, thick, ->] (6) to (1);
    \draw[dotted, thick, ->] (1) to (3);
    \draw[dotted, thick, ->] (3) to (5);
    \draw[dotted, thick, ->] (5) to (0);
\end{tikzpicture}\\
	%\caption{The cycle graph of $\pi_2 = [5,3,1,6,2,7,4]$}
%\end{figure}
FIGURE 1. The Cycle Graph of $\pi_2 = [5,3,1,6,2,7,4]$
\end{center}
\end{ex}
%-------------------------------------------
If walks are made on the cycle graph where consecutive edges in the walks alternate between belonging to $E_b$ and $E_d$, then the disjoint sets of nodes in the walks form the \textit{alternating cycles} of the permutation.
\begin{center}
\begin{tikzpicture}
%(2*cos(90 - 45*z))+2, 2*sin(90 - 45*z))+2)
% Vertices
\draw[black] (2,4) node(0)[shape=circle,draw=black,inner sep=0.5pt][anchor=center]{0};
\draw[black] (3.4,3.4) node(1)[shape=circle,draw=black,inner sep=0.5pt][anchor=center]{1};
\draw[black] (4,2) node(2)[shape=circle,draw=black,inner sep=0.5pt][anchor=center]{3};
\draw[black] (3.4,0.6) node(3)[shape=circle,draw=black,inner sep=0.5pt][anchor=center]{4};
\draw[black] (2,0) node(4)[shape=circle,draw=black,inner sep=0.5pt][anchor=center]{7};
\draw[black] (0.6,0.6) node(5)[shape=circle,draw=black,inner sep=0.5pt][anchor=center]{8};
\draw[black] (0,2) node(6)[shape=circle,draw=black,inner sep=0.5pt][anchor=center]{4};
\draw[black] (0.6,3.4) node(7)[shape=circle,draw=black,inner sep=0.5pt][anchor=center]{5};
% Edges
\draw[black, ultra thick, shorten >=.2cm,shorten <=.2 cm, ->] (0.center) -- (1.center);
\draw[dashed, ultra thick, shorten >=.2cm,shorten <=.2 cm, ->] (1.center) -- (2.center);
\draw[black, ultra thick, shorten >=.2cm,shorten <=.2 cm, ->] (2.center) -- (3.center);
\draw[dashed, ultra thick, shorten >=.2cm,shorten <=.2 cm, ->] (3.center) -- (4.center);
\draw[black, ultra thick, shorten >=.2cm,shorten <=.2 cm, ->] (4.center) -- (5.center);
\draw[dashed, ultra thick, shorten >=.2cm,shorten <=.2 cm, ->] (5.center) -- (6.center);
\draw[black, ultra thick, shorten >=.2cm,shorten <=.2 cm, ->] (6.center) -- (7.center);
\draw[dashed, ultra thick, shorten >=.2cm,shorten <=.2 cm, ->] (7.center) -- (0.center);
% Vertices
\draw[black] (7,4) node(8)[shape=circle,draw=black,inner sep=0.5pt][anchor=center]{1};
\draw[black] (8.4,3.4) node(9)[shape=circle,draw=black,inner sep=0.5pt][anchor=center]{2};
\draw[black] (9,2) node(10)[shape=circle,draw=black,inner sep=0.5pt][anchor=center]{6};
\draw[black] (8.4,0.6) node(11)[shape=circle,draw=black,inner sep=0.5pt][anchor=center]{7};
\draw[black] (7,0) node(12)[shape=circle,draw=black,inner sep=0.5pt][anchor=center]{2};
\draw[black] (5.6,0.6) node(13)[shape=circle,draw=black,inner sep=0.5pt][anchor=center]{3};
\draw[black] (5,2) node(14)[shape=circle,draw=black,inner sep=0.5pt][anchor=center]{5};
\draw[black] (5.6,3.4) node(15)[shape=circle,draw=black,inner sep=0.5pt][anchor=center]{6};
% Edges
\draw[black, ultra thick, shorten >=.2cm,shorten <=.2 cm, ->] (8.center) -- (9.center);
\draw[dashed, ultra thick, shorten >=.2cm,shorten <=.2 cm, ->] (9.center) -- (10.center);
\draw[black, ultra thick, shorten >=.2cm,shorten <=.2 cm, ->] (10.center) -- (11.center);
\draw[dashed, ultra thick, shorten >=.2cm,shorten <=.2 cm, ->] (11.center) -- (12.center);
\draw[black, ultra thick, shorten >=.2cm,shorten <=.2 cm, ->] (12.center) -- (13.center);
\draw[dashed, ultra thick, shorten >=.2cm,shorten <=.2 cm, ->] (13.center) -- (14.center);
\draw[black, ultra thick, shorten >=.2cm,shorten <=.2 cm, ->] (14.center) -- (15.center);
\draw[dashed, ultra thick, shorten >=.2cm,shorten <=.2 cm, ->] (15.center) -- (8.center);
\end{tikzpicture}\\
FIGURE 2. The Alternating Cycles of $\pi_2 = [5,3,1,6,2,7,4]$
\end{center}
%-------------------------------------------
The above suggests a representation of permutations as a product of cycles. The \textit{cycle notation} ($C_{\pi}$) of a permutation $\pi$ is the conjunction of the alternating cycles. Another way of representing $\pi = [a_1, \dots, a_n]$ is with $C_{\pi} = Y_{\pi}\circ X_n$ where $X_n = (0~1~2~\dots ~n)$ and $Y_{\pi} = (a_n~a_{n-1}~\dots ~a_1~0)$.
%-------------------------------------------
\begin{ex}
The cycle notation of $\pi_2 = [5,3,1,6,2,7,4]$ is $C_{\pi_2} = (4~7~2~6~1~3~5~0)(0~1~2~3~4~5~6~7) = (0~3~7~4)(1~6~2~5)$; in this case, we have that $Y_{\pi_2} = (4~7~2~6~1~3~5~0)$ and $X_7 = (0~1~2~3~4~5~6~7)$.
\end{ex}
%-------------------------------------------

The cycle notation and its related concepts, as used in this paper, are taken from \cite{p3}.  %\textcolor{blue}{[Adamyk, 2013]}.
%-------------------------------------------
\begin{defn}
If the cycle notation of a permutation $\pi$ is 
	$$C_{\pi} = (0~...~n~b_1~...~b_k)(...)...(...),$$ 
then the \textit{strategic pile} SP$(\pi)$ of the permutation is the set of elements in the same cycle of $n$ that follow $n$ in $C_{\pi}$; that is, SP$(\pi) = \{b_1,...,b_k\}$. If $0$ and $n$ appear in different cycles of $C_{\pi}$ or no elements follow $n$ in $C_\pi$, then $SP(\pi) = \emptyset$.
\end{defn}
%-------------------------------------------
The cycle notation of a permutation gives rise to a sortability criterion under \textbf{cds} for permutations.
%-------------------------------------------
\begin{thm}\cite{p3}%[Adamyk, 2013]
A permutation $\pi$ is \textbf{cds}-sortable if, and only if, $SP(\pi) = \emptyset$.
\end{thm}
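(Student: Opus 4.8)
The plan is to establish the theorem by analysing how a single context directed swap acts on the cycle notation $C_\pi = Y_\pi \circ X_n$, and in particular on the cycle containing $0$ and $n$. The key observation is that a context directed swap on pointers $p = (x, x+1)$ and $q = (y, y+1)$ corresponds, in cycle notation, to composing $C_\pi$ with the transposition-like product coming from the two pointers; concretely one checks that applying $\textbf{cds}_{\{p,q\}}$ replaces $C_\pi$ by $C_\pi$ composed with a product of two $2$-cycles on pointer-vertices, which has the effect of performing a ``cycle merge or split'' exactly as in the classical theory of sorting by block interchanges (Christie's framework). So the first step is to prove a lemma: for each admissible pair of pointers $\{p,q\}$, the permutation $\textbf{cds}_{\{p,q\}}(\pi)$ has cycle notation obtained from $C_\pi$ by an operation that changes the number of alternating cycles by exactly $\pm 1$, and, when it acts within the $0$--$n$ cycle, it strictly shortens the segment of that cycle lying strictly between $n$ and $0$ (the strategic pile), provided the swap is chosen appropriately.

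With that structural lemma in hand, the proof splits into the two implications. For the ``only if'' direction, I would argue the contrapositive: suppose $SP(\pi) \neq \emptyset$, so in $C_\pi$ we have $(0\ \ldots\ n\ b_1\ \ldots\ b_k)$ with $k \ge 1$. One must show that this property is \emph{invariant} under every applicable $\textbf{cds}$, i.e.\ no single context directed swap can empty the strategic pile, and hence $\pi$ cannot be sorted (the identity permutation has $SP = \emptyset$, indeed its cycle notation is $(0\ 1\ \ldots\ n)$ with nothing following $n$). The cleanest route is to show that the parity or some monovariant associated to the $0$--$n$ cycle's ``tail after $n$'' cannot drop to zero in one step: a context directed swap either leaves $0$ and $n$ in distinct cycles afterwards — but then one checks the definition of $SP$ still gives a nonempty obstruction, or rather that sortability already fails — or keeps them together with a tail that is nonempty. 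This is where I expect to lean on the detailed cycle-surgery description from the lemma and possibly on a parity argument: the relevant quantity (number of elements between $n$ and $0$, read modulo something, or the count of alternating cycles) changes by a controlled amount that forbids reaching the sorted configuration from a state with $SP(\pi)\neq\emptyset$.

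For the ``if'' direction, assume $SP(\pi) = \emptyset$ and show $\pi$ is $\textbf{cds}$-sortable by exhibiting, as long as $\pi$ is not already the identity, at least one applicable context directed swap that makes progress toward the identity while preserving the invariant $SP = \emptyset$. ``Progress'' should be measured by a nonnegative integer statistic that is $0$ only at the identity — the natural candidate is $n + 1$ minus the number of alternating cycles of $\pi$ (equivalently the number of breakpoints), since the identity has the maximum number of cycles. By the structural lemma each context directed swap changes the cycle count by $\pm 1$; so the content is to show that whenever $SP(\pi) = \emptyset$ and $\pi \neq \mathrm{id}$ there is an \emph{applicable} pair of pointers whose swap \emph{increases} the cycle count (a ``good'' swap), and does not create a nonempty strategic pile. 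Here one uses the $SP = \emptyset$ hypothesis to locate two pointers $p, q$ appearing in the forbidden pattern $\ldots\,^p\ldots\,^q\ldots\,^p\ldots\,^q\ldots$ within an alternating cycle that is not a fixed structure, reduce to that cycle, and verify directly from the definition of $\textbf{cds}$ and of $SP$ that splitting that cycle keeps $0$ at the head of its cycle with $n$ as the last element before returning to $0$.

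The main obstacle, I expect, is the bookkeeping in the structural lemma: translating the block-level definition of $\textbf{cds}_{\{p,q\}}$ given in the text into a clean statement about $C_\pi = Y_\pi \circ X_n$ and verifying that it always changes the number of alternating cycles by exactly one and behaves correctly with respect to the segment of the $0$--$n$ cycle following $n$. Once that translation is pinned down, both implications follow by a standard monovariant/invariant argument; but getting the cycle surgery exactly right — including the edge cases where $0$ and $n$ are separated, where a pointer is at the frame, or where the affected blocks $\alpha_i$ are empty — is the delicate part, and it is also exactly the place where the hypothesis of the theorem (emptiness of $SP$) must be used rather than merely carried along. Since this theorem is quoted from~\cite{p3}, in the write-up I would either cite it directly or reproduce the argument along the lines above, keeping the cycle-surgery lemma as the technical core.
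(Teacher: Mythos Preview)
The paper does not prove this theorem; it is stated with a citation to \cite{p3} and no argument is given in the present manuscript. You recognize this yourself in your final sentence, and indeed simply citing \cite{p3} is exactly what the paper does.

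As for your sketch: the broad strategy --- track how a single $\textbf{cds}$ application acts on the cycle notation $C_\pi = Y_\pi \circ X_n$, show the strategic pile is an invariant obstruction in one direction and use a cycle-count monovariant in the other --- is in line with how this result is established in \cite{p3}. However, what you have written is a plan, not a proof: the ``structural lemma'' describing the effect of $\textbf{cds}_{\{p,q\}}$ on $C_\pi$ is asserted but not proved, and the claims about how the strategic pile behaves under a swap (in particular that it can never be emptied from a nonempty state, and that a good swap always exists when $SP(\pi)=\emptyset$) are stated as expectations rather than verified. If you intend to reproduce the argument rather than cite it, those are precisely the computations you must carry out; as written, the proposal identifies the right objects and the right monovariant but does not actually execute the cycle-surgery analysis that carries the weight of the proof.
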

%-------------------------------------------
The \textit{ordered strategic pile} of a permutation $\pi$, SP$^*(\pi)$, is the same as the strategic pile for $\pi$, where the order of the numbers that follow $n$ in the cycle notation is preserved. That is, $SP^*(\pi) = (b_1~...~b_k)$.\\
%-------------------------------------------
\begin{ex}
The ordered strategic pile of $\pi = [3,2,5,1,4]$, with cycle notation $C_\pi = (0,5,4,2)(1,3)$ is $SP^*(\pi) = (4,2)$. 
\end{ex}
%-------------------------------------------
\subsubsection{Breakpoint Graph and Overlap Graph}
%-------------------------------------------
The second graph representation of a permutation is the \textit{overlap graph}.
%-------------------------------------------
\begin{defn}
Given a permutation $\pi = [a_1,~...,~a_n]$, the \textit{breakpoint graph} of the permutation is the undirected graph BG$(\pi) = (V, E)$, where
\begin{align*}
V &= \{(i,i+1)_{dir} ~|~ 0\le i \le n\text{ and }dir \in \{ L, R \} \;\}\\
E &= \Big\{\{(i,i+1)_{L},(i,i+1)_{R}\} ~|~ 0\le i \le n \Big\}
\end{align*}
The vertices correspond to the pointers of the elements of the permutation,  where $(i,i+1)_{L}$ indicates a left pointer, and $(i,i+1)_{R}$ indicates a right pointer.

\end{defn}
%-------------------------------------------
\begin{ex} 
Let $\pi_3 = [4,5,2,6,1,7,3,8]$, and write the pointers 
{\footnotesize
\begin{center}
$0^{(0,1)}$ [ $^{(3,4)}$ $4$ $^{(3,4)},$ $^{(3,4)}$ $5$ $^{(3,4)},$ $^{(3,4)}$ $2$ $^{(3,4)},$ $^{(3,4)}$ $6$ $^{(3,4)},$ $^{(3,4)}$ $1$ $^{(3,4)},$ $^{(3,4)}$ $7$ $^{(3,4)},$ $^{(3,4)}$ $3$ $^{(3,4)},$ $^{(3,4)}$ $8$ $^{(3,4)}$ ] $^{(3,4)}9$.
\end{center}
}
Figure $3$ shows the breakpoint graph of $\pi_3$.
\end{ex}
%-------------------------------------------
\begin{center}
\begin{tikzpicture}[scale=.6, every node/.style={scale=0.6}]
    %0452617389
    \filldraw[black] (-.625,-.3) node(0)[anchor=north,scale=1.5]{0};
    \filldraw[black] (.875,-.3) node(0)[anchor=north,scale=1.5]{4};
    \filldraw[black] (2.375,-.3) node(0)[anchor=north,scale=1.5]{5};
    \filldraw[black] (3.875,-.3) node(0)[anchor=north,scale=1.5]{2};
    \filldraw[black] (5.375,-.3) node(0)[anchor=north,scale=1.5]{6};
    \filldraw[black] (6.875,-.3) node(0)[anchor=north,scale=1.5]{1};
    \filldraw[black] (8.375,-.3) node(0)[anchor=north,scale=1.5]{7};
    \filldraw[black] (9.875,-.3) node(0)[anchor=north,scale=1.5]{3};
    \filldraw[black] (11.375,-.3) node(0)[anchor=north,scale=1.5]{8};
    \filldraw[black] (12.875,-.3) node(0)[anchor=north,scale=1.5]{9}; 
   
    \filldraw[black] (0,0) circle (2pt) node(01r)[anchor=north east]{01}; 
    \filldraw[black] (.25,0) circle (2pt) node(34b)[anchor=north west]{34}; 
    \filldraw[black] (1.5,0) circle (2pt) node(45r)[anchor=north east]{45}; 
    \filldraw[black] (1.75,0) circle (2pt) node(45b)[anchor=north west]{45}; 
    \filldraw[black] (3,0) circle (2pt) node(56r)[anchor=north east]{56}; 
    \filldraw[black] (3.25,0) circle (2pt) node(12b)[anchor=north west]{12}; 
    \filldraw[black] (4.5,0) circle (2pt) node(23r)[anchor=north east]{23}; 
    \filldraw[black] (4.75,0) circle (2pt) node(56b)[anchor=north west]{56}; 
    \filldraw[black] (6,0) circle (2pt) node(67r)[anchor=north east]{67}; 
    \filldraw[black] (6.25,0) circle (2pt) node(01b)[anchor=north west]{01}; 
    \filldraw[black] (7.5,0) circle (2pt) node(12r)[anchor=north east]{12}; 
    \filldraw[black] (7.75,0) circle (2pt) node(67b)[anchor=north west]{67}; 
    \filldraw[black] (9,0) circle (2pt) node(78r)[anchor=north east]{78}; 
    \filldraw[black] (9.25,0) circle (2pt) node(23b)[anchor=north west]{23}; 
    \filldraw[black] (10.5,0) circle (2pt) node(34r)[anchor=north east]{34}; 
    \filldraw[black] (10.75,0) circle (2pt) node(78b)[anchor=north west]{78}; 
    \filldraw[black] (12,0) circle (2pt) node(89r)[anchor=north east]{89};  
    \filldraw[black] (12.25,0) circle (2pt) node(89b)[anchor=north west]{89};
    
    \foreach \z in {0,...,8}
             {\pgfmathtruncatemacro{\a}{\z+1}
              \draw[black, out=90, in=90] (\z\a r.north east) to (\z \a b.north west);}
\end{tikzpicture}\\
FIGURE 3. The Breakpoint Graph of $\pi_3 = [4,5,2,6,1,7,3,8]$
\end{center}
%-------------------------------------------
Using the graph above, we can construct the \textit{overlap graph} (referred as the interleaving graph in other sources [\textcolor{blue}{References}]) by taking the edges between each of the pointers of the breakpoint graph as the vertices, and constructing an edge in the overlap graph between these vertices for which edges intersect in the breakpoint graph. A formal definition of this notion is now given.
%-------------------------------------------
\begin{defn}
Given the breakpoint graph BG$(\pi)$ of a permutation $\pi$, the \textit{overlap graph} of the permutation $\pi$ is the undirected graph OG$(\pi) = (V, E)$ where
\begin{center}
	$V = \{(i,i+1) ~|~ 0\le i \le n\}$, ~and\\~\\
	$E = \{(i,i+1),(j,j+1)\}$ if there exists an intersection between $\{(i,i+1)_{L},(i,i+1)_{R}\}$ $\&$ $\{(j,j+1)_{L},(j,j+1)_{R}\}$ in BG$(\pi)$.
\end{center}
\end{defn}

\begin{defn}
Given a permutation $\pi \in S_n$, the \textit{root pointers} of $\pi$ are the two pointers $(0,1)$ and $(n,n+1)$ which cannot be used as $p$ or $q$ in a \textbf{cds} move $\textbf{cds}_{\{p,q\}}(\pi)$. Then, define the \textit{roots} of the overlap graph OG($\pi$) to be the vertices corresponding to the two root pointers in $\pi$. 
\end{defn}
%-------------------------------------------
An example of the overlap graph of a permutation can be seen in Figure 4.
\begin{center}
\begin{tikzpicture}
%(2*cos(90 - 40*z))+2, 2*sin(90 - 45*z))+2)
% Vertices
\draw[black] (4,6) node(0)[shape=diamond, draw=black, inner sep=0.5pt][anchor=center]{(0,1)};
\draw[black] (5.3,5.5) node(1)[shape=circle, draw=black, inner sep=0.5pt][anchor=center]{(1,2)};
\draw[black] (6,4.3) node(2)[shape=circle, draw=black, inner sep=0.5pt][anchor=center][anchor=center]{(2,3)};
\draw[black] (5.7,3) node(3)[shape=circle, draw=black, inner sep=0.5pt][anchor=center][anchor=center]{(3,4)};
\draw[black] (4.7,2.1) node(4)[shape=circle, draw=black, inner sep=0.5pt][anchor=center][anchor=center]{(4,5)};
\draw[black] (3.3,2.1) node(5)[shape=circle, draw=black, inner sep=0.5pt][anchor=center][anchor=center]{(5,6)};
\draw[black] (2.3,3) node(6)[shape=circle, draw=black, inner sep=0.5pt][anchor=center][anchor=center]{(6,7)};
\draw[black] (2,4.3) node(7)[shape=circle, draw=black, inner sep=0.5pt][anchor=center][anchor=center]{(7,8)};
\draw[black] (2.7,5.5) node(8)[shape=diamond, draw=black, inner sep=0.5pt][anchor=center]{(8,9)};
% Edges
\draw[black, thick, shorten >=.45cm, shorten <=.45cm] (0.center) -- (3.center);
\draw[black, thick, shorten >=.45cm, shorten <=.45cm] (0.center) -- (1.center);
\draw[black, thick, shorten >=.45cm, shorten <=.45cm] (0.center) -- (2.center);
\draw[black, thick, shorten >=.45cm, shorten <=.45cm] (0.center) -- (6.center);
\draw[black, thick, shorten >=.45cm, shorten <=.45cm] (1.center) -- (5.center);
\draw[black, thick, shorten >=.45cm, shorten <=.45cm] (1.center) -- (2.center);
\draw[black, thick, shorten >=.45cm, shorten <=.45cm] (1.center) -- (6.center);
\draw[black, thick, shorten >=.45cm, shorten <=.45cm] (2.center) -- (5.center);
\draw[black, thick, shorten >=.45cm, shorten <=.45cm] (2.center) -- (7.center);
\draw[black, thick, shorten >=.45cm, shorten <=.45cm] (3.center) -- (7.center);
\end{tikzpicture}\\
FIGURE 4. The Overlap Graph of $\pi_3 = [4,5,2,6,1,7,3,8]$ \\The root vertices are denoted with ``${\Large \lozenge}$" rather than with ``${\tiny \bigcirc}$"\\
% The "$\lozenge$ rather than with $\bigcirc$." was a suggestion given by Liljana.
%BEFORE: "using diamonds rather than circles."
\end{center}
%-------------------------------------------
\begin{defn}
Let $G_{n,r}$ denote the set of all $n$-vertex graphs such that $r$ of the $n$ vertices are roots. Thus, if $\pi \in S_n$, then $OG(\pi) \in G_{n+1,2}$, since all overlap graphs are two-rooted and the overlap graph of an $n$-element permutation must have $n+1$ vertices.
\end{defn}
\begin{defn}
A graph $G$ is \textit{Eulerian} if every vertex of $G$ has even degree.
\end{defn}
%-------------------------------------------
\begin{defn}
Given a simple graph $G = (V,E)$, let the \textit{neighborhood} of a vertex $v$ in the graph, denoted $N_G(v)$, be the set of all vertices $u \in V$ such that edge $\{ u,v \}$ is in $E$. 
\end{defn}
%-------------------------------------------
\begin{defn}
Given a simple graph $G = (V, E)$, a set $S \subseteq V$, and a vertex $v \in V$, let $\delta_S(v)$ denote the degree of $v$ with respect to $S$, or the number of edges between $v$ and vertices in $S$.
\end{defn}
%-------------------------------------------

Note that $v \not \in N_G(v)$ for all $v \in V$. \\

For a set $S$, denote the cardinality of $S$ by $|S|$. \\

%-------------------------------------------
\subsubsection{Adjacency and Precedence Matrices}
%-------------------------------------------
We will denote the entry in row $i$ and column $j$ of the matrix $A$ using the 
Let $A(i,j)$ be the entry in row $i$ and column $j$ of the matrix $A$, let $\vec{a_i}$ be the $i$th column vector of the matrix $A$, and let $v_i$ be the $i$th entry in the vector $\vec{v}$.

The \textit{adjacency matrix} of an overlap graph can now be defined as follows,
%-------------------------------------------
\begin{defn}
The \textit{adjacency matrix} of a graph $G = (V,E)$ with vertex set $V = \{v_1,v_2,\dots,v_n\}$ is the $n \times n$ matrix $A_{\pi}$ such that, for $1 \le i,j \le n$, 
$$
A_\pi(i,j) = \left\{
	\begin{array}{ll}
           1 & \quad \{v_i,v_j\} \in E \\
            0 & \quad \text{otherwise}
	\end{array}
\right.
$$
For simplicity, we define the adjacency matrix of a permutation $\pi$ to be the adjacency matrix of its overlap graph, OG($\pi$).

\end{defn}
%-------------------------------------------
\begin{ex}
The adjacency matrix of the overlap graph of $\pi_3 = [4,5,2,6,1,7,3,8]$ is\\~\\
\[
A_{\pi_3} = 
 \begin{pmatrix}
  0 & 1 & 1 & 1 & 0 & 0 & 1 & 0 & 0 \\
  1 & 0 & 1 & 0 & 0 & 1 & 1 & 0 & 0 \\
  1 & 1 & 0 & 0 & 0 & 1 & 0 & 1 & 0 \\
  1 & 0 & 0 & 0 & 0 & 0 & 0 & 1 & 0 \\
  0 & 0 & 0 & 0 & 0 & 0 & 0 & 0 & 0 \\
  0 & 1 & 1 & 0 & 0 & 0 & 0 & 0 & 0 \\
  1 & 1 & 0 & 0 & 0 & 0 & 0 & 0 & 0 \\
  0 & 0 & 1 & 1 & 0 & 0 & 0 & 0 & 0 \\
  0 & 0 & 0 & 0 & 0 & 0 & 0 & 0 & 0 \\
 \end{pmatrix}.
\]
\end{ex}
%-------------------------------------------
\begin{defn}
For a permutation $\pi = [a_1, ..., a_n]$, let $f_\pi$ be the function defined by $f_\pi(0) = 0$, $f_\pi(a_i) = i$ for $1 \le i \le n$, and $f_\pi(n+1) = n+1$.
Then, the \textit{precedence matrix} of $\pi$ is the $(n+2) \times (n+2)$ matrix $P_{\pi}$ such that, for $1 \le i,j \le (n+2)$, 
$$
P_{\pi}(i,j) = \left\{
	\begin{array}{ll}
           1 & f_\pi(i-1) < f_\pi(j-1) \\
            0 & \quad \text{otherwise}
	\end{array}
\right.
$$
\end{defn}
%-------------------------------------------
\begin{ex}
The precedence matrix of the permutation $\pi_3 = [4,5,2,6,1,7,3,8]$ is\\~\\
\[
P_{\pi_3} = 
 \begin{pmatrix}
0 & 1 & 1 & 1 & 1 & 1 & 1 & 1 & 1 & 1 \\
0 & 0 & 0 & 1 & 0 & 0 & 0 & 1 & 1 & 1 \\
0 & 1 & 0 & 1 & 0 & 0 & 1 & 1 & 1 & 1 \\
0 & 0 & 0 & 0 & 0 & 0 & 0 & 0 & 1 & 1 \\
0 & 1 & 1 & 1 & 0 & 1 & 1 & 1 & 1 & 1 \\
0 & 1 & 1 & 1 & 0 & 0 & 1 & 1 & 1 & 1 \\
0 & 1 & 0 & 1 & 0 & 0 & 0 & 1 & 1 & 1 \\
0 & 0 & 0 & 1 & 0 & 0 & 0 & 0 & 1 & 1 \\
0 & 0 & 0 & 0 & 0 & 0 & 0 & 0 & 0 & 1 \\
0 & 0 & 0 & 0 & 0 & 0 & 0 & 0 & 0 & 0 \\
\end{pmatrix}.
\]
\end{ex}
%-------------------------------------------
%-------------------------------------------
We now provide three definitions of matrices that are the result of eliminating the first and last rows and/or columns of an initial matrix.
%-------------------------------------------
\begin{defn}
For an $n \times n$ matrix $A$, we define the \textit{central submatrix of} $A$ \textit{without its rows}, denoted cent$_{r}(A)$, to be the $(n-2) \times n$ submatrix of $A$ that excludes the first and last rows of $A$. Similarly, the \textit{central submatrix of} $A$ \textit{without its columns}, denoted cent$_{c}(A)$ is the $n \times (n-2)$ submatrix of $A$ that excludes the first and last columns of $A$. Analogously, we can also define the \textit{central submatrix of} $A$ \textit{without its rows and columns}, cent$_{r,c}(A)$.
\begin{comment}
\textit{middle submatrix} of $A$, denoted $\text{cent}_c(A)$, to be the $n \times (n-2)$ submatrix of $A$ that excludes the first and last columns of $A$, and we define the \textit{central submatrix} of $A$, denoted $\fcent(A)$, to be the $(n-2)\times (n-2)$ submatrix of $A$ that excludes the first and last rows and columns of $A$. In particular, if $M = \text{cent}_c(A)$ and $C = \fcent(A)$, then we have $M(i,j) = A(i+1,j)$ for $1 \le i \le n-2$, $1 \le j \le n$, and $C(i,j) = A(i+1,j+1)$ for $1 \le i,j \le n-2$. Additionally, we define the \textit{horizontal middle submatrix} of $A$, denoted $\text{\normalfont hmid}(A)$, to be the $(n-2) \times n$ submatrix of $A$ that excludes the first and last rows of $A$; we then have $\text{\normalfont hmid}(A) = \text{cent}_c(A)^T$.
\end{comment}
\end{defn}
%-------------------------------------------
\begin{ex}
The three different central submatrix of the matrix 
\[
A = 
 \begin{pmatrix}
  0 & 1 & 1 & 1 & 0 & 0 & 1 & 0 & 0 \\
  1 & 0 & 1 & 0 & 0 & 1 & 1 & 0 & 0 \\
  1 & 1 & 0 & 0 & 0 & 1 & 0 & 1 & 0 \\
  1 & 0 & 0 & 0 & 0 & 0 & 0 & 1 & 0 \\
  0 & 0 & 0 & 0 & 0 & 0 & 0 & 0 & 0 \\
  0 & 1 & 1 & 0 & 0 & 0 & 0 & 0 & 0 \\
  1 & 1 & 0 & 0 & 0 & 0 & 0 & 0 & 0 \\
  0 & 0 & 1 & 1 & 0 & 0 & 0 & 0 & 0 \\
  0 & 0 & 0 & 0 & 0 & 0 & 0 & 0 & 0 \\
 \end{pmatrix}.
\]
are shown below:
{\tiny
\[ \text{cent}_{r}(A) = 
	\begin{pmatrix}
		1 & 0 & 1 & 0 & 0 & 1 & 1 & 0 & 0 \\
		1 & 1 & 0 & 0 & 0 & 1 & 0 & 1 & 0 \\ 
		1 & 0 & 0 & 0 & 0 & 0 & 0 & 1 & 0 \\
		0 & 0 & 0 & 0 & 0 & 0 & 0 & 0 & 0 \\
		0 & 1 & 1 & 0 & 0 & 0 & 0 & 0 & 0 \\
		1 & 1 & 0 & 0 & 0 & 0 & 0 & 0 & 0 \\
		0 & 0 & 1 & 1 & 0 & 0 & 0 & 0 & 0 \\
	\end{pmatrix} 
	\text{, cent}_{c}(A) = 
	\begin{pmatrix}
		1 & 1 & 1 & 0 & 0 & 1 & 0 \\
		0 & 1 & 0 & 0 & 1 & 1 & 0 \\
		1 & 0 & 0 & 0 & 1 & 0 & 1 \\
		0 & 0 & 0 & 0 & 0 & 0 & 1 \\
		0 & 0 & 0 & 0 & 0 & 0 & 0 \\
		1 & 1 & 0 & 0 & 0 & 0 & 0 \\
		1 & 0 & 0 & 0 & 0 & 0 & 0 \\
		0 & 1 & 1 & 0 & 0 & 0 & 0 \\
		0 & 0 & 0 & 0 & 0 & 0 & 0 \\
	\end{pmatrix} 
	\text{, and cent}_{r,c}(A) = 
	\begin{pmatrix}
	0 & 1 & 0 & 0 & 1 & 1 & 0 \\
	1 & 0 & 0 & 0 & 1 & 0 & 1 \\
	0 & 0 & 0 & 0 & 0 & 0 & 1 \\
	0 & 0 & 0 & 0 & 0 & 0 & 0 \\
	1 & 1 & 0 & 0 & 0 & 0 & 0 \\
	1 & 0 & 0 & 0 & 0 & 0 & 0 \\
	0 & 1 & 1 & 0 & 0 & 0 & 0 \\
	\end{pmatrix}
\]
}
\end{ex}
%-------------------------------------------

\section{Generalizing context directed swaps to finite graphs and matrices.}

The idea of generalizing the \textbf{cds} operation from permutations to graphs was given in \textcolor{blue}{\cite{p4}}, Section 4. As noted in the introduction, this generalization was also observed independently in Section 7 of \textcolor{blue}{\cite{BHH}}. We will define a slightly modified version of the generalization of \textbf{cds} to graphs given in \textcolor{blue}{\cite{p4}}, and have \textbf{gcds} denote this operation on graphs. This modified \textbf{gcds} definition is as follows:
%-------------------------------------------
\begin{defn}\label{defn:gcds}
Let $G = (V,E)$ be a two-rooted undirected graph on $n$ vertices, and let $p$ and $q$ be non-root vertices of $G$ such that edge $\{p,q\} \in E$. For any two vertices $x, y \in V$, let $f_x(y) = 1$ if $x$ and $y$ are adjacent, and let $f_x(y) = 0$ otherwise. Then $\textbf{gcds}_{\{p,q\}}(G)$ is the unique graph $G' = (V,E')$ such that for any $u, v \in V$, edge $\{u,v\} \in E'$ if, and only if, $$f_p(u)f_q(v)+ f_q(u)f_p(v) +f_u(v) = 1.$$
\end{defn}
%-------------------------------------------

This is an alternative definition of \textbf{gcds} introduced in \textcolor{blue}{\cite{p4}}. The proof that the two definitions are equivalent is given in Appendix A.
%-------------------------------------------
\begin{defn}\label{defn:mcds}
Given a matrix $M$, the matrix \textbf{cds} operation, \textbf{mcds}, on entries $p,q$ is: \[ \textbf{mcds}(M) = M + MI_{pq}M \] where $I_{pq}$ is the matrix over $\mathbb{F}_2$ with $I_{pq}(i,j) = 1$ if $i = p$ and $j = q$ or $i = q$ and $j = p$ and $I_{pq}(i,j) = 0$ otherwise.
\end{defn}
%-------------------------------------------
%-------------------------------------------
Using the definitions of \textbf{gcds} and \textbf{mcds}, we now define \emph{sortability}\footnote{The terminology ``sortability" is inspired by the original context that is being generalized, rather than an intuitive idea of sorting for a graph or a matrix.} for graphs and matrices under the respective operations. Noting that the overlap graph of the sorted permutation $[1,2,\dots,n]$ is the discrete graph on $(n+1)$ vertices containing no edges and that the adjacency matrix of this graph is the $(n+1) \times (n+1)$ zero matrix, the following notations are defined:
%-------------------------------------------
\begin{defn}\label{defn:gcds_sort}
A two-rooted graph is sorted if it contains no edges, and \textbf{gcds}-sortable if finitely many applications of the \textbf{gcds} operation on the graph result in a graph containing no edges.
\end{defn}
%-------------------------------------------
\begin{defn}\label{defn:mcds_sort}
We call a matrix sorted if it is the zero matrix, and \textbf{mcds}-sortable if finitely many applications of the \textbf{mcds} operation on the matrix will result in the zero matrix.
\end{defn}
%-------------------------------------------
%-------------------------------------------
First, we justify our selection of terminology by showing that \textbf{gcds} applied to overlap graphs of permutations is equivalent to \textbf{cds} applied to permutations:
%-------------------------------------------
\begin{lemma}\label{lem:cds_eq_gcds}
Let $\varphi: S_n \to G_{n+1,2}$ (where $G_{n+1,2}$ is the set of two-rooted graphs on $n+1$ vertices), such that if $\pi \in S_n$ then $\varphi(\pi)$ is the overlap graph of $\pi$.

Let $\pi$ be a permutation of $[1,2,\dots,n]$, and let $\varphi(\pi) = (V,E)$, where $V = \{v_0,v_1,\dots,v_n\}$ and each vertex $v_i \in V$ corresponds to the pointer $(i,i+1)$ in $\pi$. Then $\textup{\textbf{gcds}}_{\{v_p,v_q\}}(\varphi(\pi))$ is defined if, and only if, $\textup{\textbf{cds}}_{\{p,q\}}(\pi)$ is defined, in which case we have $\textup{\textbf{gcds}}_{\{v_p,v_q\}}(\varphi(\pi)) = \varphi(\textbf{cds}_{\{p,q\}}(\pi))$.
\end{lemma}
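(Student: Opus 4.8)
The plan is to translate everything into the combinatorics of pointers in the permutation and then verify the two conditions — definedness and the edge-by-edge agreement — separately. First I would recall that the overlap graph $\varphi(\pi)$ has a vertex $v_i$ for each pointer $(i,i+1)$ of the framed permutation $\pi$, that the two root vertices are exactly $v_0$ and $v_n$, and that $\{v_i,v_j\}\in E$ precisely when the pointers $(i,i+1)$ and $(j,j+1)$ ``interleave'' in the breakpoint graph, i.e.\ when reading the framed permutation the four pointer-endpoints appear in the cyclic order $i\, j\, i\, j$ rather than $i\, i\, j\, j$. For the first equivalence I would argue that $\textbf{cds}_{\{p,q\}}(\pi)$ is defined iff the left/right occurrences of the pointers $p=(x,x+1)$ and $q=(y,y+1)$ occur in the pattern $\dots {}^p\dots{}^q\dots{}^p\dots{}^q\dots$, and observe that this is exactly the interleaving condition for the corresponding vertices $v_p$ and $v_q$, together with the requirement (built into Definition~\ref{defn:gcds}) that $p,q$ be non-root and that $\{v_p,v_q\}\in E$. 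Since $p$ and $q$ are by definition non-root pointers, and interleaving $\iff$ adjacency in $OG(\pi)$, the two definedness conditions coincide.

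For the equality of the two resulting graphs, assuming both sides are defined, I would fix arbitrary vertices $v_u,v_v$ and show that $\{v_u,v_v\}$ is an edge of $\textbf{gcds}_{\{v_p,v_q\}}(\varphi(\pi))$ iff it is an edge of $\varphi(\textbf{cds}_{\{p,q\}}(\pi))$. By Definition~\ref{defn:gcds}, the left side holds iff
\[
f_p(u)f_q(v)+f_q(u)f_p(v)+f_u(v)=1 \quad\text{over }\mathbb{F}_2,
\]
where $f_x(y)=1$ iff $v_x,v_y$ are adjacent in $OG(\pi)$, i.e.\ iff the pointers $x$ and $y$ interleave in $\pi$. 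The right side holds iff the pointers $u$ and $v$ interleave in the permutation $\pi'=\textbf{cds}_{\{p,q\}}(\pi)$. So the whole lemma reduces to the purely combinatorial claim: \emph{for any two pointers $u,v$, they interleave in $\pi'$ iff $(\text{$u,v$ interleave in $\pi$})\oplus(\text{$u,v$ both interleave $p$ in $\pi$, or both interleave $q$})$ holds}, where $\oplus$ is XOR and the bracketed disjunction is itself evaluated as $f_p(u)f_q(v)+f_q(u)f_p(v)$ — note this counts ``$u$ interleaves $p$ and $v$ interleaves $q$, or vice versa'', which is the genuinely content-bearing part. I would prove this claim by recalling the block structure
\[
\pi=\bigl[\{\alpha_1{}^p\}\{\alpha_2{}^q\}\{\alpha_3\}\{{}^p\alpha_4\}\{{}^q\alpha_5\}\bigr],\qquad
\pi'=\bigl[\{\alpha_1{}^p\}\{{}^p\alpha_4\}\{\alpha_3\}\{\alpha_2{}^q\}\{{}^q\alpha_5\}\bigr],
\]
and then doing a finite case analysis according to which of the five blocks $\alpha_1,\dots,\alpha_5$ the two endpoints of pointer $u$ lie in, and likewise for $v$. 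In each case one tracks how the cyclic order of the four endpoints of $u$ and $v$ is affected by swapping the $\alpha_2$ and $\alpha_4$ blocks; interleaving is unchanged unless exactly one endpoint of the pair lies in $\alpha_2\cup\alpha_4$ relative to the other, and the correction term is precisely $f_p(u)f_q(v)+f_q(u)f_p(v)$.

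The main obstacle is this last case analysis: it is where all the real content sits, and it is easy to get the bookkeeping wrong because a ``pointer'' has two endpoints and each can independently fall into any of the five blocks (or be one of the designated pointers $p,q$ themselves). To keep it manageable I would first reduce by symmetry — the roles of $p$ and $q$, and of $u$ and $v$, are symmetric, and blocks $\alpha_1$, $\alpha_3$, $\alpha_5$ behave identically (their internal order and their position relative to everything outside $\alpha_2\cup\alpha_4$ is untouched) — so that only the placement of endpoints in $\{\alpha_1,\alpha_2,\alpha_4\}$ with the $p,q$ markers really matters. It may be cleaner to phrase interleaving of $u$ and $v$ as: the two endpoints of $u$ separate the two endpoints of $v$ on the cycle obtained by framing $\pi$, and then argue that the swap is an orientation-preserving rearrangement of the three ``super-segments'' $\alpha_1{}^p$, $\alpha_3$, ${}^q\alpha_5$ together with a transposition of ${}^p\alpha_4$ and $\alpha_2{}^q$, from which the parity change in the separation count can be read off directly. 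I expect the definedness half and the reduction to the combinatorial claim to be short; the bulk of the write-up will be organizing the case check so that the term $f_p(u)f_q(v)+f_q(u)f_p(v)$ emerges transparently rather than by brute enumeration.
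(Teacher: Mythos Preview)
Your proposal is correct and takes essentially the same approach as the paper: both reduce to showing that the interleaving relation between two arbitrary pointers $u,v$ flips under $\textbf{cds}_{\{p,q\}}$ precisely when $f_p(u)f_q(v)+f_q(u)f_p(v)=1$, and both establish this via a finite case analysis on the block structure $\alpha_1,\ldots,\alpha_5$ cut out by the $p$ and $q$ endpoints. The paper organizes the seven cases by the value of the tuple $(f_p(u),f_q(u),f_p(v),f_q(v))$ rather than by the block placements of the endpoints directly, but since each such tuple determines the possible block placements (and vice versa), this is just a different indexing of the same enumeration; your suggested symmetry reductions match the paper's implicit use of them.
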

%-------------------------------------------
\begin{proof}

By definition of \textbf{gcds}, there is an edge between $u$ and $v$ in $G' = \textbf{gcds}(G)$ if, and only if, $f_p(u)f_q(v)+f_q(u)f_p(v)+f_u(v) \equiv 1 \pmod 2$. This means that if $f_p(u)f_q(v)+f_q(u)f_p(v) \equiv 0 \pmod 2$, then there is an edge between $u$ and $v$ in $G'$ if, and only if, this edge is in $G$, so the edge "stays". Otherwise, if $f_p(u)f_q(v)+f_q(u)f_p(v) \equiv 1\pmod 2$, then there is an edge between $u$ and $v$ in $G'$ if, and only if, there is no edge between $u$ and $v$ in $G$ (e.g. the edge "switches" between $G$ and $G'$). We show that two pointers $u, v$ remain intersected or non-intersected if, and only if, $f_p(u)f_q(v)+f_q(u)f_p(v) = 0$ for their corresponding vertices in $G$.

Let us consider the permutation $\pi$ with intersecting pointers $p$ and $q$ as follows:
\begin{center}
\begin{tikzpicture}[scale=.5]
	\draw[black] (4,0) circle (2pt) node(p1)[anchor = north]{$p$};
	\draw[black] (8,0) circle (2pt) node(q1)[anchor = north]{$q$};
	\draw[black] (12,0) circle (2pt) node(p2)[anchor = north]{$p$};
	\draw[black] (16,0) circle (2pt) node(q2)[anchor = north]{$q$};
	\draw[black, out=60, in=120] (p1.north) to (p2.north);
	\draw[black, out=60, in=120] (q1.north) to (q2.north);
    
    \draw[decoration={brace,mirror,raise=5pt},decorate] (0,0) -- node[below=6pt] {$\alpha$} (p1.north west); 
    \draw[decoration={brace,mirror,raise=5pt},decorate] (p1.north east) -- node[below=6pt] {$\beta$} (q1.north west);  
    \draw[decoration={brace,mirror,raise=5pt},decorate] (q1.north east) -- node[below=6pt] {$\gamma$} (p2.north west);  
    \draw[decoration={brace,mirror,raise=5pt},decorate] (p2.north east) -- node[below=6pt] {$\delta$} (q2.north west);  
    \draw[decoration={brace,mirror,raise=5pt},decorate] (q2.north east) -- node[below=6pt] {$\alpha$} (20,0); 
\end{tikzpicture}\\
FIGURE 5. Blocks Defined by a Context
\end{center}

We can label the blocks between the pointers in order from left to right as $\alpha,\beta,\gamma,\delta,\alpha$ (we consider the first and last blocks to be part of the same block). Note that if $f_p(x) = 1, f_q(x) = 0$, then $x$ can have its two endpoints in either ($\alpha$,$\beta$) or ($\gamma$, $\delta$). Similarly, if $f_p(x) = 0$ and $f_q(x) = 1$, then $x$ can have its two endpoints in either ($\alpha$, $\delta$) or ($\beta$, $\gamma$). If $f_p(x) = f_q(x) = 1$, then $x$ can have its two endpoints in either ($\alpha$, $\gamma$) or ($\beta$, $\delta$). Finally, if $f_p(x) = f_q(x) = 0$, then both endpoints of $x$ must be in the same block (($\alpha,\alpha$),($\beta,\beta$),($\gamma,\gamma$), or ($\delta,\delta$)). When \textbf{cds} is applied, the $\beta$ and $\delta$ blocks are swapped, so we will call these two blocks the swapped blocks, and the other two blocks the unchanged blocks. 

Now, given pointers $u$ and $v$, assume without loss of generality that $u$ appears before $v$ in $\pi$. If $u$ and $v$ are non-intersecting (there is no edge between $u$ and $v$ in $G$), then the possible orientations are either $u...u...v...v$ or $u...v...v...u$. If they are intersecting (there is an edge between $u$ and $v$ in $G$), their orientation must be $u...v...u...v$. Note that the relative ordering of the pointers inside a block are preserved during \textbf{cds}, so if both ends of a pointer are in the same block, then no edge adjacent to the corresponding vertex will switch with \textbf{cds}, because no pointer can be moved in between or out from between the two ends. We consider seven cases:

\begin{enumerate}
	\item $f_p(u) = 1, f_q(u) = 0, f_p(v) = 1, f_q(v) = 1, f_p(u)f_q(v)+f_q(u)f_p(v) = 1$:  In this case the endpoints of $u$ will be in either $(\alpha, \beta)$ or $(\gamma, \delta)$ and the endpoints of $v$ will be in one of $(\alpha,\gamma)$ and $(\beta, \delta)$. So $u$ will have an endpoint in a swapped block and one in an unchanged block, and $v$ will either have both ends in the two swapped blocks or both endpoints in the two unchanged blocks. When \textbf{cds} is applied, $v$ will still have both endpoints in the two swapped blocks and $u$ will still have one endpoint in the same unchanged block. However, the other end pointer of $u$ will be either in between the end pointers of $v$ after the swap or between the pointers after the swap. Thus, the edge between the corresponding vertices switches.\\
    
	\item $f_p(u) = 0, f_q(u) = 1, f_p(v) = 1, f_q(v) = 1, f_p(u)f_q(v)+f_q(u)f_p(v) = 1$: In this case the endpoints of $u$ will be in either $(\beta, \gamma)$ or $(\alpha, \delta)$ and the endpoints of $v$ will be in either $(\alpha, \delta)$ or $(\beta, \gamma)$. So again one pointer will have an end in a swapped block and one in an unchanged block, and the other will either have both ends in the two swapped blocks or both ends in the two unchanged blocks, and by the reasoning above, the edge must switch.\\
    
	\item $f_p(u) = 1, f_q(u) = 0, f_p(v) = 0, f_q(v) = 1, f_p(u)f_q(v)+f_q(u)f_p(v) = 1$: In this case the endpoints of $u$ will be in either $(\alpha, \beta)$ or $(\gamma, \delta)$ and the endpoints of $v$ will be in either $(\alpha, \delta)$ or $(\beta, \gamma)$. So each pointer will have one endpoint in a swapped block, and one endpoint in an unchanged block. If the two endpoints in unchanged blocks are in the same unchanged block, their other endpoints must be in different swapped blocks, and thus when \textbf{cds} is applied the relative ordering of one of those two endpoints will change, switching the edge between two pointers. Otherwise, if the two endpoints are in the same swapped block, their other endpoints must be in different unchanged blocks on either side of the swapped block, so when \textbf{cds} is applied, the swapped block will be transposed with one of the endpoints, thus moving that endpoint either in between the endpoints of the other pointer or out from in between the pointers after the swap, causing the edge between the corresponding vertices to switch.\\
    
	\item $f_p(u) = 1, f_q(u) = 0, f_p(v) = 1, f_q(v) = 0, f_p(u)f_q(v)+f_q(u)f_p(v) = 0$: If two pointers have their endpoints in the same blocks, then applying \textbf{cds} will not switch the edge between them, because no point can be moved in between or out from between the endpoints of the other pointer. Otherwise, in this case we have the endpoints of one pointer in $(\alpha, \beta)$ and the endpoints of the other in $(\gamma, \delta)$, and after \textbf{cds} is applied, we have the endpoints of one pointer in $(\alpha, \delta)$ and the endpoints of the other in $(\beta, \gamma)$. Thus, the pointers must be non-intersecting both before and after \textbf{cds} is applied.\\
    
	\item $f_p(u) = 0, f_q(u) = 1, f_p(v) = 0, f_q(v) = 1, f_p(u)f_q(v)+f_q(u)f_p(v) = 0$: Again, because two pointers with their endpoints in the same blocks will not change their intersection when \textbf{cds} is applied, we consider the case in which we have the endpoints of one pointer in $(\beta, \gamma)$ and the endpoints of the other in $(\alpha, \delta)$, and after \textbf{cds} is applied, we must have the endpoints of one pointer in $(\alpha, \beta)$ and the endpoints of the other in $(\gamma, \delta)$. Thus, the pointers must be non-intersecting both before and after \textbf{cds} is applied.\\
    
	\item $f_p(u) = 1, f_q(u) = 1, f_p(v) = 1, f_q(v) = 1, f_p(u)f_q(v)+f_q(u)f_p(v) = 0$: Again, because two pointers with their endpoints in the same blocks will not change their intersection when \textbf{cds} is applied, we consider the case in which we have the endpoints of one pointer in $(\beta, \delta)$ and the endpoints of the other in $(\alpha, \gamma)$. This is the case both before and after \textbf{cds} application, so the pointers will be intersecting both times, and the edge will not change.\\
    
	\item $f_p(u) = f_q(u) = 0$ or $f_p(v) = f_q(v) = 0, f_p(u)f_q(v)+f_q(u)f_p(v) = 0$: In this case, either $u$ or $v$ must be contained in the same block, and since the order of points in blocks are unchanged, the edge between $u$ and $v$ will not switch.
\end{enumerate}
\end{proof}

%-------------------------------------------
The following lemmas show that the \textbf{gcds} and \textbf{mcds} operations are equivalent, confirming that our choice of \textbf{mcds} as generalization of \textbf{cds} is correct. 
%-------------------------------------------
\begin{lemma}\label{lem:gcds_eq_mcds}
Let $\varphi$ be the function from the set of undirected $n$-vertex graphs to the set of $n \times n$ matrices over $\mathbb{F}_2$ such that if $G$ is a graph, then $\varphi(G)$ is the adjacency matrix of $G$.

Let $G = (V, E)$ be a graph on $n$ vertices, and let $V = \{ v_1, v_2, \dots, v_n \}$. Then \textup{\textbf{mcds}} can be applied to rows $p$ and $q$ of $\varphi(G)$ if, and only if, \textup{\textbf{gcds}} can be applied to vertices $v_p$ and $v_q$ of $G$, in which case we have $\textup{\textbf{mcds}}_{\{p,q\}}(\varphi(G)) = \varphi(\textup{\textbf{gcds}}_{\{v_p,v_q\}}(G))$.
\end{lemma}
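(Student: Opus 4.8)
The plan is a direct entrywise computation that matches the two operations term by term. Throughout, write $M = \varphi(G)$ for the adjacency matrix of $G$, so that $M(i,j) = f_{v_i}(v_j)$ for all $i,j$, the matrix $M$ is symmetric, and $M$ has zero diagonal (no self-loops). All matrix arithmetic is over $\mathbb{F}_2$.

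First I would dispose of the applicability clause. The natural condition for $\textbf{mcds}$ to be applied to rows $p,q$ is $M(p,q) = 1$, mirroring the requirement $\{p,q\} \in E$ for $\textbf{gcds}_{\{v_p,v_q\}}$; since $M$ is the adjacency matrix of $G$, the statement $M(p,q) = 1$ is literally the statement $\{v_p,v_q\} \in E$, so the two operations are simultaneously defined. The substance of the lemma is then the identity $\textbf{mcds}_{\{p,q\}}(M) = \varphi(\textbf{gcds}_{\{v_p,v_q\}}(G))$.

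For that, compute $M I_{pq} M$ entrywise. Since $I_{pq}$ has exactly two nonzero entries, at $(p,q)$ and $(q,p)$, the double sum defining the product collapses to $(M I_{pq} M)(i,j) = M(i,p)M(q,j) + M(i,q)M(p,j)$, and hence
\[
\textbf{mcds}_{\{p,q\}}(M)(i,j) = M(i,j) + M(i,p)M(q,j) + M(i,q)M(p,j) \pmod 2 .
\]
Using symmetry of $M$ and $M(i,j)=f_{v_i}(v_j)$ (so $M(i,p) = M(p,i) = f_{v_p}(v_i)$, etc.), this rewrites as
\[
\textbf{mcds}_{\{p,q\}}(M)(i,j) = f_{v_i}(v_j) + f_{v_p}(v_i)f_{v_q}(v_j) + f_{v_q}(v_i)f_{v_p}(v_j),
\]
which is exactly the quantity appearing in Definition~\ref{defn:gcds} with $u = v_i$, $v = v_j$, and the operating vertices $v_p, v_q$. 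Therefore $\textbf{mcds}_{\{p,q\}}(M)(i,j) = 1$ if and only if $\{v_i,v_j\}$ is an edge of $\textbf{gcds}_{\{v_p,v_q\}}(G)$, which is precisely the asserted equality of matrices.

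The last thing to check — and the only place where working over $\mathbb{F}_2$ rather than an arbitrary ring matters — is that $\textbf{mcds}_{\{p,q\}}(M)$ is genuinely an adjacency matrix, i.e.\ symmetric with zero diagonal. Symmetry follows by swapping $i$ and $j$ in the boxed formula and using symmetry of $M$; the diagonal entries are $M(i,i) + M(i,p)M(q,i) + M(i,q)M(p,i) = 0 + 2\,M(i,p)M(i,q) = 0 \pmod 2$. I expect no real obstacle here: the computation is routine, and the only thing demanding care is keeping the row index $p$ of the matrix distinct in the bookkeeping from the corresponding vertex $v_p$ of the graph when translating between the $f$-notation of Definition~\ref{defn:gcds} and the matrix entries.
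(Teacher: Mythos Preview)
Your proposal is correct and follows essentially the same approach as the paper: both arguments expand $(M + MI_{pq}M)(i,j)$ entrywise using the two nonzero entries of $I_{pq}$, then invoke symmetry of $M$ to match the resulting expression against the $f$-formula in Definition~\ref{defn:gcds}. Your additional explicit check that the output is symmetric with zero diagonal is a nice sanity check but is already implied once the entrywise equality with the adjacency matrix of $\textbf{gcds}_{\{v_p,v_q\}}(G)$ is established.
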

%-------------------------------------------
\begin{proof}
By definition, the \textbf{gcds} move that includes the vertices $v_p$ and $v_q$ is valid if, and only if, $v_p$ and $v_q$ are adjacent in $G$. The \textbf{mcds} move on rows $p$ and $q$ in $\varphi(G)$ is valid if, and only if, the element in row $p$, column $q$ of $\varphi(G)$ is a $1$. Note that by definition of the adjacency matrix, these conditions are equivalent, so $(p,q)$ is a valid \textbf{mcds} move if, and only if, $(v_p, v_q)$ is a valid \textbf{gcds} move, as desired.

Next, let $v_p$ and $v_q$ be adjacent vertices in $G$; we claim that $\textbf{mcds}_{\{p,q\}}(\varphi(G)) = \varphi(\textbf{gcds}_{\{v_p,v_q\}}(G))$. It suffices to show that for any $i$ and $j$ with $1 \le i,j \le n$, the element at position $(i,j)$ in $\textbf{mcds}_{\{p,q\}}(\varphi(G))$ is equal to the element at position $(i,j)$ in $\varphi(\textbf{gcds}_{\{v_p,v_q\}}(G))$. 

Let $M = \varphi(G)$. We defined $\textbf{mcds}_{\{p, q\}}(\varphi(G)) = M+MI_{pq}M$, where $I_{pq}$ is the $n \times n$ matrix with ones in positions $(p,q)$ and $(q,p)$ and the remaining entries all equal to zero. Then, we have

\begin{align*}
(M+MI_{pq}M)(i,j) &= M(i,j) + ((MI_{pq})M)(i,j) \\
&= M(i,j) + \sum_{k=1}^n (MI_{pq})(i,k)M(k,j) \\
&= M(i,j) + \sum_{k=1}^n \sum_{t=1}^n M(i,t)I_{pq}(t,k) M(k,j).
\end{align*}

Note that by definition of the matrix $I_{pq}$, we have that $I_{pq}(t,k) = 1$ if and only $t = p$ and $k = q$ or $t = q$ and $k = p$. Thus, it follows that 

\begin{align*}
(M+MI_{pq}M)(i,j) &= M(i,j) + \sum_{k=1}^n \sum_{t=1}^n M(i,t)I_{pq}(t,k) M(k,j) \\
&= M(i,j) + M(i,p)M(q,j) + M(i,q)M(p,j).
\end{align*}

Next, to find $\varphi(\textbf{gcds}_{\{v_p,v_q\}}(G))(i,j)$ 
, we have that if $f_{v_p}(v_i)f_{v_q}(v_j)+f_{v_p}(v_j)f_{v_q}(v_i) = 1$ then edge $\{v_i,v_j\}$ is included in $\textbf{gcds}_{\{v_p,v_q\}}(G)$ if, and only if, it is \textit{not} included in $G$, and otherwise edge $\{v_i,v_j\}$ is included in $\textbf{gcds}_{\{v_p,v_q\}}(G)$ if, and only if, it is included in $G$, where for vertices $v_x$ and $v_y$ $f_{v_x}(v_y)$ is defined to be $1$ if $v_x$ and $v_y$ are adjacent in $G$ and otherwise $0$. %But note 
By definition of the adjacency matrix, we have that $f_{v_x}(v_y) = M(x,y)$ for all pairs $(x,y)$ with $1 \le x,y \le n$. Therefore, we have that $f_{v_p}(v_i)f_{v_q}(v_j)+f_{v_p}(v_j)f_{v_q}(v_i) = M(p,i)M(q,j)+M(p,j)M(q,i)$. Furthermore, edge $\{v_i,v_j\}$ is included in $G$ if, and only if, $M(i,j) = 1$. 

We also have that when $M(p,i)M(q,j)+M(p,j)M(q,i) = 1$, edge $(v_i,v_j)$ will appear in $\textbf{gcds}_{\{v_p,v_q\}}(G)$ if, and only if, $M(i,j) = 0$. Additionally, when $M(p,i)M(q,j)+M(p,j)M(q,i) \not= 1$ implies edge $\{v_i,v_j\}$ is in $\textbf{gcds}_{\{v_p,v_q\}}(G)$ if, and only if, $M(i,j) = 1$. Since $0 \le M(p,i)M(q,j)+M(p,j)M(q,i) \le 2$, if $M(p,i)M(q,j)+M(p,j)M(q,i) \not= 1$ then $M(p,i)M(q,j)+M(p,j)M(q,i) \equiv 0 \pmod 2$. Therefore, in either case, edge $\{v_i,v_j\}$ is in $\textbf{gcds}_{\{v_p,v_q\}}(G)$ if, and only if, $M(i,j) + M(p,i)M(q,j)+M(p,j)M(q,i) \equiv 1 \pmod 2$. This implies that $\varphi(\textbf{gcds}_{\{v_p,v_q\}}(G))(i,j) = 1$ if $M(i,j) + M(p,i)M(q,j)+M(p,j)M(q,i) \equiv 1 \pmod 2$ and otherwise $\varphi(\textbf{gcds}_{\{v_p,v_q\}}(G))(i,j) = 0$. Thus,

\[\varphi(\textbf{gcds}_{\{v_p,v_q\}}(G))(i,j) \equiv M(i,j) + M(p,i)M(q,j)+M(p,j)M(q,i) \pmod 2.\]

Using simple arithmetic, we have $\varphi(\textbf{gcds}_{\{v_p,v_q\}}(G))(i,j) = M(i,j) + M(p,i)M(q,j)+M(p,j)M(q,i)$. Since the graph $G$ is undirected, the matrix $M = \varphi(G)$ is symmetric, so $M(p,i) = M(i,p)$ and $M(q,i) = M(i,q)$ and thus

\begin{align*}
\varphi(\textbf{gcds}_{\{v_p,v_q\}}(G))(i,j) &= M(i,j) + M(p,i)M(q,j)+M(p,j)M(q,i) \\&= M(i,j) + M(i,p)M(q,j) + M(i,q)M(p,j) \\&= (M+MI_{pq}M)(i,j) \\&= \textbf{mcds}_{\{p,q\}}(M)(i,j),
\end{align*} 

with addition and multiplication in $\mathbb{F}_2$. Therefore, we have

$$\textbf{mcds}_{\{p,q\}}(\varphi(G)) = \varphi(\textbf{gcds}_{\{v_p,v_q\}}(G)),$$ 

as desired.
\end{proof}
%-------------------------------------------

\begin{cor}
Define the function $\varphi : S_n \to M_n(\mathbb{F}_2)$ so that for a permutation $\pi$, $\varphi(\pi)$ is the adjacency matrix of $\pi$. Let $\pi \in S_n$. Then, \textbf{mcds} can be applied to rows $p$ and $q$ of $\varphi(\pi)$ if, and only if, \textbf{cds} can be applied to the pointers $(p,p+1)$ and $(q,q+1)$ of $\pi$, in which case we have \[ \textbf{mcds}_{\{p,q\}}(\varphi(\pi)) = \varphi(\textbf{cds}_{\{p,q\}}(\pi)). \]
\end{cor}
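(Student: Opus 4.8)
The plan is to obtain this corollary by directly composing Lemma \ref{lem:cds_eq_gcds} and Lemma \ref{lem:gcds_eq_mcds}, so essentially no new work is needed. First I would fix notation: write $\varphi_{\mathrm{OG}}$ for the map sending a permutation $\pi\in S_n$ to its overlap graph $\mathrm{OG}(\pi)$, with vertex set $\{v_0,v_1,\dots,v_n\}$ where $v_i$ corresponds to the pointer $(i,i+1)$, and write $\varphi_{\mathrm{Adj}}$ for the map sending an undirected graph on that vertex set to its adjacency matrix. By the definition of the adjacency matrix of a permutation given in Section \ref{sec2}, the map $\varphi$ in the statement is exactly $\varphi_{\mathrm{Adj}}\circ\varphi_{\mathrm{OG}}$, and under the three representations the objects labelled ``$p$'' — the pointer $(p,p+1)$ of $\pi$, the vertex $v_p$ of $\mathrm{OG}(\pi)$, and row $p$ of $\varphi(\pi)$ — are identified by construction (similarly for $q$).

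Next I would run the chain of equivalences. Lemma \ref{lem:cds_eq_gcds} gives that $\textbf{cds}_{\{p,q\}}(\pi)$ is defined if and only if $\textbf{gcds}_{\{v_p,v_q\}}(\varphi_{\mathrm{OG}}(\pi))$ is defined, and that in that case $\textbf{gcds}_{\{v_p,v_q\}}(\varphi_{\mathrm{OG}}(\pi))=\varphi_{\mathrm{OG}}(\textbf{cds}_{\{p,q\}}(\pi))$. Applying Lemma \ref{lem:gcds_eq_mcds} to the graph $G=\varphi_{\mathrm{OG}}(\pi)$, which is undirected so the lemma applies, $\textbf{gcds}_{\{v_p,v_q\}}(G)$ is defined if and only if $\textbf{mcds}_{\{p,q\}}$ can be applied to $\varphi_{\mathrm{Adj}}(G)=\varphi(\pi)$, and in that case $\textbf{mcds}_{\{p,q\}}(\varphi(\pi))=\varphi_{\mathrm{Adj}}\big(\textbf{gcds}_{\{v_p,v_q\}}(G)\big)$. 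Transitivity of the two biconditionals yields the ``if and only if'' part, and composing the two displayed identities gives
\begin{align*}
\textbf{mcds}_{\{p,q\}}(\varphi(\pi))
&= \varphi_{\mathrm{Adj}}\big(\textbf{gcds}_{\{v_p,v_q\}}(\varphi_{\mathrm{OG}}(\pi))\big) \\
&= \varphi_{\mathrm{Adj}}\big(\varphi_{\mathrm{OG}}(\textbf{cds}_{\{p,q\}}(\pi))\big) \\
&= \varphi(\textbf{cds}_{\{p,q\}}(\pi)),
\end{align*}
which is the asserted equality.

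There is no genuine obstacle here; the only point requiring a moment's care is the bookkeeping of indexing conventions, so that the symbols ``$p$'' and ``$q$'' in the three statements really refer to the same objects — in particular that the root pointers $(0,1)$ and $(n,n+1)$ of $\pi$ correspond to the root vertices of $\mathrm{OG}(\pi)$, so that the non-root restriction built into the definition of $\textbf{cds}$ matches the non-root hypothesis of Definition \ref{defn:gcds} under $\varphi_{\mathrm{OG}}$. Once that identification is recorded, the corollary is immediate.
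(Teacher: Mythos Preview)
Your proposal is correct and follows essentially the same approach as the paper: you factor $\varphi$ as the composition of the overlap-graph map and the adjacency-matrix map, then chain Lemma~\ref{lem:cds_eq_gcds} and Lemma~\ref{lem:gcds_eq_mcds} to obtain both the biconditional on definability and the displayed equality. The paper's proof is identical up to notation (it writes $\varphi_1,\varphi_2$ where you write $\varphi_{\mathrm{OG}},\varphi_{\mathrm{Adj}}$).
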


\begin{proof}
By Lemma \ref{lem:cds_eq_gcds}, we have that $\textbf{cds}_{\{p,q\}}(\pi)$ is defined if, and only if, $\textbf{gcds}_{\{v_p,v_q\}}($OG$(\pi))$ is defined, and by Lemma \ref{lem:gcds_eq_mcds} this \textbf{gcds} operation is defined if, and only if, $\textbf{mcds}_{\{p,q\}}(\varphi(\pi))$ is defined, so each \textbf{mcds} operation is defined if, and only if, the corresponding \textbf{cds} operation is defined.

Furthermore, if we let $\varphi_1$ be the function defined in Lemma \ref{lem:cds_eq_gcds} taking a permutation to its overlap graph and let $\varphi_2$ be the function defined in Lemma \ref{lem:gcds_eq_mcds} taking a graph to its adjacency matrix, then we have $\varphi = \varphi_2 \circ \varphi_1$. Applying Lemmas \ref{lem:cds_eq_gcds} and \ref{lem:gcds_eq_mcds}, we have 
\begin{align*}
\textbf{mcds}_{\{p,q\}}(\varphi(\pi)) &= \textbf{mcds}_{\{p,q\}}(\varphi_2(\varphi_1(\pi))) \\
&= \varphi_2(\textbf{gcds}_{\{v_p,v_q\}}(\varphi_1(\pi))) \\
&= \varphi_2(\varphi_1(\textbf{cds}_{\{p,q\}}(\pi))) \\
&= \varphi(\textbf{cds}_{\{p,q\}}(\pi)),
\end{align*}
as desired.
\end{proof}
%-------------------------------------------
%-------------------------------------------
\section{Parity Cuts and Preservation Properties}\label{sec3} 
%-------------------------------------------
\begin{defn}\cite{p2}%[\textcolor{blue}{Li, 2015}]
\label{defn:pc}
Let $G = (V,E)$ be a two-rooted graph. A parity cut of $G$ is a partition $V = V_1 \cup V_2, V_1 \cap V_2 = \emptyset$ %into 2 sets, $V_1$ and $V_2$, - editor 13 suggested math notation
such that for any non-root vertex $v \in V_i, \delta_{V_j}(v), j \neq i$, is even. 
\end{defn}
%-------------------------------------------
For the purpose of this paper, we will only consider parity cuts to mean those cuts in which the roots also have an even number of edges to vertices of the opposite set.
%-------------------------------------------
\begin{lem}\label{lem:pres_eulerian}
Let $G = (V,E)$ be an Eulerian graph with $G' = (V,E') = \textbf{gcds}_{\{u,v\}}(G)$ for some vertices $u,v \in V$. Then $G'$ is also an Eulerian graph.
\end{lem}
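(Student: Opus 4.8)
The plan is to work with the matrix analogue via Lemma~\ref{lem:gcds_eq_mcds}, since being Eulerian is a purely local parity condition that translates cleanly to matrices. Specifically, a graph $G$ is Eulerian iff every vertex has even degree, and if $M = \varphi(G)$ is the adjacency matrix over $\mathbb{F}_2$, then the degree of vertex $v_i$ modulo $2$ is $\sum_{j=1}^n M(i,j)$, i.e.\ the $i$-th coordinate of the vector $M\mathbf{1}$ where $\mathbf{1}$ is the all-ones vector. So $G$ is Eulerian iff $M\mathbf{1} = \mathbf{0}$ over $\mathbb{F}_2$. It then suffices to show that if $M\mathbf{1} = \mathbf{0}$ and $\{p,q\}$ is a valid \textbf{mcds} move (so $M(p,q)=1$), then $M'\mathbf{1} = \mathbf{0}$ where $M' = \textbf{mcds}_{\{p,q\}}(M) = M + MI_{pq}M$.

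The computation is short: $M'\mathbf{1} = M\mathbf{1} + MI_{pq}M\mathbf{1} = \mathbf{0} + MI_{pq}\mathbf{0} = \mathbf{0}$, where I have used $M\mathbf{1}=\mathbf{0}$ twice. Thus $M'\mathbf{1}=\mathbf{0}$, so $G' = \textbf{gcds}_{\{u,v\}}(G)$ has adjacency matrix $M'$ with every row sum even, hence $G'$ is Eulerian. One small point to double-check: one must confirm that $\textbf{mcds}$ here is genuinely the matrix corresponding to \textbf{gcds} on the same graph, which is exactly the content of Lemma~\ref{lem:gcds_eq_mcds} — applying \textbf{gcds} to $u=v_p$, $v=v_q$ corresponds to applying \textbf{mcds} to rows $p,q$, and the validity conditions match ($\{u,v\}\in E$ iff $M(p,q)=1$). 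So the hypothesis that $\{u,v\}$ is a legal \textbf{gcds} move guarantees $M'$ is well-defined, and the row-sum argument goes through.

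Alternatively, and perhaps more transparently if one wants to avoid invoking the matrix lemma, one can argue directly from Definition~\ref{defn:gcds}. Fix a non-root (indeed arbitrary) vertex $w$; its degree in $G'$ is $\sum_{z \ne w} [f_p(w)f_q(z) + f_q(w)f_p(z) + f_w(z)]$ modulo $2$. The third summand contributes $\deg_G(w) \equiv 0$. The first summand contributes $f_p(w)\sum_{z\ne w} f_q(z) = f_p(w)\cdot(\deg_G(q) - f_q(w))$; since $\deg_G(q)$ is even this is $\equiv f_p(w)f_q(w) \pmod 2$. Symmetrically the second summand contributes $\equiv f_q(w)f_p(w)$. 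Adding, the first two summands together contribute $2f_p(w)f_q(w) \equiv 0 \pmod 2$. Hence $\deg_{G'}(w) \equiv 0 \pmod 2$ for every $w$, so $G'$ is Eulerian.

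I do not anticipate a genuine obstacle here; the only thing requiring care is the bookkeeping around whether the diagonal term $f_w(w)$ (which is $0$, since no vertex is adjacent to itself) or the $z=w$ term needs to be excluded from the sums — it does, and because $f_w(w)=0$ and the cross terms at $z=w$ cancel in pairs anyway, it makes no difference modulo $2$. I would present the matrix version as the main proof for brevity, remarking that it is immediate from $M\mathbf{1}=\mathbf{0}$, and perhaps mention the direct degree computation as an alternative.
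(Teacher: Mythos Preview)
Your proof is correct, and both arguments you give are genuinely different from the paper's. The paper proceeds by a direct combinatorial case analysis: for a vertex $u \neq p,q$ it splits into the three cases where $u$ is adjacent to neither, exactly one, or both of $p,q$, and in each case counts how many adjacencies of $u$ are toggled, checking that the net change is even. Your matrix argument bypasses all of this: once one has Lemma~\ref{lem:gcds_eq_mcds} and the observation that Eulerian is equivalent to $M\mathbf{1}=\mathbf{0}$ over $\mathbb{F}_2$, the preservation is literally one line, $M'\mathbf{1} = M\mathbf{1} + MI_{pq}(M\mathbf{1}) = \mathbf{0}$. Your second argument is closer in spirit to the paper's but still cleaner, since it computes the degree parity uniformly as a sum rather than splitting into cases. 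One incidental advantage of your arguments over the paper's is that they automatically cover the vertices $p$ and $q$ themselves (which become isolated), whereas the paper's case analysis is stated only for $u \neq p,q$ and leaves that endpoint implicit. The paper's approach, on the other hand, is self-contained and does not rely on the \textbf{mcds} machinery from Section~4.
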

%-------------------------------------------
\begin{proof}
Let $p, q \in V$ such that $\{p,q\} \in E$. Consider $G' = \textbf{gcds}_{\{p,q\}}(G)$. Let $u \in V$ such that $u \neq p$ and $u \neq q$. There are three cases:

\begin{enumerate}
    \item The vertex $u$ is adjacent to neither $p$ nor $q$. In this case, $N_{G}(u) = N_{G'}(u)$.
    
    \item The vertex $u$ is adjacent to either $p$ or $q$ but not both. Without loss of generality, assume that $u$ is adjacent to $p$. Then, let $S = N_G(q) \setminus \{p\}$. For each $s \in S$, $\{u,s\} \in E'$ if, and only if, $\{u,s\} \not\in E$. Let $x$ be the number of adjacencies in $S$ gained and $y$ be the number of adjacencies in $S$ lost. Since $x+y = |S|$,(and $|S|$ must be odd) exactly one of $x$ or $y$ is odd. Therefore $x-y$, the net change of adjacencies of $u$ in $S$, is odd. However, the edge $\{u,p\}$ does not exist in $G'$. Therefore, the total net change in adjacencies of $u$ is even. Because $u$ had an even number of adjacencies in $G$, $u$ must have an even number of adjacencies in $G'$.
    
    \item The vertex $u$ is adjacent to both $p$ and $q$. Let $a_1, ..., a_i$ be the vertices adjacent to $p$, but not to $q$. Let $b_1, ..., b_j$ be the vertices adjacent to $q$, but not to $p$, and $c_1, ..., c_k$ be the vertices adjacent to both $p$ and $q$ other than $u$. For all $r, 1 \leq r \leq i$, the edge $\{a_r,u\}$ will exist in $G'$ if, and only if, it does not exist in $G$. Likewise, for all $r, 1 \leq r \leq j$, $\{b_r,u\}$ will exist in $G'$ if, and only if, it did not exist in $G$. However, for all $r, 1 \leq r \leq k$, $\{c_r,u\}$ will exist in $G'$ if, and only if, it did exist in $G$. Therefore, the total number of edges switched (where switched means the edge exists in exactly one of $G$ and $G'$) is equal to $i+j$. Since the vertex $p$ has even degree and $a_1, ..., a_i, c_1, ..., c_k$ does not include $u$ or $q$, $i+k$ must be even. Similarly, since the vertex $q$ has even degree, so $j+k$ must be even. Therefore, the net change in adjacencies of $u$ must be even, so $u$ must be of even degree in $G'$.  
\end{enumerate}

By combining the above cases, we have shown that in a graph with only even degree vertices, a $\textbf{gcds}$ operation will result in a graph with only even degree vertices.
\end{proof}
%-------------------------------------------
Another property that is preserved under \textbf{gcds} on an Eulerian graph is presented in the following theorem.
%-------------------------------------------
\begin{thm}\label{thm:pres_pc_eulerian}
Let $G = (V,E)$ be an Eulerian graph with $G' = (V,E') = \textbf{gcds}_{\{u,v\}}(G)$ for some vertices $u,v \in V$. Let $p,q$ be vertices and $(V_1, V_2)$ be a partition of $V$, then $(V_1, V_2)$ is a parity cut of $G$ if, and only if, there exists some parity cut $(V_1', V_2')$ of $G'$ such that for all vertices $v \neq p,q \in V$, $v \in V_1 \Leftrightarrow v \in V_1'$.
\end{thm}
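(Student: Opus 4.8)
The plan is to use \textbf{gcds} on a graph with the edge $\{u,v\}$ used for the operation, and to track how membership degrees change under the operation. First I would reduce to the generic setup: let $G' = \textbf{gcds}_{\{p,q\}}(G)$ with $\{p,q\} \in E$, and let $(V_1,V_2)$ be a parity cut of $G$. I want to show $(V_1,V_2)$ is \emph{itself} a parity cut of $G'$ after possibly swapping which side contains $p$ and which contains $q$; that is, I expect the witness $(V_1',V_2')$ to agree with $(V_1,V_2)$ on all vertices other than $p,q$, with $p$ and $q$ each possibly moved to the opposite side. So the real content is: for every non-root vertex $w \neq p,q$, the parity of $\delta_{V_j}(w)$ (for $w \in V_i$) is unchanged by \textbf{gcds}; and there is a consistent choice of sides for $p,q$ making the parity conditions hold for $p$ and $q$ as well.

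For a vertex $w \notin \{p,q\}$, by Definition \ref{defn:gcds} the edge $\{w,z\}$ toggles precisely when $f_p(w)f_q(z) + f_q(w)f_p(z) = 1$. If $w$ is adjacent to neither $p$ nor $q$ no edge at $w$ toggles, so its degrees to $V_1, V_2$ are untouched. If $w$ is adjacent to exactly one of $p,q$ — say to $p$ — then the edges at $w$ that toggle are exactly those to $N_G(q) \setminus \{p\}$, and this is essentially the bookkeeping already carried out in the proof of Lemma \ref{lem:pres_eulerian}, case (2): using that $\delta_{V_j}(q)$ is even (the parity-cut condition at $q$, viewing whichever side $q$ lies on), the number of toggled edges from $w$ into $V_j$ has the same parity as $f_q$-adjacency contributions that cancel, so $\delta_{V_j}(w)$ changes by an even amount. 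The case $w$ adjacent to both $p$ and $q$ is handled the same way as case (3) of Lemma \ref{lem:pres_eulerian}, restricted to counting neighbours landing in $V_j$: one splits $N_G(p) \cup N_G(q)$ into the three classes $(a_r)$, $(b_r)$, $(c_r)$, uses that $\delta_{V_j}(p)$ and $\delta_{V_j}(q)$ are even, and concludes the net change of $\delta_{V_j}(w)$ is even. The key point throughout is that Lemma \ref{lem:pres_eulerian} guarantees $G'$ is still Eulerian, and the parity-cut arithmetic is the ``$V_j$-localised'' version of the Eulerian arithmetic: wherever that proof used ``$\deg_G(p)$ is even'' I instead use ``$\delta_{V_j}(p)$ is even'', which is exactly the parity-cut hypothesis.

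It remains to pin down the sides of $p$ and $q$ in $G'$ and verify the parity condition there, and to run the converse. Since $\{p,q\}\in E$, after \textbf{gcds} the edge $\{p,q\}$ is removed, and the neighbourhoods of $p$ and $q$ in $G'$ can be computed directly from Definition \ref{defn:gcds}: a vertex $z \neq p,q$ is adjacent to $p$ in $G'$ iff $f_q(z) + f_q(q)f_p(z) + f_p(z) \equiv 1$, i.e.\ iff $f_q(z) = 1$ (since $f_q(q)=0$ and the $f_p(z)$ terms cancel), so $N_{G'}(p) = N_G(q)\setminus\{q\}$ and symmetrically $N_{G'}(q) = N_G(p)\setminus\{p\}$. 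Thus $p$ and $q$ essentially trade neighbourhoods. I then choose $V_1' , V_2'$ to agree with $V_1, V_2$ off $\{p,q\}$ and to put $p$ on the side $q$ was on (and $q$ on the side $p$ was on); then $\delta_{V_j'}(p)$ in $G'$ equals $\delta_{V_j}(q)$ in $G$ up to the now-absent edge $\{p,q\}$, whose parity contribution I account for using whether $p,q$ were on the same side or opposite sides of the original cut — and a short case check (same side vs.\ opposite side) confirms the resulting count stays even in all cases, using that $(V_1,V_2)$ was a parity cut of $G$ satisfying the conditions at both $p$ and $q$. The converse direction is the same argument applied to $G'$ with the observation that $\textbf{gcds}_{\{p,q\}}$ is an involution on graphs where $\{p,q\}$ is an edge of the \emph{result} as well (indeed $\{p,q\}$ is an edge iff it was removed, so one re-applies \textbf{gcds} with the roles reversed), so the statement is symmetric in $G$ and $G'$. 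The main obstacle I anticipate is exactly this last bookkeeping: correctly matching up the side-swap of $p$ and $q$ with the disappearance of the edge $\{p,q\}$ so that the parity conditions at $p$ and $q$ transfer cleanly; the rest is a parity-localised rerun of Lemma \ref{lem:pres_eulerian}.
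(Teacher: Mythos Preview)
Your forward-direction plan is on the right track and mirrors the paper's: a case analysis on how a generic vertex $w\neq p,q$ is attached to $p$ and $q$, localised to count only neighbours in $V_j$, using that $\delta_{V_j}(p)$ and $\delta_{V_j}(q)$ are even. The paper organises this slightly differently (it first splits on which side of the cut $p$ and $q$ lie in, then on how $w$ attaches), but the arithmetic is the same.

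However, two concrete errors derail your treatment of $p$ and $q$ themselves and the converse.

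\textbf{The neighbourhoods of $p$ and $q$ in $G'$.} Your computation of $N_{G'}(p)$ is wrong. For $u=p$ and $v=z\neq p,q$, Definition~\ref{defn:gcds} gives
\[
f_p(p)f_q(z)+f_q(p)f_p(z)+f_p(z)=0\cdot f_q(z)+1\cdot f_p(z)+f_p(z)=2f_p(z)=0,
\]
so $p$ is adjacent to no $z\neq p,q$ in $G'$; likewise $\{p,q\}\notin E'$. Thus $p$ and $q$ are \emph{isolated} in $G'$, not ``neighbourhood-swapped''. This actually simplifies the forward direction: once you have handled all $w\neq p,q$, the conditions at $p$ and $q$ in $G'$ are automatic (their degree to either side is $0$), so no side-swap argument is needed at all.

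\textbf{The converse is not by involution.} Because $p$ and $q$ are isolated in $G'$, the edge $\{p,q\}$ does not exist in $G'$, so $\textbf{gcds}_{\{p,q\}}$ cannot be applied to $G'$ and there is no involution to invoke. The paper instead argues directly: given a parity cut $(V_1',V_2')$ of $G'$, freeze all vertices other than $p,q$ on the same sides, and then \emph{choose} sides for $p$ and $q$ in $G$ according to the parities of their $G$-degrees into $V_1'\setminus\{p,q\}$ and $V_2'\setminus\{p,q\}$ (using that $p,q$ are adjacent and both have even total degree). One then reuses the forward-direction parity computation to see that this really is a parity cut of $G$. Your proposal is missing this step entirely.
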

%-------------------------------------------
\begin{proof}
($\Leftarrow$)  % change to $\Rightarrow$?
Suppose that $(V_1,V_2)$ is a valid parity cut of $G$. We will show that for any vertex $u \in V_i$, when \textbf{gcds} is performed on two vertices $p,q$, the parity of $\delta_{V_j}(v), j \neq i$ does not change, and that therefore this is also a valid parity cut for $G'$. We note that because $G$ is Eulerian, for any $v \in V$, $\delta_{V_1}(v)$ and $\delta_{V_2}(v)$ will both be even. Consider the sets $A = \{v \in V_j| v \neq q, \text{ v is adjacent only to $p$}\}, B = \{v \in V_j| v \neq p, \text{ v is adjacent only to $q$}\}$, and $C = \{v \in V_j| \text{ v is adjacent to both $p$ and $q$}\}$. Suppose $u$ is adjacent to $x$ vertices in $A$, $y$ vertices in $B$, and $z$ vertices in $C$. There are 3 possible cases:
\begin{enumerate}
	\item Suppose $p, q \in V_i$. In this case, $\delta_{V_j}(p) = |A \bigcup C| = |A|+|C|$, which is even, and similarly, $|B|+|C|$ is even, so $|A|+|B|$ must also be even.
    \begin{enumerate}
	\item Suppose $u$ is adjacent to both $p$ and $q$. When the \textbf{gcds} operation is performed, edges between $u$ and vertices of $A$ and $B$ will switch (for any vertex $v \in A \cup B$, the edge between $u$ and $v$ is in $G'$ if and only if it is not in $G$), which means $u' \in V(G')$ will have $\delta_{V_j}(u') = \delta_{V_j}(u)+(|A|-x)-x+(|B|-y)-y = \delta_{V_j}(u)+|A|+|B|-2x-2y$ edges, so the change in $\delta_{V_j}(u)$ is even. \\  
	\item Suppose $u$ is adjacent only to $p$. When the \textbf{gcds} operation is performed, edges between $u$ and vertices of $B$ and $C$ will switch, which means $u' \in V(G')$ will have $\delta_{V_j}(u') = \delta_{V_j}(u)+(|B|-y)-y+(|C|-z)-z = \delta_{V_j}(u)+|B|+|C|-2y-2z$ edges, so the change in $\delta_{V_j}(u)$ is even. \\    
    \begin{center}\begin{tikzpicture} [scale = .7]
    \filldraw[black] (0, 4) circle (2pt) node(q)[anchor=south]{q};
    \filldraw[black] (-1, 2) circle (2pt) node(u)[anchor=south]{u};
    \filldraw[black] (1, 2) circle (2pt) node(p)[anchor=south]{p}; 
    \draw[black] (p) -- (q);
    \draw[black] (u) -- (p);
    \draw[black] (u) -- (q);
    \draw[black, dashed] (0, 3) circle (48pt);
    \draw (0,5) node(v1)[anchor=south]{$V_1$};
    \draw[black, dashed] (4, 3) circle (48pt);
    \draw (4,5) node(v2)[anchor=south]{$V_2$};
    \draw (2,1) node()[anchor=center]{(a)};
    \end{tikzpicture}
    \hspace{1cm}
    \begin{tikzpicture} [scale = .7]
    \filldraw[black] (0, 4) circle (2pt) node(q)[anchor=south]{q};
    \filldraw[black] (-1, 2) circle (2pt) node(u)[anchor=south]{u};
    \filldraw[black] (1, 2) circle (2pt) node(p)[anchor=south]{p}; 
    \draw[black] (p) -- (q);
    \draw[black] (u) -- (p);
    \draw[black, dashed] (0, 3) circle (48pt);
    \draw (0,5) node(v1)[anchor=south]{$V_1$};
    \draw[black, dashed] (4, 3) circle (48pt);
    \draw (4,5) node(v2)[anchor=south]{$V_2$};
    \draw (2,1) node()[anchor=center]{(b)};
    \end{tikzpicture}\\
	FIGURE 6. Case $1$: $p,q \in V_i$.\\~\\
    \end{center}
    \end{enumerate}
	\item Suppose that $p \in V_i, q \in V_j$. In this case, $\delta_{V_j}(p) = |A \cup C \cup \{q\}| = |A|+|C|+1$, which is even, so $|A|+|C|$ must be odd and similarly, $|B|+|C|$ must be even, so $|A|+|B|$ must be odd.
    \begin{enumerate}
	\item Suppose $u$ is adjacent to both $p$ and $q$. When the \textbf{gcds} operation is performed, edges between $u$ and vertices of $A$ and $B$ will switch and the edge between $u$ and $q$ will be removed, which means $u' \in V(G')$ will have $\delta_{V_j}(u') = \delta_{V_j}(u)+(|A|-x)-x+(|B|-y)-y-1 = \delta_{V_j}(u)+|A|+|B|-2x-2y-1$ edges, so the change in $\delta_{V_j}(u)$ is even. \\ 
	\item Suppose $u$ is adjacent only to $p$. When the \textbf{gcds} operation is performed, edges between $u$ and vertices of $B$ and $C$ will switch, which means $u' \in V(G')$ will have $\delta_{V_j}(u') = \delta_{V_j}(u)+(|B|-y)-y+(|C|-z)-z = \delta_{V_j}(u)+|B|+|C|-2y-2z$ edges, so the change in $\delta_{V_j}(u)$ is even. \\
	\item Suppose $u$ is adjacent only to $q$. When the \textbf{gcds} operation is performed, edges between $u$ and vertices of $A$ and $C$ will switch and the edge between $u$ and $q$ will be removed, which means $u' \in V(G')$ will have $\delta_{V_j}(u') = \delta_{V_j}(u)+(|A|-x)-x+(|C|-z)-z-1 = \delta_{V_j}(u)+|A|+|C|-2x-2z-1$ edges, so the change in $\delta_{V_j}(u)$ is even. \\ 
    \begin{center} 
    \begin{tikzpicture} [scale = .7]
    \filldraw[black] (0, 4) circle (2pt) node(p)[anchor=south]{p};
    \filldraw[black] (-1, 2) circle (2pt) node(u)[anchor=south]{u};
    \filldraw[black] (3.5, 2) circle (2pt) node(q)[anchor=south]{q}; 
    \draw[black] (p) -- (q);
    \draw[black] (u) -- (p);
    \draw[black] (u) -- (q);
    \draw[black, dashed] (0, 3) circle (48pt);
    \draw (0,5) node(v1)[anchor=south]{$V_1$};
    \draw[black, dashed] (4, 3) circle (48pt);
    \draw (4,5) node(v2)[anchor=south]{$V_2$};
    \draw (2,1) node()[anchor=center]{(a)};
    \end{tikzpicture}
    \hspace{.22cm}
    \begin{tikzpicture} [scale = .7]
    \filldraw[black] (0, 4) circle (2pt) node(p)[anchor=south]{p};
    \filldraw[black] (-1, 2) circle (2pt) node(u)[anchor=south]{u};
    \filldraw[black] (3.5, 2) circle (2pt) node(q)[anchor=south]{q}; 
    \draw[black] (p) -- (q);
    \draw[black] (u) -- (p);
    \draw[black, dashed] (0, 3) circle (48pt);
    \draw (0,5) node(v1)[anchor=south]{$V_1$};
    \draw[black, dashed] (4, 3) circle (48pt);
    \draw (4,5) node(v2)[anchor=south]{$V_2$};
    \draw (2,1) node()[anchor=center]{(b)};
    \end{tikzpicture}
    \hspace{.22cm}
    \begin{tikzpicture} [scale = .7]
    \filldraw[black] (0, 4) circle (2pt) node(p)[anchor=south]{p};
    \filldraw[black] (-1, 2) circle (2pt) node(u)[anchor=south]{u};
    \filldraw[black] (3.5, 2) circle (2pt) node(q)[anchor=south]{q}; 
    \draw[black] (p) -- (q);
    \draw[black] (u) -- (q);
    \draw[black, dashed] (0, 3) circle (48pt);
    \draw (0,5) node(v1)[anchor=south]{$V_1$};
    \draw[black, dashed] (4, 3) circle (48pt);
    \draw (4,5) node(v2)[anchor=south]{$V_2$};
    \draw (2,1) node()[anchor=center]{(c)};
    \end{tikzpicture}\\
	FIGURE 7. Case $2$: $p \in V_i, q \in V_j$. \\~\\
    \end{center}
    \end{enumerate}
	\item Suppose $p, q \in V_j$. In this case, $\delta_{V_j}(p) = |A \bigcup C \bigcup \{q\}| = |A|+|C|+1$, which is even, so $|A|+|C|$ must be odd and similarly, $|B|+|C|$ must be odd, so $|A|+|B|$ must be even.
    \begin{enumerate}
	\item Suppose $u$ is adjacent to both $p$ and $q$. When the \textbf{gcds} operation is performed, edges between $u$ and vertices of $A$ and $B$ will switch and the edges between $u$ and both $p$ and $q$ will be removed, which means $u' \in V(G')$ will have $\delta_{V_j}(u') = \delta_{V_j}(u)+(|A|-x)-x+(|B|-y)-y-2 = \delta_{V_j}(u)+|A|+|B|-2x-2y-2$ edges, so the change in $\delta_{V_j}(u)$ is even. \\ 
	\item Suppose $u$ is adjacent only to $p$. When the \textbf{gcds} operation is performed, edges between $u$ and vertices of $B$ and $C$ will switch and the edges between $u$ and $p$ will be removed, which means $u' \in V(G')$ will have $\delta_{V_j}(u') = \delta_{V_j}(u)+(|B|-y)-y+(|C|-z)-z-1 = \delta_{V_j}(u)+|B|+|C|-2y-2z-1$ edges, so the change in $\delta_{V_j}(u)$ is even. \\    
    \begin{center}
    \begin{tikzpicture} [scale = .7]
    \filldraw[black] (0, 4) circle (2pt) node(u)[anchor=south]{u};
    \filldraw[black] (3.5, 3) circle (2pt) node(p)[anchor=south]{p};
    \filldraw[black] (4.5, 3) circle (2pt) node(q)[anchor=south]{q}; 
    \draw[black] (p) -- (q);
    \draw[black] (u) -- (p);
    \draw[black] (u) -- (q);
    \draw[black, dashed] (0, 3) circle (48pt);
    \draw (0,5) node(v1)[anchor=south]{$V_1$};
    \draw[black, dashed] (4, 3) circle (48pt);
    \draw (4,5) node(v2)[anchor=south]{$V_2$};
    \draw (2,1) node()[anchor=center]{(a)};
    \end{tikzpicture}
    \hspace{1cm}
    \begin{tikzpicture} [scale = .7]
    \filldraw[black] (0, 4) circle (2pt) node(u)[anchor=south]{u};
    \filldraw[black] (3.5, 3) circle (2pt) node(p)[anchor=south]{p};
    \filldraw[black] (4.5, 3) circle (2pt) node(q)[anchor=south]{q}; 
    \draw[black] (p) -- (q);
    \draw[black] (u) -- (p);
    \draw[black, dashed] (0, 3) circle (48pt);
    \draw (0,5) node(v1)[anchor=south]{$V_1$};
    \draw[black, dashed] (4, 3) circle (48pt);
    \draw (4,5) node(v2)[anchor=south]{$V_2$};
    \draw (2,1) node()[anchor=center]{(b)};
    \end{tikzpicture}\\
	FIGURE 8. Case $3$: $p,q \in V_j$. \\~\\
    \end{center}
    \end{enumerate}
\end{enumerate}
In all three cases, it is clear that $\delta_{V_j}(u')$ is even if, and only if, $\delta_{V_j}(u)$ is even. Additionally, $\delta_{V_j}(p') = \delta_{V_j}(q') = 0$, and we assumed that $\delta_{V_j}(p)$ and $\delta_{V_j}(q)$ were both even, so if $\delta_{V_1}(p)$, $\delta_{V_1}(q)$, $\delta_{V_2}(p)$, and $\delta_{V_2}(q)$ are all even, then for all $u \in V_i$, the parity of $\delta_{V_j}(u), j \neq i$ does not change. \\

($\Rightarrow$)
Conversely, suppose $G'$ has a valid parity cut with sets $(V_1', V_2')$. We will show that there is also a valid parity cut $(V_1, V_2)$ for $G$ such that $\forall v \neq p,q \in V, v \in V_1' \Leftrightarrow v\in V_1$. Since $p$ and $q$ are isolated vertices in $G'$, either $p,q \in V_1$ or $p,q \in V_2$. %they can be placed in either $V_1$ or $V_2$.
Suppose in $G$, $p$ had an even number of edges to vertices (other than $q$) in $V_i$ (this must be true for some $i = 1,2$ since $p$ had even degree and one edge to $q$). Since $p$ must also have had an edge to $q$ and it had even degree, it must then have had an odd number of edges to vertices (other than $q$) in $V_j, j \neq i$. Now, if $q$ also had an even number of edges to vertices in $V_i$, we can consider the partition of $G$ with all vertices other than $p$ and $q$ in the same sets that they are in in the valid parity cut of $G'$ and with $p$ and $q$ in $V_j$. Otherwise, if $q$ had an odd number of edges to vertices in $V_i$, we can consider the partition of $G$ with all vertices other than $p$ and $q$ in the same sets that they are in in the valid parity cut of $G'$ and with $p$ in $V_i$ and $q$ in $V_j$. In either case, $p$ and $q$ must have an even number of edges to vertices in the partition they are not it. Furthermore, as shown above, for any other vertex $u \in V$, after application of $\textbf{gcds}$ on $p$ and $q$, the parity of the number of edges between $u$ and vertices in the partition that does not contain $u$ does not change. Therefore, since this quantity is even in $G'$, it must have been even in $G$. Therefore, this cut must be a valid parity cut in $G$.
\end{proof}
%-------------------------------------------
To further analyze characteristics of Eulerian graphs, three properties on graphs will be presented below, them first introduced in \cite{p2}. %[Li, 2015].
%-------------------------------------------
\begin{defn}\cite{p2} %[Li, 2015]
Let $G = (V,E)$ be a two-rooted graph with roots $x,y \in V$. We define the following three properties:
	\begin{description}
    	\item[a)] $G$ has property $a$ if there exists some parity cut $(V_1, V_2)$ such that $x \in V_1$, $y \in V_2, \delta_{V_2}(x)$ is even, and $\delta_{V_1}(y)$ is even.
    	\item[b)] $G$ has property $b$ if there exists some parity cut $(V_1, V_2)$ such that $x \in V_1$, $y \in V_2, \delta_{V_2}(x)$ is odd, and $\delta_{V_1}(y)$ is odd.
    	\item[c)] $G$ has property $c$ if there exists some parity cut $(V_1, V_2)$ such that $x,y \in V_1$, and $\delta_{V_2}(x)$ and $\delta_{V_2}(y)$ are both odd.
    \end{description}
\end{defn}
%-------------------------------------------
Again, for the purposes of this paper, we will adapt this definition as follows:
	\begin{description}
    	\item[a)] $G$ has property $a$ if there exists some parity cut $(V_1, V_2)$ such that $x \in V_1$ and $y \in V_2$.
    \end{description}
Since we define parity cuts of two-rooted graphs to only be those cuts in which the roots have an even number of edges to vertices of the opposite set, this definition of property $a$ is equivalent.

Of the above three properties, we have that property $a$ is preserved when applying \textbf{gcds} to an Eulerian graph, and therefore we have a sortability criterion with this property.
%-------------------------------------------
\begin{thm}
Let $G = (V,E)$ be an Eulerian graph with roots $x,y \in V$ and let $G' = (V,E') = \textbf{gcds}_{\{u,v\}}(G)$ for some $u,v \in V$. In this case $G'$ has property $a$ if, and only if, $G$ has property $a$. 
\end{thm}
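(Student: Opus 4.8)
The plan is to derive this essentially for free from Theorem~\ref{thm:pres_pc_eulerian}, together with one structural observation: by the hypothesis of Definition~\ref{defn:gcds}, the two vertices to which \textbf{gcds} is applied are non-root vertices, hence distinct from the roots $x$ and $y$. This is what makes the argument immediate. Theorem~\ref{thm:pres_pc_eulerian} sets up a correspondence between parity cuts of $G$ and parity cuts of $G'$ under which every vertex other than the two swapped vertices stays on the same side of the cut; since $x$ and $y$ are among those unmoved vertices, a parity cut separates the roots in $G$ exactly when the corresponding cut separates them in $G'$. One should also note that Lemma~\ref{lem:pres_eulerian} guarantees $G'$ is again Eulerian, so the notion of parity cut (and hence of property $a$) is meaningful for $G'$.

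For the forward implication I would begin with a parity cut $(V_1,V_2)$ of $G$ witnessing property $a$, so $x \in V_1$ and $y \in V_2$. Applying Theorem~\ref{thm:pres_pc_eulerian} produces a parity cut $(V_1',V_2')$ of $G'$ such that $w \in V_1 \Leftrightarrow w \in V_1'$ for every vertex $w$ other than the two swapped vertices. Because $x$ and $y$ are roots while the swapped vertices are not, this equivalence applies to $w = x$ and $w = y$, giving $x \in V_1'$ and $y \in V_2'$. Hence $(V_1',V_2')$ witnesses property $a$ for $G'$.

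For the converse I would start with a parity cut $(W_1,W_2)$ of $G'$ having $x \in W_1$ and $y \in W_2$. Viewing $(W_1,W_2)$ as a partition of the common vertex set $V$, the right-hand side of the biconditional in Theorem~\ref{thm:pres_pc_eulerian} is satisfied trivially, taking the witnessing parity cut of $G'$ to be $(W_1,W_2)$ itself; the theorem then yields that $(W_1,W_2)$ is in fact a parity cut of $G$. Since it still places $x$ and $y$ on opposite sides, $G$ has property $a$.

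I do not anticipate any real obstacle: the substance is entirely contained in Theorem~\ref{thm:pres_pc_eulerian}. The only points needing care are (i) making explicit that the swapped vertices are non-roots, so the roots are never relocated by the cut correspondence, and (ii) invoking the biconditional of Theorem~\ref{thm:pres_pc_eulerian} in the correct direction for the converse, where one simply uses $(W_1,W_2)$ as its own witness. (It may be worth remarking that after \textbf{gcds} the two swapped vertices become isolated, which is the underlying reason the converse witness works so directly, but this fact is not strictly needed once Theorem~\ref{thm:pres_pc_eulerian} is available.)
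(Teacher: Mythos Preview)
Your proposal is correct and follows essentially the same approach as the paper's proof: both directions are immediate applications of Theorem~\ref{thm:pres_pc_eulerian}, with the key observation (which you make explicit and the paper leaves implicit) that the vertices $u,v$ used in \textbf{gcds} are non-roots, so the placement of the roots $x,y$ is preserved under the parity-cut correspondence. Your added remarks about Lemma~\ref{lem:pres_eulerian} and the isolation of the swapped vertices are accurate but, as you note, not strictly needed.
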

%-------------------------------------------
\begin{proof}
$\Leftarrow$ Suppose $G$ has property $a$. In this case, there exists some parity cut $(V_1, V_2)$ of $G$ with $x \in V_1$ and $y \in V_2$. Therefore, by Lemma \ref{thm:pres_pc_eulerian}, there exists some parity cut $(V_1', V_2')$ of $G'$ with $x \in V_1'$ and $y \in V_2'$.

$\Rightarrow$ Similarly, if $G'$ has property $a$, there exists some parity cut $(V_1', V_2')$ of $G'$ with $x \in V_1'$ and $y \in V_2'$, so by Lemma \ref{thm:pres_pc_eulerian}, there exists some parity cut $(V_1, V_2)$ of $G$ with $x \in V_1$ and $y \in V_2$.
\end{proof}
%-------------------------------------------
\begin{cor}\label{cor:gcdssort_propa}
An Eulerian graph is \textbf{gcds} sortable if, and only if, it has property $a$.
\end{cor}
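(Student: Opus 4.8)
The plan is to prove the two implications by combining two invariance facts already established: Lemma~\ref{lem:pres_eulerian}, that \textbf{gcds} preserves the Eulerian property, and the theorem immediately above, that on Eulerian two-rooted graphs \textbf{gcds} preserves property $a$. One extra ingredient is needed, namely termination of \textbf{gcds}, and I would get this from a one-line computation with Definition~\ref{defn:gcds}: if $v$ is isolated in $G$ and $\{p,q\}\in E$ with $p,q\neq v$, then $f_p(v)=f_q(v)=f_v(w)=0$ for every $w$, so $v$ is still isolated in $G'=\textbf{gcds}_{\{p,q\}}(G)$; the same computation with $v\in\{p,q\}$ shows $p$ and $q$ themselves become isolated in $G'$. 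Hence every \textbf{gcds} move strictly decreases (by at least $2$) the number of non-isolated vertices, so every sequence of \textbf{gcds} moves is finite.

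For the forward direction, suppose $G$ is Eulerian and \textbf{gcds}-sortable, say $G=G_0,G_1,\dots,G_k$ with $G_{i+1}=\textbf{gcds}_{\{p_i,q_i\}}(G_i)$ and $G_k$ edgeless. By Lemma~\ref{lem:pres_eulerian} every $G_i$ is Eulerian. The edgeless graph $G_k$ has property $a$ trivially: any bipartition of its vertices separating the two roots is a parity cut, since every cross-degree (including those of the roots) is $0$. Applying the theorem immediately above to the moves $G_{k-1}\to G_k$, $G_{k-2}\to G_{k-1},\dots,G_0\to G_1$ in turn, property $a$ propagates back up the chain, so $G=G_0$ has property $a$.

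For the converse, suppose $G$ is Eulerian with property $a$, with roots $x,y$; I would show $G$ is \textbf{gcds}-sortable by induction on its number of non-isolated vertices. If $G$ has no edges it is already sorted. Otherwise the key claim is that $G$ has an edge joining two non-root vertices, so that some \textbf{gcds} move applies. To prove this, fix a parity cut $(V_1,V_2)$ with $x\in V_1$, $y\in V_2$ witnessing property $a$, and suppose for contradiction that every edge of $G$ meets a root. Then each non-root $v$ has $N_G(v)\subseteq\{x,y\}$, and since $\deg_G(v)$ is even this forces $N_G(v)=\emptyset$ or $N_G(v)=\{x,y\}$; because $G$ has an edge, some non-root $v$ has $N_G(v)=\{x,y\}$ (the only alternative is that all non-root vertices are isolated, leaving $\{x,y\}$ as the sole edge and making $\deg_G(x)$ odd). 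But then $v\in V_1$ gives $\delta_{V_2}(v)=|\{y\}|=1$ and $v\in V_2$ gives $\delta_{V_1}(v)=|\{x\}|=1$, each contradicting that $(V_1,V_2)$ is a parity cut. Hence there is an edge $\{p,q\}\in E$ with $p,q$ non-root; put $G'=\textbf{gcds}_{\{p,q\}}(G)$. By Lemma~\ref{lem:pres_eulerian} $G'$ is Eulerian, by the theorem above $G'$ has property $a$, and by the observation in the first paragraph $G'$ has strictly fewer non-isolated vertices than $G$ ($p$ and $q$ are now isolated, and no previously isolated vertex is revived). By the induction hypothesis $G'$ is \textbf{gcds}-sortable, hence so is $G$.

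The main obstacle is the ``never stuck'' claim in the converse: all the content sits in the interaction between the Eulerian condition (which, once no non-root/non-root edge exists, forces every non-isolated non-root vertex to be adjacent to exactly both roots) and the paper's convention that a parity cut also requires the roots to have even cross-degree; these two together are exactly what is incompatible with property $a$. Once that bookkeeping is nailed down, everything else is routine chaining of the two preservation results and the elementary termination count, with the only point of care being to apply the biconditional of the theorem above in the correct direction in each half of the argument.
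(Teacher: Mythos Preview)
Your proof is correct and follows essentially the same approach as the paper: both arguments combine the preservation of the Eulerian property and of property $a$ under \textbf{gcds} with the observation that an Eulerian fixed point having property $a$ must be the edgeless graph (since any non-isolated non-root vertex in such a fixed point would be adjacent to exactly both roots, forcing the roots into the same part of every parity cut). The paper's version is more compressed and leaves termination implicit, whereas you spell out the decrease in non-isolated vertices and the ``never stuck'' case analysis; these are welcome clarifications but do not constitute a different method.
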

%-------------------------------------------
\begin{proof}
In any fixed point graph, there must exist some non-isolated, non-root vertex $v$. If this graph is Eulerian, then it must be adjacent to both roots, and no other vertices. As a result, both roots must be in the same set of any parity cut of this graph. Thus, the only Eulerian graph which has property $a$ and on which it is not possible to perform the \textbf{gcds} operation is the discrete graph. Therefore, if an Eulerian graph has property $a$, after any number of applications of the \textbf{gcds} operation it will still be Eulerian and have property $a$, which means it must eventually become the discrete graph and thus be sortable, whereas if the graph does not have property $a$, after any number of applications of the \textbf{gcds} operation it will still be Eulerian and not have property $a$, which means it cannot ever become the discrete graph and must be unsortable.
\end{proof}
%-------------------------------------------
Besides obtaining theorems related to property $a$, we have the following lemma referring to property $b$.
%-------------------------------------------
\begin{lem}
The overlap graph of a valid unsigned permutation cannot have property $b$.
\end{lem}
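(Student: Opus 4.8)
The plan is to deduce the statement from two facts: every overlap graph OG$(\pi)$ of a permutation $\pi\in S_n$ is Eulerian, and no two-rooted Eulerian graph possesses property $b$. Property $b$ here must be read with the parity-cut notion of \cite{p2}, requiring even crossing-degree only at non-root vertices; under the paper's stronger convention property $b$ would be vacuous, so nothing is lost. The conjunction of the two facts is exactly the lemma.

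First I would verify that OG$(\pi)$ is Eulerian, which is really a statement about the list of pointers of the framed permutation $\pi$ read left to right with repetition, namely $(0,1)$, then the pairs $(a_k-1,a_k),(a_k,a_k+1)$ for $k=1,\dots,n$, then $(n,n+1)$. The point is that in this list of length $2n+2$ each pointer $(i,i+1)$ occurs exactly once as the right pointer of the symbol $i$ — always at an even index — and exactly once as the left pointer of the symbol $i+1$ — always at an odd index — where the framing lets the endpoints $(0,1)$ and $(n,n+1)$ behave like all the others once the symbols $0$ and $n+1$ are counted. Consequently there is an even number of positions strictly between the two occurrences of $(i,i+1)$; since each such position carries a single pointer-occurrence, and a pointer other than $(i,i+1)$ has either $0$, $1$, or $2$ of its occurrences in this window — contributing to the degree of $(i,i+1)$ precisely when it contributes $1$ — the parity of $\delta_V\big((i,i+1)\big)$ equals the parity of that even count, hence is even. (If preferable, this can simply be quoted from \cite{p2}.)

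Next I would show that a two-rooted Eulerian graph $G=(V,E)$ with roots $x,y$ cannot have property $b$. Assume a witnessing parity cut $(V_1,V_2)$ with $x\in V_1$, $y\in V_2$, and $\delta_{V_2}(x),\delta_{V_1}(y)$ both odd, and let $F$ be the set of edges from $V_1$ to $V_2$. Counting $F$ at its $V_1$-ends, $|F|=\sum_{u\in V_1}\delta_{V_2}(u)$; all of $V_1\setminus\{x\}$ consists of non-root vertices, since the second root $y$ lies in $V_2$, so the parity-cut condition makes those terms even and leaves $|F|\equiv\delta_{V_2}(x)\equiv 1\pmod 2$. On the other hand, since $G$ is Eulerian, $\delta_{V_1}(u)+\delta_{V_2}(u)=\delta_V(u)\equiv 0\pmod 2$ for every $u$, hence $|F|=\sum_{u\in V_1}\delta_{V_2}(u)\equiv\sum_{u\in V_1}\delta_{V_1}(u)\pmod 2$, and the latter sum counts every edge within $V_1$ twice, so $|F|\equiv 0\pmod 2$ — a contradiction. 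Applying this to $G=\text{OG}(\pi)$, which is Eulerian by the previous step, finishes the argument.

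I expect the only delicate point to be the bookkeeping in the Eulerian step: correctly locating the two occurrences of each pointer in the framed list, and thereby the parity of the gap between them, with the endpoint pointers $(0,1)$ and $(n,n+1)$ being the place where one must be careful. The second step is then essentially a double-counting, its one subtlety being that $V_1$ contains exactly one root, which is what makes the ``extra'' terms $\delta_{V_2}(u)$ drop out mod $2$. If an algebraic ending is wanted in place of the edge-counting, take the adjacency matrix $A$ of $G$ over $\mathbb{F}_2$ and the indicator vector $v$ of $V_1$; the hypotheses force $Av$ to be the indicator of $\{x,y\}$, so $v^{\mathsf T}Av=v_x+v_y=1$, contradicting the identity $v^{\mathsf T}Av=0$, which holds for every symmetric matrix with zero diagonal over $\mathbb{F}_2$.
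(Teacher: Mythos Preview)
Your proof is correct and follows essentially the same line as the paper's: both reduce to the facts that overlap graphs are Eulerian and that an Eulerian two-rooted graph cannot satisfy property $b$, the latter via a handshaking/double-counting contradiction. The paper phrases the contradiction as ``the induced subgraph on $V_1$ has exactly one vertex of odd degree,'' while you count the cut edges $|F|$ two ways; these are the same argument, and your version is a bit more explicit (and you additionally supply a proof that overlap graphs are Eulerian, which the paper simply asserts).
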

%-------------------------------------------
\begin{proof}
Suppose an overlap graph $G = (V,E)$ has property $b$. First note that such a graph must be Eulerian. Therefore, given a parity cut $(V_1, V_2)$, for all non-root vertices $v \in V$, $\delta_{V_1}(v)$ and $\delta_{V_2}(v)$ must both be even, while for the roots $x,y \in V$, these degrees must be odd for sets. In this case, consider one partition of this graph when a valid parity cut with property $b$ is made. In the subgraph induced on $V_1$, every vertex except the one root in this set must have even degree, while the root has odd degree. However, it is not possible for only one vertex in a graph to have odd degree, so this is a contradiction.
\end{proof}
%-------------------------------------------
Finally, focusing on property $c$, we have the corollary below.
%-------------------------------------------
\begin{cor}
All Eulerian graphs must have exactly one of property $a$ or property $c$.
\end{cor}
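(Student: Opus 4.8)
Proof proposal.

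The plan is to recast the statement as a pair of linear-algebra conditions over $\mathbb{F}_2$. Fix a two-rooted Eulerian graph $G=(V,E)$ with roots $x,y$, and let $A$ be its adjacency matrix over $\mathbb{F}_2$; it is symmetric with zero diagonal, and since $G$ is Eulerian the all-ones vector $\mathbf{1}$ lies in $\ker A$ (its $v$-th coordinate is $\deg_G(v)\bmod 2 =0$). For a partition $(V_1,V_2)$ with $s\in\mathbb{F}_2^V$ the indicator vector of $V_1$, one checks directly that $\delta_{V_1}(v)\equiv(As)_v$ and, using $A\mathbf{1}=0$, also $\delta_{V_2}(v)\equiv(As)_v\pmod 2$ for every vertex $v$. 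Hence a partition is a parity cut (in the sense used for properties $b,c$) exactly when $(As)_v=0$ for all non-root $v$, and a parity cut in the adapted sense (roots also even to the opposite side) exactly when $As=0$. Reading off the definitions, $G$ has property $a$ iff there is $s\in\ker A$ with $s_x\neq s_y$, and $G$ has property $c$ iff there is $t\in\mathbb{F}_2^V$ with $At=e_x+e_y$ and $t_x=t_y$ (here $e_x,e_y$ are standard basis vectors: the equation forces non-roots even and both roots odd to the opposite side, and $t_x=t_y$ puts $x$ and $y$ on the same side; whether that common side is labelled $V_1$ or $V_2$ is immaterial after relabelling).

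With these translations I would split into the two exhaustive cases according to whether some $s\in\ker A$ has $s_x\neq s_y$. If such an $s$ exists, property $a$ holds by definition; and property $c$ fails, since a solution of $At=e_x+e_y$ would give, using symmetry of $A$ and $As=0$, the contradiction $0=(As)^{\top}t=s^{\top}At=s^{\top}(e_x+e_y)=s_x+s_y=1$ in $\mathbb{F}_2$.

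In the complementary case every $s\in\ker A$ satisfies $s_x=s_y$, so property $a$ fails; moreover this says precisely that $e_x+e_y\perp\ker A$, so (as $A$ is symmetric, $\operatorname{im}A=(\ker A)^{\perp}$, the solvability criterion for the system) there is $t$ with $At=e_x+e_y$. Then $t_x+t_y=t^{\top}(e_x+e_y)=t^{\top}At$, and $t^{\top}At=\sum_v A_{vv}t_v=0$ because $A$ is symmetric with zero diagonal; hence $t_x=t_y$ and $t$ witnesses property $c$. Combining the two cases, $G$ has exactly one of property $a$ and property $c$.

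The step I expect to require the most care is the bookkeeping of the first paragraph: the whole reduction leans on the Eulerian hypothesis (it is what makes $\delta_{V_1}(v)$ and $\delta_{V_2}(v)$ have the same parity and forces $\mathbf{1}\in\ker A$), and one must be scrupulous about matching ``this paper's'' convention on the roots of a parity cut to the correct linear condition — the adapted one ($As=0$) for property $a$, but the unadapted one ($As=e_x+e_y$, with odd entries only at the roots) for property $c$. Once those dictionary entries are pinned down, the argument reduces to the two one-line $\mathbb{F}_2$ identities $s^{\top}At=0$ and $t^{\top}At=0$, the latter being the familiar fact that a symmetric bilinear form with vanishing diagonal is alternating.
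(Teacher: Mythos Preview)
Your argument is correct and self-contained. It is, however, a genuinely different route from the paper's. The paper invokes two results from \cite{p2}: that every two-rooted graph has at least one of properties $a$, $b$, $c$, and that any graph with two of them has all three; combined with the preceding lemma that no Eulerian graph has property $b$, this forces exactly one of $a$ or $c$. Your approach instead bypasses property $b$ entirely by translating $a$ and $c$ into the linear conditions ``some $s\in\ker A$ has $s_x\neq s_y$'' and ``$e_x+e_y\in\operatorname{im}A$'' respectively, and then observing that for symmetric $A$ these are complementary via $\operatorname{im}A=(\ker A)^{\perp}$, with the automatic side condition $t_x=t_y$ coming from the alternating identity $t^{\top}At=0$ for zero-diagonal symmetric $A$. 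Your version is longer but needs no external citations, and it makes the dichotomy visible as a single orthogonality statement; it also anticipates (and in fact reproves the relevant half of) the paper's Theorem~\ref{thm:pcspace_eq_kernel} identifying the parity-cut space with $\ker A$. The one place to be meticulous, as you note, is the dictionary: for property $a$ the paper uses the adapted parity-cut convention (roots even too), giving $As=0$, whereas for property $c$ the roots are required to be odd to the opposite side, giving $As=e_x+e_y$; your translation handles both correctly.
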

%-------------------------------------------
\begin{proof}
We know from \cite{p2} %[\textcolor{blue}{Li, 2015}] 
that all graphs must have at least one of properties $a$, $b$, or $c$. Therefore, by the above lemma, all Eulerian graphs must have at least one of properties $a$ or $c$. Furthermore, Theorem 2.2 from \cite{p2} %[\textcolor{blue}{Li, 2015}] 
tells us that if a graph has two of the three properties, it must also have the third. Since it is not possible for an Eulerian graph to have property $b$, this means that no Eulerian graph can have both properties $a$ and $c$.
\end{proof}
%-------------------------------------------
Aside from the properties $a$, $b$, and $c$, one can look at the parity cut space in order to obtain more information on Eulerian graphs. 
%-------------------------------------------
\begin{defn}
We define the parity cut space of a graph to be the vector space of parity cuts on the graph.
\end{defn}
%-------------------------------------------
\begin{defn}
We define the \textit{characteristic vector} of a parity cut $S$ to be the vector $\vec{x}$ such that, for each $1 \le i \le n$, we have $x_i = 1$ if vertex $v_i \in S$ and $x_i = 0$ if $v_i \not \in S$. 
\end{defn}
%-------------------------------------------
The following major result shows the relation between the overlap graph of a permutation and its adjacency matrix, connecting two major concepts in graph theory and linear algebra.
%-------------------------------------------
\begin{thm}\label{thm:pcspace_eq_kernel}
The parity cut space of an Eulerian graph of a permutation is equivalent to the kernel of its adjacency matrix.
\end{thm}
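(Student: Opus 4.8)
The plan is to show the two subspaces of $\mathbb{F}_2^{\,V}$ coincide by showing a $0$--$1$ vector $\vec{x}$ is the characteristic vector of a parity cut if and only if $A\vec{x} = \vec{0}$ over $\mathbb{F}_2$, where $A$ is the adjacency matrix of the (Eulerian) overlap graph. First I would fix a vector $\vec{x} \in \mathbb{F}_2^{\,V}$ and let $S = \{v_i : x_i = 1\}$, so that $(S, V\setminus S)$ is the candidate partition. For a fixed vertex $v_i$, I would expand the $i$-th coordinate of $A\vec{x}$: $(A\vec x)_i = \sum_{j} A(i,j)\,x_j = \sum_{v_j \in S} A(i,j) = \delta_S(v_i) \bmod 2$. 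Thus $(A\vec x)_i = 0$ exactly when $v_i$ has an even number of neighbours in $S$.

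The next step is to translate ``$A\vec x = \vec 0$'' into the parity-cut condition, which asks (in the adapted definition used in this paper) that every vertex $v$ — including the roots — have an even number of edges to the block \emph{not} containing it. Here is where the Eulerian hypothesis does the work: since every vertex has even degree, $\delta_S(v) + \delta_{V\setminus S}(v) = \deg(v)$ is even, so $\delta_S(v)$ is even if and only if $\delta_{V\setminus S}(v)$ is even. Consequently, for a vertex $v_i \in S$ the relevant quantity $\delta_{V\setminus S}(v_i)$ is even iff $\delta_S(v_i)$ is even iff $(A\vec x)_i = 0$; and for a vertex $v_i \notin S$ the relevant quantity is $\delta_S(v_i)$, which is directly $(A\vec x)_i$. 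So in both cases the parity-cut requirement at $v_i$ is equivalent to $(A\vec x)_i = 0$, and therefore $(S, V\setminus S)$ is a parity cut if and only if $A\vec x = \vec 0$. This gives a bijection between characteristic vectors of parity cuts and elements of $\ker A$; since both the parity cut space and the kernel are $\mathbb{F}_2$-subspaces and this identification is exactly the coordinatewise one, the two vector spaces are equal (the zero vector corresponds to the trivial cut $(\emptyset, V)$, and the subspace structure matches because addition of characteristic vectors corresponds to symmetric difference of the cut sets).

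I would close by remarking on the two small points that need care. First, one must confirm that the ambient vector space is the same on both sides — the parity cut space is being regarded as living in $\mathbb{F}_2^{\,V}$ via characteristic vectors, and the kernel of $A$ naturally lives there too, so this is just a matter of stating the identification once. Second, one should double-check the root condition: for general two-rooted graphs the roots are exempt from the parity-cut condition, but the paper has explicitly adopted the convention that parity cuts require the roots also to have an even number of edges across, so no special casing of $x,y$ is needed and the uniform computation above applies to every vertex.

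The main obstacle is not computational but definitional: making sure the ``adapted'' notion of parity cut (roots included) is the one in force, since otherwise the roots would have to be handled separately and the clean equivalence ``$(A\vec x)_i = 0$ for all $i$'' would fail at the two root coordinates. Once that convention is pinned down, the Eulerian hypothesis makes the degree-parity bookkeeping symmetric and the rest is the one-line expansion of $A\vec x$ over $\mathbb{F}_2$.
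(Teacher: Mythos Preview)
Your proposal is correct and follows essentially the same approach as the paper's proof: both expand $(A\vec{x})_i$ as $\delta_S(v_i)\bmod 2$, then invoke the Eulerian hypothesis to show that evenness of $\delta_S(v_i)$ is equivalent to evenness of $\delta_{V\setminus S}(v_i)$, so the parity-cut condition at every vertex coincides with membership in $\ker A$. Your version is, if anything, slightly more careful about the root convention and the identification of the two ambient vector spaces.
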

%-------------------------------------------
\begin{proof}
Suppose we have a parity cut $S$ of an Eulerian graph $G = (V,E)$.  Now because $S$ is a parity cut, we know that for each vertex $v \in V\setminus S$, the number of edges of $v$ to vertices in $S$ will be even. Similarly, for each $v \in S$, the number of edges to vertices of in $V\setminus S$ will be even. However, since $G$ is Eulerian, this means that for each $v \in S$, the number of edges to vertices of in $S$ will also be even. That is, $S \subseteq V$ is a parity cut of $G$ if, and only if, for all $v \in V$, the number of edges from $v$ to vertices in $S$ is even. 

Suppose we have such a parity cut, $S$, and let $\vec{s}$ be the characteristic vector of this cut and $\vec{v}$ be the row representing vertex $v$ in the adjacency matrix of $G$. This means that $\vec{v} \cdot \vec{s}$ will be 0 (working over the finite field of size 2). Therefore, given the adjacency matrix $M$, this means that $M\vec{s} = \vec{0}$, so $\vec{s}$ must be in the kernel of $M$.

Conversely, if we find a vector $\vec{s}$ in the kernel of $M$, this means that for each vertex $v \in V$, $\vec{v} \dot \vec{s} = 0$, or that the number of edges of $v$ to vertices in the set $S$ defined by $\vec{s}$ must be even. Therefore, $\vec{s}$ must necessarily define a parity cut of $G$.
\end{proof}
%-------------------------------------------
Thus, we have the following sortability criteria for an Eulerian graph.
%-------------------------------------------
\begin{cor}
An Eulerian graph is \textbf{gcds} sortable if, and only if, the kernel of its adjacency matrix contains a vector $\vec{x}$ such that $x_1+x_n = 1$.
\end{cor}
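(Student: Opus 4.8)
The plan is to combine two results already in hand: Corollary~\ref{cor:gcdssort_propa}, which says an Eulerian two-rooted graph is \textbf{gcds}-sortable exactly when it has property $a$, and Theorem~\ref{thm:pcspace_eq_kernel}, which identifies the parity cut space of an Eulerian graph with the kernel of its adjacency matrix. Throughout, $G=(V,E)$ is Eulerian with vertex set $V=\{v_1,\dots,v_n\}$ and roots $v_1,v_n$, and $M$ denotes its adjacency matrix over $\mathbb{F}_2$.

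First I would unwind property $a$. In the form adapted above, $G$ has property $a$ iff there is a parity cut $(V_1,V_2)$ with $v_1\in V_1$ and $v_n\in V_2$; equivalently, there is a parity cut whose first part $S$ contains exactly one of the two roots. If $\vec{x}$ is the characteristic vector of such an $S$, then ``$S$ contains exactly one root'' is precisely the condition $x_1+x_n=1$ (the entries are $0$ or $1$, so this is an honest equation, and it is in any case the same equation mod $2$); conversely, any $S\subseteq V$ whose characteristic vector satisfies $x_1+x_n=1$ contains exactly one root. I would also remark that this translation is compatible with the symmetry $V_1\leftrightarrow V_2$: the complementary vector $\vec{1}-\vec{x}$ satisfies the same equation, and since $G$ is Eulerian $\vec{1}\in\ker M$, so the kernel is likewise closed under complementation.

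Next I would apply Theorem~\ref{thm:pcspace_eq_kernel}: a subset $S\subseteq V$ is a parity cut of $G$ if and only if its characteristic vector lies in $\ker M$ (the proof of that theorem uses only that $G$ is Eulerian, so it applies to any Eulerian two-rooted graph, not merely overlap graphs of permutations). Chaining the equivalences then yields the claim: $G$ is \textbf{gcds}-sortable $\Leftrightarrow$ $G$ has property $a$ $\Leftrightarrow$ some parity cut contains exactly one root $\Leftrightarrow$ some $\vec{x}\in\ker M$ satisfies $x_1+x_n=1$. I do not anticipate a genuine obstacle here; the only care needed is in (i) passing cleanly between the ``partition'' and ``single set / characteristic vector'' descriptions of property $a$, and (ii) confirming that Theorem~\ref{thm:pcspace_eq_kernel} is genuinely available at this level of generality, which it is by inspection of its proof.
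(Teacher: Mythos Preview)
Your proposal is correct and takes essentially the same approach as the paper: the paper's proof simply says the corollary follows directly from Corollary~\ref{cor:gcdssort_propa} and Theorem~\ref{thm:pcspace_eq_kernel}, and you have spelled out exactly that chain of equivalences, together with the translation between property~$a$ and the condition $x_1+x_n=1$ on characteristic vectors. Your care in noting that Theorem~\ref{thm:pcspace_eq_kernel} genuinely applies to any Eulerian graph (despite its statement mentioning permutations) is a welcome point that the paper glosses over.
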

%-------------------------------------------
\begin{proof}
This follows directly from Corollary \ref{cor:gcdssort_propa} and Theorem \ref{thm:pcspace_eq_kernel}.
\end{proof}
%-------------------------------------------
In this section we have talked about concepts related to Eulerian graphs. Because overlap graphs are a subsets of Eulerian graphs, the above lemmas and theorems are applicable in the context of permutations. Nevertheless, if we restrict ourselves in working on overlap graphs, we can have more precise properties of them that are related to what has already been covered. Such properties are what will be discussed in the next subsection.
%-------------------------------------------
\subsection{Permutations, Alternating Cycles, and Adjacency Matrices} 
%-------------------------------------------
The first lemma we will look at connects parity cut and alternating cycles, the latter being a concept that was introduced early on in the Terminology and Notations section.
%-------------------------------------------
\begin{lem}\label{lem:alt_cycle_pc}
For an overlap graph $G = (V,E)$ of a valid permutation, a partition of $V$ such that elements of each alternating cycle are contained in the same set constitutes a parity cut.
\end{lem}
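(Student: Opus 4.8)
The plan is to use Theorem~\ref{thm:pcspace_eq_kernel} to convert the statement into a claim about the kernel of the adjacency matrix, and then establish that claim by a parity count carried out on the breakpoint graph. Write $G=OG(\pi)$ and $A=A_\pi$. The alternating cycles of $\pi$ partition the vertex set $V$ of $G$ (each black edge of the cycle graph, hence each pointer, lies in exactly one alternating cycle), so it is enough to show that the characteristic vector $\chi_C$ of a \emph{single} alternating cycle $C$ lies in $\ker A$ over $\mathbb{F}_2$: the characteristic vector of any union of alternating cycles is then a sum of such $\chi_C$, hence again in $\ker A$, and Theorem~\ref{thm:pcspace_eq_kernel} identifies this with the characteristic vector of a parity cut. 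Now $\chi_C\in\ker A$ says exactly that every vertex $v\in V$ has an even number of neighbours in $C$, i.e. $\delta_C(v)$ is even.

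To prove that, I would pass to the breakpoint graph $BG(\pi)$, whose $2(n+1)$ vertices sit in a fixed linear order coming from the framed layout of $\pi$. Each pointer $(i,i+1)$ appears as two of these vertices joined by an edge (a ``chord''), and, by the structure of the layout, the two occurrences of one pointer always lie at positions of opposite parity. Translated into this picture, an alternating cycle is an alternating closed walk that alternates \emph{chord steps} (jumping between the two occurrences of a pointer) with \emph{gap steps} (passing between the two layout-consecutive occurrences at a breakpoint); the pointers whose chords it uses are exactly the vertices it ``contains''. Fix $v=(k,k+1)$, with occurrences at positions $\alpha<\beta$, and call a position \emph{inside} if it lies in the open interval $(\alpha,\beta)$. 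A pointer $w\neq v$ is a neighbour of $v$ in $G$ precisely when its chord has one end inside and one end outside. Walking around the closed walk $C$, the inside/outside status flips an even number of times; each flip is caused either by a chord step (which, for $w\in C$, happens exactly when $w$ is a neighbour of $v$) or by a gap step. So it remains to check that $C$ makes an even number of boundary-flipping gap steps.

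That last check is where the bookkeeping lives, and I expect it to be the main obstacle. Because $\alpha$ and $\beta$ have opposite parity, an elementary case analysis of the two layout-consecutive pairs touching the ends of $(\alpha,\beta)$ shows there are either zero or exactly two gap pairs straddling the boundary; moreover $C$ uses all of them when $v\in C$ (the straddling gap steps are precisely the gap steps incident to $v$'s two occurrences, which $C$ visits) and none of them when $v\notin C$ (those pairs each contain an occurrence of $v$, which $C$ does not visit). Either way the number of boundary-flipping gap steps of $C$ is $0$ or $2$, hence even, which forces $\delta_C(v)$ to be even and finishes the argument. The delicate points to get right are the opposite-parity statement and the position bookkeeping, the behaviour of the two ``root'' gap pairs at the very ends of the layout, and keeping the cases $v\in C$ and $v\notin C$ apart. (One can also bypass Theorem~\ref{thm:pcspace_eq_kernel}: once $\delta_C(v)$ is even for every vertex $v$ and every alternating cycle $C$, the Eulerian property of $G$ makes every ``no alternating cycle is split'' partition a parity cut directly.)
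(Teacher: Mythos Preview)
Your argument is correct, and at its heart it rests on exactly the same combinatorial fact the paper exploits: the pointer occurrences belonging to a fixed alternating cycle come in consecutive pairs (your ``gap steps''), so an interval whose endpoints are not $C$-occurrences contains an even number of them. The paper uses this observation directly and only for pointers $p\notin C$, which is all the parity-cut definition requires: if $p\notin C$ then neither endpoint of the interval lies in a $C$-gap-pair, every $C$-gap-pair is therefore wholly inside or wholly outside, the total count of $C$-dots inside is even, and hence the number of $C$-pointers crossing $p$ is even. No appeal to Theorem~\ref{thm:pcspace_eq_kernel} is needed, and the case $v\in C$ never arises.

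Your route through the kernel forces you to prove the stronger statement that $\delta_C(v)$ is even for \emph{all} $v$, including $v\in C$, which is why you need the closed-walk bookkeeping and the $0$-or-$2$ analysis of boundary-straddling gap pairs. That analysis is right (and your opposite-parity observation is what makes it work), but it is extra labour. Your parenthetical at the end is in fact the shortcut: once you know $\delta_C(v)$ is even for every $v$ and every cycle $C$, summing over the cycles on the far side of any cycle-respecting partition gives the parity-cut condition immediately, without invoking the Eulerian property or Theorem~\ref{thm:pcspace_eq_kernel}. So the cleanest version of your own proof is: drop the kernel reduction, restrict to $v\notin C$, and observe that then no gap pair of $C$ can straddle the boundary, so the flips come only from chord steps and are automatically even. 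That is the paper's proof.
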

%-------------------------------------------
\begin{proof}
It suffices to show that given an alternating cycle $(a_1, a_2, ..., a_k)$ of a permutation $\pi$ and any pointer $p$ not in that cycle, the pointer will intersect an even number of pointers of the cycle. First note that in $\pi$, the pointer $a_1, ...a_k$ will be arranged in pairs $a_{i}(a_{i-1}+1)$ (where subtraction is mod $k$). Now, between the $p$ tail pointer and the $p+1$ head pointer, there will be an even number of pointers of the alternating cycle, because they are situated in pairs. We say that pointer $q$ intersects pointer $p$ if either its head or tail pointer lies between the head and tail pointer of $p$, and its other pointer either precedes or succeeds both pointers of $p$ in the permutation. Furthermore, the pointers that we consider to belong to the alternating cycles are the head pointers of the $(a_{i-1}+1)$ and the tail pointers of the $a_i$. Since there are exactly two of these pointers per pair, there must be an even number of these pointers in between the head and tail pointers of $p$. Now, since each of these pointers corresponds to exactly one other pointer in the alternating cycle, we see that an even number of these pointers must connect to other such pointers in between the pointer of $p$, which means an even number must correspond to pointers either preceding or succeeding both pointers of $p$ in the permutation, which means an even number of these pointers intersect $p$. Thus, given any pointer $p$ and an alternating cycle of $\pi$, $p$ must have an even number of intersections with pointers of the alternating cycle, so in any partition of $V$ consisting of alternating cycles, every vertex in one set must intersect an even number of vertices of each cycle of the other set, and therefore this forms a parity cut.
\end{proof}
%-------------------------------------------
Having the restriction of working on overlap graphs rather than on Eulerian graphs, we obtain a more detailed property connecting the alternating cycles and the kernels of the adjacency matrix.
%-------------------------------------------
\begin{thm}
The alternating cycles of $\pi$ form an orthogonal basis for the kernel of the adjacency matrix of $\pi$.
\end{thm}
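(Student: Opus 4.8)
The plan is to combine Theorem~\ref{thm:pcspace_eq_kernel} with Lemma~\ref{lem:alt_cycle_pc} and then upgrade the resulting spanning set to an orthogonal basis. By Lemma~\ref{lem:alt_cycle_pc}, each alternating cycle of $\pi$ (viewed as the set of vertices of the overlap graph corresponding to the pointers in that cycle) has a characteristic vector that defines a parity cut, hence by Theorem~\ref{thm:pcspace_eq_kernel} lies in the kernel of the adjacency matrix $A_\pi$ over $\mathbb{F}_2$. So first I would record that the characteristic vectors of the alternating cycles are elements of $\ker A_\pi$. The alternating cycles partition the vertex set $V = \{v_0,\dots,v_n\}$, so their characteristic vectors are pairwise disjointly supported; over $\mathbb{F}_2$ this immediately gives pairwise orthogonality (the dot product of two disjointly-supported $0/1$ vectors is $0$) and linear independence (a nontrivial $\mathbb{F}_2$-combination of disjointly supported nonzero vectors is nonzero).

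The substantive step is to show these vectors actually \emph{span} $\ker A_\pi$, i.e. that their number equals $\dim \ker A_\pi = (n+1) - \operatorname{rank} A_\pi$. Equivalently, if $c$ denotes the number of alternating cycles of $\pi$, I need $\operatorname{rank} A_\pi = (n+1) - c$. The natural route is to relate this to the structure $C_\pi = Y_\pi \circ X_n$ recalled in the preliminaries: the number of alternating cycles is the number of cycles in the permutation $C_\pi$ acting on $\{0,1,\dots,n\}$. I would look for an identity of the form $\operatorname{rank}_{\mathbb{F}_2} A_\pi = (n+1) - c(C_\pi)$, which should follow from a known correspondence between the overlap graph's adjacency matrix and the breakpoint/cycle graph — this kind of rank formula (nullity equals number of cycles) is standard in the interlacement-matrix literature (e.g. via the relation to the number of connected components of a chord diagram / the formula $\operatorname{rank} A = 2(\text{genus})$ type results). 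Concretely I would either cite the relevant result from \cite{BH}, \cite{BHH}, or \cite{p2}/\cite{p3}, or give a short induction: removing a pointer and tracking how the alternating cycle count and the $\mathbb{F}_2$-rank each change by the same amount.

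Once the dimension count is in hand, the three facts — the alternating-cycle vectors lie in $\ker A_\pi$, they are linearly independent, and there are $\dim\ker A_\pi$ of them — together say they form a basis of $\ker A_\pi$, and disjoint support makes it an orthogonal basis. The main obstacle is precisely the rank/nullity identity $\dim\ker A_\pi = c$: orthogonality and independence are essentially free from the disjoint-support observation, but establishing that the alternating cycles exhaust the kernel requires either invoking an external structural theorem about interlacement matrices of double occurrence words or carrying out the inductive bookkeeping carefully. I would first check whether an earlier section of the paper (or the cited REU papers \cite{p2,p3}) already contains the needed count of $\ker A_\pi$ in terms of the cycle structure of $C_\pi$, since that would make the argument a one-line assembly; absent that, the inductive argument on pointer deletion is the fallback.
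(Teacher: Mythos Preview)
Your orthogonality and linear-independence steps match the paper exactly: the paper also observes that the characteristic vectors of the alternating cycles are pairwise orthogonal because the cycles are disjoint, and that they lie in the kernel by Lemma~\ref{lem:alt_cycle_pc} together with Theorem~\ref{thm:pcspace_eq_kernel}.

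Where you diverge is in the spanning step. You propose a \emph{dimension count}: establish $\dim\ker A_\pi = c$ (with $c$ the number of alternating cycles) via a rank identity for interlacement matrices, either by citation or by an induction on pointer deletion. The paper does \emph{not} proceed this way. Instead it gives a direct, self-contained combinatorial argument that every parity cut of the overlap graph must be a union of complete alternating cycles. Concretely, given an arbitrary parity cut, the paper colors the head/tail ``dots'' in the breakpoint graph (blue for dots belonging to pointers in the cut, then red/green for the remaining dots according to the parity of the number of blue dots to their left), and uses the parity-cut condition to argue that the green territory must be empty, which forces the blue dots to occur in adjacent pairs and hence to form whole alternating cycles. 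This yields spanning without ever computing $\operatorname{rank} A_\pi$.

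As a strategy, your route is sound in principle, but as written it has a genuine gap: the identity $\operatorname{rank}_{\mathbb{F}_2} A_\pi = (n+1)-c$ is precisely the content of the theorem, and neither the earlier sections of this paper nor the cited references \cite{p2,p3} supply it ready-made for you to invoke. So your ``one-line assembly'' option is not available here, and the ``inductive bookkeeping'' you allude to is left entirely unspecified. By contrast, the paper's coloring argument is elementary and internal to the permutation/breakpoint-graph setup, at the cost of being somewhat ad hoc. If you want to salvage your approach, you would need to actually carry out the induction (e.g., track how deleting a pointer changes both the cycle count and the $\mathbb{F}_2$-rank of the interlacement matrix, handling the isolated-vertex and adjacent-vertex cases separately); otherwise the paper's direct argument is the cleaner path.
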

%-------------------------------------------
\begin{proof}
 We know that the characteristic vectors of the alternating cycles must be orthogonal (since the cycles are disjoint) and also that they all define parity cuts (see Lemma \ref{lem:alt_cycle_pc}, so it only remains to show that there are no parity cuts that are not formed by a linear combination of parity cuts formed from alternating cycles (under the symmetric difference operation). That is, every parity cut must be composed of complete alternating cycles.

\begin{center}
	\begin{tikzpicture}[scale=.8]
    %-------------------------------------------
    \filldraw[lightgray] (0,0) node(a1r)
    [shape=circle, draw=black, fill=lightgray, inner sep=3pt]
    [anchor=north]{};
    \filldraw[black] (1,0) node(g1)
    [shape=circle, draw=black, fill=black, inner sep=3pt]
    [anchor=north]{};
    \filldraw[lightgray] (2,0) node(a2r)
    [shape=circle, draw=black, fill=lightgray, inner sep=3pt]
    [anchor=north]{};
    \filldraw[white] (3,0) node(r1)
    [shape=circle, draw=black, fill=white, inner sep=3pt]
    [anchor=north]{};
    \filldraw[white] (4,0) node(r2)
    [shape=circle, draw=black, fill=white, inner sep=3pt]
    [anchor=north]{};
    \filldraw[lightgray] (5,0) node(a3r)
    [shape=circle, draw=black, fill=lightgray, inner sep=3pt]
    [anchor=north]{};
    \filldraw[black] (6,0) node(g2)
    [shape=circle, draw=black, fill=black, inner sep=3pt]
    [anchor=north]{};
    \filldraw[lightgray] (7,0) node(a1l)
    [shape=circle, draw=black, fill=lightgray, inner sep=3pt]
    [anchor=north]{};
    \filldraw[white] (8,0) node(r3)
    [shape=circle, draw=black, fill=white, inner sep=3pt]
    [anchor=north]{};
    \filldraw[lightgray] (9,0) node(a4r)
    [shape=circle, draw=black, fill=lightgray, inner sep=3pt]
    [anchor=north]{};
    \filldraw[black] (10,0) node(g3)
    [shape=circle, draw=black, fill=black, inner sep=3pt]
    [anchor=north]{};
    \filldraw[black] (11,0) node(g4)
    [shape=circle, draw=black, fill=black, inner sep=3pt]
    [anchor=north]{};
    \filldraw[lightgray] (12,0) node(a3l)
    [shape=circle, draw=black, fill=lightgray, inner sep=3pt]
    [anchor=north]{};
    \filldraw[white] (13,0) node(r4)
    [shape=circle, draw=black, fill=white, inner sep=3pt]
    [anchor=north]{};
    \filldraw[white] (14,0) node(r5)
    [shape=circle, draw=black, fill=white, inner sep=3pt]
    [anchor=north]{};
    \filldraw[lightgray] (15,0) node(a2l)
    [shape=circle, draw=black, fill=lightgray, inner sep=3pt]
    [anchor=north]{};
    \filldraw[black] (16,0) node(g5)
    [shape=circle, draw=black, fill=black, inner sep=3pt]
    [anchor=north]{};
    \filldraw[black] (17,0) node(g6)
    [shape=circle, draw=black, fill=black, inner sep=3pt]
    [anchor=north]{};
    \filldraw[lightgray] (18,0) node(a4l)
    [shape=circle, draw=black, fill=lightgray, inner sep=3pt]
    [anchor=north]{};
    \filldraw[white] (19,0) node(r6)
    [shape=circle, draw=black, fill=white, inner sep=3pt]
    [anchor=north]{};
    \filldraw[white] (20,0) node(r7)
    [shape=circle, draw=black, fill=white, inner sep=3pt]
    [anchor=north]{};
    
     \foreach \z in {1,...,4}
             {\draw[black, out=60, in=120] (a\z r.north) to (a\z l.north);}
	\end{tikzpicture}\\
    \vspace{2.5mm}
	FIGURE 9. Example Breakpoint Graph\\
\end{center}

Suppose we have a permutation and a set of pointers that constitute a parity a cut. We will show that these pointers must form complete alternating cycles. 

First, let us consider the head and tail of each pointer. We will call these $dots$. Given this permutation, we know that the first dot must be the $(0,1)$ tail pointer and the last dot must be the $(n,n+1)$ head pointer. Furthermore, if we start at the first dot and follow the pointer along to the second dot of that pointer, we will end up at the $(0,1)$ head pointer, and if we move to the dot directly to the right, we will be at the $(1,2)$ tail pointer. Since this is a valid permutation, if we continue to move along each pointer and then jump to the dot directly to the right until we reach the rightmost dot (the $(n,n+1)$ tail pointer), we will end up visiting every dot in the permutation exactly once.

Now, let us color blue the dots that correspond to pointers in the parity cut. We can now label the remaining dots as follows: dots with an even number of blue dots preceding them in the permutation will be colored red, and dots with an odd number of blue dots preceding them will be colored green. This can be further generalized by coloring the "territory" between every pair of blue pointers either red or green and noting that all vertices in that territory, if there are any, must have that color. There are a few things we can note. First, since every pointer in the parity cut must intersect an even number of other parity cut pointers, between any two blue dots of the same pointer, there must be an even number of blue dots. This means that if there is a red dot directly to the left of one blue dot of a pointer, there must also be a red dot directly to the right of the other blue dot of that pointer. More generally, if the left side of a blue dot is red territory, the right side of the other blue dot in that pointer must also be red territory. Second, since there must be an odd number of blue dots between dots of different color (red or green), dots can only be in the same pointer with dots of the same color. Finally, the first dot must be either red or blue, and if it is blue, the territory to its left must be red.

We can now attempt to visit every dot along the permutation as described above. Since the first dot must be red or blue, we start at either a red or blue dot and follow the pointer to another dot of the same color. Now each time we are at a red dot, we must follow the pointer to another red dot, and the dot directly to the right of that dot can either be red or blue. If it is blue, it must have red territory to its left. Each time we visit a blue dot with red territory to its left, we will follow the pointer to a blue dot with red territory to its right, which means the dot directly to the right of it must either be red or blue with red territory to its left. Thus, since we start at a dot that is either red or blue with red to its left, and any such dot must be connected by a pointer to a dot with another such dot directly to its right, the only possible dots we can visit must be either red, blue with red to its right, or blue connected by a pointer to another blue with red to its right. Since we must visit every single dot in the permutation, this means that there can be no green dots.

However, if there are no green dots, the blue pointers in the parity cut must form alternating cycles, because the blue dots will appear in perfect adjacent pairs. Since the green territories must all be empty, the first blue to will be adjacent to the second blue dot, the third will be adjacent to the fourth, and so on. Since the first dot has green territory to its right, its pointer pair must have green to its left, so there must be a blue dot directly to its left, and this blue dot will have green to its right also, so its pointer pair must have green to its left, and so on, thus forming a complete alternating cycle.
\end{proof}
%-------------------------------------------
A result related to the theorem above is presented in the following lemma, although now we are eliminating the first and last rows of the adjacency matrix.
%-------------------------------------------
\begin{lem}
Let $M$ be the adjacency matrix of the overlap graph of a permutation and $M'$ be $M$ without the first and last rows. The characteristic vector of the strategic pile is the only element of the kernel of $M'$ that consists of elements of the alternating cycle containing both roots, but does not itself contain both roots.
\end{lem}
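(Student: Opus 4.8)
Write $G=\mathrm{OG}(\pi)=(V,E)$ with $|V|=N$, let $x,y\in V$ be the two roots (the vertices indexing the first and last rows of $M$), and let $Z_1,\dots,Z_r$ be the alternating cycles of $\pi$; being the cycles of the permutation $C_\pi$, they partition $V$. The phrase ``the alternating cycle containing both roots'' presupposes that $x$ and $y$ lie in a common $Z_i$, which I call $Z$. Note this already forces $SP(\pi)\neq\emptyset$: otherwise $\pi$ is \textbf{cds}-sortable, hence (Lemma~\ref{lem:cds_eq_gcds} and the sortability criterion for Eulerian graphs) $\ker M$ contains a parity cut $\vec v$ with $v_x+v_y=1$; but by the preceding theorem $\vec v$ is a sum of the indicator vectors $\vec 1_{Z_i}$, and since $x,y$ lie only in $Z$ this gives $v_x=v_y$, a contradiction. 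Also $SP(\pi)\subseteq Z$ and $x,y\notin SP(\pi)$, directly from the definition of the strategic pile (its elements follow $n$, and precede $0$, in the cycle of $0$ in $C_\pi$).

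The first move is to pin down $\ker M'$. Since $G$ is undirected, $M$ is symmetric, so $\mathrm{im}(M)=(\ker M)^{\perp}$, which by the preceding theorem is the set of vectors orthogonal to every $\vec 1_{Z_i}$. Because each of $x,y$ lies in exactly one alternating cycle, namely $Z$, we get $\vec e_x,\vec e_y\notin\mathrm{im}(M)$ while $\vec e_x+\vec e_y\in\mathrm{im}(M)$, so $\mathrm{span}(\vec e_x,\vec e_y)\cap\mathrm{im}(M)=\mathrm{span}(\vec e_x+\vec e_y)$. Since deleting the $x$- and $y$-rows of $M$ means $\vec u\in\ker M'$ iff $M\vec u\in\mathrm{span}(\vec e_x,\vec e_y)$, this yields
\[ \ker M' \;=\; \ker M \;\sqcup\; (\vec w+\ker M), \]
where $\vec w$ is any solution of $M\vec w=\vec e_x+\vec e_y$; in particular $\dim\ker M'=\dim\ker M+1$.

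Next I would show $\vec 1_{SP(\pi)}\in\ker M'$, i.e.\ that \emph{every non-root vertex is adjacent to an even number of vertices of $SP(\pi)$}. \textbf{This is the main obstacle.} The intended route is combinatorial: in the linear arrangement of the $2N$ pointer-occurrences of the framed permutation (the ``dots'' of the proof of the preceding theorem), the occurrences of the pointers comprising $Z$ link up, via the pointer-pairing and the left-to-right adjacency, into a single closed alternating walk whose extreme positions $1$ and $2N$ are occupied by the root pointers $x=(0,1)$ and $y=(n,n+1)$; cutting $Z$ at $x$ and $y$ then produces two arcs, one with vertex set $SP(\pi)$ and the other with vertex set $Z\setminus SP(\pi)\ni x,y$. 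Since $Z$ is itself a parity cut (Lemma~\ref{lem:alt_cycle_pc}), every vertex meets $Z$ an even number of times, and a parity count on the dot arrangement shows that a pointer other than $x$ or $y$ crosses the two arcs equally modulo $2$, hence crosses each of them evenly. Granting this, $\vec 1_{SP(\pi)}\in\ker M'$. Moreover $\vec 1_{SP(\pi)}\notin\ker M$: if it were, then (the $\vec 1_{Z_i}$ forming a basis of $\ker M$ with pairwise disjoint supports) $SP(\pi)$ would be a union of whole alternating cycles lying inside $Z$, forcing $SP(\pi)\in\{\emptyset,Z\}$, which contradicts $\emptyset\neq SP(\pi)$ and $x\notin SP(\pi)$. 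Hence $M\vec 1_{SP(\pi)}$ is a nonzero vector in $\mathrm{span}(\vec e_x,\vec e_y)\cap\mathrm{im}(M)=\mathrm{span}(\vec e_x+\vec e_y)$, so $M\vec 1_{SP(\pi)}=\vec e_x+\vec e_y$ and $\vec 1_{SP(\pi)}$ lies in the coset $\vec w+\ker M$.

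For uniqueness, suppose $\vec u\in\ker M'$ has $\emptyset\neq\mathrm{supp}(\vec u)\subseteq Z$ and $\{x,y\}\not\subseteq\mathrm{supp}(\vec u)$. If $\vec u\in\ker M$, then $\vec u$ is a sum of indicator vectors $\vec 1_{Z_i}$; having support contained in the single cycle $Z$ among the pairwise disjoint $Z_i$ forces $\vec u\in\{\vec 0,\vec 1_Z\}$, and both are excluded (empty support, respectively support $=Z\ni x,y$). So $\vec u\in\vec w+\ker M$, whence $\vec u+\vec 1_{SP(\pi)}\in\ker M$ with support inside $Z$, so by the same argument $\vec u+\vec 1_{SP(\pi)}\in\{\vec 0,\vec 1_Z\}$, i.e.\ $\vec u=\vec 1_{SP(\pi)}$ or $\vec u=\vec 1_{Z\setminus SP(\pi)}$. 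The latter contains both $x$ and $y$ (since $x,y\in Z\setminus SP(\pi)$) and is excluded. Therefore $\vec u=\vec 1_{SP(\pi)}$, completing the proof modulo the flagged parity claim, which I expect to be the only nontrivial step.
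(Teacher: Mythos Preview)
Your framework is correct and is genuinely different from the paper's route. You reduce everything to the clean coset decomposition $\ker M'=\ker M\sqcup(\vec w+\ker M)$ via $\mathrm{im}(M)=(\ker M)^{\perp}$, and then uniqueness falls out in two lines from the fact that the $\vec 1_{Z_i}$ are a disjointly-supported basis of $\ker M$. The paper never isolates this coset structure; instead it runs an extended ``dot-colouring'' argument (colouring breakpoint dots red/green/blue according to the parity of preceding cut-dots) to show directly that any element of $\ker M'$ is built from whole alternating cycles, with the single exception that the root cycle may be split at the two extreme dots. Your linear-algebraic reduction buys a much shorter and more transparent uniqueness argument; the paper's colouring argument is self-contained but considerably longer.

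On the flagged step: your instinct is right that this is the only real work, but your sketched mechanism (``crossings with $Z$ split equally mod $2$ between the two arcs'') is not how the paper does it and would need more care to make precise. The paper instead argues directly for the \emph{complement} $Z\setminus SP(\pi)$: in the breakpoint arrangement the dots of $Z\setminus SP(\pi)$ consist of the two extreme positions (the $(0,1)$ tail and the $(n,n+1)$ head) together with adjacent pairs $(b_{k+1},\,b_k+1)$ in the interior; since no non-root pointer can have a dot outside the two extremes, the number of $Z\setminus SP(\pi)$-dots between its endpoints is even. This gives $\vec 1_{Z\setminus SP(\pi)}\in\ker M'$, hence $\vec 1_{SP(\pi)}=\vec 1_{Z}+\vec 1_{Z\setminus SP(\pi)}\in\ker M'$. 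Plugging this one-paragraph argument in for your flagged step completes your proof; your separate verification that $\vec 1_{SP(\pi)}\notin\ker M$ (forcing $M\vec 1_{SP(\pi)}=\vec e_x+\vec e_y$) then does the rest.
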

%-------------------------------------------
\begin{proof}
First of all, every alternating cycle will be in this basis, and since they are disjoint, no alternating cycle will be a linear combination of the others. Now, consider the alternating cycle containing $(0,1)$, but without the strategic pile elements. We know that all of the dots in this set, except the first and last will be in the pairs $(b_{k+1}, b_k +1)$. However, we also know that no pointer can have one of its dots either before the $(0,1)$ tail pointer (the first dot in this set) or after the $(n,n+1)$ head pointer (the last dot in this set). Therefore, given any pointer other than the $(0,1)$ pointer or the $(n,n+1)$ pointer, there must be an even number of dots in between the head and tail of this pointer, which means this pointer intersects an even number of dots in this set. Therefore, the characteristic vector of this set must also be in the kernel.

Now, we must show that there are no other elements in the kernel. Take any element of this kernel. Because no valid permutation graph can have property $b$, we know that either both roots are in the same side of the cut and have odd degree with respect to the cut, or both roots have even degree with respect to the cut. In the latter case, our argument from above shows that the cut must be composed of alternating cycles. In the former case, we can again color the dots of the permutation as before, with dots in the set of the parity cut containing the roots colored blue, dots with an even number of blue dots preceding them colored red, and dots with an odd number of blue dots preceding them colored green. Now, since we know the first dot of the $(0,1)$ pointer must have red territory to its left, and that it has odd degree with respect to the parity cut, the second dot of this pointer must have an odd number of blue dots between the first dot and itself, and therefore must have green territory directly to its right. Similarly, since we know the last dot of the $(n,n+1)$ pointer must have red territory to its right, and that it has odd degree with respect to the parity cut, the first dot of this pointer must have an odd number of blue dots between the second dot and itself, and therefore must have green territory directly to its left. Therefore, for these two pointers, there is red territory directly to different sides of each dot, and green on the other. However, as our previous argument showed, fore every other pointer, the red territory must be on the same side of both dots. Therefore, if we again try to follow a path along the dots of the permutation in order, we will visit only green dots, which means there can be no red dots. Again, as before, this shows that the parity cut must be composed of alternating cycles, except for the cycle containing the $(0,1)$ and $(n,n+1)$ pointer. In this case, if we follow the pointer, we see that only the part of the cycle from the $(0,1)$ pointer up to and including the $(n,n+1)$ pointer are in the cut. However, this is also an element of the kernel. Therefore, the sets of alternating cycles and the strategic pile for a basis for the kernel for the adjacency matrix without the first and last rows.
\end{proof}
%-------------------------------------------
However, regardless of eliminating the first and the last rows of a sortable permutation's adjacency matrix, it conserves a property if compared to the original adjacency matrix.
%-------------------------------------------
\begin{lem}\label{lem:cent_pres_ker}
For a sortable permutation $\pi$ with adjacency matrix $M$ and $M'$ equal to $M$ with the first and last rows removed, $\text{ker}(M') = \text{ker}(M)$. 
\end{lem}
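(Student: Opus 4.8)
The plan is to prove the two inclusions separately. The inclusion $\ker(M)\subseteq\ker(M')$ is immediate: deleting the first and last rows of $M$ only discards equations, so any $\vec x$ with $M\vec x=\vec 0$ also satisfies $M'\vec x=\vec 0$. The substance is the reverse inclusion $\ker(M')\subseteq\ker(M)$, and this is where sortability of $\pi$ must enter, since the equality fails for general (non-sortable) permutations.

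So fix $\vec x\in\ker(M')$. Write $V=\{v_0,\dots,v_n\}$ for the vertex set of $G=OG(\pi)$, with the two roots corresponding to the first and last rows of $M$. Since $M'\vec x=\vec 0$, we have $(M\vec x)_i=0$ for every non-root index $i$; only the two root coordinates, say $a_0:=(M\vec x)_0$ and $a_n:=(M\vec x)_n$, can possibly be nonzero. The first step is to observe that $a_0=a_n$. Indeed $G$ is Eulerian (being an overlap graph), so every row sum of $M$ is even, i.e.\ $M\vec 1=\vec 0$ over $\mathbb{F}_2$; since $M$ is symmetric this also gives $\vec 1^{\,T}M=\vec 0^{\,T}$, hence $0=\vec 1^{\,T}M\vec x=\sum_{i}(M\vec x)_i=a_0+a_n$ in $\mathbb{F}_2$. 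Thus $\vec x\in\ker(M)$ unless $a_0=a_n=1$, and it remains to rule that out.

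Suppose $a_0=a_n=1$, and let $S=\{v_i:x_i=1\}$. Since $M'\vec x=\vec 0$, every non-root vertex has an even number of edges into $S$, and because $G$ is Eulerian it therefore also has an even number of edges into $V\setminus S$; so $(S,V\setminus S)$ is a parity cut in the sense of \cite{p2}. Meanwhile each root has an odd number of edges into $S$, hence (again by Eulerianness) an odd number of edges to whichever of the two sides it does \emph{not} lie in. If the two roots lie on opposite sides of the cut, this exhibits property $b$ --- impossible, since no overlap graph has property $b$ (the lemma above; its handshake argument in fact applies to any Eulerian graph). If the two roots lie on the same side, this exhibits property $c$; but $\pi$ is sortable, so by repeated application of Lemma \ref{lem:cds_eq_gcds} the graph $OG(\pi)$ is \textbf{gcds}-sortable, whence by Corollary \ref{cor:gcdssort_propa} it has property $a$, and no Eulerian graph can have both property $a$ and property $c$. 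Either way we reach a contradiction, so $a_0=a_n=0$, i.e.\ $M\vec x=\vec 0$; this proves the reverse inclusion and hence the lemma.

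The main obstacle is that last step: converting "$\vec x$ kills every non-root row but maybe not the root rows" into the language of parity cuts and the trichotomy of properties $a,b,c$, so that sortability can be brought to bear through exactly the two facts (no Eulerian graph has property $b$; no Eulerian graph has both $a$ and $c$). A cleaner but less self-contained route is also available: the earlier basis lemma says the characteristic vectors of the alternating cycles of $\pi$ together with that of $SP(\pi)$ form a basis of $\ker(M')$, while the alternating cycles alone form a basis of $\ker(M)$; since $\pi$ sortable means $SP(\pi)=\emptyset$, its characteristic vector is $\vec 0$, so the two spanning sets determine the same space and $\ker(M')=\ker(M)$.
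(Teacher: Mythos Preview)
Your proof is correct but takes a genuinely different route from the paper's. The paper argues purely by rank: sortability of $\pi$ furnishes kernel vectors $\vec x,\vec y\in\ker(M)$ with $x_1=1,x_n=0$ and $y_1=0,y_n=1$, which exhibit the first and last columns of $M$ as $\mathbb F_2$-combinations of the middle columns; by symmetry the first and last \emph{rows} are then combinations of the middle rows, so deleting them leaves the rank unchanged, whence $\dim\ker(M')=\dim\ker(M)$ and the trivial inclusion finishes. Your argument instead chases an element of $\ker(M')$ directly, using Eulerianness to force the two possibly nonzero coordinates of $M\vec x$ to agree, and then invoking the $a/b/c$ trichotomy (no Eulerian graph has property~$b$; a sortable one has property~$a$ and hence not~$c$) to rule out the value~$1$. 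Both are valid; the paper's version is a quick linear-algebra step, while yours is more graph-theoretic and makes explicit \emph{why} sortability matters---it is precisely the exclusion of property~$c$ that blocks the bad case. Your closing ``alternative route'' via the alternating-cycle and strategic-pile bases of $\ker(M)$ and $\ker(M')$ is also sound and is in fact the most transparent way to see the equality once those basis results are in hand.
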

%-------------------------------------------
\begin{proof}
We know that if a permutation is sortable, then there will some element $\vec{x}$ such that $\vec{x}_1 = 1$ and $\vec{x}_n = 0$ and some element $\vec{y}$ such that $\vec{y}_1 = 0$ and $\vec{y}_n = 1$ in the kernel of $M$. However, this implies that the first column is linear combination of other columns, not including the last, and that the last column is a linear combination of other columns, not including the first. But this means that the first row and last rows are also linear combinations of the middle rows. Thus, removing these two rows does not reduce the dimension of $M$, and therefore the nullity of $M'$ is the same as the nullity of $M$. Furthermore, it is clear that every element of the kernel of $M$ is an element of the kernel of $M'$, so the kernel of $M'$ must be exactly the kernel of $M$. 
\end{proof}
%-------------------------------------------
\section{A decision problem and its complexity}

We shall now provide some definitions which will later on help us convert from the adjacency to the precedence matrix (and vice-versa). The first definition is the following:
%-------------------------------------------
\begin{defn}
For each positive integer $n$, define $B_n \in M_n(\mathbb{F}_2)$ so that for $1 \le i,j \le n$, we have $B_{n}(i,j) = 1$ if, and only if, $i = j$ or $i+1 = j$. 
\end{defn}
%-------------------------------------------
For example,
\[ B_5 =
  \begin{bmatrix}
    1 & 1 & 0 & 0 & 0 \\
    0 & 1 & 1 & 0 & 0\\
    0 & 0 & 1 & 1 & 0 \\
    0 & 0 & 0 & 1 & 1 \\
    0 & 0 & 0 & 0 & 1 \\
  \end{bmatrix}. \] 
%\end{ex} 
%-------------------------------------------
%-------------------------------------------
\begin{defn}
Define $Z : M_n(\mathbb F_2) \to M_{n+1}(\mathbb F_2)$ as follows: $$
Z(A)(i,j) = \left\{
	\begin{array}{ll}
           1 & \quad i = j = 1\\
            0 & \quad \text{exactly one of }$i$\text{ and }$j$\text{ is }$1$ \\
            A(i-1,j-1)& \quad \text{otherwise}\\
	\end{array}
\right.
$$
\end{defn}
%-------------------------------------------
For example,
\begin{ex} 
	\[Z \left( \begin{bmatrix}
    0 & 1 & 1 & 0 \\
    1 & 0 & 1 & 1 \\
    1 & 1 & 0 & 0 \\
    0 & 1 & 0 & 0 \\
  \end{bmatrix} \right) = \begin{bmatrix}
      1 & 0 & 0 & 0 & 0 \\
      0 & 0 & 1 & 1 & 0  \\
      0 & 1 & 0 & 1 & 1 \\
      0 & 1 & 1 & 0 & 0 \\
      0 & 0 & 1 & 0 & 0 \\
    \end{bmatrix}. \]
\end{ex}
%-------------------------------------------
%This next function, on the other hand, reduces the size of a matrix by one row and column rather than increasing it.
%-------------------------------------------
Define $F : M_n(\mathbb{F}_2) \to M_{n-1}(\mathbb{F}_2)$ as follows: \[F(A)(i,j) = A(i,j)\oplus A(i+1,j)\oplus A(i,j+1)\oplus A(i+1,j+1),\] for $1 \le i,j \le n-1$, where $\oplus$ denotes the addition operation in $\mathbb F_2$.
%\end{defn}
%-------------------------------------------
%\begin{ex} 
Using the same input as the one that was used in the previous example, we now show what happens if we apply $F$ rather than $Z$ to such input.
For example, 
\[ F \left( \begin{bmatrix}
    0 & 1 & 1 & 0 \\
    1 & 0 & 1 & 1 \\
    1 & 1 & 0 & 0 \\
    0 & 1 & 0 & 0 \\
  \end{bmatrix} \right) = 
  \begin{bmatrix}
  0 & 1 & 1 \\
  1 & 0 & 0 \\
  1 & 0 & 0 \\
  \end{bmatrix}, \]
%\end{ex}
%-------------------------------------------
%The last function of this section is also over matrices, yet it does not change its size.
%-------------------------------------------
%\begin{defn}
Define $S : M_n(\mathbb F_2) \to M_n(\mathbb F_2)$ as follows: \[ S(A)(i,j) \equiv \sum_{r=1}^i \sum_{c=1}^j A(r,c) \pmod 2. \] 
%\end{defn}
%-------------------------------------------
%\begin{ex} We now show the result of applying $S$ to the same input that was previously used.
For example,
\[ S \left( \begin{bmatrix}
    0 & 1 & 1 & 0 \\
    1 & 0 & 1 & 1 \\
    1 & 1 & 0 & 0 \\
    0 & 1 & 0 & 0 \\
  \end{bmatrix} \right) = \begin{bmatrix} 
  0 & 1 & 0 & 0 \\
  1 & 0 & 0 & 1 \\
  0 & 0 & 0 & 1 \\
  0 & 1 & 1 & 0 \\
   \end{bmatrix}.
   \]
%\end{ex}
%-------------------------------------------
We now show a relation between the adjacency and precedence matrices of a permutation.
%-------------------------------------------
\begin{lemma}\label{lem:adjprec}
Let $\pi \in S_n$ be a permutation, and let $A$ and $P$ be the adjacency and precedence matrices of $\pi$, respectively. Also, let $\delta(i,j) = 1$ if $i = j$, and $\delta(i,j) = 0$ if $i \not= j$ (i.e. $\delta$ is the Kronecker delta function). Then, for any pair $(i,j)$ with $0 \le i, j \le n$, we have that \[ A(i,j) = P(i,j)\oplus P(i+1,j)\oplus P(i,j+1)\oplus P(i+1,j+1)\oplus \delta(i,j) \oplus \delta(i+1,j), \] where  $\oplus$ denotes the addition operation in $\mathbb F_2$.
\end{lemma}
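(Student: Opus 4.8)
The plan is to compute both sides of the identity directly from the geometry of the breakpoint graph $BG(\pi)$. Throughout I use the $0$‑indexed conventions forced by the statement: $A(i,j)$ records whether the pointers $(i,i+1)$ and $(j,j+1)$ are adjacent in $OG(\pi)$ (for $0\le i,j\le n$), and $P(k,\ell)$ is the precedence‑matrix entry recording $f_\pi(k)<f_\pi(\ell)$ (for $0\le k,\ell\le n+1$); I abbreviate this Boolean by $B(k,\ell)$ and note $B(k,\ell)\oplus B(\ell,k)=1\oplus\delta(k,\ell)$. First I would fix the picture of $BG(\pi)$ on the real line: place the framed sequence $0,a_1,\dots,a_n,n+1$ with value $k$ at the integer $f_\pi(k)$, and in the unit gap following a value put first the right‑pointer occurrence of that value and then the left‑pointer occurrence of the next value; concretely the right‑pointer occurrence of value $k$ sits at $f_\pi(k)+\tfrac14$ and the left‑pointer occurrence of value $k$ at $f_\pi(k)-\tfrac14$. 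Then the pointer $(i,i+1)$ is the arc joining $f_\pi(i)+\tfrac14$ to $f_\pi(i+1)-\tfrac14$, and by the definition of $OG(\pi)$ we have $A(i,j)=1$ exactly when the arcs of $(i,i+1)$ and $(j,j+1)$ cross, which, when their four endpoints are distinct, happens iff exactly one endpoint of one arc lies strictly between the two endpoints of the other.

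The technical core is one elementary fact, proved by comparing signs: for integers $x\ne y$ and an integer $z\notin\{x,y\}$, the point $z+\tfrac14$ lies strictly between $x+\tfrac14$ and $y-\tfrac14$ if and only if $z$ lies strictly between $x$ and $y$, and the same equivalence holds with $z+\tfrac14$ replaced by $z-\tfrac14$. Next I would record that for three distinct values $k,k+1,\ell$ the Boolean $B(k,\ell)\oplus B(k+1,\ell)$ equals $1$ precisely when $f_\pi(\ell)$ lies strictly between $f_\pi(k)$ and $f_\pi(k+1)$ (the XOR is $1$ exactly when the comparisons $f_\pi(k)<f_\pi(\ell)$ and $f_\pi(k+1)<f_\pi(\ell)$ disagree). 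Combining these, in the generic case $\{i,i+1\}\cap\{j,j+1\}=\emptyset$ — where $f_\pi(i),f_\pi(i+1),f_\pi(j),f_\pi(j+1)$ are four distinct integers — the right‑pointer occurrence of $(j,j+1)$ lies strictly between the endpoints of $(i,i+1)$ iff $B(i,j)\oplus B(i+1,j)=1$, and its left‑pointer occurrence does iff $B(i,j+1)\oplus B(i+1,j+1)=1$. Taking the XOR of these two conditions, which is exactly the crossing criterion, gives $A(i,j)=B(i,j)\oplus B(i+1,j)\oplus B(i,j+1)\oplus B(i+1,j+1)$, and since $\delta(i,j)=\delta(i+1,j)=0$ in this regime, this is the asserted formula.

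It then remains to treat the degenerate index cases, where the four arc‑endpoints are no longer all distinct and the $\delta$‑corrections enter; pinning down those corrections is the main obstacle, though each case is a short computation. For $i=j$ there is no loop, so $A(i,i)=0$, while the right‑hand side reduces to $B(i+1,i)\oplus B(i,i+1)\oplus 1=1\oplus 1=0$. For $j=i+1$ the arcs of $(i,i+1)$ and $(i+1,i+2)$ share the value $i+1$, whose two pointer occurrences straddle $f_\pi(i+1)$ with nothing between them; a sign check shows the right‑pointer occurrence of $(i+1,i+2)$ lies strictly between the endpoints of $(i,i+1)$ iff $f_\pi(i)>f_\pi(i+1)$, i.e.\ iff $1\oplus B(i,i+1)=1$, while the left‑pointer occurrence of $(i+1,i+2)$ behaves exactly as in the generic computation, so $A(i,i+1)=1\oplus B(i,i+1)\oplus B(i,i+2)\oplus B(i+1,i+2)$; the right‑hand side of the lemma equals this too, the extra $1$ being furnished by $\delta(i+1,j)=\delta(i+1,i+1)=1$. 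Finally, rather than redo $i=j+1$ from scratch, I would observe that $A$ is symmetric and verify that the right‑hand side $R(i,j)$ is symmetric as well: expanding $R(i,j)\oplus R(j,i)$ and using $B(a,b)\oplus B(b,a)=1\oplus\delta(a,b)$ collapses it to $\delta(i,j)\oplus\delta(i+1,j+1)=0$, so the case $i=j+1$ follows from the case $j=i+1$. Assembling the generic case with these degenerate cases establishes the identity for all $0\le i,j\le n$.
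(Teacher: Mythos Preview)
Your proof is correct and follows essentially the same approach as the paper: a generic case where the four involved values are distinct, followed by the degenerate cases $i=j$, $j=i+1$, and $i=j+1$, with the last handled by symmetry. Your presentation is arguably a bit cleaner in two spots: the explicit $\pm\tfrac14$ coordinates pin down the order of adjacent pointer dots more rigorously than the paper's verbal description, and your single verification that $R(i,j)\oplus R(j,i)=0$ replaces the paper's subcase computation in its Case~3.
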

%-------------------------------------------
\begin{proof}
By definition, we have that $A(i,j) = 1$ if, and only if, the edges connecting the first and second occurrences of the pointers $(i,i+1)$ and $(j,j+1)$ in $\pi$ overlap. This will occur if, and only if, exactly one occurrence of $(j,j+1)$ lies between the two $(i,i+1)$ pointers; one such case is shown below.
\[ ^{(j,j+1)}{j+1} \dots i^{(i,i+1)} \dots j^{(j,j+1)} \dots ^{(i,i+1)}{(i+1)} \]

%We will prove the formula in four cases:
We consider the following cases: $i+1 < j$, $i+1 = j$, $i > j$, and $i = j$.\\

\noindent \emph{Case $1$:} $i+1 < j$. \\
In this case, we have that $i+1 < j$ implies that $i < i+1 < j < j+1$, so the four elements of the permutation adjacent to $(i,i+1)$ and $(j,j+1)$ pointers are all distinct. Therefore, we have that the $(j,j+1)$ pointer adjacent to $j$ is between the $(i,i+1)$ pointers if, and only if, $j$ is between $i$ and $i+1$ in $\pi$. This is the case if, and only if, exactly one of $i$ and $i+1$ precedes $j$ in $\pi$, or if $P(i,j)\oplus P(i+1,j) = 1$. Similarly, the $(j,j+1)$ pointer adjacent to $j+1$ is between the $(i,i+1)$ pointers if, and only if, $P(i,j+1) \oplus P(i+1,j+1) = 1$.

Since we have $A(i,j) = 1$ if, and only if, exactly one of the occurrences of the $(j,j+1)$ pointer is between the two $(i,i+1)$ occurrences, we will have $P(i,j) \oplus P(i+1,j) \oplus P(i,j+1) \oplus P(i+1,j+1)  = 1$. Therefore, it follows from the fact that $\delta(i,j) = \delta(i+1,j) = 0$ since $i+1 > j$ that we indeed have \[ A(i,j) = P(i,j)\oplus P(i+1,j)\oplus P(i,j+1)\oplus P(i+1,j+1)\oplus\delta(i,j) \oplus \delta(i+1,j) \] in this case.\\

\noindent \emph{Case $2$:} $i+1 = j$. \\
Since $i,i+1,$ and $j+1$ are distinct, similarly we have that the $(j,j+1)$ pointer adjacent to $j+1$ is between the two $(i,i+1)$ pointers if, and only if, $P(i,j+1)\oplus P(i+1,j+1) = 1$. 

However, since $i+1 = j$, determining whether the $(j,j+1)$ pointer adjacent to $j$ is between the $(i,i+1)$ pointers requires a subtler analysis. The $(j,j+1)$ and $(i,i+1)$ pointers adjacent to $j$ appear in the following order, since $i+1=j$:  \[ ^{(i,i+1)}j^{(j,j+1)}. \] Thus, we have that the $(j,j+1)$ pointer adjacent to $j$ appears between the two $(i,i+1)$ pointers if, and only if, the $(i,i+1)$ pointer adjacent to $i$ appears after $j$ in $\pi$; this holds if, and only if, the element $i$ comes after $j$ in the permutation $\pi$, or $P(i,j) = 0$. 

Since $\delta(i+1,j) = 1$ in this case, we have that $P(i,j) = 0$ if, and only if, $P(i,j) \oplus \delta(i+1,j) = 1$. As $i+1 = j$, we must have $P(i+1,j) = 0$, so $P(i,j) \oplus \delta(i+1,j) = P(i,j) \oplus P(i+1,j) \oplus \delta(i+1,j)$.

Together with the result for the $(j,j+1)$ pointer adjacent to $j+1$, we have that the $(i,i+1)$ and $(j,j+1)$ pointer pairs overlap if, and only if, $ P(i,j)\oplus P(i+1,j)\oplus P(i,j+1)\oplus P(i+1,j+1)\oplus \delta(i+1,j) = 1$. We also have $\delta(i,j) = 0$. Thus, \[P(i,j) =  P(i,j)\oplus P(i+1,j)\oplus P(i,j+1)\oplus P(i+1,j+1)\oplus\delta(i,j)\oplus \delta(i+1,j)\] when $i+1 = j$.\\

\noindent \emph{Case $3$:} $i > j$.\\
Since the adjacency matrix is symmetric, as the overlap graph is undirected, we have that $A(i,j) = A(j,i)$. By the results proven in cases $1$ and $2$, we have that \[ A(i,j) = A(j,i) = P(j,i) \oplus P(j+1,i) \oplus P(j,i+1) \oplus P(j+1,i+1) \oplus\delta(j,i) \oplus \delta(j+1,i). \] Note that, for $s \not= t$, we have that $P(s,t) = P(t,s) \oplus 1$, since $s$ precedes $t$ if, and only if, $t$ does not precede $s$, and if $s = t$ then $P(s,t) = P(t,s)$. Thus, if $i > j+1$ (in which case $\delta(j+1,i) = \delta(i+1,j) = 0$), then we have \begin{align*}
A(i,j) &= A(j,i) \\
&= P(j,i) \oplus P(j+1,i) \oplus P(j,i+1)\oplus P(j+1,i+1) \oplus \delta(j,i) \oplus \delta(j+1,i) \\
&= (1 \oplus P(i,j)) \oplus (1 \oplus P(i,j+1)) \oplus (1 \oplus P(i+1,j)) \oplus (1 \oplus P(i+1,j+1)) \oplus 0 \\
&= (1 \oplus 1 \oplus 1 \oplus 1) \oplus  P(i,j) \oplus P(i,j+1) \oplus  P(i+1,j) \oplus P(i+1,j+1) \\
&= P(i,j) \oplus P(i,j+1) \oplus  P(i+1,j) \oplus P(i+1,j+1) \oplus \delta(i,j) \oplus \delta(i+1,j),
\end{align*} 
as desired. If $i = j+1$, we have both $\delta(j+1,i) = 1$ and $P(j+1,i) = P(i,j+1)$ but $\delta(i+1,j) = 0$, so \begin{align*}
A(i,j) &= A(j,i) \\
&= P(j,i) \oplus P(j+1,i) \oplus P(j,i+1)\oplus P(j+1,i+1)\oplus \delta(j,i) \oplus \delta(j+1,i) \\
&= (1 \oplus P(i,j)) \oplus P(i,j+1) \oplus (1 \oplus P(i+1,j)) \oplus (1 \oplus P(i+1,j+1)) \oplus 1 \\
&= (1 \oplus 1 \oplus 1 \oplus 1) \oplus P(i,j) \oplus P(i,j+1) \oplus  P(i+1,j) \oplus P(i+1,j+1) \\
&= P(i,j) \oplus P(i,j+1) \oplus  P(i+1,j) \oplus P(i+1,j+1) \oplus \delta(i,j) \oplus \delta(i+1,j).
\end{align*} Hence, the result follows also in the case when $i > j$.\\

\noindent \emph{Case $4$:} $i = j$.\\
In this case, since no element precedes itself, we obtain $P(i,j) = P(i+1,j+1) = 0$. We also have that since $i+1$ precedes $i$ if, and only if, $i$ does not precede $i+1$, $P(i+1,j) \oplus P(i,j+1) = 1$. Thus, we have $P(i,j)\oplus P(i+1,j)\oplus P(i,j+1)\oplus P(i+1,j+1) = 1$. Since $\delta(i,j) = 1$ and $\delta(i+1,j) = 0$ in this case, and $A(i,j) = 0$ since no pointer overlaps itself, we have that \[A(i,j) =  P(i,j) \oplus P(i,j+1) \oplus  P(i+1,j) \oplus P(i+1,j+1) \oplus \delta(i,j) \oplus \delta(i+1,j) = 1 \oplus 1 \oplus 0 = 0 \] in this case as well.\\

Therefore, in all cases we have shown that \[ A(i,j) =  P(i,j)\oplus P(i+1,j)\oplus P(i,j+1)\oplus P(i+1,j+1)\oplus\delta(i,j) \oplus \delta(i+1,j),\] so the lemma is proven.
\end{proof}
%-------------------------------------------
%The previous lemma provides a starting point to having a conversion from the precedence matrix to the adjacency matrix of a permutation.
%-------------------------------------------
\begin{lemma}%{(Precedence to Adjacency Matrix)\\}
For any permutation $\pi \in S_n$ with adjacency matrix $A_\pi$ and precedence matrix $P_\pi$,%we have that 
\[ A_\pi = F(P_\pi)+B_{n+1}, \]
where the function $F$ and the matrix $B_{n+1}$ are as defined earlier.
\end{lemma}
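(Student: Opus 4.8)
The plan is to verify the identity entry by entry, with Lemma~\ref{lem:adjprec} supplying essentially all the content. First, note that the dimensions are compatible: since $\pi \in S_n$, the adjacency matrix $A_\pi$ is $(n+1)\times(n+1)$ while the precedence matrix $P_\pi$ is $(n+2)\times(n+2)$, so $F(P_\pi)$ is $(n+1)\times(n+1)$, as is $B_{n+1}$. Thus it makes sense to compare $A_\pi$ with $F(P_\pi)+B_{n+1}$ entrywise.

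Fix indices $i,j$. Unwinding the definition of $F$ gives
\[ F(P_\pi)(i,j) = P_\pi(i,j)\oplus P_\pi(i+1,j)\oplus P_\pi(i,j+1)\oplus P_\pi(i+1,j+1). \]
Next I would record the elementary observation that $B_{n+1}(i,j) = \delta(i,j)\oplus\delta(i+1,j)$, where $\delta$ is the Kronecker delta of Lemma~\ref{lem:adjprec}: the two deltas cannot both equal $1$ (that would force $i=j=i+1$), so their sum over $\mathbb{F}_2$ equals $1$ precisely when $i=j$ or $i+1=j$, which is exactly the condition defining $B_{n+1}(i,j)=1$. Adding the two displays yields
\[ \bigl(F(P_\pi)+B_{n+1}\bigr)(i,j) = P_\pi(i,j)\oplus P_\pi(i+1,j)\oplus P_\pi(i,j+1)\oplus P_\pi(i+1,j+1)\oplus\delta(i,j)\oplus\delta(i+1,j), \]
which is precisely the right-hand side of the identity in Lemma~\ref{lem:adjprec}, hence equals $A_\pi(i,j)$. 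Since $(i,j)$ was arbitrary, the two matrices agree in every entry, proving the lemma.

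The only genuine point of care — fiddly rather than hard — is bookkeeping with the indexing conventions: the precedence matrix is indexed $1$ through $n+2$ in its definition, Lemma~\ref{lem:adjprec} is phrased with indices $0$ through $n$, and $F$ shifts the index ranges. One must check that after a single uniform reindexing the four entries of $P_\pi$ produced by $F$ are exactly the four $P_\pi$-entries appearing in Lemma~\ref{lem:adjprec}, and that $\delta(i,j)\oplus\delta(i+1,j)$ lines up with $B_{n+1}$ under that same shift; once this alignment is made explicit there is no further work. No combinatorial input beyond Lemma~\ref{lem:adjprec} and the definitions of $F$ and $B_{n+1}$ is required.
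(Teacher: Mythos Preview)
Your proof is correct and follows essentially the same approach as the paper: both verify the identity entrywise by expanding $F(P_\pi)(i,j)$ from the definition, rewriting $B_{n+1}(i,j)$ as $\delta(i,j)\oplus\delta(i+1,j)$, and then invoking Lemma~\ref{lem:adjprec}. Your additional remarks on dimension compatibility and index alignment are helpful bookkeeping that the paper's version leaves implicit.
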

%-------------------------------------------
\begin{proof}
We consider the above formula elementwise. By definition, we have that the element at position $(i,j)$ of $F(P_\pi)$ is $$P_\pi(i,j)\oplus P_\pi(i+1,j)\oplus P_\pi(i,j+1) \oplus P_\pi(i+1,j+1).$$ Also, note that $B_{n+1}(i,j) = 1$ if, and only if, $i=j$ or $i+1=j$, so it is equal to $\delta(i,j)\oplus \delta(i+1,j)$. Thus, for every pair $(i,j)$ with $ 1 \le i,j \le n+1$, the element at position $(i,j)$ of the matrix $F(P_\pi)+B_{n+1}$ is equal to the sum \[ P_\pi(i,j)\oplus P_\pi(i+1,j)\oplus P(i,j+1) \oplus P(i+1,j+1) \oplus \delta(i,j)\oplus \delta(i+1,j), \] which by Lemma \ref{lem:adjprec} is equal to $A_\pi(i,j)$; hence, we have that $A_\pi = F(P_\pi)+B_{n+1}$.
\end{proof}
%-------------------------------------------
%The opposite conversion, from the adjacency matrix to the precedence matrix, is stated and proved in the following lemma.
%-------------------------------------------
\begin{lemma}%{(Adjacency to Precedence Matrix)\\}
For any permutation $\pi \in S_n$, the following is true: \[ P_\pi = S(Z(A_\pi)+B_{n+2}). \]
\end{lemma}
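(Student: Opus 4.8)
The plan is to route through the preceding lemma, which gives $A_\pi = F(P_\pi)\oplus B_{n+1}$ over $\mathbb F_2$, together with the observation that $S$ is a bijection on $M_m(\mathbb F_2)$ whose inverse is the ``discrete mixed second difference'' $D$ defined by $D(A)(i,j) = A(i,j)\oplus A(i-1,j)\oplus A(i,j-1)\oplus A(i-1,j-1)$, where any term carrying a $0$ index is read as $0$. That $D = S^{-1}$ follows from a one-line telescoping computation, since $D(S(A))(i,j)$ collapses by inclusion--exclusion to the single entry $A(i,j)$. Consequently, proving $P_\pi = S(Z(A_\pi)+B_{n+2})$ is equivalent to proving the identity $D(P_\pi) = Z(A_\pi)+B_{n+2}$, which I would verify entry by entry.

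For an interior entry, i.e. $2 \le i,j \le n+2$, the four terms defining $D(P_\pi)(i,j)$ are exactly the four terms defining $F(P_\pi)(i-1,j-1)$, so $D(P_\pi)(i,j) = F(P_\pi)(i-1,j-1) = A_\pi(i-1,j-1)\oplus B_{n+1}(i-1,j-1)$ by the preceding lemma. On the other side, for $i,j \ge 2$ we have $Z(A_\pi)(i,j) = A_\pi(i-1,j-1)$ by definition of $Z$, and a direct check of the defining conditions shows $B_{n+2}(i,j) = B_{n+1}(i-1,j-1)$ (both equal $1$ precisely when $i=j$ or $i+1=j$). Hence the two sides agree on every interior entry.

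It remains to treat the first row and first column. Here I would use the explicit description of $P_\pi$ coming from $f_\pi$: since $f_\pi(0)=0$ while $f_\pi(k)\ge 1$ for every $k\ge 1$, one gets $P_\pi(1,1)=0$, $P_\pi(1,j)=1$ for all $j\ge 2$, and $P_\pi(i,1)=0$ for all $i$. Feeding these into the definition of $D$ (and discarding terms with a $0$ index) yields $D(P_\pi)(1,1)=0$, $D(P_\pi)(1,2)=1$, $D(P_\pi)(1,j)=0$ for $j\ge 3$, and $D(P_\pi)(i,1)=0$ for $i\ge 2$. Comparing with $Z(A_\pi)+B_{n+2}$ on the boundary, where $Z(A_\pi)$ has a single $1$ in position $(1,1)$ and zeros elsewhere in the first row and column, while $B_{n+2}$ contributes $1$ at $(1,1)$ and at $(1,2)$ and nothing else along the boundary, gives the matching values $0,\,1,\,0,\,0$ respectively, completing the entrywise comparison.

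The main obstacle is not conceptual but bookkeeping: one must keep track of three matrix sizes ($A_\pi$ is $(n+1)\times(n+1)$, while $P_\pi$ and the target are $(n+2)\times(n+2)$, and $F(P_\pi)$ is $(n+1)\times(n+1)$) and of the index shifts built into $Z$, $F$, and the boundary conventions for $D$; the roles of $Z$ and $B_{n+2}$ are precisely to absorb the edge effects that keep $S$ and $F$ from being exact inverses. A purely computational alternative would be to expand $S(Z(A_\pi)+B_{n+2})(i,j)$ as a double sum and telescope it directly, but passing through $S^{-1}=D$ keeps every step local and short.
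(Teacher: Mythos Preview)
Your proof is correct. The approach differs from the paper's in organization: the paper proves $P_\pi = S_\pi$ (where $S_\pi := S(Z(A_\pi)+B_{n+2})$) by a double induction on the indices $(i,j)$, with the first row and column as base case and, in the inductive step, the telescoping identity $S_\pi(i{+}1,j{+}1) = S_\pi(i,j)\oplus S_\pi(i,j{+}1)\oplus S_\pi(i{+}1,j)\oplus M_\pi(i{+}1,j{+}1)$ combined with Lemma~\ref{lem:adjprec}. You instead isolate that telescoping identity once and for all as the statement that the backward difference $D$ is a two-sided inverse of $S$, reducing the problem to the purely local check $D(P_\pi) = Z(A_\pi)+B_{n+2}$, which you then verify entrywise with no induction needed. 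The ingredients are the same (the preceding lemma for interior entries, the explicit first row/column of $P_\pi$ for boundary entries), but factoring out $D=S^{-1}$ makes the argument shorter and more transparent about why $Z$ and the extra $B$ term appear: they are exactly the boundary correction that turns the forward difference $F$ into the backward difference $D$ after the index shift.
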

%-------------------------------------------
\begin{proof}
For brevity, let $Z_\pi = Z(A_\pi)$, let $M_\pi = Z(A_\pi)+B_{n+2}$, and let $S_\pi = S(Z(A_\pi)+B_{n+2})$.
We claim that $P_\pi(i,j) = S_\pi(i,j)$ for all pairs $(i,j)$ with $1 \le i,j \le n+2$. 

We proceed by induction. For the base case, we will prove that for any $1 \le i \le n+2$, we have $P_\pi(i,1) = S_\pi(i,1)$ and $P_\pi(1,i) = S_\pi(1,i)$, and for the inductive step we will show that for $1 \le i,j \le n+1$, if $P_\pi(i,j) = S_\pi(i,j)$, $P_\pi(i+1,j) = S_\pi(i+1,j)$, and $P_\pi(i,j+1) = S_\pi(i,j+1)$, then $P_\pi(i+1,j+1) = S_\pi(i+1,j+1)$.

First, note that $M_\pi(1,1) = 1\oplus 1 = 0$, $M_\pi(1,2) = 0 \oplus 1 = 1$, for any $i \ge 2$ we have $M_\pi(1,i) = 0 \oplus 0 = 0$, and for any $i \ge 2$ we have $M_\pi(i,1) = 0 \oplus 0 = 0$. By definition of $S$, we have $$S_\pi(i,1) = \sum_{k=1}^i M_\pi(k,1) = 0+0+\dots+0 = 0$$ for each $1 \le i \le n+2$. Also, for $2 \le i \le n+2$ we have that $$S_\pi(1,i) = \sum_{k=1}^i M_\pi(1,k) = 0+1+0+\dots+0 = 1.$$ Moreover, since %$0$ is the first element in the extended permutation 
$\pi' = (0,\pi_1,\pi_2,\dots,\pi_n,n+1)$, %we have that $0$ precedes every element except itself and that there is no element that precedes $0$, so 
we have that $P_\pi(i,1) = 0$ for all $1 \le i \le n+2$ and $P_\pi(1,i) = 1$ for $2 \le i \le n+2$. Hence, we have shown that $P_\pi(1,i) = S_\pi(1,i)$ and $P_\pi(i,1) = S_\pi(i,1)$ for all $i$ with $1 \le i \le n+2$.%, so the base case is proven.

We will now prove the inductive step. Let $i$ and $j$ be integers with $1 \le i,j \le n+1$. Following the definition of $Z$ and $B$, we have that $Z_\pi(i+1,j+1) = A_\pi(i,j)$ and $B_{n+2}(i+1,j+1) = \delta(i+1,j+1) \oplus \delta(i+2,j+1)$. Since $i+1 = j+1 \iff i = j$ and $i+2 = j+1 \iff i+1 = j$, we also have that $B_{n+2}(i+1,j+1) = \delta(i,j)\oplus\delta(i+1,j)$. Hence, it follows that \[ M_\pi(i+1,j+1) = Z_\pi(i+1,j+1)\oplus B_{n+2}(i+1,j+1) = A_\pi(i,j) \oplus \delta(i,j)\oplus\delta(i+1,j). \]

Using induction, we assume that $P_\pi(i,j) = S_\pi(i,j)$, $P_\pi(i+1,j) = S_\pi(i+1,j)$, and $P_\pi(i,j+1) = S_\pi(i,j+1)$; we will prove that $P_\pi(i+1,j+1) = S_\pi(i+1,j+1)$. From the definition of $S$, we have the following equalities: 
\begin{align*}
S_\pi(i+1,j+1) &= \sum_{r=1}^{i+1} \sum_{c=1}^{j+1} M_\pi(r,c) \\
&= \sum_{r=1}^{i+1} \sum_{c=1}^{j+1} M_\pi(r,c) \oplus  2 \sum_{r=1}^i \sum_{c=1}^j M_\pi(r,c) \\
&= \sum_{r=1}^i \sum_{c=1}^j M_\pi(r,c) \oplus \sum_{r=1}^i M_\pi(r,j+1) \oplus \sum_{c=1}^j M_\pi(i+1,c) \\&\oplus M_\pi(i+1,j+1) \oplus \sum_{r=1}^i \sum_{c=1}^j M_\pi(r,c) \oplus \sum_{r=1}^i \sum_{c=1}^j M_\pi(r,c) \\
&= \sum_{r=1}^i \sum_{c=1}^j M_\pi(r,c) \oplus \left( \sum_{r=1}^i M_\pi(r,j+1) \oplus \sum_{r=1}^i \sum_{c=1}^j M_\pi(r,c) \right) \\&\oplus \left( \sum_{c=1}^j M_\pi(i+1,c) \oplus \sum_{r=1}^i \sum_{c=1}^j M_\pi(r,c) \right) \oplus M_\pi(i+1,j+1) \\
&= \sum_{r=1}^i \sum_{c=1}^j M_\pi(r,c) \oplus \sum_{r=1}^i \sum_{c=1}^{j+1} M_\pi(r,c) \oplus \sum_{r=1}^{i+1} \sum_{c=1}^j M_\pi(r,c) \oplus M_\pi(i+1,j+1) \\
&= S_\pi(i,j)\oplus S_\pi(i,j+1)\oplus S_\pi(i+1,j)\oplus M_\pi(i+1,j+1) \\
&= S_\pi(i,j)\oplus S_\pi(i,j+1)\oplus S_\pi(i+1,j)\oplus A(i,j) \oplus \delta(i,j) \oplus \delta(i+1,j)
\end{align*}
By the inductive assumption, we have that $S_\pi(i,j) = P_\pi(i,j), S_\pi(i,j+1) = P_\pi(i,j+1)$, and $S_\pi(i+1,j) = P_\pi(i+1,j)$, so it follows that \[ S_\pi(i+1,j+1) = P_\pi(i,j)\oplus P_\pi(i,j+1)\oplus P_\pi(i+1,j)\oplus A_\pi(i,j) \oplus \delta(i,j) \oplus \delta(i+1,j).\]
By Lemma \ref{lem:adjprec}, we have that \[ A_\pi(i,j) = P_\pi(i,j)\oplus P_\pi(i+1,j)\oplus P_\pi(i,j+1)\oplus P_\pi(i+1,j+1) \oplus \delta(i,j) \oplus \delta(i+1,j). \] Adding $A_\pi(i,j) \oplus P_\pi(i+1,j+1)$ to both sides and reducing the resulting sums modulo $2$, we have that 
\begin{align*} A_\pi(i,j) \oplus P_\pi(i+1,j+1) \oplus A_\pi(i,j) &= P_\pi(i,j)\oplus P_\pi(i+1,j)\oplus P_\pi(i,j+1)\oplus P_\pi(i+1,j+1) \\&\oplus \delta(i,j) \oplus \delta(i+1,j) \oplus A_\pi(i,j) \oplus P_\pi(i+1,j+1), \end{align*} and so \begin{align*} P_\pi(i+1,j+1) &= P_\pi(i,j)\oplus P_\pi(i+1,j)\oplus P_\pi(i,j+1)\oplus \delta(i,j) \oplus \delta(i+1,j) \oplus A_\pi(i,j) \\&=  P_\pi(i,j)\oplus P_\pi(i,j+1)\oplus P_\pi(i+1,j)\oplus A_\pi(i,j) \oplus \delta(i,j) \oplus \delta(i+1,j). \end{align*} Therefore, it follows that $ S_\pi(i+1,j+1) = P_\pi(i+1,j+1)$, and so by induction we have that $S_\pi(i,j) = P_\pi(i,j)$ for all $i$ and $j$ with $ 1 \le i,j \le n+2$. Hence, $P_\pi = S_\pi(Z_\pi(A_\pi)+B_{n+2})$.
\end{proof}
%-------------------------------------------
With the insight obtained from the above lemmas on conversions between the two main matrices of a permutation, we can address the following decision problems. The first one, involving the \textbf{cds} move graph of a permutation, was posed in \cite{p3}. \\\\
%-------------------------------------------
\indent Decision problem $1$: \textbf{CDS MOVE GRAPH} \cite{p3}\\

\indent INSTANCE: A finite graph $G$.\\
\indent QUESTION: Is there a permutation $\pi$ with \textbf{cds} move graph isomorphic to $G$?\\
 %-------------------------------------------
Although this question of determining whether a permutation with move graph isomorphic to a given graph exists still appears to be difficult, we found that a polynomial time algorithm exists for the following simpler decision problem.\\
%-------------------------------------------
\vspace{0.2in}

\indent Decision problem $2$: \textbf{LABELED CDS MOVE GRAPH}\\

\indent INSTANCE: A finite graph $G$ with vertices labeled $(1,2)$ through $(n-1,n)$.\\
\indent QUESTION: Is there a permutation $\pi$ with \textbf{cds} move graph equal to $G$?\\
%-------------------------------------------
\begin{lemma}
The decision problem \textbf{LABELED CDS MOVE GRAPH} is in $\mathbf{P}$, where $\mathbf{P}$ denotes the set of decision problems solvable in polynomial time.
\end{lemma}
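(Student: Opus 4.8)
The idea is to convert the existence question into a polynomial-size system of $\mathbb F_2$-linear conditions on the precedence matrix of a hypothetical realizing permutation, solve that system, and verify the few candidates it produces.

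First I would recast the instance in matrix language. Reading the vertices of $G$ as the non-root pointers $(1,2),\dots,(n-1,n)$, a permutation $\pi\in S_n$ has \textbf{cds} move graph equal to $G$ exactly when deleting the two root rows and columns of the adjacency matrix $A_\pi$ of $\mathrm{OG}(\pi)$ yields $G$, i.e. $\text{cent}_{r,c}(A_\pi)=G$. By Lemma~\ref{lem:adjprec} and the two lemmas following it (which give $A_\pi=F(P_\pi)+B_{n+1}$ and $P_\pi=S(Z(A_\pi)+B_{n+2})$), such a $\pi$ exists if and only if $G$ can be completed to a symmetric $(n+1)\times(n+1)$ matrix $A$ over $\mathbb F_2$ with zero diagonal such that $P:=S(Z(A)+B_{n+2})$ is the precedence matrix of a permutation --- equivalently, such that the relation ``$P(i,j)=1$'' is a transitive total order on $\{0,1,\dots,n+1\}$ with least element $0$ and greatest element $n+1$. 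When it is, $\pi$ is read off from $P$, and necessarily $A_\pi=A$ since $S$ and $Z$ are injective. The only freedom in the completion $A$ lies in its two ``root vectors'' (the adjacencies of each root to the non-root vertices) together with the root-to-root bit; call these the $2n-1$ \emph{border unknowns}.

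Second, I would turn $P$ into an explicit affine function of the border unknowns. The telescoping identity used in the proof that $P_\pi=S(Z(A_\pi)+B_{n+2})$ --- that summing $F(P)(r,c)=P(r,c)\oplus P(r{+}1,c)\oplus P(r,c{+}1)\oplus P(r{+}1,c{+}1)$ over a rectangle collapses to its four corners --- together with the fact that the first and last rows and columns of a precedence matrix are the same for every permutation, expresses each entry of $P$ as a known $\mathbb F_2$-affine function of the $2n-1$ border unknowns, all other terms being sums of known entries of $G$. Imposing the conditions on $P$ that are linear, namely $P(i,i)=0$ and $P(i,j)\oplus P(j,i)=1$ for $i\ne j$, together with the parity constraints $G$ must satisfy because overlap graphs are Eulerian, yields an $\mathbb F_2$-linear system in the $2n-1$ border unknowns; I would solve it by Gaussian elimination to obtain its affine solution space $\mathcal A$. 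Then, for each $A$ arising from a point of $\mathcal A$, I would form $P=S(Z(A)+B_{n+2})$, test in polynomial time whether ``$P(i,j)=1$'' is a transitive total order with $0$ least and $n+1$ greatest, and when it is, read off $\pi$ and check $\text{cent}_{r,c}(A_\pi)=G$. The instance is a YES-instance if and only if some candidate passes.

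Every individual step above is polynomial, so the algorithm runs in $|\mathcal A|$ times a polynomial, and the crux --- the step I expect to be the main obstacle --- is bounding $|\mathcal A|$. The linear system is wildly overdetermined (on the order of $n^2$ antisymmetry equations against only $2n-1$ unknowns), so one expects $\mathcal A$ to be essentially trivial; the real work is to prove this, presumably by showing that the even-degree conditions at the non-root vertices force one root's adjacency vector from the other, so that the genuine freedom reduces to the $O(1)$ bits that merely record move-graph-preserving symmetries of $\pi$ (such as reversal). Once $\dim\mathcal A=O(\log n)$ is established, brute-force verification over the surviving completions finishes the proof that \textbf{LABELED CDS MOVE GRAPH}$\,\in\mathbf P$.
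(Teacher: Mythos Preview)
Your framework---complete the labeled move graph $G$ to a full adjacency matrix $A$, invert through $P=S(Z(A)+B_{n+2})$, and test the resulting $P$---matches the paper's. The gap is exactly where you flag it: the bound $\dim\mathcal A=O(\log n)$ is not just unproven, it is false. A short induction on $i+j$ (using only that $A$ is symmetric with zero diagonal, which your parametrization builds in) shows that $P(i,i)=0$ and $P(i,j)\oplus P(j,i)=1$ hold automatically for \emph{every} such completion $A$; these equations therefore impose no constraint at all on the border unknowns. The Eulerian conditions at the $n-1$ non-root vertices determine $\vec u$ from $\vec v$, and the two root conditions collapse to the single equation $\sum_i v_i + x \equiv 0$, so after all your $\mathbb F_2$-linear constraints one has $\dim\mathcal A=(2n-1)-n=n-1$ and $|\mathcal A|=2^{n-1}$. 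Brute-forcing $\mathcal A$ is exponential.

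What the paper does instead is exploit a condition that is \emph{not} $\mathbb F_2$-linear: the \emph{integer} row sums of the central block of a genuine precedence matrix form a permutation of $\{0,1,\dots,n-1\}$, so in particular some row sums to $0$ over~$\mathbb Z$. Fixing which row is the zero row turns into an explicit formula for $\vec v$ (each entry of that row of $P$ is an affine function of one partial sum of $\vec v$), producing a single candidate $\vec v$ per choice of row. That yields at most $n$ candidates for $\vec v$, hence $2n$ candidate completions $A$ after the two choices of the root--root bit $x$, each verifiable in polynomial time. This integer row-sum trick is the missing idea; your $\mathbb F_2$ system cannot see it.
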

%-------------------------------------------
\begin{proof}
Let $G$ be such a labeled move graph on $n-1$ vertices, and let $M$ be the adjacency matrix of $G$. If there is some permutation $\pi$ on $n$ vertices with move graph adjacent to $G$, then the overlap graph of $\pi$ will consist of $G$ with two additional handle vertices added and additional edges adjacent to at least one handle added as well. The adjacency matrix of the overlap graph of $\pi$ is thus an $(n+1) \times (n+1)$ matrix $A$ such that $A(i+1,j+1) = M(i,j)$ for $1 \le i,j \le n-1$.

Since the overlap graph of a permutation is an Eulerian graph, we have that $A(i,n+1) \equiv  \sum_{j=1}^n A(i,j)\pmod 2$, so the final row and column of the adjacency matrix $A$ are determined by the remaining rows and columns. 

Let $P \in M_{n+2}$ be the precedence matrix of the permutation $\pi$. Then, row $1$ of $P$ is the vector $[0\ 1\ 1\ \dots\ 1]$ and row $n+2$ is the vector $[0\ 0\ 0\ \dots\ 0]$. For each $i$ with $1 \le i \le n$, let $s_i$ be the number of indices $j$ such that $P(i+1,j) = 1$. Let $\pi_i$ be the element in position $i$ of the permutation $\pi$. Then, we have that $s_{\pi_i} = n+1-i$, since $\pi_i$ precedes the elements $\pi_{i+1}$ through $\pi_n$ and $n+1$ in the permutation $\pi$. If we exclude the entries in columns $0$ and $(n+1)$ from the sum, it will be $n-i$, since all $\pi_i$ in the permutation precede $n+1$ and are preceded by $0$. Therefore, the row sums of the central submatrix of the precedence matrix $P$ form a permutation of the sequence $(0,1,2,\dots, n-1)$.

Let $T_n$ be the set of all $n \times n$ matrices $C$ such that the sums $\sum_{j=1}^n C(i,j)$ for $1 \le i \le n$ form a permutation of $(0,1,2,\dots,n-1)$, $C(i,i) = 0$, and for all $i \not= j$ we have $C(i,j) = 1-C(j,i)$. We claim that a matrix $C$ is the central submatrix of the precedence matrix of some permutation $\pi$ of length $n$ if, and only if, $C \in T_n$. We showed that if $C$ is the central submatrix of a permutation's precedence matrix, then $C$ is an element of $T_n$ (the final two conditions follow since no element can precede itself, and if $i$ precedes $j$ then $j$ does not precede $i$), and so it suffices to show that each element of $T_n$ corresponds to some permutation. 

First, note that if $\pi$ and $\pi'$ are distinct permutations in $S_n$, then $\pi_i \not= \pi'_i$ for some $i>0$. This implies that different rows in the matrices associated with $\pi$ and $\pi'$ will have sum $(n-i)$. Since there is only one row in each matrix with this sum, we have that the two matrices are distinct. Hence, since $|S_n| = n!$, there are $n!$ distinct matrices in $T_n$ corresponding to permutations. 

We claim that $|T_n| = n!$. By our earlier argument, it suffices to show that each element of $T_n$ corresponds to some permutation. Furthermore, since each of the $n!$ permutations corresponds to a distinct element of $T_n$, we already have $|T_n| \ge n!$, so it suffices to show that $|T_n| \le n!$. To show this, we proceed by induction on $n$. For $n = 1$, the only $1 \times 1$ matrix whose row sums form a permutation of $(0)$ is the matrix $\begin{bmatrix}0\\\end{bmatrix}$, so we indeed have $|T_1| \le 1$ and the base case holds. 

Now assume that $|T_n| \le n!$ for $n = k$; we will prove that this also holds for $n=k+1$. Let $C$ be a matrix in $T_{k+1}$. Then, by definition, there is some integer $i$ with $1 \le i \le k+1$ such $C(i,j) = 0$ for all $j$ with $1 \le j \le k+1$. By definition of $T_n$, this implies that $C(j,i) = 1$ for each $1 \le j \le k+1$ such that $j \not= i$. Let $C'$ be the $k \times k$ submatrix of $C$ with the $i$th row and $i$th column removed. Then, the sum of each row in $C'$ is equal to the corresponding sum in $C$ minus $1$, and the row with sum $0$ in $C$ was removed to form $C'$, so the row sums of $C'$ form a permutation of $(0,1,2,\dots,k-1)$. Furthermore, the equalities $C'(i,i) = 0$ for $1 \le i \le k$ and $C'(i,j) = 1-C'(j,i)$ for $1 \le i,j \le k$, $i \not= j$ follow from the corresponding properties of $C$, since the indices of the deleted row and column were equal. Thus, we have that $C' \in T_{k}$. By our inductive hypothesis, there are at most $k!$ matrices $C' \in T_k$, so since there are $k+1$ possible locations of the zero row in the matrix $T_{k+1}$, there are at most $(k+1)!$ possibilities for the matrix $C$. Hence, we have that $|T_{k+1}| \le (k+1)!$, and so by induction our claim is proven.

Therefore, we have that an $n \times n$ matrix $C$ is the central submatrix of the precedence matrix of some permutation $\pi \in S_n$ if, and only if, $C \in T_n$. 

Let $M$ be the adjacency matrix of the $(n-1)$ vertex move graph $G$, let $\vec{u} \in \mathbb{F}_2^n$ such that $u_i=1$ if, and only if, the vertex $(i-1,i)$ is adjacent to the last handle $(n,n+1)$ after it is added, and let $\vec{v}\in \mathbb{F}_2^n$ such that $v_i= 1$ if, and only if, the vertex $(i-1,i)$ is adjacent to the first handle $(0,1)$ after it is added (so in particular $v_1 = 0$). Then, the complete adjacency matrix, in block form, is the matrix \[ A = \left[ \begin{array}{c|c|c}
0&\vec{v}^T&x \\\hline
\vec{v}&M&\vec{u} \\\hline
x&\vec{u}^T&x
\end{array} \right],\] where $x \in \{ 0,1 \}$. If $A$ is the adjacency matrix of a permutation, then since the overlap graph of any permutation is Eulerian, we must have that $v_i+\sum_{j=1}^{n-1} M(i,j) + u_i \equiv 0 \pmod 2$, so it follows that $\vec{u} = \vec{v} + M \cdot [1\ 1\ 1\ \dots\ 1]^T$. By our formula proven earlier, we have that if $A$ is the complete adjacency matrix (equal to $M$ with additional rows and columns added at the beginning and end) and it corresponds to a permutation $\pi$ with precedence matrix $P$, then $P = S(Z(A)+B_{n+1})$, where the functions $S$ and $Z$ and the matrix $B_{n+1}$ are as defined in the lemma. Using the formula above, we can compute the element at position $(i,j)$ of the matrix $P$ as follows: \[ P(i,j) = \sum_{r=1}^i \sum_{c=1}^j (Z(A)+B_{n+1})(r,c).\]

By definition of $Z$, $Z(A)(1,1) = 1$, $Z(A)(1,i) = Z(A)(i,1) = 1$ for all $2 \le i \le n$, and $Z(A)(i,j) = A(i,j)$ for all $2\le i,j \le n$. Now, let $A'$ be the matrix with all zeroes in the top row and left column, and the entries of $A$ in the remaining positions. Then, \[ B = B_{n+1} + \begin{pmatrix}1&0&0&\dots&0 \\
0&0&0&\dots&0 \\
\vdots&\vdots&\vdots&\ddots&\vdots \\
0&0&0&\dots&0 \\
0&0&0&\dots&0\end{pmatrix} = \begin{pmatrix}0&0&0&\dots&0&0 \\
0&1&1&\dots&0&0 \\
0&0&1&\dots&0&0 \\
\vdots&\vdots&\vdots&\ddots&\vdots&\vdots \\
0&0&0&\dots&1&1 \\
0&0&0&\dots&0&1\end{pmatrix}.\] It follows that \[ P(i,j) = \sum_{r=1}^i \sum_{c=1}^j B_{n+2}(r,c) \oplus \sum_{r=1}^i \sum_{c=1}^j A'(r,c) = \sum_{r=1}^i \sum_{c=1}^j B_{n+2}(r,c) \oplus \sum_{r=2}^i \sum_{c=2}^j A(r,c).\]
For $2 \le i,j \le n+1$, we have that $\sum_{r=2}^i \sum_{c=2}^j A(r,c) = \sum_{r=2}^i v_r \oplus \sum_{c=2}^j v_c + \sum_{r=3}^i \sum_{c=3}^j M(r-1,c-1)$ (where the second sum is taken to be $0$ if $i=2$ or $j=2$). Hence, for $2 \le i,j \le n+1$, we conclude that \[ P(i,j) = \left( \sum_{r=1}^i \sum_{c=1}^j B_{n+2}(r,c) \oplus \sum_{r=3}^i \sum_{c=3}^j M(r-1,c-1) \right) \oplus\left(\sum_{r=2}^i v_r \oplus \sum_{c=2}^j v_c\right). \] 

Note that the first sum can be computed directly given the matrix $M$ in polynomial time ($O(n^4)$ by computing each term directly, or $O(n^2)$ by applying dynamic programming to the partial sums). If the precedence matrix $P$ is in $T_n$, then the row sums $\sum_{j=2}^n P(i,j)$ for each $i$ (not taken modulo $2$) form a permutation of $(0,1,2,\dots,n-1)$, so there is some row with sum $0$. 

Thus, there must be some $i$ such that if $\vec w$ is the $n$-element vector over $\mathbb{F}_2^n$ with \[ w_j = \left( \sum_{r=1}^i \sum_{c=1}^j B_{n+2}(r,c) \oplus \sum_{r=3}^i \sum_{c=3}^j M(r-1,c-1) \right),\] then we must have that $\left(\sum_{r=2}^i v_r + \sum_{c=2}^j v_c\right) = w_j$ for each $1 \le j \le n$ (over $\mathbb F_2$). As the vector $\vec v$ can be uniquely determined from this system of $n$ equations, there are at most $n$ possible values of this vector, namely the vectors $\vec w$ determined by each row $i$ of the matrix $M$. 

Each entry in one of the $\vec w$ vectors corresponding to a row of the matrix can be computed using $O(n^2)$ operations, and there are $n$ entries in each vector and $n$ vectors, so all $n$ of the vectors $\vec w$ corresponding to the rows can be computed in $O(n^4)$, yielding the possible values for $\vec v$ in polynomial time. 

Given a vector $\vec v$, we can compute the vector $\vec u$ corresponding to the rightmost column using the formula $\vec u = \vec v + M \cdot [1,1,1,\dots,1]^T$, derived above. 
As each entry in $A$ except for the single number $x = A(1,n) = A(n,1)$ is determined and $x \in \{ 0,1 \}$, there are only two possible adjacency matrices $A$ for each $\vec v$. Therefore, there are $2n$ possible adjacency matrices $A$ that could correspond to permutations with move graph $G$ (with adjacency matrix $M$). Then, using the formula $P = S(Z(A)+B_{n+1})$, we can compute the precedence matrix associated with each adjacency matrix $A$ in polynomial time: since there are $O(n^2)$ terms in the sum used to compute each entry in the matrix $P$ and $O(n^2)$ entries, this computation requires at most $O(n^4)$ time and so is polynomial. Finally, for each precedence matrix $P$ the row sums can be computed and placed in increasing order in $O(n^2)$ time, thus allowing us to decide if $P \in T_n$ in polynomial time. As this process only needs to be performed for $2n$ matrices, we can determine if there is a possible adjacency matrix $A$ with labeled move graph $G$ corresponding to a permutation in $O(n^5)$ time, so we conclude that the problem \textbf{LABELED CDS MOVE GRAPH} is in \textbf{P}.
\end{proof}
%-------------------------------------------
%After presenting the permutation results related to graphs and matrices, we can now look at these two mathematical objects under which conditions are they sortable under their respective \textbf{cds} operation. Notice that now we are not restricting ourselves to talk about overlap graphs nor even Eulerian graphs. The results of the upcoming section will be applicable to a more generalized graph.
%-------------------------------------------
%-------------------------------------------
\section{Graph and Matrix Sortability}\label{sec4}
%-------------------------------------------
%-------------------------------------------
\subsection{Sortability under \textbf{gcds}}
%-------------------------------------------
%The definition of a parity cut to a simple graph, and thus not just to an Eulerian graph, is defined as follows:
A parity cut of any simple graph (and thus not just an Eulerian graph), is defined as follows:
%-------------------------------------------
\begin{defn}
Let $G = (V,E)$ be a simple graph. A generalized parity cut is a set $V_1 \subseteq V$ such that $\forall v \in V, \delta_{V_1}(v)$ is even.
\end{defn}
%-------------------------------------------
One can tell if a subset of the vertex set is a generalized parity cut from the lemma that appears below.
%-------------------------------------------
\begin{lem}
Let $G = (V,E)$ and $G' = (V,E') = \textbf{gcds}_{\{p,q\}}(G)$ for two vertices $p,q \in V$. If we consider a subset $V_1$ of $V$, then $V_1$ is a generalized parity cut of $G$ if, and only if, there exists a generalized parity cut $V_1'$ of $G'$ such that for all vertices $v \neq p,q \in V$, $v \in V_1$ if and only if $v \in V_1'$.
\end{lem}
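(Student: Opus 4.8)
The plan is to collapse the whole statement into a single parity-transformation identity and then run, in the general (non-Eulerian) setting, the two moves already used in the proof of Theorem~\ref{thm:pres_pc_eulerian}. For a vertex $w$ and a set $V_1$, write $\delta^G_{V_1}(w)$ and $\delta^{G'}_{V_1}(w)$ for the number of edges from $w$ into $V_1$ computed in $G$ and in $G'=\textbf{gcds}_{\{p,q\}}(G)$, and recall that $V_1$ is a generalized parity cut of a graph $H$ exactly when $\delta^H_{V_1}(w)$ is even for every vertex $w$; in matrix language, with $\vec x$ the characteristic vector of $V_1$ and $M_H$ the adjacency matrix, this says $M_H\vec x=\vec 0$ over $\mathbb F_2$. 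Two preliminaries: setting $u=p$ (resp.\ $u=q$) in Definition~\ref{defn:gcds} gives $f_p(p)f_q(v)+f_q(p)f_p(v)+f_p(v)=f_p(v)+f_p(v)=0$, so $p$ and $q$ are isolated in $G'$; hence $\delta^{G'}_{V_1}(p)=\delta^{G'}_{V_1}(q)=0$ for every $V_1$, and for $w\neq p,q$ the value $\delta^{G'}_{V_1}(w)$ does not depend on whether $p$ or $q$ is in $V_1$. Also, $\textbf{gcds}_{\{p,q\}}$ is defined only when $\{p,q\}\in E$, so in $G$ the vertex $q$ is a neighbour of $p$ and vice versa, while $p\notin N_G(p)$ and $q\notin N_G(q)$.

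Next I would derive the key identity. By Lemma~\ref{lem:gcds_eq_mcds} and Definition~\ref{defn:mcds}, set $M=M_G$ and $M'=M+MI_{pq}M$; writing $I_{pq}=e_pe_q^{T}+e_qe_p^{T}$, a short computation yields $(MI_{pq}M)(i,j)=M(i,p)M(q,j)+M(i,q)M(p,j)$, so that for any $\vec x$ over $\mathbb F_2$,
\[ (M'\vec x)_i=(M\vec x)_i+M(i,p)\,(M\vec x)_q+M(i,q)\,(M\vec x)_p. \]
Taking $\vec x$ to be the characteristic vector of $V_1$ and translating back into graph language gives, for every vertex $w$,
\[ \delta^{G'}_{V_1}(w)\equiv\delta^{G}_{V_1}(w)+f_p(w)\,\delta^{G}_{V_1}(q)+f_q(w)\,\delta^{G}_{V_1}(p)\pmod 2; \]
call this identity $(\star)$. (If one would rather not pass through matrices, $(\star)$ also falls out of the four-case edge bookkeeping on the pair $(f_p(w),f_q(w))$ used in Lemma~\ref{lem:pres_eulerian} and Theorem~\ref{thm:pres_pc_eulerian}: one records which edges at $w$ are toggled and that the edges $\{w,p\}$ and $\{w,q\}$ are deleted.)

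Now the two directions follow quickly. If $V_1$ is a generalized parity cut of $G$, every $\delta^G_{V_1}(\cdot)$ is even; in particular $\delta^G_{V_1}(p)\equiv\delta^G_{V_1}(q)\equiv 0$, so $(\star)$ collapses to $\delta^{G'}_{V_1}(w)\equiv 0$ for all $w$, and $V_1$ itself is a generalized parity cut of $G'$ agreeing with $V_1$ off $\{p,q\}$. Conversely, let $V_1'$ be a generalized parity cut of $G'$ and put $W=V_1'\setminus\{p,q\}$; by the first preliminary, every set $W\cup S$ with $S\subseteq\{p,q\}$ is still a generalized parity cut of $G'$. By the second preliminary, adding or removing $q$ from $S$ changes $\delta^G_{W\cup S}(p)$ by one (as $q\in N_G(p)$) and leaves $\delta^G_{W\cup S}(q)$ fixed (as $q\notin N_G(q)$), and symmetrically for $p$; hence there is a unique $S^{*}\subseteq\{p,q\}$ with $V_1:=W\cup S^{*}$ satisfying $\delta^G_{V_1}(p)\equiv\delta^G_{V_1}(q)\equiv 0$. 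Since this $V_1$ is still a generalized parity cut of $G'$, the left-hand side of $(\star)$ vanishes, forcing $\delta^G_{V_1}(w)\equiv 0$ for every $w$; hence $V_1$ is a generalized parity cut of $G$ with the prescribed restriction to $V\setminus\{p,q\}$, which is what the converse asks for.

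The single place where real work is needed is the identity $(\star)$; after it, each direction is two or three lines, mirroring the skeleton of the proof of Theorem~\ref{thm:pres_pc_eulerian} but with the Eulerian hypothesis removed. Two points to keep in mind for the final write-up: (i) in the converse, the freedom to fix the parities at $p$ and at $q$ independently is precisely where $\{p,q\}\in E$ is used — without that edge the two toggles interact and the \textbf{gcds} move itself is undefined; and (ii) one should say explicitly that $W\cup S^{*}$ remains a generalized parity cut of $G'$, which holds only because $p$ and $q$ are isolated there. I would also sanity-check $(\star)$ on a tiny example, such as a single edge $\{p,q\}$ with one pendant vertex, before committing to the parities.
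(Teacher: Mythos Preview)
Your proof is correct and takes a genuinely different route from the paper. The paper argues both directions by a direct combinatorial case analysis: it partitions according to which of $p,q$ lie in $V_1$ and which of $p,q$ are adjacent to the test vertex $u$, and in each of the resulting subcases it tracks by hand how many edges from $u$ into $V_1$ are toggled. The converse is handled similarly, by a three-way split on the parities of $\delta_{V_1'}(p)$ and $\delta_{V_1'}(q)$ in $G$.

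You instead pass through the matrix description $M'=M+MI_{pq}M$ to derive the single identity $(\star)$, which packages the entire case analysis into one line; both directions then follow in a couple of sentences. This is cleaner and more conceptual: it makes transparent \emph{why} the obstruction lives only at the parities $\delta^G_{V_1}(p)$ and $\delta^G_{V_1}(q)$, and it explains the converse mechanism (toggling membership of $p$ and $q$) as adjusting exactly those two parities independently. The paper's approach, by contrast, is self-contained at the graph level and does not invoke Lemma~\ref{lem:gcds_eq_mcds} or any linear algebra. One small wording point: in your closing remark (i), without the edge $\{p,q\}$ the two toggles would not ``interact'' so much as become useless---toggling $q$ would no longer affect $\delta^G_{V_1}(p)$ at all---but this is a side comment and does not affect the argument.
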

%-------------------------------------------
\begin{proof}
Consider a generalized parity cut of $G$ with set $V_1$. We will show that for any vertex $u \in V$, when \textbf{gcds} is performed on two vertices $p,q$, the parity of $\delta_{V_1}(u)$ does not change, and that therefore this is also a generalized parity cut for $G'$. 

We can again consider the sets $A = \{v \in V_1| v \neq q, \text{ v is adjacent only to $p$}\}, B = \{v \in V_1| v \neq p,  \text{ v is adjacent only to $q$}\}$, and $C = \{v \in V_1| \text{ v is adjacent to both $p$ and $q$}\}$, and suppose $u$ is adjacent to $x$ vertices in $A$, $y$ vertices in $B$, and $z$ vertices in $C$. We now have three cases:
\begin{enumerate}
	\item[\emph{Case 1.}] $p,q \in V_1$ and are adjacent to each other. %Since $p$ and $q$ are both in $V_1$ and adjacent to each other, we know that - change suggested by Liljana
Then $|N(p)| \setminus \{q\} = |A \bigcup C| = |A| + |C|$ is odd, and similarly $|B|+|C|$ is odd. Therefore, if $|C|$ is odd, $|A|+|B|$ is even, and if $|C|$ is even, $|A|+|B|$ is also even. 
    \begin{enumerate}
    \item $u$ is adjacent to both $p$ and $q$. Then when the \textbf{gcds} operation is performed, edges between $u$ and vertices of $A$ and $B$ will switch, and the edges to $p$ and $q$ will be removed which means $u' \in V(G')$ will have $\delta_{V_1}(u') = \delta_{V_1}(u)+(|A|-x)-x+(|B|-y)-y-2 = \delta_{V_1}(u)+|A|+|B|-2x-2y-2$ edges, so the change in $\delta_{V_1}(u)$ is even. 
    \item $u$ is adjacent to exactly one of $p$ and $q$. Without loss of generality, suppose $u$ is adjacent only to $p$. Then, when the \textbf{gcds} operation is performed, edges between $u$ and vertices of $B$ and $C$ will switch and the edge to $p$ will be removed, meaning $u' \in V(G')$ will have $\delta_{V_1}(u') = \delta_{V_1}(u)+(|B|-y)-y+(|C|-z)-z-1 = \delta_{V_1}(u)+|B|+|C|-2y-2z-1$ edges. This is again even, so either way the change in $\delta_{V_1}(u)$ is even.
    \end{enumerate}
	\item[\emph{Case 2.}] Exactly one of $p,q$ is in $V_1$: This is a similar argument to our argument in case $1$. 
Without loss of generality, suppose $p$ is in $V_1$. In this case, we know that $|B|+|C|$ is odd, and $|A|+|C|$ is even. Therefore, if $|C|$ is odd, $|A|+|B|$ is odd, and if $|C|$ is even, $|A|+|B|$ is also odd. 
    \begin{enumerate}
    \item If $u$ is adjacent to both $p$ and $q$, when the \textbf{gcds} operation is performed, edges between $u$ and vertices of $A$ and $B$ will switch, and the edge to $p$ will be removed meaning $u' \in V(G')$ will have $\delta_{V_1}(u') = \delta_{V_1}(u)+(|A|-x)-x+(|B|-y)-y-1 = \delta_{V_1}(u)+|A|+|B|-2x-2y-1$ edges, so the change in $\delta_{V_1}(u)$ is even. 
    \item Otherwise, if $u$ is adjacent to only $p$, when the \textbf{gcds} operation is performed, edges between $u$ and vertices of $B$ and $C$ will switch and the edge to $p$ will be removed, meaning $u' \in V(G')$ will have $\delta_{V_1}(u') = \delta_{V_1}(u)+(|B|-y)-y+(|C|-z)-z-1 = \delta_{V_1}(u)+|B|+|C|-2x-2y-1$ edges, which is also even. 
    \item Finally, if $u$ is adjacent to only $q$, when the \textbf{gcds} operation is performed, edges between $u$ and vertices of $A$ and $C$ will switch, meaning $u' \in V(G')$ will have $\delta_{V_1}(u') = \delta_{V_1}(u)+(|A|-x)-x+(|C|-z)-z = \delta_{V_1}(u)+|A|+|C|-2x-2z$ edges, which is also even.
    \end{enumerate}
	\item[\emph{Case 3.}] $p,q \not\in V_1$: In this case, we know that $|B|+|C|$ and $|A|+|C|$ are both even, so $|A|+|B|$ must be even. 
    \begin{enumerate}
    \item If $u$ is adjacent to both $p$ and $q$, when the \textbf{gcds} operation is performed, edges between $u$ and vertices of $A$ and $B$ will switch, so $u' \in V(G')$ will have $\delta_{V_1}(u') = \delta_{V_1}(u)+(|A|-x)-x+(|B|-y)-y = \delta_{V_1}(u)+|A|+|B|-2x-2y$ edges, which is even. 
    \item Otherwise, if $u$ is adjacent to only one of the vertices, without loss of generality $p$, when the \textbf{gcds} operation is performed, edges between $u$ and vertices of $B$ and $C$ will switch, which means $u' \in V(G')$ will have $\delta_{V_1}(u') = \delta_{V_1}(u)+(|B|-y)-y+(|C|-z)-z = \delta_{V_1}(u)+|B|+|C|-2x-2y$ edges, which is also even.
    \end{enumerate}
    \end{enumerate}

    In any case, for any vertex $u \neq p,q \in V$, performing \textbf{gcds} on $G$ doesn't change the parity of $\delta_{V_1}(u)$, so if $V_1$ is a generalized parity cut of $G$, it is also a generalized parity cut of $G'$.
    
Now consider a generalized parity cut, $V_1'$ of $G'$. Since $p$ and $q$ are isolated in $G'$, they can be moved in or out of $V_1'$ to create $V_1$, which will still be a generalized parity cut. If we consider $V_1'$ excluding $p$ and $q$, there are 3 cases:
  \begin{enumerate}
  	\item The parities of $\delta_{V_1'}(p)$ and $\delta_{V_1'}(q)$ are both even: In this case, since applying \textbf{gcds} doesn't change the parity of $\delta_{V_1}(u)$ for any vertex $u$, $V_1'$ is a generalized parity cut of $G$ also.
    \item Exactly one of $\delta_{V_1'}(p)$ and $\delta_{V_1'}(q)$ is odd: without loss of generality suppose only $\delta_{V_1'}(p)$ is odd. Then, let $V_1$ be $V_1' \bigcup \{q\}$. Since $p$ is adjacent to $q$, this means that $\delta_{V_1}(p)$ will now be even, so again, by the preservation of parity, $V_1$ will be a generalized parity cut of $G$. 
  	\item $\delta_{V_1'}(p)$ and $\delta_{V_1'}(q)$ are both odd: Similarly, in this case we can let $V_1$ be $V_1' \bigcup \{p,q\}$ and since $p$ and $q$ are adjacent to each other, this means that $\delta_{V_1}(p)$ and $\delta_{V_1}(q)$ will both now be even, so again, by the preservation of parity, $V_1$ will be a generalized parity cut of $G$.
  \end{enumerate}	
\end{proof}
%-------------------------------------------
We now have a condition for having a generalized parity cut with only one root.
%-------------------------------------------
\begin{thm}
A two-rooted graph $G = (V,E)$ has a generalized parity cut containing exactly one of the two roots and another cut containing exactly the other root if, and only if, its image under \textbf{gcds} does too.
\end{thm}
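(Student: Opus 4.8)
The plan is to obtain this theorem as an essentially immediate corollary of the preceding lemma, after recording one small but decisive observation: in any \textbf{gcds} operation $\textbf{gcds}_{\{p,q\}}$ the vertices $p$ and $q$ are \emph{non-root} vertices (Definition \ref{defn:gcds}), so the two roots $x,y$ of $G$ satisfy $x,y \notin \{p,q\}$. The preceding lemma pairs each generalized parity cut $V_1$ of $G$ with a generalized parity cut $V_1'$ of $G' = \textbf{gcds}_{\{p,q\}}(G)$ agreeing with $V_1$ on every vertex outside $\{p,q\}$, and conversely; together with the observation, this forces $x \in V_1 \iff x \in V_1'$ and $y \in V_1 \iff y \in V_1'$. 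Hence a cut contains exactly one of $x,y$ in one of the two graphs precisely when its partner contains exactly the same one of $x,y$ in the other. Both implications of the theorem then follow by quoting the lemma.

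Concretely, for the forward direction I would take a generalized parity cut $V_1$ of $G$ with $x \in V_1$, $y \notin V_1$ and a (possibly different) generalized parity cut $W_1$ of $G$ with $y \in W_1$, $x \notin W_1$; applying the preceding lemma to each yields generalized parity cuts $V_1'$ and $W_1'$ of $G'$, and by the observation $V_1'$ contains $x$ but not $y$ while $W_1'$ contains $y$ but not $x$, so $G'$ has the required pair of cuts. For the reverse direction I would run the ``if'' half of the preceding lemma on generalized parity cuts of $G'$ that contain exactly $x$ (resp.\ exactly $y$); as in that lemma's proof, this may require moving $p$ and/or $q$ in or out of the cut before it becomes a generalized parity cut of $G$, but since the only vertices whose side changes are the non-roots $p,q$, the resulting cuts of $G$ still contain exactly $x$ (resp.\ exactly $y$).

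The only delicate point — and the closest thing to an obstacle — is precisely this bookkeeping: one must check that the cut-adjustment appearing in the proof of the preceding lemma is confined to $p$ and $q$, so that the ``exactly one root'' condition is never disturbed when passing between $G$ and $G'$. Granting that, no work beyond the lemma is required.
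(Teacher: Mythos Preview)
Your proposal is correct and takes essentially the same approach as the paper: the paper's proof is the single sentence ``This follows directly from Lemma~\ref{lem:cent_pres_ker},'' which (despite the mislabeled reference) clearly intends the immediately preceding lemma on preservation of generalized parity cuts under \textbf{gcds}. Your write-up simply makes explicit the one point the paper leaves implicit---that $p,q$ are non-root by Definition~\ref{defn:gcds}, so root membership is unchanged by the cut-adjustment in the lemma---which is exactly the ``delicate point'' you flagged and is the only content needed beyond the citation.
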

%-------------------------------------------
\begin{proof}
This follows directly from Lemma \ref{lem:cent_pres_ker}. \end{proof}
%-------------------------------------------
Thus, we have a necessary, and sufficient, condition for a two-rooted graph to be sortable under \textbf{gcds}.
%-------------------------------------------
\begin{cor}
A two-rooted graph $G = (V,E)$ is \textbf{gcds} sortable if, and only if, there exists a generalized parity cut containing exactly one of the two roots and another cut containing exactly the other root.
\end{cor}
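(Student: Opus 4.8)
The plan is to deduce the corollary from the preceding theorem --- the invariance of the property ``there is a generalized parity cut containing exactly one root together with another containing exactly the other root'' under a single \textbf{gcds} move --- supplemented by two elementary facts: a termination observation and a description of the graphs on which no \textbf{gcds} move is possible. For termination, note from Definition~\ref{defn:gcds} that if $w$ is isolated in $G$ then $f_p(w)=f_q(w)=0$ and $f_w(v)=0$ for all $v$, so $w$ stays isolated in $\textbf{gcds}_{\{p,q\}}(G)$; and taking $u=p$ in the defining relation gives $f_p(p)f_q(v)+f_q(p)f_p(v)+f_p(v)=(f_q(p)+1)f_p(v)=0$ for every $v$ (using $f_q(p)=1$, as $\{p,q\}\in E$), so $p$, and likewise $q$, is isolated in $\textbf{gcds}_{\{p,q\}}(G)$. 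Hence the non-isolated vertices of $\textbf{gcds}_{\{p,q\}}(G)$ are among those of $G$ other than $p$ and $q$, so their number strictly decreases with each move.

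Next I analyze the ``stuck'' graphs. A \textbf{gcds} move is available on $G$ exactly when some edge joins two non-root vertices. Suppose $G$ admits no such move yet has an edge; I claim $G$ fails the property. Every edge of $G$ is then incident to a root $x$ or $y$, so each non-root vertex has all its neighbors in $\{x,y\}$. For any generalized parity cut $S$ and any non-root $v$: if $v$ is adjacent to $x$ only then $\delta_S(v)$ equals $1$ or $0$ according as $x\in S$ or not, and evenness forces $x\notin S$; symmetrically $v$ adjacent to $y$ only forces $y\notin S$; and $v$ adjacent to both forces $x\in S\iff y\in S$. Now split into cases. If some non-root vertex is adjacent to exactly one root, the relevant conclusion holds for \emph{every} cut $S$, so no cut contains that root at all, and in particular none contains it without the other. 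Otherwise every non-root vertex carrying an edge is adjacent to both roots: if such a vertex exists, every cut contains both roots or neither; if none exists, then no non-root vertex has an edge, so the only edge is $\{x,y\}$, and then any cut $S$ with $x\in S$, $y\notin S$ has $\delta_S(y)=1$, contradicting that $S$ is a generalized parity cut. In every case $G$ lacks the stated property.

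Given these two facts the corollary follows. ($\Rightarrow$) If $G$ is \textbf{gcds}-sortable there is a chain $G=G_0,G_1,\dots,G_k$ with $G_{i+1}=\textbf{gcds}_{\{p_i,q_i\}}(G_i)$ and $G_k$ edgeless. Every subset of an edgeless graph is a generalized parity cut, so $\{x\}$ and $\{y\}$ witness that $G_k$ has the property; applying the preceding theorem $k$ times from $G_k$ back to $G_0$ shows $G$ has the property. ($\Leftarrow$) Induct on the number of non-isolated vertices of $G$. If $G$ is edgeless it is sorted (Definition~\ref{defn:gcds_sort}), hence \textbf{gcds}-sortable. Otherwise, by the stuck-graph analysis some move $\{p,q\}$ is available; by the preceding theorem $G'=\textbf{gcds}_{\{p,q\}}(G)$ still has the property, and by the termination observation $G'$ has strictly fewer non-isolated vertices, so by induction $G'$ is \textbf{gcds}-sortable, and therefore so is $G$.

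The main obstacle is the stuck-graph step: one must first recognize that a graph admitting no \textbf{gcds} move but still carrying an edge has \emph{all} of its edges incident to the two roots, and then verify, in each of the three resulting configurations --- a non-root vertex attached to a single root, a non-root vertex attached to both roots, or merely the root--root edge --- that the generalized parity cuts are too constrained to place one root in and the other out. The termination input and the forward implication are routine once the preceding theorem is available.
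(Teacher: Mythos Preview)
Your proof is correct and follows the paper's approach: invoke the invariance theorem, analyze the graphs on which no \textbf{gcds} move is possible, and observe that the discrete graph trivially has the property. Your version is in fact more thorough than the paper's --- you supply an explicit termination argument and you handle the subcase in which the only remaining edge joins the two roots, a case the paper's brief proof glosses over.
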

%-------------------------------------------
\begin{proof}
Consider any fixed point graph. There must be some non-root vertex $v$ adjacent to at least one root and no other non-root vertices. However, in this case there cannot be a generalized parity cut containing only this root, because then the number of edges from $v$ to this cut will be only 1. Therefore, such a cut does not exist for any fixed point graph. Then, by the above theorem, no unsortable graph can have such a cut.
However, the discrete graph does have both generalized parity cuts (simply the sets of the roots alone), so every sortable graph must also have such cuts.
\end{proof}
%-------------------------------------------
\begin{cor}
\textbf{gcds} Inevitability
\end{cor}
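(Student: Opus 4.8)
The plan is to read this corollary as the statement that a two-rooted graph $G$ is \textbf{gcds}-sortable if, and only if, \emph{every} maximal sequence of \textbf{gcds} operations applied to $G$ terminates in the discrete graph --- equivalently, once $G$ is known to be sortable one can never get stuck at a non-discrete graph, no matter which legal moves are performed. The backward implication is immediate from Definition \ref{defn:gcds_sort}, so the content lies in the forward implication, which I would assemble from three ingredients: termination of \textbf{gcds}, invariance of the sortability criterion under a single move, and an identification of the \textbf{gcds}-terminal graphs satisfying that criterion.

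First I would prove termination. A direct computation from Definition \ref{defn:gcds} shows that after $\textbf{gcds}_{\{p,q\}}$ is applied, both $p$ and $q$ are isolated: taking $u=p$ in the defining condition gives $f_p(p)f_q(v)+f_q(p)f_p(v)+f_p(v) = (f_q(p)+1)\,f_p(v) = 0$ in $\mathbb{F}_2$, since $\{p,q\}\in E$ forces $f_q(p)=1$; the same holds with $u=q$. Moreover an isolated vertex stays isolated under any later \textbf{gcds} move: if $w$ is isolated and the move uses vertices $p',q'\neq w$, then $f_{p'}(w)=f_{q'}(w)=f_w(v)=0$ for every $v$, so no edge at $w$ is created. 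Hence each \textbf{gcds} move strictly increases the number of isolated vertices --- it isolates $p$ and $q$, which were not isolated because the edge $\{p,q\}$ was used, and it un-isolates nothing --- so on an $n$-vertex graph every sequence of \textbf{gcds} moves has length at most $\lfloor n/2\rfloor$. In particular a maximal sequence exists and ends at a graph $H$ on the same vertex set admitting no \textbf{gcds} move, i.e.\ a graph with no edge joining two non-root vertices.

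Next I would combine this with the two results immediately preceding this corollary. By the preservation theorem just above, the property ``$G$ has a generalized parity cut containing exactly one root and another generalized parity cut containing exactly the other root'' is unchanged by a single \textbf{gcds} move, hence unchanged along the whole maximal sequence, so the terminal graph $H$ has this property if and only if $G$ does; and by the sortability corollary immediately preceding, $G$ being sortable is exactly this property. Thus if $G$ is sortable, $H$ has the property, and it remains to check that the only \textbf{gcds}-terminal graph with the property is the discrete graph. Let $x,y$ be the roots and suppose $V_1$ with $x\in V_1$, $y\notin V_1$ and $V_2$ with $y\in V_2$, $x\notin V_2$ are generalized parity cuts of $H$. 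Each non-root vertex $v$ of $H$ has all of its neighbours in $\{x,y\}$, since $H$ admits no \textbf{gcds} move. If $\{v,x\}\in E(H)$ then, as $x\in V_1$ and $y\notin V_1$, the edge $\{v,x\}$ contributes $1$ and no edge to $y$ contributes, so $\delta_{V_1}(v)=1$ is odd, contradicting that $V_1$ is a generalized parity cut; symmetrically $\{v,y\}\in E(H)$ contradicts that $V_2$ is. Hence every non-root vertex of $H$ is isolated, so the only edge $H$ could contain is $\{x,y\}$; but that edge forces $\delta_{V_1}(y)=1$, again contradicting that $V_1$ is a generalized parity cut. Therefore $H$ is discrete, the chosen maximal sequence sorted $G$, and since the sequence was arbitrary the corollary follows.

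I expect the main obstacle to be the last step: making sure the case analysis on the neighbourhoods of non-root vertices of a \textbf{gcds}-terminal graph is genuinely exhaustive and uses only the ``exactly one root per cut'' shape of $V_1$ and $V_2$, in particular not overlooking the leftover edge $\{x,y\}$; together with the ``isolated stays isolated'' claim, which is precisely what turns the number of isolated vertices into a monotone potential and so delivers termination.
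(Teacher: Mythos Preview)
Your interpretation of the (bare) corollary title is the natural one, and your proof is correct. The paper itself gives no proof of this corollary at all --- it is stated as a heading only --- so there is nothing to compare against line by line. That said, your argument is exactly the one the paper sets up: the preservation theorem just above shows the two-parity-cut criterion is \textbf{gcds}-invariant, and the proof of the immediately preceding sortability corollary already contains the observation that a non-discrete fixed-point graph cannot satisfy the criterion. You have simply made this explicit and filled in the case analysis (including the leftover root-to-root edge) more carefully than the paper does.

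The one ingredient you add that the paper does not supply at this point is termination. Your ``isolated-vertex monotone potential'' argument is clean and correct: $\textbf{gcds}_{\{p,q\}}$ isolates both $p$ and $q$, isolated vertices remain isolated under later moves, so the isolated set grows strictly, bounding any run by $\lfloor n/2\rfloor$ moves. The paper instead derives termination (and in fact a sharper, path-length-independent statement) from the rank theorem that appears immediately \emph{after} this corollary, showing every run has length exactly $\tfrac{1}{2}\operatorname{rank}(\text{cent}_{r,c}(M))$. So your approach is more elementary and self-contained, while the paper's subsequent result gives quantitatively more; either suffices to close the argument.
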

%-------------------------------------------
%In this section, we also have results on \textbf{gcds} that relate to matrices. One of these results is provided in the theorem below.
%-------------------------------------------
\begin{thm}
Let $M$ be the adjacency matrix of a graph $G$ and $\text{cent}_{r,c}(M)$ be $M$ with the first and last rows and columns removed. The number of \textbf{gcds} operations it takes to reach any fixed point of $G$ is always $\frac{1}{2}$ rank($\text{cent}_{r,c}(M)$).
\end{thm}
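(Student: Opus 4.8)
The plan is to reduce the statement to a single fact about how the rank of the central submatrix changes under one move, and then induct on the length of a move sequence. Write $C=\text{cent}_{r,c}(M)$, which is just the adjacency matrix of the subgraph of $G$ induced on the non-root vertices, hence a symmetric $\mathbb F_2$-matrix with zero diagonal. Two elementary observations set things up. First, a graph is a fixed point of \textbf{gcds} exactly when it has no edge between two non-root vertices, i.e.\ exactly when $C=0$, equivalently $\text{rank}(C)=0$. Second, if $p,q$ are non-root vertices with $\{p,q\}\in E$, then by the entrywise formula derived in the proof of Lemma \ref{lem:gcds_eq_mcds}, the $(i,j)$-entry of the adjacency matrix of $\textbf{gcds}_{\{p,q\}}(G)$ for non-root $i,j$ is $M(i,j)+M(i,p)M(q,j)+M(i,q)M(p,j)$, which involves only entries of $C$; thus $\textbf{gcds}$ on $G$ restricts to $\textbf{mcds}$ on $C$, i.e.\ $\text{cent}_{r,c}$ of the image is obtained from $C$ by the rule $C\mapsto C+CI_{pq}C$ at the corresponding indices (consistent with the fact, easily checked from Definition \ref{defn:gcds}, that $p$ and $q$ become isolated).

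The heart of the argument is the rank lemma: \emph{if $C$ is a symmetric $\mathbb F_2$-matrix with zero diagonal and $C(p,q)=1$, then $\text{rank}(C+CI_{pq}C)=\text{rank}(C)-2$.} I would prove it by computing $\ker(C')$ for $C'=C+CI_{pq}C=C+\vec{c_p}\,\vec{c_q}^{\,T}+\vec{c_q}\,\vec{c_p}^{\,T}$, where $\vec{c_p},\vec{c_q}$ are columns $p,q$ of $C$. Using $\vec{c_p}=Ce_p$ and symmetry of $C$, a direct computation gives $C'e_p=C'e_q=0$ (here $C(p,q)=1$ and the zero diagonal are used), and also $\ker C\subseteq\ker C'$ (since $x\in\ker C$ forces $\vec{c_p}^{\,T}x=\vec{c_q}^{\,T}x=0$). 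Conversely, if $C'x=0$, put $a=(Cx)_p$ and $b=(Cx)_q$; then $C(x+a\,e_q+b\,e_p)=0$, so $\ker C'\subseteq\ker C+\langle e_p,e_q\rangle$. Combining, $\ker C'=\ker C+\langle e_p,e_q\rangle$. Finally $C(p,q)=1$ forces $e_p,e_q$ to be linearly independent modulo $\ker C$ (look at the $p$-th and $q$-th coordinates of $C(\alpha e_p+\beta e_q)$), so $\dim\ker C'=\dim\ker C+2$ and the rank drops by exactly $2$.

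Given the lemma, the theorem follows quickly. Each \textbf{gcds} move strictly decreases the nonnegative integer $\text{rank}(C)$, and a move is available whenever $\text{rank}(C)>0$, so the process always terminates at a fixed point. Now induct on the length of a path $G=G_0\to G_1\to\cdots\to G_k=H$ to a fixed point $H$: if $k=0$ then $G=H$ is a fixed point, so $\text{rank}(C)=0$ by the first observation; if $k\ge 1$, apply the inductive hypothesis to $G_1\to\cdots\to H$, giving $k-1=\tfrac12\text{rank}(\text{cent}_{r,c}(M_{G_1}))$, and apply the lemma, giving $\text{rank}(\text{cent}_{r,c}(M_{G_1}))=\text{rank}(C)-2$; hence $k=\tfrac12\text{rank}(C)$. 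In particular this count is independent of the chosen fixed point and of the order of moves.

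The main obstacle is the rank lemma, and inside it the delicate part is that the rank drops by \emph{exactly} two. The inclusion $\ker C'\subseteq\ker C+\langle e_p,e_q\rangle$ gives $\text{rank}(C')\ge\text{rank}(C)-2$ cheaply, but the reverse inequality rests on $e_p$ and $e_q$ remaining independent modulo $\ker C$, which is precisely where the hypothesis $C(p,q)=1$ — i.e.\ the validity of the move — is essential; without it the two new kernel vectors could partially collapse into $\ker C$ and the rank could fail to drop by a full $2$. (As a byproduct, since $\text{rank}(\text{cent}_{r,c}(M))$ must reach $0$ after integer steps, this also reproves that it is even.)
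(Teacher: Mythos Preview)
Your proof is correct and follows essentially the same strategy as the paper: show that each \textbf{gcds} move drops $\text{rank}(\text{cent}_{r,c}(M))$ by exactly $2$, and that fixed points are precisely where this rank is $0$. The only difference is in how the rank lemma is established: the paper factors $C'=(I+CI_{pq})C$, observes $\text{nullity}(I+CI_{pq})=2$, and combines a Sylvester-type bound with the fact that $e_p,e_q\notin\ker C$ to get the exact drop, whereas you compute $\ker C'=\ker C+\langle e_p,e_q\rangle$ directly and verify $e_p,e_q$ are independent modulo $\ker C$. Your route is slightly more self-contained (no appeal to Sylvester's inequality), but the two arguments are really the same idea viewed from the kernel side versus the rank side.
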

%-------------------------------------------
\begin{proof}
First, %by [reference], 
the rank of a skew symmetric matrix will always be even. Since $M$ is symmetric, $\text{cent}_{r,c}(M)$ must be also, and since we are working mod 2, it must also be skew symmetric, so the rank of $\text{cent}_{r,c}(M)$ will be even. Now, consider the kernel of $\text{cent}_{r,c}(M)$. Clearly, if there are any edges between non-root vertices $p$ and $q$, $\text{cent}_{r,c}(M_{(p-1)(q-1)}) = \text{cent}_{r,c}(M_{(q-1)(p-1)}) = 1$ and $\vec{e_{(p-1)}}, \vec{e_{(q-1)}}$ will not be in $\ker(\text{cent}_{r,c}(M))$. Then, if we consider $M' = \textbf{mcds}(M) = MI_{pq}M + M = (MI_{pq}+I)M$, as shown above, the nullity of $MI_{pq}+I$ will be $2$, and similarly, since the first and last columns of $MI_{pq}+I$ will be $\vec{e_{1}}$ and $\vec{e_{n}}$, the nullity of $\text{cent}_{r,c}(MI_{pq}+I)$ will be 2. Therefore, the rank of $M$ will decrease by 2 at every step.
Since $\text{cent}_{r,c}(M)$ is effectively the adjacency matrix of the subgraph induced on the set of all non-root vertices of $G$, when we reach a fixed point, no non-root vertices will be adjacent, so $\text{cent}_{r,c}(M)$ will be the zero matrix and have rank 0. Therefore, the number of steps to get from $G$ to any fixed point must be $\frac{1}{2}$ rank($\text{cent}_{r,c}(M)$). 
\end{proof}
%-------------------------------------------
The above shows the connection between graph theory and linear algebra when talking about \textbf{gcds} on simple graphs. Next, we examine the case of sortability under \textbf{cds} on matrices, which we refer to as \textbf{mcds}. %Thus, it is natural to now talk about sortability under \textbf{cds} on matrices, this referred to in the beginning of the present paper as \textbf{mcds}.
%-------------------------------------------
\subsection{Sortability under \textbf{mcds}}
%-------------------------------------------
%We begin to present our results on the topic of \textbf{mcds} sortability by giving a property that depends on the entries of the matrix being analyzed.
%-------------------------------------------
\begin{lem}\label{lem:mcds_zero_col}
Let $\vec{M}'_q$ be the $q$th column of $M' = \textbf{mcds}(M)$. If $M(p,q) = 1$, then $\vec{M}'_q = \vec{0}$. %Rephrasing suggested by editor #13.
\end{lem}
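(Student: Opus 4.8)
The plan is to compute the $q$th column of $M'$ directly from the definition $\textbf{mcds}(M) = M + MI_{pq}M$, working over $\mathbb{F}_2$ and using that $M$ (the adjacency matrix of a simple graph, as is the relevant case throughout this section) has zero diagonal. Writing $\vec e_k$ for the $k$th standard basis vector, the $q$th column of $M'$ is $\vec M'_q = M'\vec e_q = M\vec e_q + MI_{pq}(M\vec e_q) = \vec M_q + MI_{pq}\vec M_q$, where $\vec M_q$ denotes the $q$th column of $M$.

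The next step is to evaluate $I_{pq}\vec M_q$. Since the only nonzero entries of $I_{pq}$ are the $1$'s in positions $(p,q)$ and $(q,p)$, for any vector $\vec v$ we have $I_{pq}\vec v = v_q\vec e_p + v_p\vec e_q$. Applying this with $\vec v = \vec M_q$ gives $I_{pq}\vec M_q = M(q,q)\vec e_p + M(p,q)\vec e_q$. Here the hypotheses enter: $M(q,q) = 0$ because $M$ has zero diagonal, and $M(p,q) = 1$ by assumption. Hence $I_{pq}\vec M_q = \vec e_q$, so $MI_{pq}\vec M_q = M\vec e_q = \vec M_q$, and therefore $\vec M'_q = \vec M_q + \vec M_q = \vec 0$ over $\mathbb{F}_2$.

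Equivalently, one can argue entrywise using the identity $(M+MI_{pq}M)(i,j) = M(i,j) + M(i,p)M(q,j) + M(i,q)M(p,j)$ established in the proof of Lemma~\ref{lem:gcds_eq_mcds}: setting $j = q$ gives $\vec M'_q(i) = M(i,q) + M(i,p)M(q,q) + M(i,q)M(p,q) = M(i,q) + 0 + M(i,q) = 0$ for every $i$. I do not expect any real obstacle here; the only point to be careful about is that the mixed term $M(i,p)M(q,q)$ vanishes precisely because of the zero-diagonal condition — without it the $q$th column would instead equal $M(q,q)\,\vec M_p$, which is generally nonzero, so this hypothesis (inherited from $M$ being an adjacency matrix) is genuinely used.
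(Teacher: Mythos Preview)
Your proof is correct and essentially identical to the paper's: the paper uses exactly your entrywise computation $M'(k,q) = M(k,q) + M(k,p)M(q,q) + M(k,q)M(p,q) = M(k,q)+M(k,q) = 0$, and your column-vector version is just a repackaging of the same calculation. Your explicit remark that the step relies on $M(q,q)=0$ (the zero-diagonal condition) is a useful clarification, since the paper uses this silently.
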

%-------------------------------------------
\begin{proof}
By the definition of \textbf{mcds}, $M'(i,j) = M(i,j) + M(i,p)M(q,j) + M(i,q)M(p,j)$. If $M(p,q) = 1$, then for all $0 \leq k \leq n$, $M'(k,q) = M(k,q) + M(k,p)M(q,q) + M(k,q)M(p,q) = M(k,q)+M(k,q) = 0$.
\end{proof}
%-------------------------------------------
%The following lemma gives a relation between the kernel of a matrix and the kernel of the matrix after applying \textbf{mcds} to it.
%-------------------------------------------
\begin{lem}\label{lem:mcds_pres_ker}
Given matrix $M$, % its image under the \textbf{mcds} operation, 
$\text{ker}(M) \subseteq \text{ker}(M')$, where $M'=\textbf{mcds}(M)$.
\end{lem}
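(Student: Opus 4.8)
The plan is to read off the result directly from the algebraic definition of \textbf{mcds} in Definition \ref{defn:mcds}: $M' = \textbf{mcds}(M) = M + MI_{pq}M$, all arithmetic being over $\mathbb{F}_2$. The key structural observation is that $M$ occurs as a \emph{right} factor of every term, so that $M' = (I + MI_{pq})M$, where $I$ is the identity matrix of the appropriate size. (One checks $(I + MI_{pq})M = M + MI_{pq}M = M'$ by distributivity and associativity of matrix multiplication over $\mathbb{F}_2$.) From this factorization the containment of kernels is immediate.

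Concretely, I would take an arbitrary $\vec{x} \in \ker(M)$, so $M\vec{x} = \vec{0}$, and compute
\[
M'\vec{x} = (I + MI_{pq})M\vec{x} = (I + MI_{pq})\vec{0} = \vec{0},
\]
whence $\vec{x} \in \ker(M')$. Since $\vec{x}$ was arbitrary, $\ker(M) \subseteq \ker(M')$. Equivalently, without invoking the factorization one may expand $M'\vec{x} = M\vec{x} + MI_{pq}(M\vec{x})$ using associativity and then substitute $M\vec{x} = \vec{0}$ into both summands.

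There is essentially no obstacle in this argument; it is a one-line consequence of the definition. The only points worth a remark are that \textbf{mcds} is only defined when the pivot entry $M(p,q) = 1$, so the statement is tacitly understood in that regime, although the computation above never uses this hypothesis; and that everything takes place over $\mathbb{F}_2$, which is harmless since distributivity and associativity hold over any field. It is also worth noting that the reverse containment need not hold in general (for instance, Lemma \ref{lem:mcds_zero_col} shows that $\textbf{mcds}$ can zero out a column, enlarging the kernel), so the statement is genuinely an inclusion and not an equality.
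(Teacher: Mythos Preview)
Your proof is correct and essentially identical to the paper's own argument: factor $M' = (I + MI_{pq})M$ and observe that any $\vec{x}$ with $M\vec{x} = \vec{0}$ is annihilated by $M'$. The additional remarks about the reverse inclusion and the tacit hypothesis $M(p,q)=1$ are accurate but not needed for the lemma itself.
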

%-------------------------------------------
\begin{proof}
For any $\vec{x} \in \text{ker}(M), M'\vec{x} = (M+MI_{pq}M)\vec{x} = (I+MI_{pq})M\vec{x} = (I+MI_{pq})\vec{0} = \vec{0}$, so $\vec{x} \in \text{ker}(M')$.
\end{proof}
%-------------------------------------------
From the above, we have necessary and sufficient conditions of the columns of a matrix and that matrix after some \textbf{mcds} operation was applied to it.
%-------------------------------------------
\begin{thm}\label{thm:mcds_pres_gpcr}
An $n \times n$ matrix $M$ has an element $\vec{x}$ such that $x_1 = 0$ and $x_n = 1$ and an element $\vec{y}$ such that $y_1 = 1$ and $y_n = 0$ in $\text{ker}(M)$ if, and only if, its image under \textbf{mcds} does too.
\end{thm}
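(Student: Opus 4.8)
The plan is to separate the biconditional, using Lemma~\ref{lem:mcds_pres_ker} for one direction and a short ``coordinate correction'' argument for the other. Throughout write $M' = \textbf{mcds}_{\{p,q\}}(M) = M + MI_{pq}M$, and recall that for this to be a legitimate \textbf{mcds} move on a two-rooted matrix the indices $p$ and $q$ are non-root, i.e.\ $p,q \notin \{1,n\}$; this is the only property of $p,q$ the argument will use.

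For the forward implication: if $\ker(M)$ contains a vector $\vec{x}$ with $x_1 = 0$, $x_n = 1$ and a vector $\vec{y}$ with $y_1 = 1$, $y_n = 0$, then since $\ker(M) \subseteq \ker(M')$ by Lemma~\ref{lem:mcds_pres_ker}, the very same vectors $\vec{x}$ and $\vec{y}$ witness the conclusion for $M'$.

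For the reverse implication --- the real content --- suppose $\vec{x} \in \ker(M')$ with $x_1 = 0$, $x_n = 1$. From $\vec{0} = M'\vec{x} = M\vec{x} + MI_{pq}(M\vec{x})$ we obtain $M\vec{x} = MI_{pq}(M\vec{x})$. Set $\vec{z} = M\vec{x}$; a one-line computation gives $I_{pq}\vec{z} = z_q\,\vec{e}_p + z_p\,\vec{e}_q$, where $\vec{e}_p,\vec{e}_q$ are standard basis vectors, and hence $M I_{pq}\vec{z} = M(z_q\,\vec{e}_p + z_p\,\vec{e}_q)$. Combining, $M(\vec{x} + z_q\,\vec{e}_p + z_p\,\vec{e}_q) = M\vec{x} + M\vec{x} = \vec{0}$ over $\mathbb{F}_2$, so $\vec{x}^{*} := \vec{x} + z_q\,\vec{e}_p + z_p\,\vec{e}_q$ lies in $\ker(M)$. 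Since $\vec{x}^{*}$ and $\vec{x}$ differ only in coordinates $p$ and $q$, and $p,q \notin \{1,n\}$, we still have $x^{*}_1 = 0$ and $x^{*}_n = 1$. The identical argument applied to $\vec{y}$ yields $\vec{y}^{*} \in \ker(M)$ with $y^{*}_1 = 1$, $y^{*}_n = 0$, which finishes the proof.

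The step I expect to be the crux is the reverse direction, precisely because $\ker(M) \subseteq \ker(M')$ is in general a strict inclusion, so one cannot merely reuse a kernel vector of $M'$; what makes it work is that the discrepancy $M\vec{x}$ lies in the span of columns $p$ and $q$ of $M$, hence can be cancelled by altering $\vec{x}$ only in positions $p$ and $q$ --- exactly the positions left untouched by the constraints on the first and last coordinates. It is worth noting that this argument uses neither the validity condition $M(p,q)=1$ nor symmetry of $M$; only $p,q \notin \{1,n\}$ is needed, so the statement holds for arbitrary square matrices over $\mathbb{F}_2$ under a non-root \textbf{mcds} move.
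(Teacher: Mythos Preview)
Your proof is correct, and the reverse direction is handled by a genuinely different---and cleaner---argument than the paper's.

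The paper proves the reverse implication by dimension counting: it factors $M' = (I + MI_{pq})M$, computes the rank of $I + MI_{pq}$ in three separate cases depending on which of $M(p,q)$, $M(q,p)$ equal $1$, applies Sylvester's rank inequality to bound $\nul(M') - \nul(M)$, and then invokes Lemma~\ref{lem:mcds_zero_col} to identify the ``new'' basis vectors of $\ker(M')$ as $\vec{e}_p$ and $\vec{e}_q$, neither of which has the required first/last coordinates. Your argument bypasses all of this: from $(I+MI_{pq})(M\vec{x}) = 0$ you read off directly that $M\vec{x}$ is an $M$-linear combination of $\vec{e}_p$ and $\vec{e}_q$, and absorb that combination into $\vec{x}$ to produce an explicit $\vec{x}^* \in \ker(M)$ differing from $\vec{x}$ only at positions $p$ and $q$. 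This is constructive, avoids the case split entirely, and as you note requires neither $M(p,q)=1$ nor symmetry---only $p,q \notin \{1,n\}$. The paper's route does have the side benefit of making the structure $\ker(M') = \ker(M) + \mathrm{span}\{\vec{e}_p,\vec{e}_q\}$ visible via the dimension count, but for the theorem as stated your approach is both shorter and more general.
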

%-------------------------------------------
\begin{proof}
$\Rightarrow$ By Lemma $\ref{lem:mcds_pres_ker}$, if $M$ has such an $\vec{x}$ in its kernel, $M' = \textbf{mcds}_{\{p,q\}}(M)$ must too.

$\Leftarrow $ Consider an element $\vec{x} \in \ker(M')$. It must either be in the kernel of $M$ or it must be the case that $M\vec{x}$ is in the kernel of $I+MI_{pq}$. Now, $MI_{pq}$ has the form $\begin{bmatrix} \vec{0} & \cdots & \vec{q} & \cdots & \vec{p} & \cdots & \vec{0} \end{bmatrix}$, where $\vec{p}$ is the $p^{th}$ column of M and the $q^{th}$ column of $MI_{pq}$, and $\vec{q}$ is the $q^{th}$ column of M and the $p^{th}$ column of $MI_{pq}$. 

Now, if $M(p,q) = 1$ and $M(q,p) = 1$, $I+MI_{pq}$ must have rank $n-2$, because $n-2$ columns will just be $\vec{e_i}, i \in \{1, ..., n\} \setminus {p,q}$, and $(I+MI_{pq})(p,q) = (I+MI_{pq})(q,p) = 0$ because $(MI_{pq})(p,p) = M(p,q) = 1$ and $(MI_{pq})(q,q) = M(q,p) = 1$. Now by Sylvester's theorem, $\rank(M') \geq \rank(M)+\rank(I+MI_{pq}) - n = \rank(M) + (n-2) - n = \rank(M) -2$, which means that by the Rank-Nullity Theorem, $\nul(M') = n - \rank(M') \leq n-(\rank(M) - 2) = \nul(M)+2$. Also, because $M(p,q) = 1$ and $M(q,p) = 1$, $\vec{p}$ and $\vec{q}$ cannot be $\vec{0}$, which means that $\hat{e_p}$ and $\hat{e_q}$ were not elements of $\text{ker}(M)$. However, we know from Lemma $\ref{lem:mcds_zero_col}$ that the $p^{th}$ and $q^{th}$ columns of $M'$ will both be $\vec{0}$, so $\hat{e_p}$ and $\hat{e_q}$ will both be in $\ker(M')$. This accounts for the 2 additional vectors in the basis of $\text{ker}(M')$, so the only vectors added to the basis are $\hat{e_p}$ and $\hat{e_q}$, neither of which fit the condition that their first and last elements sum to 1. Therefore, the desired $\vec{x}$ will be in the kernel of $M'$ only if it was in the kernel of $M$.

Similarly, if exactly one of $M(p,q)$ and $M(q,p)$ is $0$, $I+MI_{pq}$ must have rank $n-1$ and can have at most 1 additional vector in the basis of its kernel. We know that one of $M(p,q)$ and $M(q,p)$ is $1$, and therefore either $\vec{p}$ or $\vec{q}$ cannot be $\vec{0}$, which means that either $\hat{e_p}$ or $\hat{e_q}$ were not elements of $\text{ker}(M)$. However, by Lemma $\ref{lem:mcds_zero_col}$ if $M(p,q) = 1$, the $q^{th}$ column of $M'$ will be $\vec{0}$ and if $M(q,p) = 1$, the $p^{th}$ column of $M'$ will be $\vec{0}$ so exactly one of $\hat{e_p}$ or $\hat{e_q}$ will be in $\ker(M')$, which accounts for the additional vector in the basis of $\text{ker}(M')$. Thus, the only vector added to the basis will be exactly one of $\hat{e_p}$ and $\hat{e_q}$, neither of which fits the condition that their first and last elements sum to 1, so the desired $\vec{x}$ will be in the kernel of $M'$ only if it was in the kernel of $M$.

Finally, if both $M(p,q)$ and $M(q,p)$ are $0$, $I+MI_{pq}$ must have dimension $n$ and $M'$ will have the same kernel as $M$.
\end{proof}
%-------------------------------------------
From the above, a \textbf{mcds} sortability criterion was found.
%-------------------------------------------
\begin{cor}\label{cor:mcds_sort_cond}
A matrix is \textbf{mcds}-sortable if, and only if, its kernel contains an element $\vec{x}$ such that $x_1 = 0$ and $x_n = 1$ and an element $\vec{y}$ such that $y_1 = 1$ and $y_n = 0$.
\end{cor}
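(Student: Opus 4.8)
The plan is to read this off from the \textbf{gcds} sortability criterion proved earlier, using the \textbf{mcds}--\textbf{gcds} dictionary together with the elementary observation that generalized parity cuts are precisely the $\mathbb{F}_2$-kernel vectors of the adjacency matrix. Concretely: for a graph $G$ with adjacency matrix $M$ (symmetric, zero diagonal), a subset $V_1$ of the vertices is a generalized parity cut if and only if its $0/1$ indicator vector $\vec{s}$ lies in $\ker(M)$, since $(M\vec{s})_v \equiv \delta_{V_1}(v) \pmod 2$ for every vertex $v$ (this is the general-graph version of Theorem~\ref{thm:pcspace_eq_kernel}, needing no Eulerian hypothesis). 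Under this identification, ``a generalized parity cut containing exactly the first root'' corresponds to a kernel vector $\vec{y}$ with $y_1 = 1$ and $y_n = 0$, and ``a generalized parity cut containing exactly the last root'' to a kernel vector $\vec{x}$ with $x_1 = 0$ and $x_n = 1$. Since $M$ is \textbf{mcds}-sortable if and only if $G$ is \textbf{gcds}-sortable (Lemma~\ref{lem:gcds_eq_mcds} with Definitions~\ref{defn:gcds_sort} and~\ref{defn:mcds_sort}), the corollary is exactly the translation of the earlier characterization of \textbf{gcds}-sortability of two-rooted graphs in terms of generalized parity cuts.

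If instead one wants to stay entirely on the matrix side, I would argue as follows, which I expect to be slightly more work. Abbreviate by $\mathcal{K}(M)$ the property ``$\ker(M)$ contains a vector $\vec{x}$ with $x_1=0$, $x_n=1$ and a vector $\vec{y}$ with $y_1=1$, $y_n=0$''. Theorem~\ref{thm:mcds_pres_gpcr} says exactly that $\mathcal{K}(M) \iff \mathcal{K}(\textbf{mcds}_{\{p,q\}}(M))$ for every valid move, so it suffices to show (i) the zero matrix satisfies $\mathcal{K}$, and (ii) the only \textbf{mcds}-fixed point satisfying $\mathcal{K}$ is the zero matrix. Granting these, the ``only if'' direction is immediate: a sorting chain $M = M_0, M_1, \dots, M_k = 0$ has $\mathcal{K}(M_k)$ by (i), and pulling this back step by step through Theorem~\ref{thm:mcds_pres_gpcr} gives $\mathcal{K}(M)$. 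For the ``if'' direction, assume $\mathcal{K}(M)$ and keep applying \textbf{mcds} moves while one is available; each valid move on $(p,q)$ puts $\vec{e}_p$ and $\vec{e}_q$ into the kernel (Lemma~\ref{lem:mcds_zero_col}) while $\ker$ can only grow (Lemma~\ref{lem:mcds_pres_ker}) and $\vec{e}_p \notin \ker(M)$ since $M(p,q) = 1$, so $\nul$ strictly increases and the process halts at a fixed point $M^{\ast}$ after finitely many steps (equivalently, each step lowers $\rank(\text{cent}_{r,c}(M))$ by $2$, as in the preceding rank theorem). By Theorem~\ref{thm:mcds_pres_gpcr}, $\mathcal{K}(M^{\ast})$ holds, so by (ii) $M^{\ast} = 0$ and $M$ is \textbf{mcds}-sortable.

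The heart of this second route, and the step I expect to be the main obstacle, is (ii). Here is the intended argument. A fixed point $M^{\ast}$ has $M^{\ast}(p,q)=0$ for all non-root indices $p,q$ (the diagonal case $p=q$ using that an adjacency matrix has zero diagonal), i.e.\ $\text{cent}_{r,c}(M^{\ast}) = 0$; thus columns $2, \dots, n-1$ of $M^{\ast}$ are supported only on rows $1$ and $n$. A kernel vector $\vec{x}$ with $x_1 = 0$ and $x_n = 1$ writes column $n$ of $M^{\ast}$ as an $\mathbb{F}_2$-sum of columns whose indices lie in $\{2,\dots,n-1\}$, so column $n$ is likewise supported only on rows $1,n$; symmetrically, $\vec{y}$ with $y_1 = 1$, $y_n = 0$ shows column $1$ is supported only on rows $1, n$. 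Hence every column of $M^{\ast}$ vanishes off rows $1, n$, and by symmetry every row vanishes off columns $1, n$, so $M^{\ast}$ is zero except possibly at the four corner entries; the zero diagonal forces $M^{\ast}(1,1)=M^{\ast}(n,n)=0$. Finally, if $M^{\ast}(1,n)=M^{\ast}(n,1)=1$ then $M^{\ast}\vec{z}=\vec{0}$ forces $z_1=z_n=0$, so $\ker(M^{\ast})$ would contain no vector with $x_n = 1$, contradicting $\mathcal{K}(M^{\ast})$; therefore $M^{\ast}=0$. Throughout, both routes tacitly use that the matrices in question are adjacency matrices of two-rooted graphs (symmetric, zero diagonal), and that \textbf{mcds} preserves this class: it keeps the matrix symmetric (because $I_{pq}$ is symmetric) and keeps the diagonal zero, the $p$-th and $q$-th rows and columns becoming zero by Lemma~\ref{lem:mcds_zero_col}.
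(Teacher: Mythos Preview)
Your second route---staying on the matrix side and using Theorem~\ref{thm:mcds_pres_gpcr} to reduce to analyzing fixed points---is exactly the paper's approach; your treatment of the corner entry $M^\ast(1,n)$ is in fact more careful than the paper's, which asserts $M(i,n)=0$ for all $i$ a bit too quickly. Your first route, translating via Lemma~\ref{lem:gcds_eq_mcds} to the \textbf{gcds} sortability criterion for two-rooted graphs, is a valid alternative the paper does not take here.
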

%-------------------------------------------
\begin{proof}
For any fixed point matrix $M$, $1 < i,j < n$, we must have that $M(i,j) = 0$. If $\ker(M)$ contains an element $\vec{x}$ such that $x_1 = 0$ and $x_n = 1$, $M(i,n)$ must also be 0 for all $1 \leq i \leq n$, and similarly if $\ker(M)$ contains an element $\vec{y}$ such that $y_1 = 1$ and $y_n = 0$, $M(i,1)$ must also be 0 for all $1 \leq i \leq n$. Therefore the only such fixed point matrix must be the zero matrix, so by Theorem $\ref{thm:mcds_pres_gpcr}$ every fixed point of a matrix with this property must be the zero matrix and therefore such a matrix must be \textbf{mcds}-sortable. Similarly no fixed point of a matrix without this property can be the zero matrix, so a matrix is \textbf{mcds}-sortable if and only it has this property.
\end{proof}
%-------------------------------------------
A second \textbf{mcds} sortability criterion is presented below.
%-------------------------------------------
\begin{cor}\label{cor:mcds_sort_cond2}
Given a matrix $A \in M_n$, let $M = \text{cent}_c(A)$ and let $\vec{a_1}$ and $\vec{a_n}$ be the first and last columns of $A$, respectively. Then, $A$ is \textbf{mcds}-sortable if, and only if, there exist vectors $\vec{u},\vec{u'} \in \mathbb{F}_2^{n-2}$ such that $M\vec{u} = \vec{a_1}$ and $M\vec{u'} = \vec{a_n}$. 
\end{cor}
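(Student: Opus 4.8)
The plan is to derive this directly from Corollary~\ref{cor:mcds_sort_cond}, which already characterizes \textbf{mcds}-sortability of $A$ by the existence of a kernel vector $\vec{x}$ with $x_1 = 0$, $x_n = 1$ together with a kernel vector $\vec{y}$ with $y_1 = 1$, $y_n = 0$. The only work remaining is to re-express each of these two kernel conditions as a solvability statement for a linear system whose coefficient matrix is $M = \text{cent}_c(A)$.

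First I would write $A$ in terms of its columns as $A = [\,\vec{a_1} \mid \vec{a_2} \mid \cdots \mid \vec{a_n}\,]$, so that by definition of $\text{cent}_c$ we have $M = [\,\vec{a_2} \mid \cdots \mid \vec{a_{n-1}}\,]$. A vector $\vec{x} \in \mathbb{F}_2^n$ lies in $\ker(A)$ exactly when $\sum_{j=1}^n x_j\,\vec{a_j} = \vec{0}$. Suppose $\vec{x} \in \ker(A)$ with $x_1 = 0$ and $x_n = 1$; then, since negation is the identity over $\mathbb{F}_2$, we get $\vec{a_n} = \sum_{j=2}^{n-1} x_j\,\vec{a_j} = M\vec{u}$ with $\vec{u} = (x_2, \dots, x_{n-1}) \in \mathbb{F}_2^{n-2}$. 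Conversely, given $\vec{u} \in \mathbb{F}_2^{n-2}$ with $M\vec{u} = \vec{a_n}$, the vector $\vec{x} = (0,\,u_1,\dots,u_{n-2},\,1)$ satisfies $A\vec{x} = \vec{0}$ and has $x_1 = 0$, $x_n = 1$. Hence $\ker(A)$ contains a vector with first coordinate $0$ and last coordinate $1$ if, and only if, $\vec{a_n}$ lies in the column space of $M$, i.e.\ if, and only if, the system $M\vec{u'} = \vec{a_n}$ is solvable.

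The symmetric argument handles the other condition: $\ker(A)$ contains a vector $\vec{y}$ with $y_1 = 1$ and $y_n = 0$ if, and only if, $\vec{a_1} = \sum_{j=2}^{n-1} y_j\,\vec{a_j} = M\vec{u}$ for some $\vec{u} \in \mathbb{F}_2^{n-2}$, that is, if, and only if, $M\vec{u} = \vec{a_1}$ is solvable. Combining these two equivalences with Corollary~\ref{cor:mcds_sort_cond} then yields exactly the claimed statement.

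I do not anticipate any genuine obstacle here; the proof is a routine translation, and the only point requiring a little care is bookkeeping the column indices of $M$ against those of $A$ and using that working over $\mathbb{F}_2$ eliminates any sign issues when moving a column from one side of an equation to the other.
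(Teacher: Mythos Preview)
Your proof is correct and follows essentially the same approach as the paper: both derive the result directly from Corollary~\ref{cor:mcds_sort_cond} by observing that a kernel vector of $A$ with prescribed first and last coordinates corresponds, via the column decomposition $A\vec{x} = x_1\vec{a_1} + M\vec{u} + x_n\vec{a_n}$, to a solution of $M\vec{u} = \vec{a_1}$ or $M\vec{u'} = \vec{a_n}$ over $\mathbb{F}_2$. If anything, your treatment of the converse direction is slightly cleaner than the paper's, which phrases it by contrapositive and contains a minor index slip.
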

%-------------------------------------------
\begin{proof}
We will first prove that if $A$ is \textbf{mcds}-sortable, then such vectors $\vec{u}$ and $\vec{u'}$ exist. By Corollary \ref{cor:mcds_sort_cond}, we have that if $A$ is \textbf{mcds}-sortable, then $\ker(A)$ contains some vectors $\vec{v}$ and $\vec{w}$ such that $v_1 = 1$, $v_n = 0$, $w_1 = 0$, $w_n = 1$.

Let $\vec{u} \in \mathbb{F}_2^{n-2}$ be the column vector containing the elements $v_2$ through $v_{n-1}$, and define $\vec{u'}$ similarly for $\vec{w}$. Then, by definition of the kernel, we have that $M \vec u +\vec  a_1 = 0$ and $M \vec u' + \vec a_n = 0$. Since vector addition is its own inverse in $\mathbb{F}_2^{n-2}$, we have that $M\vec u = \vec a_1$ and $M \vec u' = \vec a_n$, as desired.

Conversely, if no such vector $\vec{u}$ exists, then there is no solution $\vec u$ to the equation $M \vec u = \vec a_1$ and hence no solution to $M \vec u + \vec a_1 = \vec 0$. Therefore, there can be no vector $\vec v \in \ker(A)$ such that $v_1 = 0$ and $v_n = 1$, so by Corollary \ref{cor:mcds_sort_cond}, $A$ is not \textbf{mcds}-sortable. The argument for when no such $\vec{u'}$ exists is similar, so both directions hold.
\end{proof}

%-------------------------------------------
\section{Counting \textbf{gcds}-sortable Graphs}
%-------------------------------------------
Using the sortability criterion proven in Corollary \ref{cor:mcds_sort_cond}, we prove several lemmas regarding the structure of general and Eulerian \textbf{gcds}-sortable graphs. Afterwards, we derive formulas counting the numbers of both general and Eulerian \textbf{gcds}-sortable graphs on $n$ vertices.

Let $M_n$ be the set of all $n \times n$ matrices $A$ with coefficients in $\mathbb{F}_2$ such that all diagonal elements of $A$ are zero and $A = A^T$. Note that an $n \times n$ matrix is the adjacency matrix of a simple undirected graph on $n$ vertices if, and only if, it is in $M_n$.

For brevity, we will say that an $n \times n$ matrix $A$ is \textit{Eulerian} if it is the adjacency matrix of an Eulerian graph.
%-------------------------------------------
\begin{defn}
For a vector $\vec{v} \in \mathbb{F}_2^n$, we define the \text{one's complement vector} of $\vec{v}$, denoted $\vec{v}^C$, to be the vector \[ \vec{v}^C = \vec{v}+ [1\ 1\ \dots\ 1]^T,\] where the vector addition is taken over $\mathbb{F}_2^n$.
\end{defn}
%-------------------------------------------
Using the definition above, we have the following lemma:
%-------------------------------------------
\begin{lemma}\label{lem:gcdscount0}
Let $A \in M_n$ be the adjacency matrix of an Eulerian graph $G$, let $M = \text{cent}_c(A)$, and for $1 \le i \le n$ let $\vec{a_i}$ be the $i$th column of $A$. Then, for any vector $\vec{u}$ such that $M\vec{u} = \vec{a_1}$, we have that $M\vec{u}^C = \vec{a_n}$.
\end{lemma}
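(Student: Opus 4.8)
The plan is to compute $M\vec{u}^C$ directly, using linearity over $\mathbb{F}_2$ together with the Eulerian hypothesis. Write $\vec{u}^C = \vec{u} + \vec{e}$, where $\vec{e} = [1\ 1\ \dots\ 1]^T \in \mathbb{F}_2^{n-2}$ is the all-ones vector. Then, since $M\vec{u} = \vec{a_1}$ by hypothesis, we get
\[ M\vec{u}^C = M\vec{u} + M\vec{e} = \vec{a_1} + M\vec{e}. \]
The next step is to identify $M\vec{e}$. By definition $M = \text{cent}_c(A)$ is exactly the matrix whose columns are columns $2, 3, \dots, n-1$ of $A$, so $M\vec{e}$ is the sum of those columns, i.e. $M\vec{e} = \vec{a_2} + \vec{a_3} + \dots + \vec{a_{n-1}}$. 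Combining, $M\vec{u}^C = \vec{a_1} + \vec{a_2} + \dots + \vec{a_{n-1}} = \sum_{i=1}^{n-1} \vec{a_i}$.

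It then remains to show $\sum_{i=1}^{n-1} \vec{a_i} = \vec{a_n}$, which over $\mathbb{F}_2$ is equivalent to $\sum_{i=1}^{n} \vec{a_i} = \vec{0}$. This is where the hypothesis that $G$ is Eulerian is used: the $j$th coordinate of $\sum_{i=1}^{n} \vec{a_i}$ is $\sum_{i=1}^{n} A(j,i)$, which is precisely the degree of vertex $v_j$ in $G$. Since $G$ is Eulerian, every such degree is even, hence $\equiv 0 \pmod 2$, so $\sum_{i=1}^{n} \vec{a_i} = \vec{0}$ in $\mathbb{F}_2^n$. Rearranging gives $\sum_{i=1}^{n-1} \vec{a_i} = \vec{a_n}$, and therefore $M\vec{u}^C = \vec{a_n}$, as claimed.

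I do not expect a genuine obstacle here; the argument is essentially a one-line computation once one observes that $M\vec{e}$ is the sum of the central columns of $A$. The only thing requiring a little care is the bookkeeping of which columns of $A$ appear: columns $2$ through $n-1$ coming from $M\vec{e}$, together with column $1$ from the term $\vec{a_1}$, produce every column of $A$ except the $n$th, which is then matched against $\vec{a_n}$ using the vanishing of the mod-$2$ column sum of the adjacency matrix of an Eulerian graph.
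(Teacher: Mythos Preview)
Your proof is correct and follows essentially the same approach as the paper: both arguments expand $M\vec{u}^C = M\vec{u} + M\vec{e}$, identify $M\vec{e}$ with the sum $\vec{a_2}+\dots+\vec{a_{n-1}}$ of the central columns, and then use the Eulerian hypothesis to conclude that $\sum_{i=1}^n \vec{a_i}=\vec{0}$ over $\mathbb{F}_2$. Your write-up is in fact slightly more streamlined than the paper's, which routes the same computation through $M(\vec{u}+\vec{u}^C)$ before cancelling $M\vec{u}+\vec{a_1}$.
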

%-------------------------------------------
\begin{proof}
By definition, for each $\vec{u} \in \mathbb{F}_2^{n-2}$, we have $\vec{u}^C = [1\ 1\ \dots\ 1]^T + \vec{u}$. If $M \vec{u} = \vec{a_1}$, then we have that $M \vec u = \vec{a_1}$ and so $M \vec{u} + \vec{a_1} = \vec{0}$. Since $G$ is Eulerian, we have that $$\vec{a_1}+(\vec{a_2}+\dots+\vec{a_{n-1}})+\vec{a_n} = \vec{0},$$ so $$M \left( [1\ 1\ \dots\ 1]^T \right)  + \vec{a_1}+\vec{a_n} = \vec{0}.$$ Therefore, $M (\vec{u}+\vec{u}^C)+\vec{a_1}+\vec{a_n} = \vec 0$, so by distributivity and commutativity we have that $M \vec{u} + \vec{a_1} + M\vec{u}^C+\vec{a_n} = 0$. Since $M\vec{u}+\vec{a_1} = 0$, it follows that $M\vec{u}^C + \vec{a_n} = \vec{0}$, and since addition is its own inverse over $\mathbb{F}_2^n$ we have that $M \vec{u}^C = \vec{a_n}$.
\end{proof}
%-------------------------------------------
A property regarding the dot product of specific vectors is shown below.
%-------------------------------------------
\begin{lemma}\label{lem:gcdscount1}
If $A$ is a matrix in $M_n$ and $\vec{u}, \vec{v} \in \mathbb{F}_2^n$, then we have $(A \vec{u}) \bullet \vec{v} = (A \vec{v}) \bullet \vec{u}$ and $(A \vec u) \bullet \vec{u} = 0$.
\end{lemma}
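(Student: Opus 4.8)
The plan is to verify both identities by a direct coordinate expansion over $\mathbb{F}_2$, using only the two defining properties of matrices in $M_n$: symmetry $A = A^T$ and vanishing diagonal, $A(i,i) = 0$ for all $i$.

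For the first identity I would start from the definition of the dot product and expand the matrix-vector product, writing $(A\vec{u})\bullet\vec{v} = \sum_{i=1}^n (A\vec{u})_i v_i = \sum_{i=1}^n \sum_{j=1}^n A(i,j)\, u_j v_i$. Interchanging the roles of the summation indices $i$ and $j$ and then applying $A(i,j) = A(j,i)$ turns this double sum into $\sum_{i=1}^n \sum_{j=1}^n A(j,i)\, u_j v_i = \sum_{j=1}^n \Big(\sum_{i=1}^n A(j,i) v_i\Big) u_j = \sum_{j=1}^n (A\vec{v})_j u_j = (A\vec{v})\bullet\vec{u}$. Equivalently, and perhaps more cleanly, one can observe that $(A\vec u)\bullet \vec v$ is the $1\times 1$ scalar $\vec v^{\,T} A \vec u$, hence equal to its own transpose $\vec u^{\,T} A^T \vec v = \vec u^{\,T} A \vec v = (A\vec v)\bullet \vec u$; this argument is valid over any commutative ring, in particular over $\mathbb{F}_2$.

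For the second identity I would expand $(A\vec{u})\bullet\vec{u} = \sum_{i=1}^n\sum_{j=1}^n A(i,j)\, u_i u_j$ and separate the diagonal from the off-diagonal terms. The diagonal contribution $\sum_{i=1}^n A(i,i)\, u_i u_i$ vanishes because every diagonal entry of $A$ is zero; this is precisely where the zero-diagonal hypothesis is used. For the off-diagonal part, I pair the term indexed $(i,j)$ with the term indexed $(j,i)$ whenever $i \neq j$; by symmetry $A(i,j)\,u_i u_j = A(j,i)\,u_j u_i$, so each such pair contributes $2\,A(i,j)\,u_i u_j$, which is $0$ in $\mathbb{F}_2$. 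Summing, the whole expression is $0$.

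I do not anticipate any real obstacle; the only points worth flagging are that the vanishing of the self-pairing $(A\vec{u})\bullet\vec{u}$ genuinely relies on characteristic $2$ (over $\mathbb{Z}$ the off-diagonal part is merely an even integer, which is exactly what is needed modulo $2$), and that the zero-diagonal assumption on $M_n$ is indispensable — without it the diagonal terms $A(i,i)\,u_i$ would survive. Note also that the second identity does not follow formally from the first by setting $\vec v = \vec u$, so the short diagonal-pairing computation is actually required.
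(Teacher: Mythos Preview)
Your proposal is correct and follows essentially the same approach as the paper: a direct coordinate expansion using symmetry for the first identity, and for the second, splitting the double sum into the diagonal part (killed by $A(i,i)=0$) and off-diagonal pairs (each contributing twice, hence vanishing over $\mathbb{F}_2$). Your additional remarks---the clean $\vec v^{\,T}A\vec u = (\vec v^{\,T}A\vec u)^T$ formulation, and the observation that the second identity does not follow from the first by setting $\vec v=\vec u$---are correct and go slightly beyond what the paper records, but the underlying argument is identical.
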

%-------------------------------------------
\begin{proof}
Expanding the dot product, we have that \[(A \vec{u}) \bullet \vec{v} = \sum_{i=1}^n (A \vec{u})_i \vec{v}_i = \sum_{i=1}^n \left( \sum_{j=1}^n A(i,j)\vec{u}_j \right) \vec{v}_i = \sum_{i=1}^n \sum_{j=1}^n A(i,j) \vec{u}_j \vec{v}_i. \] Since $A = A^T$, we have that \[ \sum_{i=1}^n \sum_{j=1}^n A(i,j) \vec{u}_j \vec{v}_i = \sum_{i=1}^n \sum_{j=1}^n A(j,i) \vec{u}_i \vec{v}_j = \sum_{i=1}^n \sum_{j=1}^n A(i,j) \vec{v}_j \vec{u}_i = (A \vec{v}) \bullet \vec{u}, \] as desired. Additionally, we have that 
\begin{align*}
(A \vec u) \bullet \vec u &= \sum_{i=1}^n 
\sum_{j=1}^n A(i,j) \vec{u}_i \vec{u}_j \\&= \sum_{i=2}^n \sum_{j=1}^{i-1} A(i,j) \vec{u}_i \vec{u}_j + \sum_{i=1}^n A(i,i)\vec{u}_i^2 + \sum_{i=2}^n \sum_{j=1}^{i-1} A(j,i) \vec{u}_i \vec{u}_j \\&= 2 \sum_{i=2}^n \sum_{j=1}^{i-1} A(i,j) \vec{u}_i \vec{u}_j + 0 = 0,\end{align*} where the addition is taken over $\mathbb F_2$.
\end{proof}
%-------------------------------------------
%A function that will later help give information on \textbf{mcds}-sortable matrices is now defined.
%-------------------------------------------
\begin{defn}
For each positive integer $n$, let $F : M_n \times \mathbb{F}_2^n \times \mathbb{F}_2^n \to M_{n+2}$ be the function defined as follows. For $A \in M_n$ and $\vec{u_1}, \vec{u_2} \in \mathbb{F}_2^n$, we define \[ F(A, \vec{u_1}, \vec{u_2}) = \left[\begin{array}{c|c|c}
0&\left(A\vec{u_1}\right)^T& \left(A \vec{u_1} \right)^T \bullet \vec{u_2} \\\hline
A\vec{u_1}&A&A\vec{u_2} \\\hline
\left(A \vec{u_1} \right)^T \bullet \vec{u_2} &\left(A\vec{u_2}\right)^T&0 \end{array} \right]. \]
\end{defn}
%-------------------------------------------
%-------------------------------------------
\begin{lemma}\label{lem:gcdscount2}
For any \textbf{mcds}-sortable $n \times n$ matrix $A \in M_n$, there exist vectors $\vec{u_1}, \vec{u_2} \in \mathbb{F}_2^{n-2}$ such that \[A = F(\fcent(A), \vec{u_1}, \vec{u_2}).\]  
\end{lemma}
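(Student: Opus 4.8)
The plan is to reconstruct the full $n \times n$ matrix $A$ from its central submatrix $C = \fcent(A)$ together with two coefficient vectors witnessing the \textbf{mcds}-sortability criterion of Corollary \ref{cor:mcds_sort_cond2}. First I would apply Corollary \ref{cor:mcds_sort_cond2}: since $A$ is \textbf{mcds}-sortable, with $M = \text{cent}_c(A)$ there exist $\vec{u},\vec{u'} \in \mathbb{F}_2^{n-2}$ with $M\vec{u} = \vec{a_1}$ and $M\vec{u'} = \vec{a_n}$, where $\vec{a_1},\vec{a_n}$ are the first and last columns of $A$. The key observation is that $M = \text{cent}_c(A)$ is the $n \times (n-2)$ matrix obtained by deleting the first and last \emph{columns} of $A$, so its row blocks decompose as: row $1$ of $M$ is $\vec{v}_1^T$ where $\vec v_1$ collects entries $A(1,2),\dots,A(1,n-1)$; the middle $n-2$ rows of $M$ form exactly $C = \fcent(A)$; and the last row of $M$ is $\vec v_n^T$ with $\vec v_n$ collecting $A(n,2),\dots,A(n,n-1)$. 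Since $A$ is symmetric, $\vec v_1$ is precisely the vector of middle entries of the column $\vec a_1$, i.e. $\vec v_1 = \fcent\text{-part of }\vec a_1$, and similarly for $\vec v_n$ and $\vec a_n$. I will set $\vec{u_1} = \vec u$ and $\vec{u_2} = \vec{u'}$ and verify $A = F(C,\vec{u_1},\vec{u_2})$ entry by entry, using the block form in the definition of $F$.

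The verification splits into the three row/column bands. For the central $(n-2)\times(n-2)$ block, $F(C,\vec{u_1},\vec{u_2})$ has $C$ there by construction, which equals $\fcent(A)$ by definition. For the first column of $A$ restricted to the middle rows: the equation $M\vec u = \vec a_1$, read off in the middle $n-2$ rows, says exactly $C\vec u = (\text{middle entries of }\vec a_1)$; and the block form of $F$ puts $C\vec{u_1} = C\vec u$ in precisely that position, so these middle entries agree. Reading $M\vec u = \vec a_1$ in the \emph{first} row gives $\vec v_1 \bullet \vec u = A(1,1) = 0$ — but wait, $A(1,1) = 0$ automatically since $A \in M_n$, so this is the consistency condition $(C\vec u')$-type relation; more carefully, the top-left entry of $F$ is $0$ and matches $A(1,1)$, and the top-row middle entries of $F$ are $(C\vec{u_1})^T$, which by symmetry of $A$ must equal the first-row middle entries $\vec v_1^T$ of $A$ — and indeed $C\vec u = \vec v_1$ is exactly what the middle rows of $M\vec u = \vec a_1$ assert (using $A = A^T$ so that $\vec v_1 = \fcent$-part of $\vec a_1$). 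The analogous argument with $M\vec{u'} = \vec a_n$ handles the last column and last row. The only entries left are the two corners $A(1,n) = A(n,1)$, which the definition of $F$ sets to $(C\vec{u_1})^T \bullet \vec{u_2}$; I would show this equals $A(1,n)$ by noting $A(1,n)$ appears in the first row of $M\vec{u'} = \vec a_n$ — that row says $\vec v_1 \bullet \vec{u'} = A(1,n)$ — and $\vec v_1 = C\vec{u_1}$ from the previous step, so $(C\vec{u_1})\bullet\vec{u_2} = \vec v_1 \bullet \vec{u'} = A(1,n)$, and symmetrically $A(n,1) = \vec v_n \bullet \vec u = (C\vec{u_2})\bullet\vec{u_1}$, which equals $(C\vec{u_1})\bullet\vec{u_2}$ by the symmetry property in Lemma \ref{lem:gcdscount1}. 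This consistency between the two expressions for the corner is exactly where Lemma \ref{lem:gcdscount1} is needed.

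The main obstacle I anticipate is purely bookkeeping: carefully tracking how $\text{cent}_c(A)$, $\fcent(A)$, and the columns $\vec a_1, \vec a_n$ interlock — specifically, that deleting columns from $A$ and then looking at rows $1$, $2$ through $n-1$, and $n$ recovers $\vec v_1^T$, $C$, and $\vec v_n^T$ respectively, and that symmetry of $A$ identifies $\vec v_1$ with the middle segment of $\vec a_1$. Once that indexing is pinned down, each of the nine blocks of $F(C,\vec{u_1},\vec{u_2})$ matches the corresponding block of $A$ by either the definition of $C$, one of the two equations $M\vec{u_j} = \vec a_{\cdot}$ restricted to the appropriate band, the symmetry $A = A^T$, or Lemma \ref{lem:gcdscount1}; the well-definedness of the two corner entries (that the formula gives a single consistent value) is the one genuinely non-formal point and is dispatched by Lemma \ref{lem:gcdscount1}. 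I would present the argument block by block rather than entry by entry to keep it readable.
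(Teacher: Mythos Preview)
Your proposal is correct and follows essentially the same route as the paper: invoke Corollary~\ref{cor:mcds_sort_cond2} to obtain $\vec{u_1},\vec{u_2}$ with $M\vec{u_1}=\vec{a_1}$ and $M\vec{u_2}=\vec{a_n}$, observe that the middle $n-2$ rows of $M$ are exactly $C=\fcent(A)$, and then verify the nine blocks of $F(C,\vec{u_1},\vec{u_2})$ against $A$ using these equations and the symmetry $A=A^T$. One small simplification: your appeal to Lemma~\ref{lem:gcdscount1} for the corner consistency is unnecessary, since $F$ places the \emph{same} expression $(C\vec{u_1})^T\bullet\vec{u_2}$ in both the $(1,n)$ and $(n,1)$ positions and $A$ is symmetric, so once you have verified $(C\vec{u_1})^T\bullet\vec{u_2}=A(1,n)$ via the first row of $M\vec{u_2}=\vec{a_n}$, the other corner matches automatically.
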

%-------------------------------------------
\begin{proof}
For $1 \le i \le n$, we let $\vec{a_i}$ denote the $i$th column of the matrix $A$. Note that since $A$ is the adjacency matrix of an undirected graph, we have $A = A^T$, so $\vec{a_i}$ is also the $i$th row of $A$. We will also let $A(i,j)$ denote the element of $A$ in row $i$ and column $j$. Finally, we will let $M = \text{cent}_c(A)$ and $C = \fcent(A)$.

By Corollary \ref{cor:mcds_sort_cond2}, since $A$ is \textbf{mcds}-sortable there exist vectors $\vec{u_1}$ and $\vec{u_2}$ in $\mathbb{F}_2^{n-2}$ satisfying $M\vec{u_1} = \vec{a_1}$ and $M \vec{u_2}= \vec{a_n}$. We claim that $A = F(C, \vec{u_1}, \vec{u_2})$ for these $\vec{u_1}$ and $\vec{u_2}$. In particular, we will show that each section of the block matrix given in the definition of $F(C, \vec{u_1}, \vec{u_2})$ is equal to the corresponding submatrix of $A$. 

First, since $A \in M_n$, all of the diagonal elements of $A$ are $0$, so we have that the top-left and bottom-right blocks of $F(C, \vec{u_1}, \vec{u_2})$ match the corresponding elements of $A$. Also, we have by definition that $C$ is the $(n-2) \times (n-2)$ submatrix of $A$ containing the elements $A(i,j)$ for $2 \le i,j \le n-1$, so the central block of $F(C, \vec{u_1}, \vec{u_2})$ is also correct.

Since the vector in $\mathbb{F}_2^{n-2}$ containing the elements in positions $2$ through $n-1$ of the $n$-element vector $M\vec{u_1}$ is $C\vec{u_1}$ (as $C$ comprises rows $2$ through $n-1$ of $M$), we have that since $\vec{a_1} = M\vec{u_1}$, the vector containing the elements in positions $2$ through $n-1$ of $\vec{a_1}$ is $C\vec{u_1}$. Therefore, the block in row $2$, column $1$ of the block matrix $F(C, \vec{u_1}, \vec{u_2})$ is correct. Since $A = A^T$, the block in row $1$, column $2$ is also correct.

By similar reasoning, we have that since $\vec{a_n} = M\vec{u_2}$, the vector containing the elements in positions $2$ through $n-1$ of $\vec{a_n}$ is $C\vec{u_2}$. Therefore, the block in row $2$, column $3$ of the block matrix $F(C, \vec{u_1}, \vec{u_2})$ is correct, and so since $A = A^T$ the block in row $3$, column $2$ is also correct.

It remains to prove that the blocks in row $1$, column $3$ and row $3$, column $1$ are correct. We proved earlier that the block in row $1$, column $2$ is correct, so $\left[ A(1,2) \ A(1,3) \ \dots\ A(1,n-1) \right] = (C \vec{u_1})^T$. By definition of $\text{cent}_c(A) = M$, this row vector $(C\vec{u_1})^T$ is also the first row of the middle submatrix $M$ of $A$. Since we have $M \vec{u_2} = \vec{a_n}$ from the fact that $A$ is \textbf{mcds}-sortable, it follows that $(C\vec{u_1})^T \bullet \vec{u_2} = a_{1n}$. Therefore, the block in row $1$, column $3$ is correct, and since $A$ is symmetric the block in row $3$, column $1$ is correct as well. 

Thus, all of the blocks have been verified, and so we have shown that for some vectors $\vec{u_1}, \vec{u_2} \in \mathbb{F}_2^{n-2}$, we have $A = F(\fcent(A), \vec{u_1}, \vec{u_2})$. 
\end{proof}
%-------------------------------------------
From the above lemma, we have this corollary.
%-------------------------------------------
\begin{cor}\label{cor:gcdscor2}
For any \textbf{mcds}-sortable Eulerian $n \times n$ matrix $A \in M_n$, there exists a vector $\vec{u} \in \mathbb{F}_2^{n-2}$ such that $$A = F(\fcent(A), \vec{u}, \vec{u}^C).$$ 
\end{cor}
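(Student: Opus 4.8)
The plan is to reduce everything to Lemma \ref{lem:gcdscount2} together with Lemma \ref{lem:gcdscount0}. Write $M = \text{cent}_c(A)$, $C = \fcent(A)$, and let $\vec{a_i}$ denote the $i$th column (equivalently $i$th row) of $A$. First I would apply Lemma \ref{lem:gcdscount2}: since $A$ is \textbf{mcds}-sortable there exist $\vec{u_1}, \vec{u_2} \in \mathbb{F}_2^{n-2}$ with $A = F(C, \vec{u_1}, \vec{u_2})$, and as extracted from the proof of that lemma (via Corollary \ref{cor:mcds_sort_cond2}) these vectors satisfy $M\vec{u_1} = \vec{a_1}$ and $M\vec{u_2} = \vec{a_n}$. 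I would then set $\vec{u} := \vec{u_1}$ and aim to show $A = F(C, \vec{u_1}, \vec{u_1}^C)$.

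The key observation is that the block matrix $F(C, \vec{u_1}, \vec{w})$ depends on its third argument $\vec{w}$ only through the vector $M\vec{w} \in \mathbb{F}_2^n$: the off-diagonal blocks $C\vec{w}$ and $(C\vec{w})^T$ are precisely the restriction of $M\vec{w}$ to coordinates $2,\dots,n-1$ (since $C$ consists of rows $2,\dots,n-1$ of $M$), while each corner scalar $(C\vec{u_1})^T \bullet \vec{w}$ is exactly the first coordinate of $M\vec{w}$, because $(C\vec{u_1})^T$ is the first row of $M$ (this is how that row vector arose in the proof of Lemma \ref{lem:gcdscount2} from $M\vec{u_1} = \vec{a_1}$). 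Consequently it suffices to prove $M\vec{u_1}^C = M\vec{u_2}$. Since $A$ is Eulerian and $M\vec{u_1} = \vec{a_1}$, Lemma \ref{lem:gcdscount0} gives $M\vec{u_1}^C = \vec{a_n}$; combined with $M\vec{u_2} = \vec{a_n}$ this yields $M\vec{u_1}^C = M\vec{u_2}$, hence $F(C, \vec{u_1}, \vec{u_1}^C) = F(C, \vec{u_1}, \vec{u_2}) = A$, which is the claim with $\vec{u} = \vec{u_1}$.

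I do not expect a serious obstacle here, as the statement is essentially a bookkeeping consequence of the two prior lemmas. The one point that needs care is the distinction between the $n \times (n-2)$ matrix $M = \text{cent}_c(A)$ and its $(n-2) \times (n-2)$ submatrix $C = \fcent(A)$: one must make precise that the entries of $F(C, \vec{u_1}, \vec{w})$ touched by $\vec{w}$ are exactly the first $n-1$ coordinates of $M\vec{w}$, so that the full equality $M\vec{u_1}^C = M\vec{u_2}$ supplied by Lemma \ref{lem:gcdscount0} (rather than merely $C\vec{u_1}^C = C\vec{u_2}$) is both available and sufficient.
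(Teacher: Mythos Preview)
Your proposal is correct and follows essentially the same approach as the paper: apply Lemma \ref{lem:gcdscount2} to obtain $\vec{u_1},\vec{u_2}$ with $M\vec{u_1}=\vec{a_1}$ and $M\vec{u_2}=\vec{a_n}$, then invoke Lemma \ref{lem:gcdscount0} to get $M\vec{u_1}^C=\vec{a_n}$ and conclude that $\vec{u_1}^C$ can replace $\vec{u_2}$. The paper phrases this last step as ``the only restriction on $\vec{u_2}$ in the proof of Lemma \ref{lem:gcdscount2} was $M\vec{u_2}=\vec{a_n}$,'' whereas you unpack more explicitly which blocks of $F(C,\vec{u_1},\vec{w})$ depend on $\vec{w}$ and verify they are determined by $M\vec{w}$; this is a welcome bit of extra care but not a genuinely different argument.
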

%-------------------------------------------
\begin{proof}
As before, let $M = \text{cent}_c(A)$ and let $\vec{a_i}$ be the $i$th column vector of $A$ for $1 \le i \le n$. By Lemma \ref{lem:gcdscount1}, since $A$ is \textbf{mcds}-sortable there are vectors $\vec{u}, \vec{u_2} \in \mathbb{F}_2^{n-2}$ such that $A = F(\fcent(A), \vec{u}, \vec{u_2})$, $M \vec{u} = \vec{a_1}$, and $M \vec{u_2} = \vec{a_n}$. But by Lemma \ref{lem:gcdscount0}, since $A$ is Eulerian and $M \vec{u} = \vec{a_1}$, we have $M \vec{u}^C = \vec{a_n}$. Since the only restriction placed on $\vec{u_2}$ in the proof of Lemma \ref{lem:gcdscount0} was that $M\vec{u_2} = \vec{a_n}$, we can replace $\vec{u_2}$ with $\vec{u}^C$ to get $A = F(\fcent(A), \vec{u}, \vec{u}^C)$, as desired.
\end{proof}
%-------------------------------------------
This function $F$ outputs an \textbf{mcds}-sortable graph, as shown in the upcoming lemma.
%-------------------------------------------
\begin{lemma}\label{lem:gcdscount3}
For any matrix $A \in M_n$ and any vectors $\vec{u_1}, \vec{u_2} \in \mathbb{F}_2^n$, the matrix $F(A, \vec{u_1}, \vec{u_2})$ is an element of $M_{n+2}$ and is \textbf{mcds}-sortable.
\end{lemma}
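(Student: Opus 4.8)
The plan is to verify the two claimed properties of $F(A,\vec{u_1},\vec{u_2})$ separately: first that it lies in $M_{n+2}$, i.e.\ that it is a symmetric matrix over $\mathbb{F}_2$ with zero diagonal, and second that it is \textbf{mcds}-sortable. Write $B = F(A,\vec{u_1},\vec{u_2})$ in the block form given in the definition, with rows and columns indexed $1, 2,\dots,n+1, n+2$, where row/column $1$ is the ``first handle'', rows/columns $2$ through $n+1$ carry the copy of $A$, and row/column $n+2$ is the ``second handle''.

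For membership in $M_{n+2}$: the diagonal entries are $0$ in the top-left and bottom-right corners by construction, and the central diagonal block is the diagonal of $A$, which is zero since $A \in M_n$; so $B$ has zero diagonal. For symmetry, the central block $A$ is symmetric since $A\in M_n$; the $(2,1)$ block $A\vec{u_1}$ is the transpose of the $(1,2)$ block $(A\vec{u_1})^T$, and likewise the $(2,3)$ and $(3,2)$ blocks match; and the $(1,3)$ and $(3,1)$ scalar entries are literally the same expression $(A\vec{u_1})^T \bullet \vec{u_2}$. Hence $B = B^T$ and $B \in M_{n+2}$.

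For \textbf{mcds}-sortability, I will invoke Corollary~\ref{cor:mcds_sort_cond}: it suffices to exhibit vectors $\vec{x}, \vec{y} \in \ker(B)$ with $x_1 = 0$, $x_{n+2} = 1$ and $y_1 = 1$, $y_{n+2} = 0$. The natural candidates are $\vec{y} = (1, \vec{u_1}^T, 0)^T$ and $\vec{x} = (0, \vec{u_2}^T, 1)^T$ (handles set so the sum condition holds, and the middle block filled with the defining vectors). One then computes $B\vec{y}$ blockwise: the first coordinate is $0\cdot 1 + (A\vec{u_1})^T\vec{u_1} + \big((A\vec{u_1})^T\bullet\vec{u_2}\big)\cdot 0 = (A\vec{u_1})\bullet\vec{u_1}$, which is $0$ by Lemma~\ref{lem:gcdscount1}; the middle block is $(A\vec{u_1})\cdot 1 + A\vec{u_1} + (A\vec{u_2})\cdot 0 = A\vec{u_1} + A\vec{u_1} = \vec{0}$ over $\mathbb{F}_2$; and the last coordinate is $\big((A\vec{u_1})^T\bullet\vec{u_2}\big)\cdot 1 + (A\vec{u_2})^T\vec{u_1} + 0 = (A\vec{u_1})\bullet\vec{u_2} + (A\vec{u_2})\bullet\vec{u_1}$, which is $0$ by the first identity of Lemma~\ref{lem:gcdscount1}. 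So $\vec{y}\in\ker(B)$, and the symmetric computation (swapping the roles of $\vec{u_1}$ and $\vec{u_2}$ and of the two handles) gives $\vec{x}\in\ker(B)$. Since $y_1 = 1, y_{n+2}=0$ and $x_1 = 0, x_{n+2} = 1$, Corollary~\ref{cor:mcds_sort_cond} yields that $B$ is \textbf{mcds}-sortable.

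The only real subtlety — and the step I would be most careful about — is bookkeeping the two scalar corner entries and the cross-terms in $B\vec{y}$ and $B\vec{x}$ so that the cancellations genuinely follow from Lemma~\ref{lem:gcdscount1} rather than from wishful sign-chasing; everything else is immediate from the block structure and the fact that we work over $\mathbb{F}_2$, where $v + v = 0$. It is worth stating explicitly that $(A\vec{u_1})^T \bullet \vec{u_2} = (A\vec{u_1}) \bullet \vec{u_2}$ is the scalar quantity appearing in both corners, so that the corner-entry contributions to $B\vec{y}$ and $B\vec{x}$ are exactly the symmetric dot products that Lemma~\ref{lem:gcdscount1} kills.
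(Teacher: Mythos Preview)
Your proof is correct and follows essentially the same approach as the paper. The only cosmetic difference is that you invoke Corollary~\ref{cor:mcds_sort_cond} directly and exhibit the full kernel vectors $(1,\vec{u_1}^T,0)^T$ and $(0,\vec{u_2}^T,1)^T$, whereas the paper invokes the equivalent Corollary~\ref{cor:mcds_sort_cond2} and verifies $\text{cent}_c(B)\vec{u_i}=\vec{b_1},\vec{b_{n+2}}$; the blockwise computations and the appeals to Lemma~\ref{lem:gcdscount1} are the same.
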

%-------------------------------------------
\begin{proof}
Let $M = F(A, \vec{u_1}, \vec{u_2})$. 

We begin by proving that $M \in M_{n+2}$; to do this, we must show that each diagonal entry of $M$ is a zero and that $M = M^T$. 

By definition of $F$, we have $M(1,1) = 0$, $M(i+1,i+1) = A(i,i)$ for $1 \le i \le n$, and $M(n+2,n+2) = 0$. Since $A \in M_n$, we have $A(i,i) = 0$ for each $1 \le i \le n$, so it follows that all of the diagonal entries of $M$ are zeroes.

Next, we will show that $M = M^T$. To do this, we will show that for each block submatrix $B(i,j)$ appearing in the block form of $M$ in the definition of $F$, we have that $B(i,j)^T = B(j,i)$ (for each $1 \le i,j \le 3$). First, since $B(1,1) = B(3,3) = \begin{bmatrix} 0\end{bmatrix}$, this relation holds for $(i,j) = (1,1)$ and $(3,3)$. Since $A \in M_n$, we have $A = A^T$ by definition, so since $B(2,2) = A$ it also holds for $(i,j) = (2,2)$. By definition of $F$, we have $B(2,1) = A \vec{u_1}$, $B(1,2) = (A \vec{u_1})^T$, $B(2,3) = A \vec{u_2}$, and $B(3,2) = (A \vec{u_2})^T$, so we additionally have $B(i,j)^T = B(j,i)$ for $(i,j) = (1,2),(2,1),(2,3),$ and $(3,2)$. Finally, both $B(1,1)$ and $B(3,3)$ are equal to the single element matrix $\begin{bmatrix} (A \vec{u_1})^T \bullet \vec{u_2} \end{bmatrix}$, so the cases $(i,j) = (1,1)$ and $(3,3)$ hold as well. Therefore, we have proven that $M = M^T$, so $M \in M_{n+2}$. 

Next, we will show that $M$ is \textbf{mcds}-sortable. By Corollary \ref{cor:mcds_sort_cond2}, it suffices to find vectors $\vec{u}$ and $\vec{u'} \in \mathbb{F}_2^n$ such that $\text{cent}_c(M) \vec{u} = \vec{m_1}$ and $\text{cent}_c(M) \vec{u'} = \vec{m_{n+2}}$. Let $P = \text{cent}_c(M)$. We claim that the vectors $\vec{u} = \vec{u_1}$ and $\vec{u'} = \vec{u_2}$ satisfy the required conditions; we must show that $P\vec{u_1} = \vec{m_1}$ and $P\vec{u_2} = \vec{m_{n+2}}$. 

By definition of $F$, we have that $P, \vec{m_1}$, and $\vec{m_{n+2}}$ can be expressed in block form as follows: \[ P = \left[ \begin{array}{c} \left(A\vec{u_1}\right)^T \\\hline A \\\hline \left(A \vec{u_2}\right)^T \end{array} \right] , \vec{m_1} = \left[ \begin{array}{c} 0 \\\hline A\vec{u_1} \\\hline \left( A \vec{u_1} \right)^T \bullet \vec{u_2} \end{array}\right], \text{ and } \vec{m_{n+2}} = \left[ \begin{array}{c} \left( A \vec{u_1} \right)^T \bullet \vec{u_2} \\\hline A\vec{u_2} \\\hline 0 \end{array}\right]. \]

We will prove that $P\vec{u_1} = \vec{m_1}$ and $P\vec{u_2} = \vec{m_{n+2}}$ by showing that for each component index $i$ with $1 \le i \le n+2$, $\left(P\vec{u_1} \right)_i = \left( \vec{m_1}\right)_i$ and $\left(P\vec{u_2} \right)_i = \left( \vec{m_{n+2}}\right)_i$.

First, for $2 \le i \le n+1$, we have that $(P\vec{u_1})_i = \vec{a_{i-1}} \bullet \vec{u_1} = (A \vec{u_1})_{i-1} = (\vec{m_1})_i$. Similarly, $(P \vec{u_2})_i = \vec{a_{i-1}} \bullet \vec{u_2} = (A \vec{u_2})_{i-1} = (\vec{m_{n+2}})_i$. 

Next, we consider the case in which $i = 1$. In this case, applying Lemma \ref{lem:gcdscount1} we have that $(P\vec{u_1})_1 = (A \vec{u_1})^T \bullet  \vec{u_1} = 0 = (\vec{m_1})_1$ and $(P \vec{u_2})_1 = (A \vec{u_1})^T \bullet \vec{u_2} =(\vec{m_{n+2}})_1 $, as needed.

Finally, we consider the case with $i = n+2$. We have by Lemma \ref{lem:gcdscount1} that $(P\vec{u_1})_{n+2} = (A \vec{u_2})^T \vec{u_1} = (A \vec{u_2}) \bullet \vec{u_1} = (A \vec{u_1})^T \vec{u_2} = (\vec{m_1})_{n+2}$ and $(P \vec{u_2})_{n+2} = (A \vec{u_2})^T \bullet \vec{u_2} = 0 = (\vec{m_{n+2}})_{n+2}$.

Therefore, we have shown that each pair of corresponding entries are equal, so we have that $P\vec{u_1} = \vec{m_1}$ and $P\vec{u_2} = \vec{m_{n+2}}$, and the lemma is proven.
\end{proof}
%-------------------------------------------
Moreover, $F$ outputs an Eulerian graph under the following circumstances.
%-------------------------------------------
\begin{cor}\label{cor:gcdscor3}
For any matrix $A \in M_n$ and any vector $\vec{u} \in \mathbb{F}_2^n$, the matrix $F(A, \vec{u}, \vec{u}^C)$ is both \textbf{mcds}-sortable and Eulerian. 
\end{cor}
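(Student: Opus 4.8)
The plan is to split the claim into its two halves and dispatch the sortability half immediately. By Lemma~\ref{lem:gcdscount3}, for \emph{any} $A \in M_n$ and \emph{any} $\vec{u_1},\vec{u_2} \in \mathbb{F}_2^n$ the matrix $F(A,\vec{u_1},\vec{u_2})$ lies in $M_{n+2}$ and is \textbf{mcds}-sortable, so taking $\vec{u_1} = \vec{u}$ and $\vec{u_2} = \vec{u}^C$ already gives that $F(A,\vec{u},\vec{u}^C) \in M_{n+2}$ is \textbf{mcds}-sortable. The only remaining content is to show it is Eulerian, i.e.\ that every vertex has even degree; equivalently, writing $M = F(A,\vec{u},\vec{u}^C)$, that $M\vec{1}_{n+2} = \vec{0}$ over $\mathbb{F}_2$, since the $i$th coordinate of $M\vec{1}_{n+2}$ is the parity of the degree of the $i$th vertex.

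To verify $M\vec{1}_{n+2} = \vec{0}$ I would compute blockwise, using the $1+n+1$ block decomposition of $M$ from the definition of $F$ and writing $\vec{1}_{n+2}$ as $[\,1 \mid \vec{1}_n \mid 1\,]^T$, together with the single identity $\vec{u}^C = \vec{u} + \vec{1}_n$ over $\mathbb{F}_2^n$. The middle block (coordinates $2$ through $n+1$) evaluates to $A\vec{u} + A\vec{1}_n + A\vec{u}^C = A(\vec{u}+\vec{1}_n+\vec{u}^C) = A\vec{0} = \vec{0}$, since $\vec{u} + \vec{u}^C = \vec{1}_n$. The first coordinate is $(A\vec{u})\bullet\vec{1}_n + (A\vec{u})\bullet\vec{u}^C$; expanding $(A\vec{u})\bullet\vec{u}^C = (A\vec{u})\bullet\vec{u} + (A\vec{u})\bullet\vec{1}_n$ and applying Lemma~\ref{lem:gcdscount1} (which gives $(A\vec{u})\bullet\vec{u}=0$) collapses this to $(A\vec{u})\bullet\vec{1}_n + (A\vec{u})\bullet\vec{1}_n = 0$. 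The last coordinate equals $(A\vec{u})\bullet\vec{u}^C + (A\vec{u}^C)\bullet\vec{1}_n$; expanding both terms via $\vec{u}^C = \vec{u}+\vec{1}_n$ and again invoking Lemma~\ref{lem:gcdscount1} for $(A\vec{u})\bullet\vec{u} = 0$ and $(A\vec{1}_n)\bullet\vec{1}_n = 0$ gives $(A\vec{u})\bullet\vec{1}_n + (A\vec{u})\bullet\vec{1}_n = 0$. Hence $M\vec{1}_{n+2} = \vec{0}$ and $M$ is Eulerian, which together with the first paragraph finishes the corollary.

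I do not expect a genuine obstacle here: the argument is pure bookkeeping in block matrix arithmetic over $\mathbb{F}_2$. The only place demanding a little care is aligning the self-dot-product vanishing from Lemma~\ref{lem:gcdscount1} with the right arguments when reducing the two scalar entries, and exploiting that addition is its own inverse over $\mathbb{F}_2$ — which is precisely the feature that forces the cross-terms to cancel.
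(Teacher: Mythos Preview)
Your proof is correct and follows essentially the same approach as the paper: both verify that every row sum of $F(A,\vec{u},\vec{u}^C)$ vanishes over $\mathbb{F}_2$, handling the middle rows via $\vec{u}+\vec{u}^C=\vec{1}_n$ and the corner rows using the vanishing of $(A\vec{v})\bullet\vec{v}$. Your version is a bit more streamlined, since you invoke Lemma~\ref{lem:gcdscount1} directly for the first and last coordinates, whereas the paper re-expands the double sum by hand before collapsing it.
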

%-------------------------------------------
\begin{proof}
Let $M = F(A, \vec{u}, \vec{u}^C)$. Since $\vec{u}^C \in \mathbb{F}_2^n$, it follows directly from Lemma \ref{lem:gcdscount3} that $M$ is \textbf{mcds}-sortable; hence, it suffices to prove that $M$ is Eulerian. 

$M$ represents the adjacency matrix of a graph $G$, and is Eulerian if, and only if, $G$ is Eulerian. Each position $j$ in row $i$ such that $M(i,j) = 1$ corresponds to a vertex $j$ such that there is an edge between vertices $i$ and $j$. Thus, for all vertices in $G$ to have even degree we must have that the sum of the elements in each row is even, or equivalently is $0$ when taken over $\mathbb F_2$. 

We begin by showing that the sum of the first row is zero over $\mathbb F_2$. By definition of $M = F(A, \vec{u}, \vec{u}^C)$, we have: \begin{align*}
\sum_{i=1}^{n+2} M(1,i) &= 0 + \sum_{i=1}^n (A\vec{u})_i + (A\vec{u})^T \bullet  \vec{u}^C \\
&= \sum_{i=1}^n \left( \sum_{j=1}^n A(i,j) u_j \right) + \sum_{i=1}^n (A\vec{u})_i \left(u^C\right)_i \\
&= \sum_{i=1}^n \sum_{j=1}^n A(i,j) u_j + \sum_{i=1}^n  \left( \sum_{j=1}^n A(i,j) {u}_j \right) \left({u^C}\right)_i \\
&= \sum_{i=1}^n \sum_{j=1}^n A(i,j) {u}_j  \left(1+\left({u^C}\right)_i\right) \\&= \sum_{i=1}^n \sum_{j=1}^n A(i,j) {u}_j {u}_i
\\ &= \sum_{i=2}^n \sum_{j=1}^{i-1} A(i,j) {u}_j {u}_i + \sum_{i=1}^n A(i,i) {u}_i^2 + \sum_{i=2}^n \sum_{j=1}^{i-1} A(j,i) {u}_i {u}_j.
\end{align*}
Since $A \in M_n$ the main-diagonal elements of $A$ are all zeroes and $A = A^T$. Therefore, we have $\displaystyle{\sum_{i=1}^n} A(i,i) {u}_i^2 = 0$ and $\displaystyle{\sum_{i=2}^n \sum_{j=1}^{i-1}} A(i,j) {u}_j {u}_i = \displaystyle{\sum_{i=2}^n \sum_{j=1}^{i-1}} A(j,i) {u}_i {u}_j$, so it follows that the sum of the first row is even. By similar reasoning, and the fact that \[ \left(A \vec{u} \right)^T \bullet \vec{u}^C = \sum_{i=1}^n \sum_{j=1}^n A(i,j) {u}_j ({u^C})_i = \sum_{i=1}^n \sum_{j=1}^n A(j,i) {u}_i ({u^C})_j = \sum_{i=1}^n \sum_{j=1}^n A(i,j)  ({u^C})_j {u}_i= \left(A \vec{u}^C \right)^T \bullet \vec{u},\] we also have that the sum of the last row is even. 

We will now show that the sum of each row $\vec{m_i}$ in the middle, for $2 \le i \le n+1$, is even. By definition of $F$, we have that for $1 \le i \le n$, \[ \sum_{j=1}^{n+2} M(i+1,j) = (A\vec{u})_i + \sum_{j=1}^n A(i,j) + (A\vec{u}^C)_i. \] By definition of $\vec{u}^C$, we have $\vec{u}+\vec{u}^C = [1\ 1\ \dots\ 1]^T$, so by commutativity and distributivity we have that \begin{align*} (A\vec{u})_i + \sum_{j=1}^n A(i,j) + (A\vec{u}^C)_i &= (A(\vec{u}+\vec{u}^C))_i + \sum_{j=1}^n A(i,j) \\ &= \left(A \left( [1\ 1\ \dots\ 1]^T \right)\right)_i + \sum_{j=1}^n A(i,j) \\ &=  \sum_{j=1}^n A(i,j) + \sum_{j=1}^n A(i,j),
\end{align*}
which is even. Hence, the sum of each row in the matrix $M = F(A, \vec{u}, \vec{u}^C)$ is even, so $F(A, \vec{u}, \vec{u}^C)$ is Eulerian and the corollary is proven.
\end{proof}
%-------------------------------------------
A necessary and sufficient condition related to the kernel of a matrix and the function $F$ applied to it is expressed below.
%-------------------------------------------
\begin{lemma}\label{lem:gcdscount4}
Let $A$ be a matrix in $M_n$, and let $\vec{u_1}, \vec{u_2}, \vec{v_1}, \vec{v_2}$ be vectors in $\mathbb{F}_2^n$. Then, we have that $F(A, \vec{u_1}, \vec{u_2}) = F(A, \vec{v_1}, \vec{v_2})$ if, and only if, $\vec{v_1}-\vec{u_1} \in \ker(A)$ and $\vec{v_2}-\vec{u_2} \in \ker(A)$.
\end{lemma}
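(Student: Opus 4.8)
The plan is to compare the two block matrices $F(A,\vec{u_1},\vec{u_2})$ and $F(A,\vec{v_1},\vec{v_2})$ entrywise, using the $3\times 3$ block decomposition from the definition of $F$. The central $n\times n$ block is $A$ in both cases, so it contributes nothing. The only distinct pieces to check are: the column $A\vec{u_1}$ together with its transpose row $(A\vec{u_1})^T$; the column $A\vec{u_2}$ together with its transpose row $(A\vec{u_2})^T$; and the single scalar corner entry $(A\vec{u_1})^T\bullet\vec{u_2}$, which occupies both the top-right and bottom-left positions (and equals the ordinary dot product $(A\vec{u_1})\bullet\vec{u_2}$, so the transpose there is cosmetic).

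For the reverse implication, assume $F(A,\vec{u_1},\vec{u_2}) = F(A,\vec{v_1},\vec{v_2})$. Equality of the blocks in row $2$, column $1$ gives $A\vec{u_1}=A\vec{v_1}$, hence $A(\vec{v_1}-\vec{u_1})=\vec 0$, i.e. $\vec{v_1}-\vec{u_1}\in\ker(A)$; equality of the blocks in row $2$, column $3$ gives $\vec{v_2}-\vec{u_2}\in\ker(A)$ in the same way. For the forward implication, set $\vec{k_1}:=\vec{v_1}-\vec{u_1}\in\ker(A)$ and $\vec{k_2}:=\vec{v_2}-\vec{u_2}\in\ker(A)$. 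Then $A\vec{v_1}=A\vec{u_1}+A\vec{k_1}=A\vec{u_1}$ and likewise $A\vec{v_2}=A\vec{u_2}$, so the two off-diagonal column/row blocks match automatically; it remains only to verify that the scalar corner entry is unchanged.

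The one step that is not a mere restatement of "$A\vec{u_i}=A\vec{v_i}$" is this corner entry, and it is where Lemma \ref{lem:gcdscount1} is needed. I would expand
\[ (A\vec{v_1})^T\bullet\vec{v_2} = (A\vec{u_1})\bullet(\vec{u_2}+\vec{k_2}) = (A\vec{u_1})\bullet\vec{u_2} + (A\vec{u_1})\bullet\vec{k_2}, \]
using $A\vec{v_1}=A\vec{u_1}$ and bilinearity of the dot product over $\mathbb F_2$. By Lemma \ref{lem:gcdscount1} applied to $A\in M_n$ and the pair $\vec{u_1},\vec{k_2}$, we have $(A\vec{u_1})\bullet\vec{k_2}=(A\vec{k_2})\bullet\vec{u_1}=\vec 0\bullet\vec{u_1}=0$ since $\vec{k_2}\in\ker(A)$, so $(A\vec{v_1})^T\bullet\vec{v_2}=(A\vec{u_1})^T\bullet\vec{u_2}$. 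Thus every block agrees and the two matrices coincide. The main (indeed only) subtlety is this use of the symmetry identity $(A\vec u)\bullet\vec v=(A\vec v)\bullet\vec u$, which is what lets the equality "at the level of $A\vec{u_i}$" propagate to the quadratic corner term; the rest is bookkeeping on the block structure of $F$.
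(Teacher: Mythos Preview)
Your proof is correct and follows essentially the same approach as the paper: compare the block decomposition of $F$, read off $A\vec{u_i}=A\vec{v_i}$ from the off-diagonal blocks for one direction, and for the other direction use the symmetry identity of Lemma~\ref{lem:gcdscount1} to kill the cross term $(A\vec{u_1})\bullet\vec{k_2}$ in the corner entry. One cosmetic slip: you have the labels ``forward'' and ``reverse'' swapped on the two implications, though the arguments themselves are right.
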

%-------------------------------------------
\begin{proof}
If $F(A, \vec{u_1}, \vec{u_2}) = F(A, \vec{v_1}, \vec{v_2})$, then considering the middle $n$ rows of the two matrices produced via the definition of $F$ we must have that $A\vec{u_1} = A\vec{v_1}$ and $A\vec{u_2} = A\vec{v_2}$. Hence, we have that $A(\vec{v_1}-\vec{u_1}) = \vec{0}$ and $A(\vec{v_2}-\vec{u_2}) = \vec{0}$, so by definition we have $\vec{v_1}-\vec{u_1}, \vec{v_2}-\vec{u_2} \in \ker(A)$.

Conversely, if the vectors $\vec{v_1}-\vec{u_1}$ and $\vec{v_2}-\vec{u_2}$ are both in the kernel of $A$, let $\vec{w_1} = \vec{v_1}-\vec{u_1}$ and $\vec{w_1} = \vec{v_2}-\vec{u_2}$. We will show that the corresponding blocks in each location within the matrices $F(A, \vec{u_1}, \vec{u_2})$ and $F(A, \vec{v_1}, \vec{v_2})$ are equal. Since the blocks along the main diagonal only involve $A$, which is the same for both matrices, it suffices to prove equality for the remaining six pairs of non-diagonal blocks. Moreover, since the top-right and bottom-left blocks in the definition of $F$ are identical, and since $A\vec{u_1} = A\vec{v_1} \implies (A\vec{u_1})^T = (A\vec{v_1})^T$ and $A\vec{u_2} = A\vec{v_2} \implies (A\vec{u_2})^T = (A\vec{v_2})^T$, it suffices to show that the corresponding blocks in positions $(2,1)$, $(2,3)$, and $(1,3)$ of the two block matrices $F(A, \vec{u_1}, \vec{u_2})$ and $F(A, \vec{v_1}, \vec{v_2})$ are equal.

Since $A\vec{v_1} = A(\vec{u_1}+\vec{w_1}) = A\vec{u_1}+A\vec{w_1} = A\vec{u_1}+\vec{0} = A\vec{u_1}$, the corresponding blocks in row $2$, column $1$ are equal. By similar reasoning, we have $A \vec{v_2} = A\vec{u_1}$, so the blocks in row $2$, column $3$ are equal. Finally, since $A \vec{u_1} = A \vec{v_1}$, we have that \[ (A \vec{v_1})^T \bullet \vec{v_2} = (A \vec{u_1})^T \bullet \vec{v_2} = (A \vec{u_1})^T \bullet \left( \vec{u_2} + \vec{w_2} \right) = (A \vec{u_1})^T \bullet \vec{u_2} + (A \vec{u_1})^T \bullet \vec{w_2}. \]
As $A$ is symmetric and all of the main diagonal entries are zeroes, we may apply an argument similar to that used in the proof of Lemma \ref{lem:gcdscount2} to show that $(A \vec{u_1})^T \bullet \vec{w_2} = (A \vec{w})^T \bullet \vec{u_1}$, which is $0$ since $\vec{w} \in \ker(A)$. Thus, we are left with $(A \vec{v_1})^T \bullet \vec{v_2} =  (A \vec{u_1})^T \bullet \vec{u_2}$, so the blocks in row $1$, column $3$ are equal as well. Hence, we have shown that if $\vec{v_1}-\vec{u_1}, \vec{v_2}-\vec{u_2} \in \ker(A)$ then $F(A, \vec{u_1}, \vec{u_2}) = F(A, \vec{v_1}, \vec{v_2})$.
\end{proof}
%-------------------------------------------
A consequence of the above lemma is now presented.
%-------------------------------------------
\begin{cor}\label{cor:diff_in_ker}
Let $A$ be a matrix in $M_n$, and let $\vec{u}, \vec{v} \in \mathbb{F}_2^n$. Then, we have that $F(A, \vec{u}, \vec{u}^C) = F(A, \vec{v}, \vec{v}^C)$ if, and only if, $\vec{v}-\vec{u} \in \ker(A)$.
\end{cor}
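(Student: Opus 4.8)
The plan is to obtain this immediately from Lemma~\ref{lem:gcdscount4} by a judicious choice of the four vectors appearing there. Specifically, I would apply that lemma with $\vec{u_1} = \vec{u}$, $\vec{u_2} = \vec{u}^C$, $\vec{v_1} = \vec{v}$, and $\vec{v_2} = \vec{v}^C$, all of which lie in $\mathbb{F}_2^n$ as required. Lemma~\ref{lem:gcdscount4} then says that $F(A, \vec{u}, \vec{u}^C) = F(A, \vec{v}, \vec{v}^C)$ holds if, and only if, \emph{both} $\vec{v}-\vec{u} \in \ker(A)$ and $\vec{v}^C - \vec{u}^C \in \ker(A)$.

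The only remaining step — and it is essentially the whole content of the proof — is to observe that these two membership conditions are identical. Indeed, by the definition of the one's complement vector we have $\vec{u}^C = \vec{u} + [1\ 1\ \dots\ 1]^T$ and $\vec{v}^C = \vec{v} + [1\ 1\ \dots\ 1]^T$, so that $\vec{v}^C - \vec{u}^C = \vec{v} - \vec{u}$ (the all-ones vectors cancel, and over $\mathbb{F}_2$ addition and subtraction agree). Hence the conjunction ``$\vec{v}-\vec{u} \in \ker(A)$ and $\vec{v}^C - \vec{u}^C \in \ker(A)$'' collapses to the single condition $\vec{v} - \vec{u} \in \ker(A)$, which yields the claimed equivalence.

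There is no genuine obstacle here; the statement is a routine corollary, and the single subtlety — that complementing both arguments does not change the relevant difference — is handled by the one-line cancellation above. (One could alternatively bypass Lemma~\ref{lem:gcdscount4} and verify the six non-diagonal blocks of $F$ directly, but that would merely reproduce the argument already recorded in the proof of that lemma, so invoking it is the cleanest route.)
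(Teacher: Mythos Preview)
Your proof is correct and follows essentially the same route as the paper: apply Lemma~\ref{lem:gcdscount4} with the four vectors $\vec{u},\vec{u}^C,\vec{v},\vec{v}^C$, then observe that $\vec{v}^C-\vec{u}^C=\vec{v}-\vec{u}$ so the two kernel conditions coincide. (The paper's printed proof cites Lemma~\ref{lem:gcdscount2}, but this is evidently a typo for Lemma~\ref{lem:gcdscount4}, which is exactly what you invoke.)
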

%-------------------------------------------
\begin{proof}
By Lemma \ref{lem:gcdscount2}, it suffices to show that $\vec{v}^C-\vec{u}^C \in \ker(A)$ if, and only if, $\vec{v}-\vec{u} \in \ker(A)$. By definition, we have that \[ \vec{v}^C-\vec{u}^C = \left( \vec{v} + [1\ 1\ \dots\ 1]^T \right) - \left( \vec{u} + [1\ 1\ \dots\ 1]^T \right) = \vec{v} - \vec{u}, \] so it follows immediately that $\vec{v}-\vec{u} \in \ker(A) \iff \vec{v}^C-\vec{u}^C \in \ker(A)$. 
\end{proof}
%-------------------------------------------
The properties that have been mentioned in this section have led us to the following theorem which tells us the number of two-rooted \textbf{gcds}-sortable graphs $G$ on $n+2$ vertices under a specific circumstance. It is described in more detail below. 
%-------------------------------------------
\begin{thm}\label{thm:rankforms}
Let $A$ be the adjacency matrix of an undirected graph on $n$ vertices with no self-loops and at most one edge between any two vertices. Then, we have that the number of two-rooted \textbf{gcds}-sortable graphs $G$ on $n+2$ vertices such that $A$ is the adjacency matrix of the subgraph of non-rooted vertices of $G$ is $4^{\text{\normalfont rank}(A)}$, and the number of such graphs that are Eulerian is $2^{\text{\normalfont rank}(A)}$, where $A$ is taken as a matrix over the field $\mathbb F_2$.
\end{thm}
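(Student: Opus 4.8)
The plan is to turn the count into an exercise in linear algebra over $\mathbb F_2$, using the structure theorems already in hand. The first step is purely a matter of bookkeeping. Realize a two-rooted graph $G$ on $n+2$ vertices as having vertex set $\{1,\dots,n+2\}$ with roots $1$ and $n+2$ and non-root vertices $2,\dots,n+1$; then its adjacency matrix $M$ lies in $M_{n+2}$, the induced subgraph on the non-root vertices has adjacency matrix $\fcent(M)$, and $G \mapsto M$ is a bijection between two-rooted graphs on $n+2$ vertices whose non-root induced subgraph has adjacency matrix $A$ and the set $\{M \in M_{n+2} : \fcent(M) = A\}$. By Lemma \ref{lem:gcds_eq_mcds}, applied step by step to a sorting sequence, $G$ is \textbf{gcds}-sortable if and only if $M$ is \textbf{mcds}-sortable, and $G$ is Eulerian if and only if $M$ is (by definition of an Eulerian matrix). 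So it suffices to count the set $\mathcal{S} = \{M \in M_{n+2} : \fcent(M)=A,\ M \text{ is \textbf{mcds}-sortable}\}$, and separately its Eulerian subset $\mathcal{S}_E$.

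For $\mathcal{S}$: Lemma \ref{lem:gcdscount2} (applied with $n$ replaced by $n+2$) shows that every $M \in \mathcal{S}$ equals $F(A,\vec{u_1},\vec{u_2})$ for some $\vec{u_1},\vec{u_2}\in\mathbb{F}_2^{n}$, while Lemma \ref{lem:gcdscount3} shows conversely that every $F(A,\vec{u_1},\vec{u_2})$ lies in $M_{n+2}$, is \textbf{mcds}-sortable, and has central submatrix $A$. Hence $\mathcal{S}$ is exactly the image of $\Phi:(\mathbb{F}_2^{n})^2 \to M_{n+2}$, $\Phi(\vec{u_1},\vec{u_2}) = F(A,\vec{u_1},\vec{u_2})$. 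By Lemma \ref{lem:gcdscount4}, $\Phi(\vec{u_1},\vec{u_2}) = \Phi(\vec{v_1},\vec{v_2})$ iff $\vec{v_1}-\vec{u_1}\in\ker(A)$ and $\vec{v_2}-\vec{u_2}\in\ker(A)$, so $\Phi$ descends to an injection on $(\mathbb{F}_2^{n}/\ker A) \times (\mathbb{F}_2^{n}/\ker A)$. Therefore $|\mathcal{S}| = |\mathbb{F}_2^{n}/\ker A|^2$, and by the rank-nullity theorem over $\mathbb{F}_2$ we have $|\mathbb{F}_2^{n}/\ker A| = 2^{\,n-\nul(A)} = 2^{\rank(A)}$, giving $|\mathcal{S}| = 4^{\rank(A)}$.

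For $\mathcal{S}_E$ I would run the same argument with the "$\vec{u}^C$" refinements of these lemmas. Corollary \ref{cor:gcdscor2} shows every Eulerian $M\in\mathcal{S}$ has the form $F(A,\vec{u},\vec{u}^C)$, and Corollary \ref{cor:gcdscor3} shows every such matrix is \textbf{mcds}-sortable, Eulerian, and has central submatrix $A$; hence $\mathcal{S}_E$ is exactly the image of the map $\vec{u}\mapsto F(A,\vec{u},\vec{u}^C)$. By Corollary \ref{cor:diff_in_ker}, this map identifies $\vec{u}$ and $\vec{v}$ iff $\vec{v}-\vec{u}\in\ker A$, so it induces an injection on $\mathbb{F}_2^{n}/\ker A$, and therefore $|\mathcal{S}_E| = |\mathbb{F}_2^{n}/\ker A| = 2^{\rank(A)}$.

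Given the preceding lemmas, no deep obstacle remains; the one genuinely delicate point is the very first step, namely making the correspondence between two-rooted graphs with prescribed non-root induced subgraph and matrices $M$ with $\fcent(M)=A$ precise enough that \textbf{gcds}-sortability of $G$ matches \textbf{mcds}-sortability of $M$ and that nothing is missed or double-counted. Once that is pinned down, Lemmas \ref{lem:gcdscount2}--\ref{lem:gcdscount4} (respectively Corollaries \ref{cor:gcdscor2}, \ref{cor:gcdscor3}, \ref{cor:diff_in_ker}) do all the work, and only rank-nullity is left.
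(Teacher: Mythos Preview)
Your proposal is correct and follows essentially the same approach as the paper: both arguments use Lemmas \ref{lem:gcdscount2} and \ref{lem:gcdscount3} to identify $\mathcal{S}$ with the image of $(\vec{u_1},\vec{u_2})\mapsto F(A,\vec{u_1},\vec{u_2})$, invoke Lemma \ref{lem:gcdscount4} to see that the fibers are cosets of $\ker(A)\times\ker(A)$, and then apply rank--nullity, with the Eulerian count handled identically via Corollaries \ref{cor:gcdscor2}, \ref{cor:gcdscor3}, and \ref{cor:diff_in_ker}. Your write-up is in fact slightly more careful than the paper's in making the graph/matrix dictionary explicit at the outset.
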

%-------------------------------------------
\begin{proof}
We define the relation $\sim$ on $\mathbb{F}_2^n \times \mathbb{F}_2^n$ as follows: $(\vec{u_1}, \vec{u_2}) \sim (\vec{v_1}, \vec{v_2})$ if, and only if, $F(A, \vec{u_1}, \vec{u_2}) = F(A, \vec{v_1}, \vec{v_2})$. Since $\sim$ is a relation defined in terms of equality, we have that $\sim$ is reflexive, symmetric, and transitive, so $\sim$ is an equivalence relation. Let $[(\vec u_1, \vec u_2 )]$ be the equivalence class containing $(\vec u_1, \vec u_2 )$ under the equivalence relation $\sim$. 

By Lemma \ref{lem:gcdscount4}, we have that $(\vec{u_1}, \vec{u_2}) \sim (\vec{v_1}, \vec{v_2})$ if, and only if, $\vec{v_1} - \vec{u_1}$ and $\vec{v_2}-\vec{u_2}$ are both in the kernel of $A$. Thus, we have that \[ [(\vec{u_1}, \vec{u_2})] = \{ (\vec{u_1}+\vec{w_1}, \vec{u_2}+\vec{w_2}) \ | \ \vec{w_1}, \vec{w_2} \in \ker(A) \}, \] and so $|[(\vec{u_1}, \vec{u_2})]| = |\ker(A)|^2$ for any pair $(\vec{u_1}, \vec{u_2}) \in \mathbb{F}_2^n \times \mathbb{F}_2^n$.  

Thus, the number of distinct equivalence classes under the relation $\sim$ is given by $\left|\ \left( \mathbb{F}_2^n \times \mathbb{F}_2^n \right)\backslash\sim\ \right| = \frac{4^n}{|\ker(A)|^2}$. Since $\ker(A)$ is a subspace of the vector space $\mathbb{F}_2^n$, we have that $\ker(A)$ is isomorphic to the vector space $\mathbb{F}_2^{\dim(\ker(A))}$, and so $|\ker(A)| = 2^{\dim(\ker(A))}$. Therefore, since $\dim(\ker(A)) + \text{rank}(A) = n$ by the Rank-Nullity Theorem, we have that the number of distinct equivalence classes is \[ \frac{4^n}{|\ker(A)|^2} =\frac{4^n}{\left(2^{\dim(\ker(A))}\right)^2} = \frac{4^n}{4^{\dim(\ker(A))}} = 4^{\text{rank}(A)}.\] 

Therefore, there are exactly $4^{\text{rank}(A)}$ distinct $(n +2) \times (n+2)$ matrices $M$ such that $M = F(A, \vec{u_1}, \vec{u_2})$ for some $\vec{u_1}, \vec{u_2} \in \mathbb{F}_2^n$. By Lemma \ref{lem:gcdscount3}, all of these matrices are valid adjacency matrices of undirected simple graphs, and all are \textbf{mcds}-sortable. Also, by Lemma \ref{lem:gcdscount2}, any $(n+2) \times (n+2)$ \textbf{mcds}-sortable matrix $M$ whose central $n \times n$ submatrix is $A$ can be written as $ F(A, \vec{u_1}, \vec{u_2})$ for some $\vec{u_1}$ and $\vec{u_2}$, so there are exactly $4^{\text{rank}(A)}$ \textbf{mcds}-sortable $(n+2) \times (n+2)$ matrices whose central submatrix is $A$. Each of these matrices corresponds to a distinct two-rooted graph with $G$ as its non-rooted subgraph, so there are also $4^{\text{rank}(A)}$ such \textbf{gcds}-sortable graphs. 

To count the number of Eulerian \textbf{gcds}-sortable graphs with central submatrix $A$, we apply a similar procedure, but using single vectors in $\mathbb{F}_2^n$ instead of ordered pairs of vectors. As before, we define an equivalence relation $\sim_E$ on $\mathbb{F}_2^n$ such that $\vec{u} \sim_E \vec{v}$ if, and only if, $F(A, \vec{u}, \vec{u}^C) = F(A, \vec{v}, \vec{v}^C)$, and we consider the equivalence classes $[\vec{u}]$ under $\sim_E$. By Corollary \ref{cor:diff_in_ker}, we have that $[\vec{u}] = \{ \vec{u}+\vec{w} \ | \ \vec{w} \in \ker(A) \}$. Thus, we have $|[\vec{u}]| = |\ker(A)|$. By an analogous argument to the one above, the number of equivalence classes under $\sim_K$ is $$\frac{2^k}{|\ker(A)|} = 2^{\text{rank}(A)}.$$ It follows by Corollaries \ref{cor:gcdscor2} and \ref{cor:gcdscor3} that there is exactly one $(n+2) \times (n+2)$ \textbf{mcds}-sortable Eulerian matrix whose central submatrix is $A$ for each equivalence class, using a similar argument as before, so there are $2^{\text{rank}(A)}$ \textbf{mcds}-sortable Eulerian $(n+2) \times (n+2)$ matrices whose central submatrix is $A$, each corresponding to a \textbf{gcds}-sortable graph with non-rooted $n$-vertex subgraph $G$.
\end{proof}
%-------------------------------------------
To count the total number of \textbf{gcds}-sortable two-rooted graphs on $n$-vertices, we apply Theorem \ref{thm:rankforms} along with the following combinatorial result proven in \cite{macw}.
%-------------------------------------------
\begin{thm}[MacWilliams, 1969, Theorem 3]\label{thm:macw}
Let $N_0(t,r)$ be the number of symmetric $t \times t$ matrices $A$ with entries in $\mathbb{F}(2^n)$ such that all of the main diagonal entries of $A$ are $0$ and $\text{rank}(A) = r$. Then, if $r$ is odd, we have $N_0(t,r) = 0$, and if $r=2s$ we have that \[ N_0(t,r) = \prod_{i=1}^s \frac{2^{2i-2}}{2^{2i}-1} \cdot \prod_{i=0}^{2s-1} (2^{t-i}-1). \]
\end{thm}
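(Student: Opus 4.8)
The plan is to pass from matrices to alternating bilinear forms and then count by orbit--stabilizer; throughout I would work over $\mathbb{F}_2$, consistent with the displayed formula. A symmetric $t \times t$ matrix over $\mathbb{F}_2$ with zero main diagonal is exactly the Gram matrix of an alternating bilinear form $B$ on $V = \mathbb{F}_2^t$, and distinct matrices give distinct forms, so $N_0(t,r)$ counts the alternating forms $B$ on $V$ with $\operatorname{rank}(B) = r$. The vanishing for odd $r$ comes for free from the symplectic normal form: in a suitable basis $B$ is an orthogonal direct sum of hyperbolic planes $\left(\begin{smallmatrix}0&1\\1&0\end{smallmatrix}\right)$ and a zero block spanning the radical, so $\operatorname{rank}(B)$ is always even.

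For $r = 2s$, the key step I would use is the bijection sending a rank-$2s$ alternating form $B$ on $V$ to the pair $(R,\bar B)$, where $R = \{v \in V : B(v,w) = 0 \text{ for all } w \in V\}$ is the radical of $B$ (a subspace of dimension $t-2s$) and $\bar B$ is the induced nondegenerate alternating form on $V/R \cong \mathbb{F}_2^{2s}$. This is reversible: for any $(t-2s)$-dimensional subspace $R \subseteq V$ and any nondegenerate alternating form $\bar B$ on $V/R$, the pullback $B(v,w) = \bar B(v+R,\, w+R)$ is an alternating form whose radical is exactly $R$, hence of rank $2s$; checking this really is a bijection is routine. It yields
\[ N_0(t,2s) = \binom{t}{2s}_{2}\cdot A(2s), \]
where $\binom{t}{2s}_{2} = \binom{t}{t-2s}_{2}$ is the Gaussian binomial counting the $(t-2s)$-dimensional subspaces of $\mathbb{F}_2^t$, and $A(2s)$ is the number of nondegenerate alternating forms on $\mathbb{F}_2^{2s}$.

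Next I would evaluate the two factors separately. For $A(2s)$: the group $\mathrm{GL}_{2s}(\mathbb{F}_2)$ acts transitively on the nondegenerate alternating forms on $\mathbb{F}_2^{2s}$ (each admits a symplectic basis), and the stabilizer of a fixed one is by definition $\mathrm{Sp}_{2s}(\mathbb{F}_2)$, so orbit--stabilizer gives $A(2s) = |\mathrm{GL}_{2s}(\mathbb{F}_2)| / |\mathrm{Sp}_{2s}(\mathbb{F}_2)|$. Substituting the standard orders $|\mathrm{GL}_{2s}(\mathbb{F}_2)| = \prod_{i=0}^{2s-1}(2^{2s}-2^i)$ and $|\mathrm{Sp}_{2s}(\mathbb{F}_2)| = 2^{s^2}\prod_{i=1}^{s}(2^{2i}-1)$, together with $\binom{t}{2s}_{2} = \prod_{i=0}^{2s-1}(2^{t-i}-1) \big/ \prod_{j=1}^{2s}(2^j-1)$, turns the whole count into an identity among products of powers of $2$ and factors of the form $2^j-1$.

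The main obstacle is the final bookkeeping that rearranges this product into the stated shape: one extracts the power of $2$ via $\prod_{i=0}^{2s-1}(2^{2s}-2^i) = 2^{s(2s-1)}\prod_{j=1}^{2s}(2^j-1)$, cancels $\prod_{j=1}^{2s}(2^j-1)$ against the denominator of the Gaussian binomial, uses $2^{\,s(2s-1)-s^2} = 2^{\,s(s-1)} = \prod_{i=1}^{s} 2^{2i-2}$, and regroups to reach $\prod_{i=1}^{s}\frac{2^{2i-2}}{2^{2i}-1}\cdot\prod_{i=0}^{2s-1}(2^{t-i}-1)$, which is the claimed formula. I would sanity-check the algebra on small cases before writing it out in full --- for instance $N_0(2,2)=1$, $N_0(3,2)=7$, and $1 + N_0(4,2) + N_0(4,4) = 1 + 35 + 28 = 64 = 2^6$. (Since the paper only quotes this from \cite{macw}, no self-contained proof is strictly required; the above is just the route one would take to reprove it.)
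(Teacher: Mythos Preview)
Your argument is correct. The paper itself does not prove this theorem: its entire proof reads ``This result was proven by MacWilliams in \cite{macw}, Theorem 3.'' So there is nothing to compare against on the paper's side beyond a citation.

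What you supply is the standard (and clean) self-contained derivation: identify symmetric zero-diagonal matrices over $\mathbb{F}_2$ with alternating bilinear forms, use the symplectic normal form to force even rank, factor a rank-$2s$ form through its radical to obtain $N_0(t,2s)=\binom{t}{2s}_2\cdot A(2s)$, and compute $A(2s)=|\mathrm{GL}_{2s}(\mathbb{F}_2)|/|\mathrm{Sp}_{2s}(\mathbb{F}_2)|$ by orbit--stabilizer. The bookkeeping you sketch is accurate, and your numerical checks ($N_0(2,2)=1$, $N_0(3,2)=7$, $1+35+28=64$) all come out right. You also correctly flag that the statement's ``$\mathbb{F}(2^n)$'' should be read as $\mathbb{F}_2$ for the displayed formula to hold; MacWilliams's general result over $\mathbb{F}_q$ has $q$'s in place of the $2$'s, and the paper only ever uses the $q=2$ case. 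Your closing parenthetical already anticipates that the paper merely quotes the result, so you have the situation exactly right.
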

%-------------------------------------------
\begin{proof}This result was proven by MacWilliams in \cite{macw}, Theorem 3.
\end{proof}
%-------------------------------------------
From such theorem, we get the following corollary.
%-------------------------------------------
\begin{cor}
The number of \textbf{gcds}-sortable two-rooted graphs on $n$ vertices is \[ \sum_{s=0}^{\lfloor n/2 \rfloor-1} 2^{s(s+3)}\left( \dfrac{\prod_{i=0}^{2s-1} \left(2^{n-2-i}-1\right)}{\prod_{i=1}^s \left(2^{2i}-1\right)} \right). \] Moreover, the number of \textbf{gcds}-sortable two-rooted Eulerian graphs on $n$ vertices is \[ \sum_{s=0}^{\lfloor n/2 \rfloor-1} 2^{\frac{s(s+3)}{2}}\left( \dfrac{\prod_{i=0}^{2s-1} \left(2^{n-2-i}-1\right)}{\prod_{i=1}^s \left(2^{2i}-1\right)} \right). \]
\end{cor}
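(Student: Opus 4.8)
The plan is to reduce the count to a sum over the possible non-rooted subgraphs and then feed in the two theorems just recalled. A two-rooted graph on $n$ vertices has precisely $n-2$ non-rooted vertices, and the subgraph they induce has adjacency matrix lying in $M_{n-2}$; conversely every $A \in M_{n-2}$ occurs as such an induced subgraph. Hence the collection of all \textbf{gcds}-sortable two-rooted graphs on $n$ vertices is partitioned according to which $A \in M_{n-2}$ is the adjacency matrix of its non-rooted part, and by Theorem \ref{thm:rankforms} the block indexed by $A$ has size $4^{\mathrm{rank}(A)}$, and size $2^{\mathrm{rank}(A)}$ if one restricts to Eulerian graphs. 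So the two quantities we want are $\sum_{A \in M_{n-2}} 4^{\mathrm{rank}(A)}$ and $\sum_{A \in M_{n-2}} 2^{\mathrm{rank}(A)}$.

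Next I would collect these sums by rank. Every $A \in M_{n-2}$ is symmetric with zero diagonal, hence alternating over $\mathbb{F}_2$, so $\mathrm{rank}(A)$ is even; write $\mathrm{rank}(A) = 2s$. The number of $A \in M_{n-2}$ of rank $2s$ is exactly $N_0(n-2,2s)$ in the notation of Theorem \ref{thm:macw}, applied over the ground field $\mathbb{F}_2$. The rank can be any even number between $0$ and $n-2$, so $s$ ranges over $0,1,\dots,\lfloor n/2\rfloor-1$, and the two counts become $\sum_{s=0}^{\lfloor n/2\rfloor-1} N_0(n-2,2s)\,4^{2s}$ and $\sum_{s=0}^{\lfloor n/2\rfloor-1} N_0(n-2,2s)\,2^{2s}$, respectively.

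Finally, I would substitute the explicit value of $N_0(n-2,2s)$ from Theorem \ref{thm:macw} and simplify. The product $\prod_{i=1}^s 2^{2i-2}$ equals $2^{\sum_{i=1}^s(2i-2)} = 2^{s(s-1)}$; multiplying by $4^{2s} = 2^{4s}$ gives $2^{s(s+3)}$ and multiplying by $2^{2s}$ gives the power of two appearing in the Eulerian formula, while the surviving factors $\prod_{i=1}^s(2^{2i}-1)^{-1}$ and $\prod_{i=0}^{2s-1}(2^{n-2-i}-1)$ are exactly the quotient displayed in the statement. This yields both closed forms.

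The argument has no real conceptual obstacle; the only things requiring care are the bookkeeping items — matching the ``$n$ vertices'' of the graph against the ``$n-2$ vertices'' of its non-rooted subgraph (so that the parameter $t$ in Theorem \ref{thm:macw} is $n-2$), checking that MacWilliams' hypotheses apply verbatim to the matrices in $M_{n-2}$, and carrying out the exponent arithmetic on the various powers of $2$ without error.
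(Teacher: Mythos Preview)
Your proposal is correct and follows essentially the same approach as the paper: partition by the adjacency matrix $A\in M_{n-2}$ of the non-rooted subgraph, invoke Theorem~\ref{thm:rankforms} to get $4^{\mathrm{rank}(A)}$ (respectively $2^{\mathrm{rank}(A)}$) per block, group by rank $2s$, apply MacWilliams' count $N_0(n-2,2s)$, and simplify the powers of $2$ via $\prod_{i=1}^s 2^{2i-2}=2^{s(s-1)}$. The paper's proof is the same argument with the same bookkeeping.
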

%-------------------------------------------
\begin{proof}
By Theorem \ref{thm:rankforms}, we have that for each matrix $A \in M_{n-2}$, there are exactly $4^{\text{rank}(A)}$ two-rooted \textbf{gcds}-sortable graphs $G$ such that $A$ is the adjacency matrix of the non-rooted subgraph of $G$, $2^{\text{rank}(A)}$ of which are Eulerian. Therefore, to count the \textbf{gcds}-sortable graphs on $n$ vertices, it suffices to count the number of matrices $A \in M_{n-2}$ with rank $r$ over the field $F_2$ for each $r$ with $0 \le r \le n-2$. 

To do this, we apply Theorem \ref{thm:macw} (proven as Theorem $3$ in \cite{macw}) with $t = n-2$, for each value of $r$ from $0$ to $n-2$. Since there are no solutions in the case where $r$ is odd, the total number of \textbf{gcds}-sortable graphs on $n$ vertices is given by
\begin{align*}
\sum_{s=0}^{\lfloor\frac{n-2}{2} \rfloor} 4^{2s} N_0(n-2, 2s) &= \sum_{s=0}^{\lfloor\frac{n-2}{2} \rfloor}4^{2s} \left( \prod_{i=1}^s \frac{2^{2i-2}}{2^{2i}-1} \cdot \prod_{i=0}^{2s-1} (2^{n-2-i}-1) \right) \\
&= \sum_{s=0}^{\lfloor n / 2\rfloor - 1 }2^{4s} \left( \prod_{i=1}^s \frac{2^{2i-2}}{2^{2i}-1} \cdot \prod_{i=0}^{2s-1} (2^{n-2-i}-1) \right) \\
&= \sum_{s=0}^{\lfloor n / 2\rfloor - 1 }2^{4s} \cdot \prod_{i=1}^s 2^{2i-2} \cdot \left( \frac{\prod_{i=0}^{2s-1} (2^{n-2-i}-1)}{\prod_{i=1}^s (2^{2i}-1)}  \right)\\
&= \sum_{s=0}^{\lfloor n / 2\rfloor - 1 }2^{4s} \cdot 2^{s(s-1)} \cdot \left( \frac{\prod_{i=0}^{2s-1} (2^{n-2-i}-1)}{\prod_{i=1}^s (2^{2i}-1)}  \right) \\
&= \sum_{s=0}^{\lfloor n / 2\rfloor - 1 }2^{s(s+3)} \cdot \left( \frac{\prod_{i=0}^{2s-1} (2^{n-2-i}-1)}{\prod_{i=1}^s (2^{2i}-1)}  \right),
\end{align*} as desired. The derivation of the corresponding formula for Eulerian graphs is similar, but with the initial $4^{2s}$ term replaced with $2^{2s}$.
\end{proof}
%-------------------------------------------
%\subsection{Data on \textbf{gcds}-Sortable Graphs}

\section{The proportion of graphs that are \textbf{gcds} sortable}

Here is a table of the numbers of \textbf{gcds}-sortable graphs and \textbf{gcds}-sortable Eulerian graphs on $n$-vertices, for $3 \le n \le 10$, computed using the formula. 
%The counts for the \textbf{gcds}-sortable graphs for $n \le 8$ were initially computed naively by performing individual \textbf{mcds} operations, and the initial numbers match those produced by the formula.

\begin{center}\begin{tabular}{|c|c|c|c|c|}\hline 
\# vertices& \# total graphs& \# \textbf{gcds}-sortable &ratio&\# \textbf{gcds}-sortable Eulerian  \\\hline
3&8&1&.125&1\\\hline
4&64&17&.266&5\\\hline
5&1,024&113&.110&29\\\hline
6&32,768&7,729&.236&365\\\hline
7&2,097,152&224,689&.107&7,565\\\hline
8&268,435,456&61,562,033&.229&259,533\\\hline
9&68,719,476,736&7,309,130,417&.106&16,766,541\\\hline
10&35,184,372,088,832&8,013,328,398,001&.228&1,695,913,805\\\hline

\end{tabular}\\
    \vspace{2.5mm}
	TABLE 1. \textbf{gcds}-sortable graphs on $n$ vertices, $3 \le n \le 10$ \\
\end{center}

As can be observed in the table above, the proportions of \textbf{gcds}-sortable graphs on even and odd numbers of vertices appear to converge. A proof of this is given in the following lemma:
%-------------------------------------------
\begin{lem}\label{lem:conv}
Let $r_n$ be the proportion of two-rooted graphs on $n$ vertices that are \textbf{gcds}-sortable. By our formula for the number of \textbf{gcds}-sortable graphs on $n$ vertices and the fact that there are $2^{\binom n2}$ total graphs, we have that \[ r_n = \frac{1}{2^{\frac{n(n-1)}{2}}} \sum_{s=0}^{\lfloor n/2 \rfloor-1} 2^{s(s+3)}\left( \dfrac{\prod_{i=0}^{2s-1} \left(2^{n-2-i}-1\right)}{\prod_{i=1}^s \left(2^{2i}-1\right)} \right).\]

Then, we claim that the sequences $\{ r_{2n} | n \in \mathbb N\}$ and $\{ r_{2n+1} | n \in \mathbb N\}$ of the proportions of \textbf{gcds}-sortable graphs on odd and even numbers of vertices converge to finite, positive constants.
\end{lem}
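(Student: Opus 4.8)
The plan is to analyze the sum
\[
r_n = \frac{1}{2^{\binom n2}} \sum_{s=0}^{\lfloor n/2\rfloor - 1} 2^{s(s+3)}\left( \frac{\prod_{i=0}^{2s-1}(2^{n-2-i}-1)}{\prod_{i=1}^s (2^{2i}-1)} \right)
\]
term by term, isolating for each fixed $s$ the limiting contribution as $n\to\infty$ through one fixed parity class, and then justify interchanging the limit with the (infinite) sum by a dominated-convergence/uniform-tail argument. The key observation is that $\prod_{i=0}^{2s-1}(2^{n-2-i}-1) = 2^{2s(n-2) - \binom{2s}{2}}\prod_{i=0}^{2s-1}(1 - 2^{-(n-2-i)})$, so the exponent of $2$ in the $s$-th term, before dividing by $2^{\binom n2}$, is $s(s+3) + 2s(n-2) - s(2s-1) = 2sn + (\text{lower order in } s) $; more precisely one computes this exponent is $2sn - s^2 - 2s$ (I would double-check the arithmetic: $s(s+3) - s(2s-1) - 4s = s^2+3s-2s^2+s-4s = -s^2$, and $2s(n-2) = 2sn-4s$, giving $2sn - s^2 - 4s$). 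Meanwhile $\binom n2 = \frac{n(n-1)}{2}$. The ratio of consecutive terms (in $s$) of the whole summand behaves, for large $n$, like $2^{2n}\cdot 2^{-(2s+1)}\cdot \frac{1}{2^{2(s+1)}-1}$ times a bounded correction, so the terms grow until roughly $s \approx n - O(1)$ — i.e. the dominant terms are those with $s$ near its maximum $\lfloor n/2\rfloor - 1$. This means the naive "fix $s$, let $n\to\infty$" does not immediately work; instead I would reindex by $k = \lfloor n/2\rfloor - 1 - s$, the distance from the top.

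After reindexing, I would show that for fixed $k$ the $k$-th-from-top term of $r_n$ converges, as $n\to\infty$ within a fixed parity class, to an explicit constant $c_k$ (even parity) or $c_k'$ (odd parity), with the $c_k$ decaying geometrically in $k$ (roughly like $4^{-k^2}$ or faster, coming from the $\prod(2^{2i}-1)$ denominator and the truncation of the $\prod(1-2^{-\cdot})$ factors). Concretely, write $n = 2m$ or $n = 2m+1$, substitute $s = m-1-k$, and carefully track the three pieces: the prefactor $2^{s(s+3) - \binom n2}$, the numerator product $\prod_{i=0}^{2s-1}(2^{n-2-i}-1)$, and the denominator $\prod_{i=1}^s(2^{2i}-1)$. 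Each of these, after pulling out the leading power of $2$, is a power of $2$ (which I expect to cancel to $2^{O(1)}$ depending only on $k$ and the parity) times a convergent infinite product of the form $\prod_{j\ge 1}(1 - 2^{-j})$ or a finite truncation thereof. The even and odd cases differ because the exponent bookkeeping $\binom{n}{2}$ has different residues mod small powers of $2$ in the two cases, and because the top index $\lfloor n/2\rfloor$ differs — this is precisely where the two limits $\lim r_{2n} \ne \lim r_{2n+1}$ will come from.

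The main obstacle, and the step I would spend the most care on, is the uniform bound needed to pass the limit inside the sum: I need to exhibit, for each parity, a summable sequence $\{B_k\}$ with $B_k$ independent of $n$ such that the $k$-th-from-top term of $r_n$ is $\le B_k$ for all sufficiently large $n$. The natural candidate bound comes from dropping all the $(1-2^{-j})$ factors (each is in $(0,1)$, so dropping them from the numerator only increases, but they also appear in the denominator where dropping them decreases — so I must be careful to bound numerator factors above by $1$ and denominator factors below by a fixed constant like $\prod_{j\ge1}(1-2^{-j}) \approx 0.288$). Granting such a bound, dominated convergence for series gives $\lim_{n\to\infty, \text{ even}} r_n = \sum_{k\ge 0} c_k$ and likewise for odd $n$, both finite; positivity of each limit follows since, e.g., the $k=0$ (top) term alone is bounded below by a positive constant (the $s = \lfloor n/2\rfloor-1$ term dominates and its normalized value is readily seen to be bounded away from $0$). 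I would then, if desired, simplify $\sum c_k$ and $\sum c_k'$ into closed forms involving $q$-Pochhammer-type products, but establishing convergence to finite positive constants — which is all the lemma claims — requires only the domination argument plus the termwise-limit computation.
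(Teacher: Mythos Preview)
Your approach is sound but takes a genuinely different route from the paper's. The paper does \emph{not} reindex from the top and pass a limit through the sum via dominated convergence. Instead it shows directly that $x_n := r_{2n}$ is Cauchy: writing $T(n,s)$ for the $s$-th summand of $x_n$, the paper proves two auxiliary estimates --- (i) for $s \ge n/3$ the ratio $T(n{+}1,s{+}1)/T(n,s)$ lies in $(1-(\sqrt2)^{-n},\,1+(\sqrt2)^{-n})$, and (ii) for $s \le 2n/3$ one has $T(n,s) < (\sqrt2)^{-n}$ --- and combines them via the triangle inequality to get $|x_{n+1}-x_n| \le 3n(\sqrt2)^{-n}$, which telescopes to convergence. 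Positivity is then checked numerically by computing $x_{100}$ and bounding the tail.

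Your dominated-convergence strategy buys something the paper's does not: it naturally yields explicit expressions for the two limits as $q$-series in $k$ (the distance from the top index), whereas the paper's Cauchy argument only establishes existence and must resort to a numerical evaluation at $n=100$ to certify positivity. Conversely, the paper's route is slightly more elementary in that it avoids any interchange-of-limits machinery. Your outline is correct, including the crucial point you flagged about bounding the denominator product below by a fixed positive constant times $2^{s(s+1)}$; with that, the $k$-th-from-top term is dominated by $C\cdot 2^{-2(k+1)^2 + (k+1)}$ uniformly in $n$, which is summable. (Minor note: in your exponent bookkeeping, $s(s+3)+2s(n{-}2)-s(2s{-}1)$ simplifies to $2sn - s^2$, not $2sn - s^2 - 4s$; the $-4s$ was already absorbed. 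This does not affect the argument.)
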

%-------------------------------------------
\begin{proof}
We first consider the sequence $\{ r_{2n} | n \in \mathbb N\}$, corresponding to the case where the number of vertices is even. We begin by rewriting the formula in terms of $r_{2n}$, and we define $x_n = r_{2n}$ for convenience: \[ x_n = r_{2n} = \sum_{s=0}^{n-1} \frac{2^{s(s+3)}}{2^{n(2n-1)}} \left( \frac{\prod_{i=0}^{2s-1} \left( 2^{2n-2-i}-1\right)}{\prod_{i=1}^s \left( 2^{2i}-1 \right)}\right). \]
For each positive integer $n$ and each integer $s$ with $0 \le s < n$, let \[ T(n, s) = \frac{2^{s(s+3)}}{2^{n(2n-1)}} \left( \frac{\prod_{i=0}^{2s-1} \left( 2^{2n-2-i}-1\right)}{\prod_{i=1}^s \left( 2^{2i}-1 \right)}\right). \]

In order to prove that the sequence $\{ x_n \}$ converges, we will prove the following two lemmas about the terms $T(n,s)$.
	%-------------------------------------------
	\begin{lemma}\label{lem:conv1}
	Let $k = \sqrt 2$. Then for each integer $n \ge 10$ and each integer $s$ with $\frac{n}{3} \le s \le n$, we have that \[ 1-k^{-n} < \frac{T(n+1,s+1)}{T(n,s)} < 1+k^{-n}.\]
	\end{lemma}
    %-------------------------------------------
	\begin{proof}
	We begin by simplifying the expression $\frac{T(n+1,s+1)}{T(n,s)}$; we have \begin{align*}
 \frac{T(n+1,s+1)}{T(n,s)} &= \left( \frac{2^{(s+1)(s+4)}}{2^{(n+1)(2n+1)}} \left( \frac{\prod_{i=0}^{2s+1} \left( 2^{2n-i}-1\right)}{\prod_{i=1}^{s+1} \left( 2^{2i}-1 \right)}\right) \right) \cdot \left( \frac{2^{n(2n-1)}}{2^{s(s+3)}} \left( \frac{\prod_{i=1}^s \left( 2^{2i}-1 \right)}{\prod_{i=0}^{2s-1} \left( 2^{2n-2-i}-1\right)}\right)\right) \\
  &= \left( \frac{2^{s^2+5s+4}}{2^{2n^2+3n+1}} \left( \frac{\prod_{i=1}^s \left( 2^{2i}-1 \right)}{\prod_{i=1}^{s+1} \left( 2^{2i}-1 \right)}\right) \right) \cdot \left( \frac{2^{2n^2-n}}{2^{s^2+3s}} \left( \frac{\prod_{i=0}^{2s+1} \left( 2^{2n-i}-1\right)}{\prod_{i=0}^{2s-1} \left( 2^{2n-2-i}-1\right)}\right)\right) \\
  &= \frac{2^{s^2+5s+4}}{2^{s^2+3s}} \cdot \frac{2^{2n^2-n}}{2^{2n^2+3n+1}} \cdot \left( \frac{\prod_{i=1}^s \left( 2^{2i}-1 \right)}{\prod_{i=1}^{s+1} \left( 2^{2i}-1 \right)}\right)  \cdot \left( \frac{\prod_{i=0}^{2s+1} \left( 2^{2n-i}-1\right)}{\prod_{i=0}^{2s-1} \left( 2^{2n-2-i}-1\right)}\right) \\
  &= 2^{2s+4} \cdot \frac{1}{2^{4n+1}} \cdot \frac{1}{2^{2s+2}-1} \cdot \left( \frac{\prod_{i=2n-2s-1}^{2n} \left( 2^{i}-1\right)}{\prod_{i=2n-2s-1}^{2n-2} \left( 2^{i}-1\right)}\right) \\
  &= \frac{2^{2s+4}}{2^{2s+2}-1} \cdot \frac{1}{2^{4n+1}} \cdot \left( 2^{2n}-1\right) \left( 2^{2n-1}-1 \right) \\
  &= \frac{2^{2s+2}}{2^{2s+2}-1} \cdot \frac{(2^{4n-1}-3 \cdot 2^{2n-1}+1)}{2^{4n-1}} \\
  &= \left( 1 + \frac{1}{2^{2s+2}-1}\right) \left( 1 - \frac{3}{2^{2n}} + \frac{1}{2^{4n-1}} \right).
 \end{align*}
Let $k = \sqrt 2$. Then, for $n \ge 10$, $s \ge \frac n3$, we have $$\frac{T(n+1,s+1)}{T(n,s)} \ge 1-\frac{3}{2^{2n}} \ge 1 - 2^{-n} > 1-k^{-n}$$ and $$\frac{T(n+1,s+1)}{T(n,s)} \le 1 + \frac{1}{2^{2s+2}-1} \le 1+\frac{1}{2^{2s}} = 1+\frac{1}{2^{2n/3}} = 1+k^{-4n/3} < 1+k^{-n},$$ as desired.
	\end{proof}
    %-------------------------------------------
	\begin{lemma}\label{lem:conv2}
	Let $k = \sqrt 2$. Then for each integer $n \ge 10$ and each nonnegative integer $s \le \frac{2n}{3}$, we have \[ T(n, s) < k^{-n}.\]
	\end{lemma}
    %-------------------------------------------
    \begin{proof}
    Note that since $2^{2n-2-i}-1 \le 2^{2n-2-i}$ for all $0 \le i < n$ and $2^{2i}-1 \ge 2^{2i-1}$ for all $i \ge 1$, we have that \[T(n, s) =  \frac{2^{s(s+3)}}{2^{n(2n-1)}} \left( \frac{\prod_{i=0}^{2s-1} \left( 2^{2n-2-i}-1\right)}{\prod_{i=1}^s \left( 2^{2i}-1 \right)}\right) \le \frac{2^{s(s+3)}}{2^{n(2n-1)}} \left( \frac{\prod_{i=0}^{2s-1} \left( 2^{2n-2-i}\right)}{\prod_{i=1}^s \left( 2^{2i-1} \right)}\right). \] Therefore, we have that \begin{align*}\log_2 T(n,s) &\le s(s+3)-n(2n-1) + \sum_{i=0}^{2s-1} (2n-2-i) - \sum_{i=1}^s (2i-1) \\
&= s^2+3s-2n^2+n+4ns-4s-\left( \sum_{i=0}^{2s-1} i\right)  - 2\left( \sum_{i=1}^s i \right) + s\\
&= s^2+3s-2n^2+n+4ns-4s-\frac{2s(2s-1)}{2} - 2\frac{s(s+1)}{2} + s \\
&= s^2+3s-2n^2+n+4ns-4s-2s^2+s-s^2-s+s \\
&= -2n^2+4ns-2s^2+n \\
&= -2(n-s)^2+n
\end{align*}
Using our assumption that $s \le \frac{2n}{3}$, we have $n-s \ge \frac{n}{3}$, so $-2(n-s)^2+n \le -2n^2/9+n$. For $n \ge 10$, we have $$-2n^2/9+n < -2n+n = -n.$$ Therefore, it follows that $T(n,s) < 2^{-n} < \left( \sqrt 2 \right)^{-n}$ for $n \ge 10$, $s \le \frac{2n}{3}$, so the value $k = \sqrt 2$ satisfies the condition for our desired constant.
	\end{proof}
	%-------------------------------------------
Let $k = \sqrt 2$. By Lemmas \ref{lem:conv1} and \ref{lem:conv2}, for each integer $n$ with $n \ge 10$ we have $1-k^{-n} < \frac{T(n+1,s+1)}{T(n,s)} < 1+k^{-n}$ for $s \ge n/2$, $T(n+1,s) < k^{-n}$ for $s \le n/2$, and $T(n,s) < k^{-n}$ for $s \le n/2$.

Multiplying the inequality $1-k^{-n} < \frac{T(n+1,s+1)}{T(n,s)} < 1+k^{-n}$ through by $T(n,s)$ and then subtracting $T(n,s)$ from each term, we have that $-T(n,s) k^{-n} < T(n+1,s+1)-T(n,s) < T(n,s)k^{-n}$. Since $T(n,s) \le x_n \le 1$ (as the proportion of \textbf{gcds}-sortable graphs cannot be greater than $1$), we must have that $$|T(n+1,s+1)-T(n,s)| < k^{-n}.$$

We expand the difference $|x_{n+1}-x_n|$ using the Triangle Inequality:
\begin{align*}
|x_{n+1}-x_n|&=\left| \sum_{s=0}^n T(n+1,s) - \sum_{s=0}^{n-1} T(n,s)\right| \\
&\le \left| \sum_{s=\lfloor \frac{n}{2}\rfloor+1}^{n-1} T(n+1,s+1)-T(n,s) \right| + \left| \sum_{s=0}^{\lfloor \frac{n}{2}\rfloor+1} T(n+1,s) \right| + \left| \sum_{s=0}^{\lfloor \frac{n}{2}\rfloor} T(n,s) \right| \\
&\le \sum_{s=\lfloor \frac{n}{2}\rfloor+1}^{n-1} \left|T(n+1,s+1)-T(n,s)\right| + \sum_{s=0}^{\lfloor \frac{n}{2}\rfloor+1}\left| T(n+1,s) \right| + \sum_{s=0}^{\lfloor \frac{n}{2}\rfloor} \left| T(n,s) \right| \\
&\le (n-\lfloor n/2 \rfloor -1)k^{-n} + \left( \lfloor n/2 \rfloor +2 \right) k^{-n} +  \left( \lfloor n/2 \rfloor +1 \right) k^{-n}\\
&= \left( n+\lfloor n/2 \rfloor + 2\right) k^{-n}\\
&\le 3nk^{-n}. \\
\end{align*}
Let $d = \frac{k-1}{2}$, and let $c = \frac{k}{d+1}$. Note that since $k > 1$, we have $d+1 = \frac{k+1}{2} < k$, so $c > 1$. Now let $T = \frac{3}{d} = \frac{6}{k-1}$. Since $k$ is a constant not depending on $n$, both $T$ and $c$ are positive constants not depending on $n$ (and $c > 1$). Then, since $d > 0$, we have $$3nk^{-n} = \frac{3}{d}k^{-n} (nd) \le \frac{3}{d} k^{-n} \sum_{j=0}^n \binom nj d^j = \frac{3}{d} k^{-n}(d+1)^n =  \frac{3}{d} \cdot \left( \frac{k}{d+1} \right)^{-n} = T \cdot c^{-n}. $$
Thus, we have shown that $|x_{n+1}-x_n| < Tc^{-n}$ for each integer $n \ge 10$ and constants $T > 0, c > 1$.

Let $a_1 = x_1$, and for each integer $n \ge 2$, $a_n = x_n-x_{n-1}$. Then, for each $n>0$ we have that $x_n = \sum_{i=1}^n a_i$. Let $b_n = Tc^{-(n-1)}$ for each $n > 0$. Then, $|a_n| \le b_n$ for all $n \ge 10$. Since the geometric series $\sum_{n=1}^\infty b_n$ converges, we have that the series $\sum_{n=1}^\infty a_n$ converges as well. Therefore, the sequence $\{x_n\ | \ n \in \mathbb N \} = \{r_{2n}\ | \ n \in \mathbb N \}$ converges.
% "by the direct comparison test" was removed by editor #13

Let $L = \displaystyle{\lim_{n \to \infty}} x_n$. Then, $$L = x_{100} + \sum_{n=100}^{\infty} (x_{n+1}-x_{n}) \ge x_{100} - \sum_{n=100}^{\infty} |x_{n+1}-x_n| \ge x_{100} - \sum_{n=100}^{\infty} T \cdot c^{-n} \ge x_{100} - c^{-100} \frac{cT}{c-1}.$$

With $k = \sqrt 2$, we have $c = \frac{2\sqrt 2}{\sqrt 2 + 1} = 4-2\sqrt 2 > 1.1$ and $T = \frac{6}{\sqrt 2-1} < \frac{6}{0.3} = 20$, and also $c < 2$. Thus, we have $\frac{cT}{c-1} < \frac{2 \cdot 20}{0.1} = 400$. Additionally, we have that $c^{-100} < 1.1^{-100} < 10^{-4}$, so $c^{-100}\frac{cT}{c-1} < 0.04$. By computing the value of $x_{100}$ through inserting $n = 100$ into the formula for $x_n$. We can verify that $x_{100} > 0.2$ and so $L \ge 0.2-0.04 \ge 0.16 > 0$. Therefore, the limiting value of the sequence, which exists by our convergence result, must be strictly positive. 

The proof that the sequence $\{x_n\ | \ n \in \mathbb N \} = \{r_{2n+1}\ | \ n \in \mathbb N \}$ converges to a positive limit is similar. % (editor 13 suggested removing the rest) essentially identical, except with the $2^{n(2n-1)}$ term in the denominator of $T(n,s)$ replaced with $2^{n(2n+1)}$ and the product in the numerator changed from $\prod_{i=0}^{2s-1} \left( 2^{2n-2-i}-1 \right)$ to $\prod_{i=0}^{2s-1} \left( 2^{2n-1-i}-1 \right)$.
\end{proof}
%-------------------------------------------
%-------------------------------------------
%-------------------------------------------

%-------------------------------------------
%-------------------------------------------
\appendix
\section{Equivalence of \textbf{gcds} definitions}
%-------------------------------------------
%-------------------------------------------
We now provided an equivalence among a definition we provided in the present paper with a definition presented in another reference.
%-------------------------------------------
\begin{lem}\label{lem:gcds_eq_defn}
The definition of \textbf{gcds} given as Definition \ref{defn:gcds} is equivalent to the definition given in \cite{p4}. 
\end{lem}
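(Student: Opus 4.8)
The plan is to unwind both definitions of \textbf{gcds} on the level of a single vertex pair and verify they produce the same edge set. Recall that Definition \ref{defn:gcds} of this paper says: given a two-rooted graph $G=(V,E)$ with $\{p,q\}\in E$ and $p,q$ non-root, the graph $\textbf{gcds}_{\{p,q\}}(G)=(V,E')$ is characterized by $\{u,v\}\in E' \iff f_p(u)f_q(v)+f_q(u)f_p(v)+f_u(v)=1$ over $\mathbb F_2$. The definition in \cite{p4} is stated in terms of local (Lemma-\ref{lem:gcds_eq_mcds}-style) complementation operations: intuitively, one complements the edges ``between'' the neighborhoods of $p$ and $q$, then deletes $p$ and $q$ from the active part, or some equivalent sequence of neighborhood complementations at $p$ and $q$. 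So I first need to state precisely what the \cite{p4} definition is (I would quote it verbatim from Section 4 of that reference), and then show the two produce identical graphs on the same vertex set.

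First I would fix an arbitrary unordered pair $\{u,v\}$ with $u,v\in V$ and split into the cases that the \cite{p4} operation distinguishes: whether each of $u,v$ lies in $N_G(p)\setminus N_G(q)$, in $N_G(q)\setminus N_G(p)$, in $N_G(p)\cap N_G(q)$, or outside $N_G(p)\cup N_G(q)$ (and the special sub-cases where $u$ or $v$ equals $p$ or $q$). For each such case I would compute the value of $f_p(u)f_q(v)+f_q(u)f_p(v)$ and check whether this parity exactly records ``the edge $\{u,v\}$ is toggled'' under the \cite{p4} description. This is the same bookkeeping already carried out, in a disguised form, inside the proof of Lemma \ref{lem:cds_eq_gcds} (the seven-case analysis there), and inside Lemma \ref{lem:pres_eulerian}; so I would lean on that analysis rather than redo it from scratch. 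The point is that $f_p(u)f_q(v)+f_q(u)f_p(v)=1$ precisely when exactly one of $u,v$ is a ``$p$-only'' neighbor and the other a ``$q$-only'' neighbor, or one is a ``both'' neighbor and the other is ``$p$-only'' or ``$q$-only'' — which is exactly the set of pairs whose adjacency is flipped by the composition of local complementations in the \cite{p4} recipe. The handling of the vertices $p$ and $q$ themselves (they become isolated in both definitions) is the small extra check: when $u=p$, $f_p(u)=f_p(p)=0$, so the formula reduces to $f_q(u)f_p(v)+f_p(v)=f_p(v)(f_q(p)+1)=f_p(v)(1+1)=0$ using $f_q(p)=1$, confirming $p$ is isolated in $E'$, matching \cite{p4}.

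The key steps, in order, are: (1) write down the \cite{p4} definition explicitly and reconcile notation (their ``roots''/''handles'' versus ours, and whether their operation is phrased as edge complementation on an induced subgraph or as iterated local complementation at $p$ then $q$ then $p$); (2) reduce the claimed equality of graphs to a pointwise statement about each pair $\{u,v\}$; (3) run the case analysis on the membership of $u$ and $v$ in the four neighborhood regions, in each case evaluating both the $\mathbb F_2$ formula and the \cite{p4} toggling rule and observing they agree; (4) separately dispatch the cases $u\in\{p,q\}$ or $v\in\{p,q\}$, showing both definitions isolate $p$ and $q$; (5) conclude that $E'$ is the same under both definitions, hence the operations coincide. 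I expect step (1) to be the main obstacle — not because it is mathematically hard, but because the ``modified'' qualifier in this paper's Definition \ref{defn:gcds} signals that the two definitions differ by exactly how the root/handle vertices are treated (this paper keeps $p,q$ in $V$ and isolates them; \cite{p4} may delete them or relabel), so the bulk of the care goes into matching up the ambient vertex sets and the fate of $p,q$, after which the interior-edge toggling is the routine $\mathbb F_2$ computation already essentially done in Lemma \ref{lem:cds_eq_gcds}.
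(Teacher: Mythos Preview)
Your proposal is correct and follows essentially the same approach as the paper's proof: both reduce to a pairwise case analysis on how $u$ and $v$ sit relative to the neighborhoods of $p$ and $q$, then verify in each case that the $\mathbb{F}_2$ formula $f_p(u)f_q(v)+f_q(u)f_p(v)$ records exactly the ``toggled'' pairs, with a separate check that $p$ and $q$ become isolated. The paper phrases the cases using the ``master list'' $M(p,q)$ terminology imported from \cite{p4} (Case A: one of $u,v$ absent from the list; Case B with three subcases depending on column membership and parity of appearances), whereas you partition directly by the four neighborhood regions $N_G(p)\setminus N_G(q)$, $N_G(q)\setminus N_G(p)$, $N_G(p)\cap N_G(q)$, and the complement --- but these are the same partition in different language, and the computations are identical.

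One small caution: your plan to ``lean on'' the seven-case analysis of Lemma~\ref{lem:cds_eq_gcds} is not quite right, since that lemma concerns permutation pointers rather than abstract graph neighborhoods; you should carry out the region-based casework directly (as you also outline), which is short. Your speculation in step~(1) about the \cite{p4} definition possibly deleting rather than isolating $p,q$ turns out not to be the issue --- the reference keeps them and isolates them, matching Definition~\ref{defn:gcds} --- so that step is more routine than you anticipate.
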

%-------------------------------------------
\begin{proof}
We proceed by casework, considering whether edge $\{u,v\}$ will be present in the graph $\textbf{gcds}_{\{p,q\}}(G)$ for each case in the definition from \cite{p4} governing the adjacency relationships between the vertices $u,v,p,$ and $q$.

Our vertices $p$ and $q$ correspond to the vertices $x$ and $y$ in the  \cite{p4} definition used to apply \textbf{gcds}, and our vertices $u$ and $v$ correspond to the arbitrary vertices $p$ and $q$ in the  \cite{p4} that are affected by the \textbf{gcds} operation.\\

\begin{enumerate}
\item Case A: At least one of the vertices $u$ and $v$ is adjacent to neither $p$ nor $q$. 

In this case, we will either have $f_p(u)=f_q(u)=0$ or $f_p(v)=f_q(v) = 0$. In both situations, it follows that $f_p(u)f_q(v)+f_q(u)f_p(v) = 0$, so the edge $\{u,v\}$ is in $E'$ if, and only if, $\{u,v\} \in E$. At least one of $u$ and $v$ will not appear in the master list $M(p,q)$ in this case (whichever one was adjacent to neither $p$ nor $q$), so this condition matches the specification given in the  \cite{p4} definition. \\

\item Case B: Both $u$ and $v$ appear in the master list $M(p,q)$. 

\begin{enumerate}
\item Subcase B1: $u$ and $v$ do not appear in the same column of the master list. 

The  \cite{p4} definition specifies that $\{u,v\} \in E'$ if, and only if, $\{u,v\} \not \in E$. This subcase corresponds to the case when there is no vertex in the set $\{p,q\}$ that $u$ and $v$ are both adjacent to, so if $f_p(u) = 1$, then $f_q(u) = 0$, $f_q(v) = 1$, and $f_p(v) = 0$. Alternatively, if $f_p(u) = 0$, then $f_q(u) = 1$, $f_q(v) = 0$, and $f_p(v) = 1$. In either case, we have that $f_p(u)f_q(v)+f_q(u)f_p(v) = 1$, so our definition specifies that $\{u,v\} \in E'$ if and only $\{u,v\} \in E$, as desired.\\

\item Subcase B2: $u$ and $v$ appear in the same column of the master list, and the sum of the number of times $u$ appears and the number of times $v$ appears is even. 

First, we note that the  \cite{p4} definition lets $\{u,v\}$ be an edge in $E'$ if, and only if, $\{u,v\} \in E$. We now consider the possible values of $f_p(u),f_q(u),f_p(v),$ and $f_q(v)$. 

Since $u$ and $v$ both appear in the same column of the master list, they are both adjacent to $w$ for a specific vertex $w \in \{ p,q \}$. Furthermore, since the number of adjacencies is even, either none or both of $u$ and $v$ are adjacent to the other vertex in $\{ p,q \}$. 

This leaves a total of $3$ possibilities. If both are adjacent to the other vertex for either choice of $w$, we have that $f_p(u)=f_p(v)=f_q(u)=f_q(v) = 1$, so $f_p(u)f_q(v)+f_p(v)f_q(u) \not= 1$ and we have $uv \in E'$ if, and only if, $uv \in E$. Otherwise, both $u$ and $v$ are adjacent to $w$ but neither is adjacent to the other vertex. If $w = p$, then $f_p(u) = f_p(v) = 1$ and $f_q(u) = f_q(v) = 0$, so we have $f_p(u)f_q(v)+f_q(u)f_p(v) \not= 1$; similarly, if $w = q$, then $f_p(u)=f_p(v) = 0$ and $f_q(u)=f_q(v)=1$, so $f_p(u)f_q(v)+f_q(u)f_p(v) \not= 1$. In either case, we have $\{u,v\} \in E'$ if, and only if, $\{u,v\} \in E$, so the definitions are always equivalent in subcase B2.\\ 

\item Subcase B3: $u$ and $v$ appear in the same column of the master list, and the sum of the number of times $u$ appears and the number of times $v$ appears is odd. 

The  \cite{p4} definition lets $\{u,v\} \in E'$ if, and only if, $\{u,v\} \not \in E$. Note that since $u$ and $v$ are both adjacent to some $w \in \{p,q \}$ (since they appear in the same column of the master list), at least two of $f_p(u),f_q(u),f_p(v),$ and $f_q(v)$ are equal to $1$. Since the sum of the numbers of times $p$ and $q$ appear in the master list is odd, we have that an odd number of $f_p(u),f_q(u),f_p(v),$ and $f_q(v)$ are equal to $1$. Hence, exactly three of the quantities $f_p(u),f_q(u),f_p(v),$ and $f_q(v)$ are equal to $1$, and the fourth is equal to zero. Thus, one of the two products in the expression $f_p(u)f_q(v)+f_q(u)f_p(v)$ will contain two ones, and the other term will contain one $1$ and one $0$. Therefore, we must have $f_p(u)f_q(v)+f_q(u)f_p(v)=1$, so our \textbf{gcds} definition lets $\{u,v\} \in E'$ iff $\{u,v\} \not \in E$ as well for subcase B3.\\

\end{enumerate}
\end{enumerate}

Finally, if $u$ or $v$ is an element of $\{ p,q \}$, we consider the case in which $u = p$ without loss of generality. Then, we have $f_p(u) = 0$ and $f_q(u) = 1$, so we have that $f_p(u)f_q(v)+f_q(u)f_p(v)=1$ if, and only if, $f_p(v) = 1$ - in other words, if $v$ and $u$ are adjacent. In this case (when $\{u,v\} \in E$), we will have $\{u,v\} \in E'$ if, and only if, $\{u,v\} \not \in E$, so $\{u,v\} \not \in E'$. If $\{u,v\} \not \in E$, we will have $f_p(u)f_q(v)+f_q(u)f_p(v)\not =1$, so $\{u,v\} \in E'$ if, and only if, $\{u,v\} \in E$ and so $\{u,v\} \not \in E'$. Therefore, either way we have that edge $\{u,v\}$ is not in $E'$. By similar reasoning, we conclude that there is no edge in $E'$ whose endpoints include either vertex $p$ or vertex $q$, so $p$ and $q$ are isolated vertices in $G'$. This aligns with the specification given in the  \cite{p4} definition. 

Thus, our definition corresponds to the definition of  \cite{p4} in each case, so the two definitions are equivalent.

\end{proof}
%-------------------------------------------
\begin{comment}
%-------------------------------------------
%\documentclass{article}
\documentclass[12pt]{article}
%-------------------------------------------
\usepackage[margin=1in]{geometry}
%\usepackage[margin=1in]{geometry}
\usepackage[utf8]{inputenc}
\usepackage[english]{babel}
\usepackage{amsmath,amsfonts,amsthm,verbatim, graphicx,tikz}
\usepackage{amssymb, sectsty, mathrsfs, mathtools, centernot}
%-------------------------------------------
\usetikzlibrary{shapes.geometric,decorations.pathreplacing,angles,quotes}
\end{comment}
%-------------------------------------------

\begin{thebibliography}{99} 
	\bibitem%[Adamyk, 2013]
	{p3} 
	K.L.M. Adamyk, E. Holmes, G.R. Mayfield, D.J. Moritz, M. Scheepers, B.E. Tenner, and H.C. Wauck, 
    \newblock \emph{Sorting Permutations: Games, Genomes, and Cycles},
    \newblock  \textbf{Discrete Mathematics, Algorithms and Applications} \normalfont \textbf{9:5} \normalfont (2017), 1750063 (31 pp) %, arXiv:1410.2353v3 [math.CO]
        
     \bibitem{BH} R. Brijder and H.J. Hoogeboom,
     \newblock \emph{The Group Structure of Pivot and Loop Complementation on Graphs and Set Systems},
     \newblock \textbf{European Journal of Combinatorics } \textbf{32} (2011),  1353 - 1367
   
     \bibitem{BHH} R. Brijder, T. Harju and H.J. Hoogeboom, 
     \newblock \emph{Pivots, Determinants, and Perfect Matchings of Graphs}
     \newblock \textbf{Theoretical Computer Science} \textbf{454} (2012),  64 - 71.  
        
    \bibitem{christie} D.A. Christie,     
    \newblock \emph{Sorting Permutations by Block Interchanges},
    \newblock  \textbf{Information Processing Letters} \normalfont \textbf{60} \normalfont (1996), 165 - 169 %, arXiv:1410.2353v3 [math.CO]
       
        
     \bibitem{ehppr}   A. Ehrenfeucht, T.Harju, I. Petre, D.M. Prescott and G. Rozenberg, 
     \newblock \emph{Computation in Living Cells: Gene Assembly in Ciliates}, 
     \newblock \textbf{Springer-Verlag} 2004.
        
        
    \bibitem{Geelen} J.F. Geelen, 
    \newblock \emph{A generalization of Tutte's characterization of totally unimodular matrices},
    \newblock \textbf{Journal of Combinatorial Theory, Series B} \textbf{70} (1997), 101 - 117.
        
%	\bibitem%[Axler, 2015] %Not cited
%	{axler} S. Axler.
%    \newblock \emph{Linear Algebra Done Right, 3rd edition.}, 
%    \newblock \textbf{Springer Undergraduate Texts in Mathematics}, 2015.
    
%    \bibitem%[Gaetz, 2016] %Not cited
%    {p5} M. Gaetz, B. Molokach, M. Scheepers and M.A. Shanks,		
%	\newblock \emph{Quantifying \textbf{cds} Sortability of Permutations Using Strategic Pile Size},
%	\newblock arXiv1811.11937 
    
%    \bibitem%[Hartman, 2006] %Not cited
%    {p7} Hartman T. and Verbin E.,
%    \newblock \emph{Matrix Tightness: A Linear-Algebraic Framework for Sorting By Transpositions}
%    \newblock{In: Crestani F., Ferragina P., Sanderson M. (eds) String Processing and Information Retrieval. SPIRE 2006. Lecture Notes in Computer Science, vol 4209. Springer, Berlin, Heidelberg, (2006), 279 - 290.}
    
    \bibitem%[Jansen, 2014]
    {p4} C.L. Jansen, M. Scheepers, S.L. Simon, and E. Tatum
    \newblock \emph{Permutation Sorting and a Game on Graphs}, 
    \newblock \textbf{arXiv}:1411.5429v1 [math.CO]
    
    \bibitem%[Li, 2015]
    {p2} H.Q. Li, J. Ramsey, M. Scheepers, H.E. Schilling, and C. Stanford
    \newblock \emph{Parity Cuts  In  2-Rooted  Graphs:  A Trichotomy Theorem} 
    \newblock (in preparation) 
    
%	\bibitem[Zermelo, 1913]{p6} E. Zermelo 
%    \newblock \emph{\"Uber eine Anwendung der Mengenlehre auf die Theorie des Schachspiels} 				
%    \newblock{Proceedings of the Fifth International Congress of Mathematics (1913), 501-504}
    
    \bibitem%[MacWilliams, 1969]{
    {macw} J.~MacWilliams
    \newblock \emph{Orthogonal matrices over finite fields}
    \newblock \textbf{The American Mathematical Monthly} \normalfont \textbf{76} \normalfont  (1969), no. 2, 152--164
    
    \bibitem{per} D.M. Prescott, A. Ehrenfeucht and G. Rozenberg, 
    \newblock \emph{Template guided recombination for IES elimination and unscrambling of genes in stichotrichous ciliates}, 
    \newblock \textbf{Journal of Theoretical Biology} \textbf{222} (2003), 3232 - 330
    
    
    \bibitem{oeis} N. J. A. Sloane, 
    \newblock \emph{The On-line Encyclopedia of Integer Sequences}, 
    \newblock published electronically at {\tt http://www.oeis.org/.}


%	\bibitem%[Hefferon, 2017] %Not cited
%	{heff} J. Hefferon.
%	\newblock \emph{Linear Algebra, 3rd edition.}, 2017. http://joshua.smcvt.edu/linearalgebra/

%	\bibitem%[Ruohonen, 2013] %Not cited
%	{ruoh} K. Ruohonen. 
%	\newblock \emph{Graph Theory}, 2013. 

%Rashmika Tentative Source List:
% Ruohonen
% https://en.wikipedia.org/wiki/Cycle_basis#cite_note-5
% http://students.imsa.edu/~tliu/Math/GraphTheoryIII.pdf
% https://en.wikipedia.org/wiki/Rank%E2%80%93nullity_theorem
% https://en.wikipedia.org/wiki/Sylvester_domain
% https://www3.nd.edu/~dgalvin1/40210/40210_F12/CGT_early.pdf
% %https://books.google.com/books?id=cyX32q8ZP5cC&lpg=PA12&dq=rank%20of%20skew-symmetric%20matrix&pg=PA12#v=onepage&q=rank%20of%20skew-symmetric%20matrix&f=false
\end{thebibliography}
\end{document}